\DeclareMathOperator{\Dm}{Dm}
\begin{document}

\newtheorem{lem}{Lemma}[section]
\newtheorem{prop}{Proposition}
\newtheorem{con}{Construction}[section]
\newtheorem{defi}{Definition}[section]
\newtheorem{coro}{Corollary}[section]
\newcommand{\hf}{\hat{f}}
\newtheorem{fact}{Fact}[section]
\newtheorem{theo}{Theorem}
\newtheorem{conjec}{Conjecture}
\newcommand{\Br}{\Poin}
\newcommand{\Cr}{{\bf Cr}}
\newcommand{\dist}{{\rm dist}}
\newcommand{\diam}{\mathrm{diam}}
\newcommand{\compose}{\circ}
\newcommand{\dbar}{\bar{\partial}}
\newcommand{\Def}[1]{{{\em #1}}}
\newcommand{\dx}[1]{\frac{\partial #1}{\partial x}}
\newcommand{\dy}[1]{\frac{\partial #1}{\partial y}}
\newcommand{\Res}[2]{{#1}\raisebox{-.4ex}{$\left|\,_{#2}\right.$}}
\newcommand{\sgn}{{\rm sgn}}

\newcommand{\CC}{\mathbb{C}}
\newcommand{\D}{{\bf D}}
\newcommand{\RR}{\mathbb{R}}
\newcommand{\NN}{\mathbb{N}}
\newcommand{\HH}{\mathbb{H}}
\newcommand{\ZZ}{\mathbb{Z}}

\newcommand{\tr}{\mbox{Tr}\,}
\newcommand{\R}{{\bf R}}
\newcommand{\C}{{\bf C}}
\newcommand{\Leb}{\text{Leb}}

\newenvironment{nproof}[1]{\trivlist\item[\hskip \labelsep{\bf Proof{#1}.}]}
{\begin{flushright} $\square$\end{flushright}\endtrivlist}
\newenvironment{proof}{\begin{nproof}{}}{\end{nproof}}

\newenvironment{block}[1]{\trivlist\item[\hskip \labelsep{{#1}.}]}{\endtrivlist}
\newenvironment{definition}{\begin{block}{\bf Definition}}{\end{block}}

\newtheorem{com}{Comment}
\font\mathfonta=msam10 at 11pt
\font\mathfontb=msbm10 at 11pt
\def\Bbb#1{\mbox{\mathfontb #1}}
\def\lesssim{\mbox{\mathfonta.}}
\def\suppset{\mbox{\mathfonta{c}}}
\def\subbset{\mbox{\mathfonta{b}}}
\def\grtsim{\mbox{\mathfonta\&}}
\def\gtrsim{\mbox{\mathfonta\&}}

\newcommand{\tsing}{T_{\text{sing}}}
\newcommand{\dsing}{D_{\text{sing}}}
\newcommand{\tpar}{T_{\text{par}}}
\newcommand{\dpar}{D_{\text{par}}}
\newcommand{\thyp}{T_{\text{hyp}}}
\newcommand{\vhyp}{V_{\text{hyp}}}
\newcommand{\pr}{\mathbf{Pr}}

\title{Limit Drift for Complex Feigenbaum Mappings}

\author{Genadi Levin
\thanks{Supported in part by an ISF grant 1378/13}\\
\small{Einstein Institute of Mathematics}\\
\small{Hebrew University}\\
\small{Givat Ram 91904, Jerusalem, ISRAEL}\\
\small{\tt levin@math.huji.ac.il}\\
\and
Grzegorz \'{S}wia\c\negthinspace tek
\thanks{Supported in part by a grant 2015/17/B/ST1/00091 funded by Narodowe Centrum Nauki.}\\
\small{Department. of Mathematics and Information Science}\\
\small{Politechnika Warszawska}\\
\small{Koszykowa 75}\\
\small{00-662 Warszawa, POLAND}\\
\small{\tt g.swiatek@mini.pw.edu.pl}
}
\normalsize
\maketitle

\abstract{We study the dynamics of towers defined by fixed points of
  renormalization for Feigenbaum polynomials in the complex plane with
  varying order $\ell$ of the critical point. It is known that the measure of
  the Julia set of the Feigenbaum polynomial is positive if and only
  if almost every point tends to $0$ under the dynamics of the tower
  for corresponding $\ell$. That in turn depends on the sign of a quantity
  called the
  {\em drift}. We prove the existence and key properties of absolutely
  continuous invariant measures for tower dynamics as well as their
  convergence when $\ell$ tends to
  $\infty$. We also prove the convergence of the drifts to a finite
  limit which can be expressed purely in terms of the limiting tower
  which corresponds to a Feigenbaum map with a flat critical point.
}

\section{Introduction}

Our main object of interest will be the Feigenbaum functions which are the solutions of the
Feigenbaum-Coullet-Tresser fixed point
equation \cite{CT}, \cite{feig0}, \cite{feig1}:
\begin{equation}\label{equ:1hp,1}
  \tau H^2(x) = H(\tau x).
  \end{equation}
$H$ is assumed to be a smooth unimodal map on some interval which contains $0$ with the critical point of order $\ell$ and normalized (following \cite{EW}, \cite{leswi:feig}) so that the critical value is at $0$ and its image at $1$. It is well known (and very non-trivial, see e.g. \cite{leswi:feig} for a historical account) that for each $\ell$ even and positive
a unique solution $(H_{\ell},\tau_{\ell})$ exists and has the form $H_\ell(x)=E_\ell(x)^\ell$
where $E_\ell$ is a real-analytic mapping with strictly negative derivative on $[0,1]$ and with a unique zero $x_{0,\ell}$ (so that $x_{0,\ell}$ is the critical point of $H_\ell$ of order $\ell$).
Furthermore,
by~\cite{EpsLas}, \cite{leswi:feig}, $E_\ell$ extends to a univalent map (denoted again by $E_\ell$) from some Jordan domain $\Omega_\ell$ onto a slit complex plane (see Sect.~\ref{rev} for details). This implies in particular that $H_\ell$ has a polynomial-like extension onto some disk $D(0, R)$, $R>1$ with a single critical point of order $\ell$. Let $J_\ell$ be the Julia set of this polynomial-like mapping.

For every $\ell$ even and positive
the {tower} map (cf.~\cite{profesorus}) $\hat{T}_\ell: \CC\to \CC$ is defined almost everywhere as follows. Introduce the fundamental annulus $A_\ell=\Omega_\ell\setminus \tau_\ell^{-1}\overline{\Omega_\ell}$ (geometrically, this is indeed an annulus domain, for every finite $\ell$). For every $n\in \ZZ$ and every $z\in \tau_\ell^n A_\ell$, let
$$\hat{T}_\ell(z)=\tau_\ell^n H_\ell \tau_\ell^{-n}(z).$$
Note that
$\hat{T}_\ell(z)=H_\ell^{2^n}(z)$ for $n\ge 0$ and $z\in \tau_\ell^{-n}A_\ell$.
By \cite{EW}, \cite{leswi:feig}, the quadruple $(H_\ell, \tau_\ell, \Omega_\ell, x_{0,\ell})$, as $\ell\to \infty$, has a well-defined non-trivial limit $(H_\infty, \tau_\infty, \Omega_\infty, x_{0,\infty})$ so that the limit tower $\hat{T}_\infty: \CC\to \CC$ and $A_{\infty} := \Omega_{\infty}\setminus \overline{\tau_{\infty}^{-1}\Omega_{\infty}}$ are defined as well.

Main results of the present paper are summarized in the following Theorems~\ref{m1}-\ref{m2}.
\begin{theo}\label{m1}
For any $\ell\in 2\NN$ or $\ell=\infty$, there exists a unique measure $\mu_\ell$ supported on $A_{\ell}$ which satisfies the following conditions (1)-(2):

(1) $\mu_\ell$ is absolutely continuous w.r.t. the Lebesgue measure on
the plane and $\mu_\ell(A_\ell)=1$ and with a density which is
real-analytic and positive on $A_{\ell}$.

(2) $\hat{\mu}_\ell$ defined by $\hat{\mu}_{\ell}(S)=\mu_{\ell}\left(\tau_{\ell}^n S\right)$ for every Borel set $S\subset \tau_\ell^{-n}A_\ell$ and every $n\in \ZZ$ is a $\sigma$-finite measure on $\CC$ which is invariant under $\hat{T}_\ell: \CC\to \CC$.
\end{theo}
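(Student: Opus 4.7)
My strategy is to construct $\mu_\ell$ as the unique absolutely continuous invariant probability measure for a ``first-landing'' Markov map $F_\ell$ induced on $A_\ell$ by $\hat T_\ell$, and then extend by the scaling symmetry $\hat T_\ell(\tau_\ell z)=\tau_\ell \hat T_\ell(z)$ to obtain $\hat\mu_\ell$ on the entire plane. Concretely, since $H_\ell(A_\ell)\subset\Omega_\ell=\bigsqcup_{n\ge 0}\tau_\ell^{-n}A_\ell$ (modulo boundaries), I set $A_\ell^{(n)}=\{z\in A_\ell:H_\ell(z)\in\tau_\ell^{-n}A_\ell\}$ and define $F_\ell:A_\ell\to A_\ell$ by $F_\ell|_{A_\ell^{(n)}}=\tau_\ell^{\,n}\circ H_\ell$. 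Since the critical point $x_{0,\ell}$ of $H_\ell$ lies in $\tau_\ell^{-1}\Omega_\ell$ and not in $A_\ell$, each branch of $F_\ell$ is an unramified finite covering. Moreover, because $E_\ell$ extends univalently from $\Omega_\ell$ onto a slit plane, a Koebe-type distortion estimate yields uniform hyperbolic expansion of $F_\ell$ together with geometric decay in $n$ of the Euclidean diameters of the inverse-branch images; this gives $F_\ell$ the structure of a holomorphic expanding Markov map with countably many summable branches.

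I would then invoke a Ruelle--Perron--Frobenius theorem for holomorphic expanders (in the spirit of Mayer--Rugh--Keller) applied to the transfer operator
\[
(\mathcal{L}_\ell\rho)(w)=\sum_{z\in F_\ell^{-1}(w)}\frac{\rho(z)}{|F_\ell'(z)|^2}
\]
on a space of holomorphic functions on a neighborhood of $\overline{A_\ell}$. Summability over $n$ follows from the diameter decay above, so $\mathcal{L}_\ell$ is compact (in fact nuclear); its spectral radius equals $1$ and the leading eigenvalue is simple and isolated, with an eigenfunction $\rho_\ell$ that can be chosen strictly positive (via a Perron-type argument) and real-analytic on $A_\ell$ (from the holomorphic character of the operator). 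Setting $d\mu_\ell=\rho_\ell\,d\Leb$ with $\mu_\ell(A_\ell)=1$ yields property~(1), and uniqueness of $\mu_\ell$ among absolutely continuous $F_\ell$-invariant probability measures follows from the spectral gap, equivalently from ergodicity of $F_\ell$.

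Extending to $\hat\mu_\ell$ is then essentially formal. The prescription $\hat\mu_\ell(S)=\mu_\ell(\tau_\ell^{\,n}S)$ for $S\subset\tau_\ell^{-n}A_\ell$ is consistent on the null boundaries and defines a $\sigma$-finite measure on $\CC$. To verify $\hat T_\ell$-invariance, I would decompose $\hat T_\ell^{-1}(S)$, for $S\subset\tau_\ell^{-n}A_\ell$, into countably many pieces, one in each $\tau_\ell^{-m}A_\ell$ whose image under $\tau_\ell^{-m}H_\ell\tau_\ell^{\,m}$ lies in $\tau_\ell^{-n}A_\ell$; pulling back to $A_\ell$ via the relevant scalings and using $\hat T_\ell\circ\tau_\ell=\tau_\ell\circ \hat T_\ell$ reduces the identity $\hat\mu_\ell(\hat T_\ell^{-1}S)=\hat\mu_\ell(S)$ precisely to $\mathcal{L}_\ell\rho_\ell=\rho_\ell$. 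Uniqueness of $(\mu_\ell,\hat\mu_\ell)$ satisfying (1)--(2) reduces to uniqueness of the $F_\ell$ a.c.i.m.

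The case $\ell=\infty$ is treated identically, using the limiting tower $(H_\infty,\tau_\infty,\Omega_\infty)$ furnished by \cite{EW},\cite{leswi:feig}; the flat critical point of $H_\infty$ sits inside $\tau_\infty^{-1}\Omega_\infty$ and is invisible to the dynamics on $A_\infty$. The main obstacle, where most of the technical effort resides, is establishing the uniform holomorphic-expansion and summability estimates required for the Ruelle--Perron--Frobenius machinery to produce a \emph{real-analytic} leading eigenfunction, rather than merely a H\"older or $L^\infty$ one. This depends on precise control of the geometry of the annular pieces $A_\ell^{(n)}$ as $n\to\infty$, which must be extracted from the definite univalent extension of $E_\ell$ beyond $\Omega_\ell$ and the geometric decay of the nested annuli $\tau_\ell^{-n}A_\ell$, preferably uniformly in $\ell\in 2\NN\cup\{\infty\}$ so that the same framework feeds into the convergence statements of subsequent theorems.
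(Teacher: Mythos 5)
Your overall architecture (induce a Markov map on $A_{\ell}$, run a Ruelle--Perron--Frobenius argument on a space of holomorphic functions to get a real-analytic density, then unfold to the tower by the scaling relation) is indeed the skeleton of the paper's proof, and your treatment of the passage from $\mu_{\ell}$ to $\hat{\mu}_{\ell}$ is fine. The gap is in the sentence claiming that, because $E_{\ell}$ extends univalently to a slit plane, ``a Koebe-type distortion estimate yields uniform hyperbolic expansion of $F_{\ell}$ together with geometric decay in $n$ of the diameters of the inverse-branch images.'' The first-landing map $F_{\ell}$ (the paper's $T_{\ell}$) is \emph{not} a uniformly expanding Markov map with summable branches, and no Koebe argument makes it one. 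Three families of branches obstruct this. First, the \emph{parabolic} branches with symbols $(-,\pm,1,0)$ are $G_{\ell}$ conjugated by $\tau_{\ell}$; they carry a boundary period-two cycle $\tau_{\ell}^{-1}x_{\pm,\ell}$ whose multiplier tends to $1$ as $\ell\to\infty$ and which is a genuine parabolic point for $\ell=\infty$ (a case your theorem must cover), so expansion fails there and the contribution of orbits lingering near it must be controlled by the Poincar\'{e}-series estimate of Theorem~\ref{theo:27ha,1}. Second, the \emph{post-singular} branches $(+,\pm,2,0)$ adjacent to the singular value $1\in A_{\ell}$ prevent a common univalent Markov extension with definite space around $\overline{A_{\ell}}$ (any slit $[1,+\infty)$ cuts through the annulus). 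Third, the \emph{pointy} branches accumulating at the cusps of $\partial\Omega_{\ell}$ have critical (for $\ell<\infty$) or flat essential (for $\ell=\infty$) singularities with value $1$; the summability of their return times --- what the paper calls uniform tightness --- is exactly the content of Lemma~\ref{lem:12ma,1} and Proposition~\ref{prop:7mp,1}, proved via dominant convergence and quasiconformal area distortion, not via diameter decay of nested annuli. Your own $A_{\ell}^{(n)}$ decomposition also mislabels the combinatorics: $H_{\ell}(A_{\ell})\not\subset\Omega_{\ell}$ (the levels $n(z)$ take negative values on the pointy branches), so the index set must be all of $\ZZ$.

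Consequently the compactness/spectral-gap step cannot be applied to $F_{\ell}$ directly. The paper's route is to induce three more times --- past the post-singular branches ($\tsing$), past the parabolic ones ($\tpar$), and finally past a finite bad set $\mathfrak{B}$ to obtain $\thyp(\mathfrak{B})$, which \emph{does} extend to a uniform Markov structure over a fixed slit plane (Theorem~\ref{theo:23mp,1}); only then is the compact transfer operator argument run (on functions of two variables, to force real-analyticity on the diagonal), and the density for $T_{\ell}$ is recovered by the propagation operator (Theorem~\ref{theo:28mp,1}). You correctly sense that ``most of the technical effort resides'' in the expansion and summability estimates, but you locate the difficulty in uniformity over $\ell$, whereas already for a single $\ell$ (in particular $\ell=\infty$) the first-landing map fails to be uniformly hyperbolic and the extra inducing steps are unavoidable. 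As written, the proposal assumes the conclusion of roughly two thirds of the paper.
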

Define the level function $\hat{m}: \CC\to \ZZ$ and the map $T_\ell: A_\ell\to A_\ell$ so that
$\hat{m}(z)=n$ for $z\in\tau_\ell^{-n} A_\ell$ and
$T_\ell=\tau_\ell^{\hat{m}\circ H_\ell}H_\ell$.
Then Theorem~\ref{m1} means that $\mu_\ell$ is invariant under $T_\ell$.

Let $0<\ell< \infty$ be any even number. Since $T_\ell$ is $\mu_\ell$-ergodic and $\hat{m}\circ H_\ell$ is integrable, by the Birkhoff Ergodic Theorem, for Lebesgue almost every $z\in \CC$ the following limit (called 'drift') exists:
$$\vartheta(\ell):=\lim_{N\to \infty} \frac{1}{N}\hat{m}(\hat{T}_\ell^N(z))=\lim_{N\to \infty} \frac{1}{N}\sum_{i=0}^{N-1}\hat{m}\circ H_\ell(T_\ell^i(y))
=\int_{A_\ell}\hat{m}\circ H_{\ell}(x)\,d\mu_\ell(x),$$
where $y=\tau_\ell^kz\in A_\ell$, for an appropriate $k\in \ZZ$.
It follows from here, similar to~\cite{leswi:limit}, that the Lebesgue measure of the Julia set $J_\ell$ is positive if and only if $\vartheta(\ell)>0$.

We are interested in the behavior of $\vartheta(\ell)$ as $\ell$ tends to infinity.

\begin{theo}\label{m2}
  (1) The sequence of measures $\{\mu_\ell\}_{\ell\in 2\NN}$ tends strongly to $\mu_\infty$, i.e. the corresponding densities converge in $L_1(\CC,\Leb_2)$; moreover their convergence is analytic on some disk which contains the critical point of $H_{\ell}$ for all $\ell$ large enough.

   (2) the sequence of drifts $\{\vartheta(\ell)\}_{\ell\in 2\NN}$ converges to a finite number (the limit drift)
\begin{equation}\label{limdrift}
\vartheta(\infty)=-\frac{1}{\log\tau_\infty}\lim_{r\to 0}\int_{A_\infty\setminus B(x_{0,\infty},r)}\log\frac{|H_\infty(z)|}{|z|}d\mu_\infty(z).
\end{equation}
\end{theo}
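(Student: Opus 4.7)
My plan is to work with the transfer operator $\mathcal{L}_\ell$ associated with $T_\ell: A_\ell \to A_\ell$, for which $h_\ell := d\mu_\ell/d\Leb_2$ (extended by zero outside $A_\ell$) is the unique fixed point provided by Theorem~\ref{m1}. I would deduce $L_1$-convergence of the densities by a spectral perturbation argument: the convergence $(H_\ell,\tau_\ell,\Omega_\ell,x_{0,\ell}) \to (H_\infty,\tau_\infty,\Omega_\infty,x_{0,\infty})$ implies $\mathcal{L}_\ell \to \mathcal{L}_\infty$ in a suitable operator norm on densities, and the spectral gap at the leading eigenvalue $1$ (built into the uniqueness and strict positivity of the real-analytic density in Theorem~\ref{m1}) then forces $\|h_\ell - h_\infty\|_{L_1} \to 0$. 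For analytic convergence on a fixed disk containing $x_{0,\ell}$ for $\ell$ large, I would exploit the factorization $H_\ell = E_\ell^\ell$ with $E_\ell \to E_\infty$ analytically on a complex neighborhood of $x_{0,\infty}$; iterating the fixed-point equation $\mathcal{L}_\ell h_\ell = h_\ell$ and following preimages of such a disk back into a region where the density is already controlled produces an analytic representation of $h_\ell$ that depends analytically on the parameter.

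\textbf{Plan for Part (2).} The central ingredient is an exact integral identity valid at every finite $\ell$. Applying $T_\ell$-invariance of $\mu_\ell$ to the bounded observable $f(x) = \log|x|$ (bounded on $A_\ell$ since $0 \notin A_\ell$) and using
\[
\log|T_\ell(x)| \,=\, (\hat{m}\circ H_\ell)(x)\,\log\tau_\ell \,+\, \log|H_\ell(x)|,
\]
I obtain
\[
\vartheta(\ell)\,=\,-\frac{1}{\log\tau_\ell}\int_{A_\ell}\log\frac{|H_\ell(x)|}{|x|}\,d\mu_\ell(x).
\]
The right-hand side is absolutely convergent for finite $\ell$ because $\log|H_\ell|$ has only a locally integrable (logarithmic) singularity at the isolated zero $x_{0,\ell}$. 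Splitting $A_\ell$ into the outer piece $A_\ell\setminus B(x_{0,\ell},r)$ and the inner piece $A_\ell\cap B(x_{0,\ell},r)$, the outer integral converges, for each fixed $r>0$, to $\int_{A_\infty\setminus B(x_{0,\infty},r)}\log(|H_\infty|/|x|)\,d\mu_\infty$ by Part~(1), the dominated convergence theorem, and uniform convergence $H_\ell\to H_\infty$ on compact subsets away from $x_{0,\infty}$. Formula~\eqref{limdrift} will then follow once I show that the residual $J_\ell(r) := \int_{A_\ell\cap B(x_{0,\ell},r)}\log(|H_\ell(x)|/|x|)\,d\mu_\ell(x)$ satisfies $\lim_{r\to 0}\sup_{\ell\in 2\NN}|J_\ell(r)|=0$.

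\textbf{Main obstacle.} Establishing this uniform vanishing is the delicate step. Since $H_\ell = E_\ell^\ell$ with $E_\ell$ having a simple zero at $x_{0,\ell}$, near the critical point one has $|\log|H_\ell(x)|| \sim \ell\,|\log|x-x_{0,\ell}||$, so a naive bound via $\|h_\ell\|_\infty$ on $B(x_{0,\ell},r)$ yields only $|J_\ell(r)|\lesssim \|h_\ell\|_\infty\,\ell\,r^2\,|\log r|$, which does not tend to $0$ uniformly in $\ell$ as $r\to 0$. To get around this I would analyze $h_\ell$ near the critical point more carefully through iteration of $\mathcal{L}_\ell h_\ell = h_\ell$: the Jacobian $|H_\ell'(x)|^2 \sim \ell^2|x-x_{0,\ell}|^{2\ell-2}$ has a zero of order $2\ell-2$ at $x_{0,\ell}$, which, combined with the $\ell$-fold ramification and a change of variables $w = H_\ell(x)$, should yield a uniform-in-$\ell$ bound on the pushforward density of $(H_\ell)_*\mu_\ell$ near $0$, and hence the cancellation of the linear $\ell$ factor in the integrand. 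Once this uniform estimate is in hand, passing to the limit $\ell\to\infty$ at fixed $r$ and then sending $r\to 0$ yields~\eqref{limdrift}.
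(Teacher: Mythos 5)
Your reduction in Part (2) — the invariance identity giving $\vartheta(\ell)=-\frac{1}{\log\tau_\ell}\int_{A_\ell}\log\frac{|H_\ell|}{|x|}\,d\mu_\ell$, the outer/inner splitting, and convergence of the outer piece from Part (1) plus dominated convergence — does match the paper's route (formula~(\ref{equ:17np,1})). But your plan for the residual term $J_\ell(r)$ cannot work as stated, and this is the heart of the theorem. You propose to prove $\lim_{r\to0}\sup_\ell|J_\ell(r)|=0$ by a uniform-in-$\ell$ \emph{absolute} bound (controlling the pushforward density under $w=H_\ell$ and cancelling the factor $\ell$). No absolute bound can succeed: on the intermediate scale where $\log H_\ell(u)\approx -C_{\text{Fatou}}(u-x_{0,\infty})^{-2}$ (Lemma~\ref{lem:11na,1}), the integral of $|\Re\log H_\ell|$ over $B(x_{0,\infty},r)\setminus B(x_{0,\infty},\rho)$ against a bounded density grows like $\log(r/\rho)$, and this regime extends down to a scale $\rho_\ell\to 0$; hence $\sup_\ell\int_{A_\ell\cap B(x_{0},r)}\bigl|\log\tfrac{|H_\ell|}{|x|}\bigr|\,d\mu_\ell=+\infty$ for every fixed $r>0$. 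This is also why the paper stresses that the $\ell=\infty$ integral exists only as a Cauchy principal value. The uniform smallness of the \emph{signed} integral rests on an angular cancellation of $\Re[(u-x_0)^{-2}]$ against a density that is nearly constant near $x_0$ — which is exactly why Part (1) must deliver \emph{analytic} (not just $L_1$) convergence on a disk around the critical point. The paper extracts this cancellation via Stokes' formula (Definition~\ref{defi:10np,1}, formula~(\ref{equ:10xp,1})): the area integral becomes a circle integral whose main term $\Re[\iota\int_{C}\overline{z}z^{-2}dz]$ vanishes up to wedge-boundary effects (formula~(\ref{equ:15np,2})), plus a correction along $\partial\Omega_\ell$ controlled by comparing $\log H_\ell$ with a generalized K\"{o}nig/Fatou coordinate $t_\ell^{-1}\log\mathfrak{k}_{\pm,\ell}$ (Corollary~\ref{coro:8xp,1}) and by length estimates for $\partial\Omega_\ell$ near $x_{0,\ell}$ (Lemma~\ref{lem:3qp,1}), with the pieces of $D(x_{0,\infty},r)$ outside $A_\ell$ handled by the parabolic exit-time estimates (Propositions~\ref{prop:21ma,1} and~\ref{prop:11xp,1}). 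None of this machinery is replaceable by an $L^1$ bound on a pushforward density.

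Part (1) is also substantially underspecified. A spectral-perturbation argument for the Perron--Frobenius operator of $T_\ell$ itself does not get off the ground: $T_\ell$ has post-singular and parabolic branches whose distortion is not uniformly bounded and whose return structure degenerates as $\ell\to\infty$, so there is no function space on which $\mathcal{L}_\ell\to\mathcal{L}_\infty$ in operator norm without first inducing. The paper spends Sections 3--4 constructing the induced map $\thyp$ and proving \emph{uniform tightness} (Propositions~\ref{prop:7mp,1},~\ref{prop:21ma,1}, Theorem~\ref{theo:23mp,1}, which in turn rest on the Poincar\'{e}-series estimate of Theorem~\ref{theo:27ha,1} near the almost-parabolic point); only then does operator-norm convergence hold (Lemma~\ref{lem:27mp,3}), and the fixed points are compared not by a spectral gap but by compactness of a transfer operator acting on holomorphic functions of two variables together with an identity principle (Proposition~\ref{prop:27mp,1}). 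That two-variable construction is also what yields the analytic convergence on a fixed disk, which, as noted above, is an essential input to Part (2) rather than a cosmetic refinement.
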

Note that this integral exists only in the Cauchy sense given above
and is undefined on $A_{\infty}$, see~\cite{leswi:feig}.

The present paper is a sequel of \cite{leswi:feig},~\cite{leswi:hd},~\cite{leswi:measure},~\cite{leswi:common} and particularly \cite{leswi:limit}. 
The problem of \cite{leswi:limit} and the present paper can be tracked down to \cite{bkns} and \cite{ns}, see \cite{leswi:limit} for discussions.
In \cite{leswi:limit}, a formula for the limit drift which is similar to (\ref{limdrift}) is proved in a class of smooth covering circle maps. The proof (in the present paper) for the class of Feigenbaum maps follows similar lines, but substantially more technical.

\cite{leswi:limit} was a ground for a computer-assistant evaluation of the limit drift in the class of circle covers. The result shows that the limit drift in this class is negative which implies in particular that those maps of the circle with high enough criticalities do not have wild attractor.

In a recent preprint \cite{ds}, the authors present a computer-assisted proof that the area of $J_2$ is zero. The case of $\ell=2$ represents the opposite end
of the range of possibilities compared with our interest in $\ell$ that
tend to $\infty$.

\paragraph{Acknowledgement.} We thank the referees for their careful
reading of the manuscript and many good remarks.

\section{The Feigenbaum Function}
\subsection{Review of known properties.}\label{rev}
We consider the Feigenbaum-Coullet-Tresser fixed point
equation with the critical point of order $\ell$ even and positive,
set at some point $x_{0,\ell}\in (0,1)$ and normalized so that the critical value
is at $0$ and its image at $1$. The equation has the form of~(\ref{equ:1hp,1})
and $H$ is assumed to be unimodal on some interval which contains $0$
with Feigenbaum topological type.

It is well known that for each $\ell$
a unique solution $(H_{\ell},\tau_{\ell})$ exists.
We will now describe it following~\cite{leswi:feig}.

$H_{\ell}$ is a holomorphic map defined on a domain $\Omega_{\ell}$
which is a bounded topological disk symmetric with respect to the real line and
mapping into $\CC$. $\Omega_{\ell}$ can be split into two disks by an
arc $\mathfrak{w}_{\ell}$ which is tangent at $x_{0,\ell}$ to the line
$\{ z :\: \Re z = x_{0,\ell}\}$ and mapped by $H_{\ell}$ into the real
line. One can further observe that the image of $\mathfrak{w}_{\ell}$
is the positive half-line for $\ell$ divisible by $4$ and the negative
half-line otherwise.

The right connected component of $\Omega_{\ell}\setminus
\mathfrak{w}_{\ell}$ will be denoted by $\Omega_{+,\ell}$ and the left
one by $\Omega_{-,\ell}$. We will also write $H_{\pm,\ell}$ for $H_{\ell}$
restricted to $\Omega_{\pm,\ell}$.

\paragraph{Convergence as $\ell\rightarrow\infty$.}
When $\ell\rightarrow\infty$ triples $(H_{\ell},\tau_{\ell}, x_{0,\ell})$ converge
to a limit $(H_{\infty},\tau_{\infty}, x_{0,\infty})$ where $\tau_{\infty}>1$, $x_{0,\infty}\in(0,1)$ and $H_{\ell}$
converge to $H_{\infty}$ uniformly at least on the interval $[0,1]$. Mapping
$H_{\infty}$ is unimodal with the critical point at $x_{0,\infty}$ and $(H_{\infty},\tau_{\infty})$
satisfy the Feigenbaum equation~(\ref{equ:1hp,1}).

Furthermore, $H_{\infty}$ has a holomorphic continuation which is
similar to $H_{\ell}$. Namely, its domain is $\Omega_{\infty}$ which
is symmetric with respect to $\RR$ and
is the union of two bounded disks $\Omega_{\pm,\infty}$ with closures
intersecting exactly at $\{x_{0,\infty}\}$. We then define
restrictions $H_{\pm,\infty}$ to the corresponding
$\Omega_{\pm,\infty}$.

\paragraph{Holomorphic continuation.}
These mappings can then be
described by the following statement.

\begin{fact}\label{fa:1hp,1}
For every $\ell$ even and positive, the mapping $H_{\ell}$ only takes
the value $0$ at the critical point $x_{0,\ell}$ while the image of $H_{\infty}$
avoids $0$ at all. Subsequently, one can consider a pair of univalent mappings $\phi_{\pm,\ell}$, real and thus uniquely
determined by the condition
\[ \exp \left( \phi_{\pm,\ell} \right) = \tau_{\ell}^{-2} H_{\pm,\ell}\]
for $\ell$ which is even or infinite.
Then each $\phi_{\pm,\ell}$ maps the corresponding
$\Omega_{\pm,\ell}$ onto the set
\[ \varPi_{\ell} := \{ z\in\CC :\: |\Im z| < \frac{\ell\pi}{2} \} \setminus
[0,+\infty) \]
and is univalent.
\end{fact}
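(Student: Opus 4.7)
The plan is to derive this statement from the univalent factorization $H_\ell = E_\ell^\ell$ already recalled in the introduction, which is the main output of the construction in~\cite{EpsLas} and~\cite{leswi:feig}, together with a continuity argument to handle the case $\ell=\infty$.

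For finite even $\ell$, I would start from the fact that $E_\ell:\Omega_\ell\to \CC\setminus S_\ell$ is univalent onto a slit plane with a slit $S_\ell$ running from $0$ along a ray to infinity (the slit can be taken to be a subset of the positive real axis after normalization, with $E_\ell(x_{0,\ell})=0$). Univalence immediately gives that the preimage of $0$ under $E_\ell$ is the single point $x_{0,\ell}$, so the same is true for $H_\ell=E_\ell^\ell$. The dividing arc $\mathfrak{w}_\ell$ is by construction the preimage under $E_\ell$ of the real axis; it splits $\Omega_\ell$ into two topological disks $\Omega_{\pm,\ell}$, and $E_\ell$ restricted to each of them is univalent onto a half of the slit plane, hence $\log E_\ell$ is single-valued there and maps each half univalently onto a half-strip of height $\pi$ with a slit running from $\log|E_\ell(x_{0,\ell})|=-\infty$ to $+\infty$ along the real axis. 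Multiplying by $\ell$, the image of $\ell\log E_\ell$ on $\Omega_{\pm,\ell}$ becomes a half-strip of height $\ell\pi/2$ minus the positive real axis. The normalization constants $\tau_\ell^{-1}$ on $\Omega_{+,\ell}$ and $\tau_\ell^{-2}$ on $\Omega_{-,\ell}$ are read off from the Feigenbaum equation $\tau H^2(x)=H(\tau x)$: they are exactly the scaling factors that line up the two half-images so that their union, after subtracting a constant, equals the full slit strip $\varPi_\ell$. The verification that no further multiple of $2\pi i$ is picked up is done by checking a single real reference point (for instance $x=0$, where $H_\ell(0)=1$ gives $\phi_{+,\ell}(0)=-\log\tau_\ell\in\RR\subset\varPi_\ell$).

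For the $\ell=\infty$ case, the plan is to take limits of the above picture. By the convergence results of~\cite{EW} and~\cite{leswi:feig}, $E_\ell\to E_\infty$ locally uniformly on $\Omega_\infty$, and $E_\infty$ is again univalent onto a slit plane; but the two components $\Omega_{\pm,\infty}$ now close up at $\{x_{0,\infty}\}$, so $0$ lies on the boundary of $E_\infty(\Omega_\infty)$, meaning $H_\infty$ does not attain $0$ at all. Consequently $\log H_{\pm,\infty}$ extends to all of $\Omega_{\pm,\infty}$ and, passing to the limit in the formulas for $\phi_{\pm,\ell}$, yields univalent maps onto the slit strip $\varPi_\infty=\{|\Im z|<\infty\}\setminus[0,+\infty)$, which is what the statement asserts when the height $\ell\pi/2$ is read as $+\infty$.

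The main obstacle I expect is the bookkeeping of the different $\tau$-powers on $\Omega_{\pm,\ell}$ and in particular the proof that the two half-images glue exactly along the negative real axis in $\varPi_\ell$ rather than being off by some additive constant. This is where the Feigenbaum fixed point equation must be used explicitly: one checks that $\tau_\ell^{-1}H_\ell$ on $\Omega_{+,\ell}$ and $\tau_\ell^{-2}H_\ell$ on $\Omega_{-,\ell}$ continuously match along $\mathfrak{w}_\ell$ with opposite imaginary parts on its two sides, and then uses the monodromy of $\log$ along $\mathfrak{w}_\ell$ together with the asymptotics at $x_{0,\ell}$ (order $\ell$ zero) to identify the total height of the strip as $\ell\pi$. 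The remaining work, bounding the images away from accidental self-overlap, is a direct consequence of the univalence of $E_\ell$.
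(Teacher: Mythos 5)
This statement appears in the paper as a \emph{Fact} in a section explicitly titled ``Review of known properties'': it is imported without proof from \cite{EpsLas} and \cite{leswi:feig}, so there is no internal argument to compare yours against. On its own terms, your sketch picks the only natural route --- unwind the logarithm through the factorization $H_\ell=E_\ell^\ell$ --- but the entire substance of the Fact is that $E_\ell$ is univalent onto a slit plane with the slits positioned exactly so that both logarithmic images equal the symmetric slit strip $\varPi_\ell$; that is the deep output of the Herglotz-function analysis in the cited references. Assuming it and then taking logarithms is a change of variables, and the step you defer as ``the main obstacle'' (that the two half-images glue to give precisely $\varPi_\ell$, with the right additive constants) is where essentially all the content sits.

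Beyond that, two concrete points fail as written. First, $\mathfrak{w}_\ell$ cannot be $E_\ell^{-1}(\RR)$: since $E_\ell$ is real on $\RR$ and $\ell$ is even, that would force $H_\ell(\mathfrak{w}_\ell)=E_\ell(\mathfrak{w}_\ell)^\ell\subset[0,+\infty)$ for every $\ell$, contradicting the paper's statement that the image of $\mathfrak{w}_\ell$ is the \emph{negative} half-line when $\ell\equiv 2\pmod 4$. The correct identification is $\mathfrak{w}_\ell=E_\ell^{-1}(\iota\RR)$, consistent with $\mathfrak{w}_\ell$ being tangent to a vertical line at $x_{0,\ell}$; accordingly $E_\ell(\Omega_{\pm,\ell})$ are the right and left (not upper and lower) halves of the slit plane, each symmetric about $\RR$ --- which is exactly what makes both images $\varPi_\ell$ symmetric about $\RR$, whereas your upper/lower split would produce strips $\{0<\Im z<\ell\pi\}$ and $\{-\ell\pi<\Im z<0\}$ rather than $\varPi_\ell$. (For the left half one must also use $H_{-,\ell}=(-E_\ell)^\ell$ to land in the right half-plane before taking the principal logarithm.) Second, for $\ell=\infty$ local uniform convergence of univalent maps does not give that the limit is \emph{onto} $\varPi_\infty=\CC\setminus[0,+\infty)$; surjectivity requires Carath\'{e}odory kernel convergence of the image domains or a direct argument, and ``$E_\infty^\infty$'' is not a meaningful object --- the limit map has a flat critical point and is treated directly in \cite{leswi:feig}.
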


We can now formulate the convergence of mappings as
$\ell\rightarrow\infty$.

\begin{fact}\label{fa:1hp,2}
As $\ell$ tends to $\infty$ mappings $(\phi_{\pm,\ell})^{-1}$
converge to $(\phi_{\pm,\infty})^{-1}$ uniformly on compact subsets
of $\varPi_{\infty} := \CC \setminus [0,+\infty)$. \marginpar{??}
\end{fact}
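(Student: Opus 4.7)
The approach is a standard normal families argument combined with identification of the limit on the real axis. The statement is naturally phrased for the inverses rather than $\phi_{\pm,\ell}$ themselves because the forward maps diverge to $-\infty$ at the critical point $x_{0,\ell}$ and cannot converge uniformly on any set containing it.

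First I would fix a compact $K\subset\varPi_\infty$. Since the slit strips $\varPi_\ell$ exhaust $\varPi_\infty=\CC\setminus[0,+\infty)$ with $\ell$, there exists $\ell_0=\ell_0(K)$ such that $K\subset\varPi_\ell$ for every $\ell\ge\ell_0$. On $K$ each $(\phi_{\pm,\ell})^{-1}$ is univalent with image in the topological disk $\Omega_{\pm,\ell}$. Uniform boundedness of the $\Omega_{\pm,\ell}$ in $\CC$, which is part of the a priori bounds behind Fact~\ref{fa:1hp,1} and which I would take as a black box from \cite{EpsLas,leswi:feig}, then makes $\{(\phi_{\pm,\ell})^{-1}|_K\}_{\ell\ge\ell_0}$ a normal family by Montel's theorem.

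Next, I would pass to a subsequence along which $(\phi_{\pm,\ell_k})^{-1}$ converges locally uniformly on $\varPi_\infty$ to some holomorphic $g_\pm$; Hurwitz's theorem gives that $g_\pm$ is either univalent or constant. To identify it, I would restrict attention to the real ray $(-\infty,0)\subset\varPi_\infty$. For real $t<0$ the equation $\phi_{+,\ell}(x)=t$ with real $x$ amounts to $H_{+,\ell}(x)=\tau_\ell e^t$, which has a unique solution in the real subinterval of $\Omega_{+,\ell}$ on which $H_{+,\ell}$ is strictly monotone onto $(0,\tau_\ell)$; the analogous statement holds for $\phi_{-,\ell}$ with $\tau_\ell^2$ in place of $\tau_\ell$. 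The uniform convergence $H_\ell\to H_\infty$ on $[0,1]$ together with $\tau_\ell\to\tau_\infty$ then forces pointwise, hence by monotonicity uniform-on-compacts, convergence of these real solutions to the real restriction of $(\phi_{\pm,\infty})^{-1}$. The identity principle therefore gives $g_\pm\equiv(\phi_{\pm,\infty})^{-1}$ on all of $\varPi_\infty$, which in particular rules out the constant alternative in Hurwitz. Uniqueness of the subsequential limit upgrades subsequence convergence to convergence of the whole sequence.

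The main obstacle is the uniform boundedness of the disks $\Omega_{\pm,\ell}$; this is precisely the sort of deep a priori geometric control that is the hardest input from the theory of Feigenbaum fixed points, and here it enters as a black box from \cite{EpsLas,leswi:feig}. A secondary, purely bookkeeping issue is tracking the correct branches of the logarithm and the correct real subintervals of $\Omega_{\pm,\ell}$ on either side of $x_{0,\ell}$, but once that is done the real-axis identification is elementary and simultaneously disposes of the constant case in Hurwitz's dichotomy without any separate distortion or area argument.
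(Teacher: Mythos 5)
The paper does not prove this statement at all: it appears in the section ``Review of known properties,'' is imported verbatim from \cite{EW} and \cite{leswi:feig} (note even the authors' own \verb|\marginpar{??}|), and is later used as an input to Lemma~\ref{lem:11jp,2} and Proposition~\ref{prop:11jp,1}. So there is no in-paper argument to compare against; what you have written is a self-contained proof of a cited fact, and as such it is sound. The route --- Montel on a compact $K$ exhausted by the $\varPi_{\ell}$, Hurwitz for the univalent-or-constant dichotomy, identification of the limit on the negative real ray via monotonicity of $H_{\pm,\ell}$ on the real trace of $\Omega_{\pm,\ell}$, then the identity principle and uniqueness of subsequential limits --- is the standard and correct one. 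Two small remarks. First, the uniform boundedness you treat as a black box is actually already recorded in the paper as the third item of Fact~\ref{fa:1hp,3}, namely $\overline{\Omega}_{\ell}\subset D(0,\tau_{\ell})$ with $\tau_{\ell}\rightarrow\tau_{\infty}<\infty$, so no external input is needed there. Second, your identification step quotes convergence of $H_{\ell}$ only on $[0,1]$, whereas the real traces of $\Omega_{+,\ell}$ and $\Omega_{-,\ell}$ are $(x_{0,\ell},\tau_{\ell}x_{0,\ell})$ and $(y_{\ell},x_{0,\ell})$ with $y_{\ell}<0$ and possibly $\tau_{\ell}x_{0,\ell}>1$; this is harmless because the identity principle only requires agreement of the subsequential limit with $(\phi_{\pm,\infty})^{-1}$ on some nondegenerate real subinterval, so one restricts $t$ to a compact range of negative values whose preimages stay in a fixed compact subinterval of $(0,1)$ where the quoted convergence applies, exactly the bookkeeping you anticipate.
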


For $\ell$ finite we will also consider an analytic continuation of mappings
$\phi_{\pm,\ell}$ which is described next.

\begin{fact}\label{fa:6hp,1}
Transformations $\phi_{\pm,\ell}$ for $\ell$ finite each have two univalent
analytic continuations, one with domain equal to $\Omega_{\ell} \cap
\HH_+$ and another one to $\Omega_{\ell} \cap\HH_-$ with ranges
$\{ z\in\CC :\: 0<\Im z<\ell\pi\}$ and $\{ z\in\CC :\: 0<\Im
z<\ell\pi\}$, respectively.
\end{fact}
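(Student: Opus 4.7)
My plan is to construct each extension as $\ell\log E_\ell$ (up to a real additive constant and branch choice), exploiting the factorization $H_\ell=E_\ell^\ell$ and the univalence of $E_\ell$ on $\Omega_\ell$. Since $E_\ell$ vanishes only at $x_{0,\ell}\in\RR$, it is a nonvanishing conformal map on each of the simply connected domains $\Omega_\ell\cap\HH_\pm$, so single-valued holomorphic branches of $\log E_\ell$ exist there.

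First I would identify $E_\ell(\Omega_\ell\cap\HH_\pm)$. Schwarz reflection across $\RR\cap\Omega_\ell$ (where $E_\ell$ is real) makes the two images complex conjugate, while $E_\ell'(x_{0,\ell})<0$ forces $E_\ell$ to be orientation-reversing, so locally $E_\ell(\Omega_\ell\cap\HH_+)\subset\HH_-$. Surjectivity onto the whole half-plane follows from $E_\ell$ being a conformal homeomorphism of $\Omega_\ell$ onto a plane slit along a real ray: the real diameter of $\Omega_\ell$ exhausts the real axis of the slit plane, while $\partial\Omega_\ell\cap\bar\HH_+$ covers one side of the slit, so $E_\ell(\Omega_\ell\cap\HH_+)=\HH_-$; symmetrically $E_\ell(\Omega_\ell\cap\HH_-)=\HH_+$.

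Next I pick on each $\Omega_\ell\cap\HH_\pm$ a branch $L_\pm$ of $\log E_\ell$ with imaginary part in $(0,\pi)$: the principal branch on the lower half (where $E_\ell\in\HH_+$) and the principal branch of $\log(-E_\ell)$ on the upper half (where $-E_\ell\in\HH_+$). Because $\ell$ is even, $\ell\pi i\in 2\pi i\ZZ$, so $\exp(\ell L_\pm)=H_\ell$ regardless of which branch is used; that is, $\ell L_\pm$ is a legitimate single-valued branch of $\log H_\ell$ on the respective domain. Each $L_\pm$ is univalent onto $\{0<\Im w<\pi\}$, so the maps
\[
\ell L_\pm-\log\tau_\ell\quad\text{and}\quad\ell L_\pm-2\log\tau_\ell,
\]
defined on $\Omega_\ell\cap\HH_\pm$, are univalent and surject onto the strip $\{0<\Im w<\ell\pi\}$.

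Finally I verify that these candidates are analytic continuations of $\phi_{+,\ell}$ and $\phi_{-,\ell}$ respectively. On the connected overlap $\Omega_{\pm,\ell}\cap\HH_\pm$, both the candidate and the original $\phi_{\pm,\ell}$ are holomorphic branches of the multivalued $\log(\tau_\ell^{-c}H_\ell)$, so they differ by a constant in $2\pi i\ZZ$; I pin this constant to zero by a limit computation as $z\to\Omega_{\pm,\ell}\cap\RR$, where $H_\ell>0$ and the real-axis values of both functions are explicitly determined by the factorization $H_\ell=E_\ell^\ell$ and the sign of $E_\ell$ on the relevant real subinterval. The main obstacle is precisely this branch bookkeeping: without $\ell$ being even, the $\pi i$ shift required on the $\HH_+$ side would fail to lift to a legitimate branch shift of $\log H_\ell$, and one would land in the wrong strip instead of $\{0<\Im w<\ell\pi\}$. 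Once the matching is done, univalence and the identification of the image follow mechanically from the corresponding properties of $E_\ell$, of the branch of $\log$ on a half-plane, and of the affine rescaling by $\ell$.
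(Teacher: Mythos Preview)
The paper does not prove this statement; it is recorded as a Fact drawn from the earlier literature (cf.\ \cite{EpsLas}, \cite{leswi:feig}), so there is no in-paper argument to compare against. Your approach via the factorization $H_\ell=E_\ell^\ell$ and the univalence of $E_\ell$ on $\Omega_\ell$ is exactly the natural one and is essentially correct: once $E_\ell$ is known to map $\Omega_\ell\cap\HH_\pm$ biholomorphically onto a half-plane, a branch of $\log E_\ell$ gives a strip of width $\pi$, multiplication by $\ell$ gives width $\ell\pi$, and the real additive constant matches $\phi_{\pm,\ell}$.

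Two small points to tighten. First, your phrase ``$E_\ell$ orientation-reversing'' is misleading for a holomorphic map; what you mean (and use) is that $E_\ell'<0$ on the real trace forces $\HH_+\mapsto\HH_-$. Second, your explicit branch $L_-=\log E_\ell$ on $\Omega_\ell\cap\HH_-$ with $\Im L_-\in(0,\pi)$ does \emph{not} match $\phi_{+,\ell}$ on the overlap: approaching $(x_{0,\ell},\tau_\ell x_{0,\ell})$ from $\HH_-$, one has $E_\ell\to E_\ell(x)<0$ from $\HH_+$, so $L_-\to\log|E_\ell(x)|+i\pi$ and $\ell L_--\log\tau_\ell$ differs from the real value $\phi_{+,\ell}(x)$ by $i\ell\pi$. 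The fix is immediate: either take the conjugate branch with $\Im\in(-\pi,0)$, giving range $\{-\ell\pi<\Im w<0\}$, or simply invoke the Schwarz symmetry $z\mapsto\overline{\Psi(\bar z)}$ once the $\HH_+$ extension $\Psi$ is built. (This also confirms that the second range in the stated Fact is a typo and should read $\{-\ell\pi<\Im z<0\}$.) With that adjustment your argument is complete.
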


\paragraph{Geometric properties of $\Omega_{\pm,\ell}$.}

Below we state a couple of properties which will be used.

\begin{fact}\label{fa:1hp,3}
  For any $\ell$ positive and even or infinite,
  \begin{itemize}
  \item
  \[ \overline{\Omega}_{\ell} \cap \RR = [ y_{\ell}, \tau_{\ell}
    x_{0,\ell} ] \]
  where $y_{\ell} < 0$ and $H_{\ell}(\tau^{-1}y_{\ell}) =
  \tau_{\ell}x_{0,\ell}$,
  \item
  \[   \overline{\Omega}_{\ell} \setminus  \tau_{\ell} \Omega_{-,\ell}=
  \{\tau_{\ell}\cdot x_{0,\ell}\} \]

\item
  \[ \overline{\Omega}_{\ell} \subset D(0,\tau_{\ell}) \; .\]
  \end{itemize}
\end{fact}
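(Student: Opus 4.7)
All three items follow from combining the univalent parametrizations of Fact~\ref{fa:1hp,1} with the functional equation~(\ref{equ:1hp,1}). For~(1), the set $\overline{\Omega}_\ell\cap\RR$ is a closed real interval by symmetry of $\Omega_\ell$ about $\RR$. Its two endpoints are boundary points of $\Omega_{\pm,\ell}$ on the real axis, reached via $\phi_{\pm,\ell}^{-1}$ at the finite slit-tip $0\in\partial\varPi_\ell$; the other real limit $\phi_{\pm,\ell}\to-\infty$ corresponds to $x_{0,\ell}$. The right endpoint $b$ satisfies $\phi_{+,\ell}(b)=0$, i.e.\ $H_\ell(b)=\tau_\ell$; meanwhile~(\ref{equ:1hp,1}) at $x=x_{0,\ell}$ gives $H_\ell(\tau_\ell x_{0,\ell})=\tau_\ell H_\ell(H_\ell(x_{0,\ell}))=\tau_\ell H_\ell(0)=\tau_\ell$, and injectivity of $H_{+,\ell}$ on $\Omega_{+,\ell}\cap\RR$ forces $b=\tau_\ell x_{0,\ell}$. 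Analogously at the left endpoint, $\phi_{-,\ell}(y_\ell)=0$ yields $H_\ell(y_\ell)=\tau_\ell^2$, and~(\ref{equ:1hp,1}) at $x=\tau_\ell^{-1}y_\ell$ rewrites this as $H_\ell(H_\ell(\tau_\ell^{-1}y_\ell))=\tau_\ell$; the preceding computation together with real-line monotonicity of $H_\ell$ identifies $H_\ell(\tau_\ell^{-1}y_\ell)=\tau_\ell x_{0,\ell}$, and $y_\ell<0$ because $\tau_\ell x_{0,\ell}>1=H_\ell(0)$ forces $\tau_\ell^{-1}y_\ell$ onto the negative side of the critical point.

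For~(2), I would use~(\ref{equ:1hp,1}) to propagate the domain of $H_\ell$ by scaling. On a neighborhood of the real interval where both sides are defined, $H_\ell\circ\tau_\ell=\tau_\ell H_\ell^2$; one then shows that the right side extends as a holomorphic function on $\Omega_{-,\ell}\setminus\{x_{0,\ell}\}$, which requires tracking where $H_{-,\ell}(\Omega_{-,\ell})$ meets the natural domain of $H_\ell$ using the $\varPi_\ell$-model from Fact~\ref{fa:1hp,1}, combined with the monodromy of the $\ell$-fold local covering near $x_{0,\ell}$. By analytic continuation, $H_\ell$ then extends holomorphically across $\tau_\ell(\Omega_{-,\ell}\setminus\{x_{0,\ell}\})$, so this set must lie in $\overline{\Omega}_\ell$. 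The single exceptional point $\tau_\ell x_{0,\ell}\in\partial\Omega_\ell$ (established in~(1)) is the scaled critical point, at which no holomorphic extension is possible, accounting for the singleton set difference.

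For~(3), on the real axis $\overline{\Omega}_\ell\cap\RR\subset(-\tau_\ell,\tau_\ell)$ follows from~(1): $\tau_\ell x_{0,\ell}<\tau_\ell$ since $x_{0,\ell}<1$, and $|y_\ell|<\tau_\ell$ amounts to $|\tau_\ell^{-1}y_\ell|<1$, which is checked from unimodality plus the normalization $H_\ell(0)=1$, locating the relevant negative $H_\ell$-preimage of $\tau_\ell x_{0,\ell}$ inside $(-1,0)$. The imaginary-direction bound uses the conformal parametrizations $\phi_{\pm,\ell}^{-1}:\varPi_\ell\to\Omega_{\pm,\ell}$ together with a distortion estimate anchored at the slit tip and the known boundary-value behavior at the prime ends $\pm i\ell\pi/2$ of $\varPi_\ell$. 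The principal difficulty sits in~(2): tracking $H_{-,\ell}(\Omega_{-,\ell})$ through its $\varPi_\ell$-model and making the analytic-continuation argument rigorous across the critical point.
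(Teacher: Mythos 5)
This statement is labelled a Fact and the paper gives no proof of it: it belongs to the ``Review of known properties'' recalled from \cite{leswi:feig} and \cite{EpsLas}, so the expected justification is a citation rather than an argument, and there is nothing internal to compare your write-up against. That said, your reconstruction of the first item is essentially sound: reading the real endpoints of $\Omega_{\pm,\ell}$ off the model $\phi_{\pm,\ell}:\Omega_{\pm,\ell}\to\varPi_{\ell}$ of Fact~\ref{fa:1hp,1} gives $H_{\ell}(b)=\tau_{\ell}$ at the right endpoint and $H_{\ell}(y_{\ell})=\tau_{\ell}^2$ at the left one, and the functional equation then yields $b=\tau_{\ell}x_{0,\ell}$, $H_{\ell}(\tau_{\ell}^{-1}y_{\ell})=\tau_{\ell}x_{0,\ell}$ and $y_{\ell}<0$ much as you compute (the cleanest route to $y_{\ell}<0$ is directly from $H_{\ell}(y_{\ell})=\tau_{\ell}^2>1=H_{\ell}(0)$ and monotonicity of the left branch).

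The second and third items, however, are not proved. For the second, your key step --- ``$H_{\ell}$ extends holomorphically across $\tau_{\ell}(\Omega_{-,\ell}\setminus\{x_{0,\ell}\})$, so this set must lie in $\overline{\Omega}_{\ell}$'' --- rests on a false principle: $\Omega_{\ell}$ is not the maximal domain of analyticity of $H_{\ell}$. The paper itself continues $\phi_{\pm,\ell}$, hence $H_{\ell}$, to strictly larger domains $\hat{\Omega}_{\pm,\ell}\supsetneq\Omega_{\pm,\ell}$ (Proposition~\ref{prop:2hp,1}, Theorem~\ref{theo:3hp,1}), so analytic continuability at a point says nothing about membership in $\Omega_{\ell}$. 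The premise is also unavailable as stated: by Fact~\ref{fa:1hp,1}, $H_{-,\ell}(\Omega_{-,\ell})=\tau_{\ell}^2\exp(\varPi_{\ell})$ is unbounded and far from contained in the bounded set $\Omega_{\ell}$, so $\tau_{\ell}H_{\ell}\circ H_{\ell}$ is simply undefined on most of $\Omega_{-,\ell}$ and cannot serve as the continuation you need. Any honest proof of this item must use the defining characterization of $\Omega_{\pm,\ell}$ as the univalence domains of $\phi_{\pm,\ell}$ onto $\varPi_{\ell}$, equivalently the inverse-branch statements of Fact~\ref{fa:1hp,4}, which is how the references proceed. For the third item the real-axis part follows from the first, but ``a distortion estimate anchored at the slit tip'' is a placeholder, not an argument: $\overline{\Omega}_{\ell}\subset D(0,\tau_{\ell})$ is a quantitative geometric bound that no soft distortion statement will produce. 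The appropriate resolution for all three items is to cite \cite{leswi:feig}.
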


\paragraph{Associated mapping.}
\begin{defi}\label{defi:1hp,1}
  For any $\ell$ positive and even or infinite, define the {\em
    associated mapping}
  \[ G_{\ell}(z) = H_{\ell}(\tau_{\ell}^{-1}z) \]
  where $z\in\tau_{\ell}\Omega_{\ell}$.
  We also define the {\em principal inverse branch}
  $\mathbf{G}^{-1}_{\ell}$ which is defined on $\CC\setminus \{
  x\in\RR :\: x\notin [0,\tau^2_{\ell}]\}$ and fixes $x_{0,\ell}$.
\end{defi}

We list key properties of the associated mapping.
\begin{fact}\label{fa:1hp,4}
\begin{itemize}
\item
  $G_{\ell}$ has a fixed point at $x_{0,\ell}$ which is attracting for $\ell$
  finite and neutral for $\ell=\infty$.
\item
  the range of the principal inverse branch
  $\mathbf{G}^{-1}_{\ell}$ is contained in $\tau_{\ell}\Omega_{-,\ell}$.
\item
  \begin{eqnarray*}
    \mathbf{G}^{-1}_{\ell}(\Omega_{+,\ell}) &=& \Omega_{-,\ell} \\
    \mathbf{G}^{-1}_{\ell}(\Omega_{-,\ell}\setminus(-\infty,0]) &=&
\Omega_{+,\ell} \; .
   \end{eqnarray*}
\item
  $\tau_{\ell}^{-1} H_{\ell} = H_{\ell} G_{\ell}$ on $\Omega_{\ell}$.
\end{itemize}
\end{fact}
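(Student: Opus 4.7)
The plan is to derive all four items from the Feigenbaum functional equation together with the mapping-theoretic description of $H_\ell$ recorded in Facts~\ref{fa:1hp,1}--\ref{fa:6hp,1}, starting with~(4) since items (1)--(3) draw on it. Substituting $z=\tau_\ell x$ into~(\ref{equ:1hp,1}) yields $H_\ell(H_\ell(\tau_\ell^{-1}z))=\tau_\ell^{-1}H_\ell(z)$, and by Definition~\ref{defi:1hp,1} the inner composition equals $H_\ell\circ G_\ell$; this delivers (4) on $\Omega_\ell\subset\tau_\ell\Omega_\ell$, where the inclusion follows from Fact~\ref{fa:1hp,3}.

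For~(1), evaluating (4) at $x_{0,\ell}$ together with the uniqueness of the zero of $H_\ell$ (Fact~\ref{fa:1hp,1}) forces $G_\ell(x_{0,\ell})=x_{0,\ell}$. To compute the multiplier I would use the factorization $H_\ell=E_\ell^\ell$ with $E_\ell$ univalent and having a simple zero at $x_{0,\ell}$: (4) rewrites as $E_\ell(G_\ell(z))^\ell=\tau_\ell^{-1}E_\ell(z)^\ell$, taking the principal $\ell$-th root near $x_{0,\ell}$ gives $E_\ell\circ G_\ell=\tau_\ell^{-1/\ell}E_\ell$, and differentiation at $x_{0,\ell}$ yields $G_\ell'(x_{0,\ell})=\tau_\ell^{-1/\ell}$, of modulus strictly below $1$ for finite even $\ell$ (since $\tau_\ell>1$) and tending to $1$ as $\ell\to\infty$. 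For $\ell=\infty$ directly, (4) lifts via Fact~\ref{fa:1hp,1} to $\phi_{\pm,\infty}\circ G_\infty=\phi_{\pm,\infty}-\log\tau_\infty$, which conjugates $G_\infty$ to a translation and forces the multiplier at the fixed point to be $1$, i.e.\ neutral.

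For~(2) and~(3), observe that $G_\ell=H_\ell\circ(z\mapsto\tau_\ell^{-1}z)$ is univalent on each of $\tau_\ell\Omega_{+,\ell}$ and $\tau_\ell\Omega_{-,\ell}$ separately, so $G_\ell$ admits exactly two univalent inverse branches with images $\tau_\ell\Omega_{+,\ell}$ and $\tau_\ell\Omega_{-,\ell}$ respectively. The principal branch is singled out by fixing $x_{0,\ell}$, and $x_{0,\ell}\in\tau_\ell\Omega_{-,\ell}$ by Fact~\ref{fa:1hp,3}, which gives (2). For (3), compute $H_\ell(\Omega_{\pm,\ell})=\tau_\ell^{\epsilon_\pm}\exp(\varPi_\ell)$ with $\epsilon_+=1$ and $\epsilon_-=2$ from the definitions of $\phi_{\pm,\ell}$ in Fact~\ref{fa:1hp,1}, then identify the two preimages under $\mathbf{G}^{-1}_\ell$ by matching which piece of $H_\ell(\Omega_\ell)$ is covered by each restriction after pre-scaling by $\tau_\ell^{-1}$. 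The deletion of $(-\infty,0]$ in the second equality arises because that ray is the lift of the slit $[0,+\infty)$ in $\varPi_\ell$ along the inverse branch of $H_{-,\ell}$.

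The main obstacle is the bookkeeping in~(3): one has to track which side of the slit $[0,+\infty)$ each $H_\ell(\Omega_{\pm,\ell})$ occupies in the strip coordinate, handle the different prefactors $\tau_\ell^{-1}$ versus $\tau_\ell^{-2}$ in $\phi_{+,\ell}$ versus $\phi_{-,\ell}$, and incorporate the parity of $\ell$ modulo $4$ that decides whether the dividing arc $\mathfrak{w}_\ell$ maps to the positive or negative real axis under $H_\ell$. The description of the domain of $\mathbf{G}^{-1}_\ell$ as $\CC$ minus the rays $(-\infty,0)$ and $(\tau_\ell^2,+\infty)$ is then a direct reading of this same picture in the target coordinate. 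Everything else follows cleanly from the functional equation and the normalization of $H_\ell$.
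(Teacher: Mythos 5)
The paper records this as a quoted Fact in its review section (following \cite{leswi:feig}, \cite{EW}) and supplies no proof, so there is no internal argument to compare against; here is an assessment of your reconstruction on its own terms. Your item (4) from the Feigenbaum equation, your item (2) via the univalence of $G_\ell$ on $\tau_\ell\Omega_{-,\ell}$, and the finite-$\ell$ computation $|G_\ell'(x_{0,\ell})|=\tau_\ell^{-1/\ell}$ in item (1) are all essentially in order. One small slip: the constant in $E_\ell\circ G_\ell=cE_\ell$ is $c=-\tau_\ell^{-1/\ell}$, not $+\tau_\ell^{-1/\ell}$, because $G_\ell$ is orientation-reversing on $\RR$ near $x_{0,\ell}$ (so for $z>x_{0,\ell}$ one has $E_\ell(z)<0$ while $E_\ell(G_\ell(z))>0$); this does not affect the modulus, hence not the conclusion. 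Item (3) is acknowledged as a sketch and is not carried through.

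The one step that would not survive closer scrutiny is the $\ell=\infty$ case of item (1). The identity you invoke, $\phi_{\pm,\infty}\circ G_\infty=\phi_{\pm,\infty}-\log\tau_\infty$, is not correct: the relations actually used by the paper (see the proof of Proposition~\ref{prop:2hp,1}) are $\phi_{+,\ell}=\phi_{-,\ell}\circ G_\ell+2\log\tau_\ell$ and $\phi_{-,\ell}=\phi_{+,\ell}\circ G_\ell$, which compose to $\phi_{\pm,\ell}\circ G_\ell^2=\phi_{\pm,\ell}-2\log\tau_\ell$. More to the point, these relations hold for \emph{every} $\ell$, finite or infinite, so the bare fact that the coordinate $\phi$ conjugates $G^2$ to a translation cannot by itself separate the attracting case from the neutral one. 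What distinguishes $\ell=\infty$ is the asymptotics of $\phi_{\pm,\ell}$ at $x_{0,\ell}$: for finite $\ell$ the singularity is logarithmic of order $\ell$ (so translation by $-2\log\tau_\ell$ pulls back to multiplication by $\tau_\ell^{-2/\ell}<1$), whereas for $\ell=\infty$ the singularity is a pole of order two, i.e.\ a genuine Fatou coordinate, which is the signature of a parabolic fixed point. The cleaner route, and the one already implicit in your finite-$\ell$ computation, is to observe that $(H_\ell,\tau_\ell,x_{0,\ell})\to(H_\infty,\tau_\infty,x_{0,\infty})$ with locally uniform convergence of the holomorphic extensions (Fact~\ref{fa:1hp,2}), hence of the derivatives, so $|G_\infty'(x_{0,\infty})|=\lim_{\ell\to\infty}\tau_\ell^{-1/\ell}=1$ directly.
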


\paragraph{Coverings.}
\begin{fact}\label{fa:3ha,1}
A holomorphic mapping $\psi :\: U\rightarrow V$, where $U$ and $V$ are
domains on $\CC$, is a covering if and only if for every $v\in V$,
every simply-connected domain $W$ which contains $v$ and is compactly
contained in $V$ and every $u :\:
\psi(u)=v$ there exists a univalent inverse branch of $\psi$ defined
on $W$ which sends $v$ to $u$.
\end{fact}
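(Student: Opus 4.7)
My plan is to treat the two implications separately.

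For the ``only if'' direction, assume $\psi$ is a covering. The inclusion $W\hookrightarrow V$ lifts through $\psi$ by the standard monodromy / path-lifting property of coverings applied to the simply-connected, locally path-connected space $W$, with the prescribed value $u$ at the basepoint $v$. The resulting continuous lift $\sigma:W\to U$ satisfies $\psi\circ\sigma=\mathrm{id}_W$ and is automatically holomorphic and injective because $\psi$ is a local biholomorphism (being a holomorphic covering), yielding the required univalent inverse branch.

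For the converse I would fix $v\in V$, pick a small simply-connected $W\ni v$ with $\overline W\subset V$, and for each $u\in\psi^{-1}(v)$ invoke the hypothesis to obtain a univalent inverse branch $\sigma_u:W\to U$ with $\sigma_u(v)=u$; set $W_u:=\sigma_u(W)$. I would first verify that the $W_u$ are pairwise disjoint: if $z\in W_u\cap W_{u'}$, then $\sigma_u,\sigma_{u'}$ are holomorphic right inverses of $\psi$ on the connected set $W$ that agree at $\psi(z)$, and since $\psi$ is locally biholomorphic near $z$ (because $\sigma_u$ furnishes a local inverse there), local uniqueness of inverse germs combined with the identity principle forces $\sigma_u\equiv\sigma_{u'}$, hence $u=u'$. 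By construction each restriction $\psi|_{W_u}:W_u\to W$ is then a biholomorphism and $\bigsqcup_u W_u\subset\psi^{-1}(W)$ is immediate.

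The core step, which I expect to be the main technical point, is the reverse inclusion $\psi^{-1}(W)\subset\bigsqcup_u W_u$. Given $z\in\psi^{-1}(W)$, applying the hypothesis at $\psi(z)$ with preimage $z$ yields a univalent inverse branch on some small simply-connected neighborhood of $\psi(z)$ sending $\psi(z)$ to $z$. I would then extend this germ by analytic continuation along arbitrary paths in $W$; at every point reached the hypothesis supplies a local univalent branch, so no obstruction to continuation ever arises. Applying the monodromy theorem on the simply-connected $W$ glues these continuations into a single-valued univalent $\tilde\sigma:W\to U$. Setting $u:=\tilde\sigma(v)\in\psi^{-1}(v)$ and re-using the uniqueness argument above gives $\tilde\sigma=\sigma_u$, so $z\in W_u$. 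This identifies $W$ as an evenly covered neighborhood of $v$, proving that $\psi$ is a covering. The compact containment of $W$ in $V$ in the hypothesis plays the role of a convenient safety margin keeping all branches and analytic continuations inside the domain where the hypothesis applies, rather than being strictly essential to the logic of the argument.
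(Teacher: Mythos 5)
The paper states Fact~\ref{fa:3ha,1} without a proof, treating it as a standard characterization of holomorphic coverings, so there is no author argument to compare yours against; I can only assess whether your proof stands on its own. It essentially does, but there is one step you over-engineer and one point you should make explicit.

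For the reverse inclusion $\psi^{-1}(W)\subset\bigsqcup_u W_u$ you construct an inverse germ at $\psi(z)$ on a small disc and then propagate it across $W$ by analytic continuation and the monodromy theorem. The monodromy detour is sound (each germ makes $\psi$ a local biholomorphism, so continuation is unobstructed), but it reconstructs exactly what the hypothesis already grants in one stroke: $W$ itself is simply connected, contains $\psi(z)$, is compactly contained in $V$, and $z\in\psi^{-1}(\psi(z))$, so applying the hypothesis to the triple $(\psi(z),W,z)$ directly produces a univalent $\tilde\sigma:W\to U$ with $\tilde\sigma(\psi(z))=z$. Setting $u:=\tilde\sigma(v)$, your identity-principle uniqueness argument identifies $\tilde\sigma$ with $\sigma_u$, whence $z\in W_u$. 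This is shorter and eliminates any need to invoke monodromy.

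The point left implicit is surjectivity, which the usual definition of a covering demands and which your argument does not establish: showing that each $v$ has an evenly covered neighbourhood is a priori consistent with $\psi^{-1}(v)=\emptyset$. It is ruled out as follows. Since $\psi$ is non-constant it is an open map, so $\psi(U)$ is open in $V$. If $v\in V\cap\overline{\psi(U)}$, choose $W\ni v$ simply connected and compactly contained in $V$, pick $v'\in W\cap\psi(U)$ and $z'\in\psi^{-1}(v')$, and apply the hypothesis at $(v',W,z')$ to obtain an inverse branch $\sigma$ on $W$; then $\sigma(v)\in\psi^{-1}(v)$, so $v\in\psi(U)$. Thus $\psi(U)$ is open and closed in the connected domain $V$, hence $\psi(U)=V$. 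With that sentence added, and the monodromy passage pruned, your proof is complete and correct.
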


\subsection{Analytic continuations.}
\begin{defi}\label{defi:3hp,1}
For $k :\: 0\leq k\leq \infty$ and $0<\ell\leq\infty$, let us define
\[ \varPi^k_{\ell} := \{ z\in\CC :\: |\Im z| < \frac{\ell\pi}{2} \} \setminus
\left( \left\{ 2j\log\tau_{\ell} :\: j=0,\cdots,k-1\right\} \cup [2k\log\tau_{\ell},+\infty)
  \right) \; .\]
\end{defi}
Thus $\varPi_{\ell}^0 = \varPi_{\ell}$ in the notation of Fact~\ref{fa:1hp,1},
while
\[ \varPi^{\infty}_{\ell} = \{ z\in\CC :\: |\Im z| < \frac{\ell\pi}{2} \} \setminus
\{ 2j\log\tau :\: j=0,1,\cdots\}  \; .\]

\begin{prop}\label{prop:2hp,1}
  For every $k\geq 0$ and every $\ell$ positive and even or infinite,
  there exist domains $\hat{\Omega}^k_{\pm,\ell}$ where
  $\hat{\Omega}^0_{\pm,\ell}=\Omega_{\pm,\ell}$, respectively. Furthermore
  $\phi_{\pm,\ell}$
  continue analytically to the corresponding
  $\hat{\Omega}^k_{\pm,\ell}$ with non-vanishing derivative and the
  claims below hold:
  \begin{itemize}
  \item
  $\hat{\Omega}^k_{+,\ell}$ and $\hat{\Omega}^k_{-,\ell}$ are
    disjoint,
  \item
    for $k>0$

   \[ \hat{\Omega}^k_{+,\ell} =
    G^{-1}_{\ell}\left(\hat{\Omega}^{k-1}_{-,\ell}
    \right) \]
    for $\ell$ finite and
   \[ \hat{\Omega}^k_{+,\infty} =
    G^{-1}_{\infty}\left(\hat{\Omega}^{k-1}_{-,\infty}\right) \cap
      \tau_{\infty}\Omega_{-,\infty}  \]
    for $\ell=\infty$, while for $k\geq 0$ one also has
    \[  \hat{\Omega}^k_{-,\ell} =
        \mathbf{G}^{-1}_{\ell}(\hat{\Omega}^{k}_{+,\ell}) \]
      for all $\ell$, where $\mathbf{G}^{-1}_{\ell}$ is the principal
      inverse branch, cf. Definition~\ref{defi:1hp,1}.
  \item
    \[ \phi_{\pm,\ell} :\: \hat{\Omega}^k_{\pm,\ell}\setminus
    \phi_{\pm,\ell}^{-1}\bigl(\{j\log\tau_{\ell}^2 :\: 0\leq j<k\}\bigr) \rightarrow
    \varPi^k_{\ell} \]
    is a covering.
  \end{itemize}
\end{prop}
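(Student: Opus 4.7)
The argument is by induction on $k\geq 0$, the base case $k=0$ being Fact~\ref{fa:1hp,1}. The workhorse is the functional equation $\tau_\ell^{-1}H_\ell = H_\ell\circ G_\ell$ of Fact~\ref{fa:1hp,4}. Using the defining formulas of $\phi_{\pm,\ell}$ together with the inclusions $G_\ell(\Omega_{+,\ell})\subset\Omega_{-,\ell}$ and $G_\ell(\Omega_{-,\ell})\subset\Omega_{+,\ell}$ (from the same Fact), one verifies directly the two identities
\[
\phi_{+,\ell}(z) = 2\log\tau_\ell + \phi_{-,\ell}(G_\ell(z))\quad (z\in\Omega_{+,\ell}), \qquad \phi_{-,\ell}(z) = \phi_{+,\ell}(G_\ell(z))\quad (z\in\Omega_{-,\ell}).
\]
These identities are the mechanism propagating the analytic continuation: each extends the left-hand side to preimages under $G_\ell$ of domains where the right-hand side is already known.

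For the inductive step, assume the statement at level $k-1$. I set $\hat\Omega^k_{+,\ell} := G_\ell^{-1}\bigl(\hat\Omega^{k-1}_{-,\ell}\bigr)$ for $\ell$ finite (intersecting with $\tau_\infty\Omega_{-,\infty}$ when $\ell=\infty$) and extend $\phi_{+,\ell}$ to this set by the first identity. Next I define $\hat\Omega^k_{-,\ell} := \mathbf{G}_\ell^{-1}\bigl(\hat\Omega^k_{+,\ell}\bigr)$ using the principal inverse branch of Definition~\ref{defi:1hp,1}, and extend $\phi_{-,\ell}$ by $\phi_{-,\ell}(w) := \phi_{+,\ell}(G_\ell(w))$. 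Agreement with the extensions from step $k-1$ on overlaps is automatic by the identity theorem. The derivative is non-vanishing away from $\phi_{\pm,\ell}^{-1}(\{j\log\tau_\ell^2 : 0\leq j<k\})$, because at the excluded points the chain rule forces a zero coming from the critical point of $G_\ell$ at $\tau_\ell x_{0,\ell}$ propagated forward; elsewhere both $G_\ell$ and the inductively given $\phi_{-,\ell}$ are immersions.

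For the covering property I invoke Fact~\ref{fa:3ha,1}: given a simply connected subdomain $W$ with compact closure in $\varPi^k_\ell$ and a preimage $u\in\hat\Omega^k_{+,\ell}\setminus\phi_{+,\ell}^{-1}(\{j\log\tau_\ell^2:0\leq j<k\})$, I construct a univalent inverse branch over $W$ by composing three steps. First, shift by $-2\log\tau_\ell$: this biholomorphically identifies $W$ with a subdomain $W'$ of $\varPi^{k-1}_\ell$, since the punctures of $\varPi^k_\ell$ at $2j\log\tau_\ell$ shift to those of $\varPi^{k-1}_\ell$ at $2(j-1)\log\tau_\ell$ and the slit $[2k\log\tau_\ell,+\infty)$ shifts to $[(2k-2)\log\tau_\ell,+\infty)$. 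Second, a univalent inverse branch of $\phi_{-,\ell}$ over $W'$, furnished by the inductive hypothesis. Third, a univalent inverse branch of $G_\ell$: since $G_\ell = (E_\ell\circ\tau_\ell^{-1})^\ell$ with $E_\ell\circ\tau_\ell^{-1}$ univalent, $G_\ell$ restricted to $\tau_\ell\Omega_\ell\setminus\{\tau_\ell x_{0,\ell}\}$ is a covering of degree $\ell$ (infinite for $\ell=\infty$) onto its image minus $\{0\}$, and thus admits univalent inverse branches over simply connected domains avoiding the critical value $0$, which is ensured because $0$ is a puncture of $\varPi^{k-1}_\ell$. The corresponding statement for $\phi_{-,\ell}$ follows analogously from the second identity and the univalence of $\mathbf{G}_\ell^{-1}$. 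Disjointness of $\hat\Omega^k_{\pm,\ell}$ is inherited from the base case, using $\hat\Omega^k_{-,\ell}\subset\tau_\ell\Omega_{-,\ell}$ (by Fact~\ref{fa:1hp,4}) and the fact that $\hat\Omega^k_{+,\ell}$ lies on the opposite side of $\mathfrak{w}_\ell$ and its pulled-back copies under $G_\ell$.

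The main technical obstacle is the case $\ell=\infty$. Since $G_\infty$ has infinite degree and $x_{0,\infty}$ is a neutral fixed point of $G_\infty$, the full preimage $G_\infty^{-1}(\hat\Omega^{k-1}_{-,\infty})$ has infinitely many connected components accumulating at $x_{0,\infty}$; most of these components would either collide with $\hat\Omega^k_{-,\infty}$ or produce an inconsistent continuation. Intersection with $\tau_\infty\Omega_{-,\infty}$ singles out the unique component where the inductive propagation is coherent. One must still verify that the resulting $\phi_{+,\infty}$ covers all of $\varPi^k_\infty$ and not a proper subdomain; this surjectivity check requires combining Fact~\ref{fa:1hp,1} with the geometric control on $\Omega_{-,\infty}$ near $x_{0,\infty}$ from Fact~\ref{fa:1hp,3}, and is the subtlest ingredient of the proof.
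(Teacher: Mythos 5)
Your overall architecture --- induction on $k$, the two functional equations $\phi_{+,\ell}=\phi_{-,\ell}\circ G_\ell+2\log\tau_\ell$ and $\phi_{-,\ell}=\phi_{+,\ell}\circ G_\ell$, the domains defined by the stated recursions, and the criterion of Fact~\ref{fa:3ha,1} to compose coverings --- is the same as the paper's. But three steps do not go through as written, and the central one is your treatment of $G_\ell$ itself. The claim that $G_\ell$ restricted to $\tau_\ell\Omega_\ell\setminus\{\tau_\ell x_{0,\ell}\}$ is a covering ``onto its image minus $\{0\}$'' is false: $E_\ell\circ\tau_\ell^{-1}$ maps onto a \emph{slit} plane, and $w\mapsto w^\ell$ does not evenly cover points whose fiber of $\ell$-th roots meets the slit. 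The paper isolates exactly this as Lemma~\ref{lem:2hp,1}: one must delete $G_\ell^{-1}\left([\tau_\ell,\infty)\right)$ from the domain and $[\tau_\ell,\infty)$ from the range, and then use $\hat\Omega^{k-1}_{-,\ell}\cap\RR\subset(-\infty,x_{0,\ell})$ to conclude that the restriction over $\hat\Omega^{k-1}_{-,\ell}\setminus\{0\}$ is still a covering. The same lemma in its $\ell=\infty$ form --- $G_\infty$ maps $\tau_\infty\Omega_{-,\infty}$ \emph{univalently} onto $\CC\setminus\left(\{0\}\cup[\tau_\infty^2,+\infty)\right)$ --- is also what disposes of the surjectivity question that you correctly flag as the subtlest point but leave unresolved: the intersection with $\tau_\infty\Omega_{-,\infty}$ is the image of a single inverse branch defined on all of $\hat\Omega^{k-1}_{-,\infty}\setminus\{0\}$, so no part of $\varPi^k_\infty$ is lost. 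Without this lemma or a substitute, your proof is incomplete precisely at its load-bearing step.

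Second, the disjointness argument does not work as sketched: $\hat\Omega^k_{+,\ell}$ does not stay on the opposite side of $\mathfrak{w}_\ell$ from $\hat\Omega^k_{-,\ell}$; by Theorem~\ref{theo:3hp,1} one only has $\hat\Omega_{+,\ell}\subset\tau_\ell\Omega_\ell$, and for $\ell=\infty$ both $\hat\Omega^k_{+,\infty}$ and $\hat\Omega^k_{-,\infty}$ sit inside $\tau_\infty\Omega_{-,\infty}$. The paper's argument is a two-line reduction: if $z\in\hat\Omega^k_{+,\ell}\cap\hat\Omega^k_{-,\ell}$, the recursions give $G_\ell(z)\in\hat\Omega^{k-1}_{-,\ell}\cap\hat\Omega^k_{+,\ell}$ and then $G_\ell^2(z)\in\hat\Omega^{k-1}_{+,\ell}\cap\hat\Omega^{k-1}_{-,\ell}$, contradicting the inductive hypothesis. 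Finally, a small indexing slip: the inverse branch of $\phi_{-,\ell}$ over $W'=W-2\log\tau_\ell$ avoids the critical value $0$ of $G_\ell$ because $W\subset\varPi^k_\ell$ omits the puncture at $0$, which after the shift becomes the deleted point $\log\tau_\ell^{-2}=\phi_{-,\ell}(0)$; it is not the puncture of $\varPi^{k-1}_\ell$ at $0$ that is relevant here.
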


\paragraph{Proof of Proposition~\ref{prop:2hp,1}.}
The proof will naturally proceed by induction with respect to $k$. For
$k=0$ all claims are known, in particular the second one follows from
Fact~\ref{fa:1hp,4} and the third from
Fact~\ref{fa:1hp,1}.

In an inductive step from $k-1$ to $k$, $\hat{\Omega}^k_{\pm,\ell}$
are already defined by the second claim. The first one is easy, since
the each of following inclusions implies the next one by the second claim:
\begin{eqnarray*}
z &\in& \hat{\Omega}^k_{+,\ell} \cap \hat{\Omega}^{k}_{-,\ell}\\
G_{\ell}(z) &\in& \hat{\Omega}^{k-1}_{-,\ell} \cap \hat{\Omega}^k_{+,\ell}\\
G^2_{\ell}(z) &\in& \hat{\Omega}^{k-1}_{+,\ell} \cap
\hat{\Omega}^{k-1}_{-,\ell} \; .
\end{eqnarray*}
obviously. Thus, we need to prove the third claim.

Let us begin with a lemma.
\begin{lem}\label{lem:2hp,1}
  For $\ell$ finite and even,
  \[ G_{\ell} :\: \tau\Omega_{\ell} \setminus
  \left( \{\tau_{\ell}x_{0,\ell}\} \cup G^{-1}_{\ell}\left([\tau_{\ell},\infty)\right)\right) \rightarrow
    \CC\setminus\left( \{0\} \cup [\tau_{\ell},\infty) \right) \]
      is a covering. For $\ell=\infty$, the corresponding claim is
      that
   \[ G_{\infty} :\: \tau_{\infty}\Omega_{-,\infty} \rightarrow
   \CC\setminus\left( \{0\} \cup [\tau_{\infty}^2,+\infty)\right) \]
     is a covering.
\end{lem}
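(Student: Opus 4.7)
The plan is to verify Fact~\ref{fa:3ha,1}: for every $v$ in the codomain, every simply-connected domain $W\ni v$ compactly contained in the codomain, and every preimage $u$ of $v$ in the domain, I must exhibit a univalent inverse branch of $G_\ell$ defined on all of $W$ and sending $v$ to $u$. The excluded points ($\tau_\ell x_{0,\ell}$, the unique critical point of $G_\ell$, and $G_\ell^{-1}([\tau_\ell,+\infty))$ in the finite case) ensure that $G_\ell$ is a local biholomorphism on the (trimmed) domain whose image lies in the codomain.

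For $\ell=\infty$, I would factor $G_\infty(z)=\tau_\infty^2\exp(\psi(z))$, where $\psi:=\phi_{-,\infty}\circ\tau_\infty^{-1}$ is, by Fact~\ref{fa:1hp,1}, a biholomorphism from $\tau_\infty\Omega_{-,\infty}$ onto $\varPi_\infty=\CC\setminus[0,+\infty)$. An inverse branch of $G_\infty$ over $W$ then takes the form $\psi^{-1}\circ L$, where $L$ is a branch of $\log$ on $\tau_\infty^{-2}W$. The key check is that every such $L$ lands in $\varPi_\infty$: given $w\in\tau_\infty^{-2}W\subset\CC\setminus(\{0\}\cup[1,+\infty))$, if $|w|<1$ the real part of every branch is negative, while if $|w|\ge 1$ then $w\notin[1,+\infty)$ forces $\arg w\ne 0$, so every branch has nonzero imaginary part; in either case $L(w)\notin[0,+\infty)$. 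The branch matching a given preimage $u$ is pinned down by $\psi(u)$, and ranging over all branches of $\log$ exhausts $G_\infty^{-1}(v)$ in the domain.

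For $\ell$ finite and even, the same scheme applies separately on the two sides using $G_\ell=\tau_\ell\exp\circ\psi_+$ on $\tau_\ell\Omega_{+,\ell}$ and $G_\ell=\tau_\ell^2\exp\circ\psi_-$ on $\tau_\ell\Omega_{-,\ell}$, with $\psi_\pm:=\phi_{\pm,\ell}\circ\tau_\ell^{-1}$ biholomorphic onto $\varPi_\ell=\{|\Im z|<\ell\pi/2\}\setminus[0,+\infty)$. Since $W\subset\CC\setminus(\{0\}\cup[\tau_\ell,+\infty))$ (and \emph{a fortiori} $W\subset\CC\setminus(\{0\}\cup[\tau_\ell^2,+\infty))$ because $\tau_\ell>1$), the same argument shows every branch of $\log$ on $\tau_\ell^{-1}W$ and on $\tau_\ell^{-2}W$ avoids $[0,+\infty)$. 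The strip constraint $|\Im|<\ell\pi/2$ then selects exactly $\ell/2$ admissible branches on each side. Removing $G_\ell^{-1}([\tau_\ell,+\infty))$ from the domain discards precisely those $\tau_\ell\Omega_{-,\ell}$-preimages falling into $[\tau_\ell,\tau_\ell^2)$ and, when $\ell\equiv 0\pmod 4$, the $\tau_\ell\mathfrak{w}_\ell$-preimages mapping into $[\tau_\ell,+\infty)$.

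The main technical obstacle is the behaviour along the arc $\tau_\ell\mathfrak{w}_\ell$. When $v$ lies on the image arc --- $(0,\tau_\ell)$ for $\ell\equiv 0\pmod 4$ or $(-\infty,0)$ for $\ell\equiv 2\pmod 4$ --- some preimages of $v$ lie on $\tau_\ell\mathfrak{w}_\ell$ itself, where the two factorizations degenerate on the boundary of $\varPi_\ell$. Since $G_\ell$ is holomorphic across $\tau_\ell\mathfrak{w}_\ell$ (as $H_\ell$ is holomorphic throughout $\Omega_\ell$) and is a local biholomorphism at every such preimage (the only critical point being $x_{0,\ell}$, which lies in the excluded set), a local inverse branch at an $\mathfrak{w}_\ell$-preimage extends by analytic continuation along paths in the simply-connected $W$. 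A count by the residue of $\ell$ modulo $4$ then verifies that the resulting branches, together with those coming from the two factorizations, exhaust $G_\ell^{-1}(v)$ in the trimmed domain, completing the application of Fact~\ref{fa:3ha,1}.
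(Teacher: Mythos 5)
Your route is genuinely different from the paper's. The paper treats $\log G_{\ell}$ as a \emph{single} univalent map of $\tau_{\ell}\Omega_{\ell}\setminus\{\tau_{\ell}x_{0,\ell}\}$ onto a ``comb'' domain ($\CC$ with horizontal slits at heights $2\pi k$ starting at $\log\tau_{\ell}$ or $2\log\tau_{\ell}$), so that the gluing of the two halves across $\tau_{\ell}\mathfrak{w}_{\ell}$ is built into the coordinate, and the covering then drops out by projecting with $\exp$. You instead verify the criterion of Fact~\ref{fa:3ha,1} branch by branch, using the two factorizations $\tau_{\ell}^{j}\exp\circ\psi_{\pm}$ separately. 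Your treatment of $\ell=\infty$ is complete and correct: there $\varPi_{\infty}$ has no strip constraint and no seam, and the check that every branch of $\log$ on $\tau_{\infty}^{-2}W$ avoids $[0,+\infty)$ is exactly what is needed.

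For finite $\ell$ there is a gap precisely at the point you flag as the ``main technical obstacle,'' and it is wider than you describe. The inverse branch $\psi_{\pm}^{-1}\circ L$ exists only if $L(W)\subset\varPi_{\ell}$, and avoiding $[0,+\infty)$ does not keep $L(W)$ inside the strip $|\Im z|<\ell\pi/2$: whenever the chosen preimage $u$ lies \emph{near} (not just on) $\tau_{\ell}\mathfrak{w}_{\ell}$, the point $L(v)=\psi_{\pm}(u)$ is close to the lines $\Im z=\pm\ell\pi/2$ and $L(W)$ pokes out of the strip; likewise a simply connected $W$ that winds through $(0,\tau_{\ell})$ around the puncture at $0$ forces $L(W)$ out of the strip. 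So the assertion that the strip ``selects exactly $\ell/2$ admissible branches on each side'' presupposes a stable partition of the fibre into sides which does not exist, and every such case requires the seam-crossing continuation, i.e.\ the extensions of Fact~\ref{fa:6hp,1} onto $\Omega_{\ell}\cap\HH_{\pm}$ with range $\{0<\Im z<\ell\pi\}$. That continuation argument is only asserted: you still owe (i) the check that a continuation along a path in $W$ can never reach $\partial(\tau_{\ell}\Omega_{\ell})$ (this holds because the outer boundary maps into $[\tau_{\ell},+\infty)$, which is excluded from the base, while only the interior seam maps into $\RR\setminus[\tau_{\ell},+\infty)$ --- a point worth stating, since otherwise the covering property would simply fail), and (ii) the exhaustion of the fibre, which you defer to an unperformed ``count by the residue of $\ell$ modulo $4$.'' Your parenthetical description of $G_{\ell}^{-1}([\tau_{\ell},+\infty))$ is also inaccurate: for $\ell\geq 6$ this set meets $\tau_{\ell}\Omega_{+,\ell}$ as well, through the non-principal branches of $\log$ in the strip. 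All of these difficulties are exactly what the paper's single comb-domain coordinate is designed to bypass.
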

\begin{proof}
 Let us deal with the case of $\ell$ finite. Since $G_{\ell} =
 H_{\ell} \tau^{-1}_{\ell}$, $G_{\ell}$ can be be viewed as composed
 of two branches one defined on $\tau_{\ell}\Omega_{-,\ell}$ and the
 other on $\tau_{\ell}\Omega_{+,\ell}$ which match analytically on the
 common boundary $\tau_{\ell}\mathfrak{w}_{\ell}$. By
 Fact~\ref{fa:1hp,1} $\log G_{\ell}$ is a univalent mapping of its
 domain with $\tau_{\ell}x_{0,\ell}$ removed onto
 $\CC$ with infinitely many slits of the form $\{x+iy :\: y=2\pi k,\,
 k\in \ZZ,\, x\geq X_k\}$ where $X_k$ is $\log\tau_{\ell}$ or $2\log\tau_{\ell}$
 depending on which branch of $G_{\ell}$ acts. A projection by $\exp$
 then yields the claim.

 A similar reasoning works for $\ell=\infty$ except that $G$ already
 maps $\tau_{\infty}\Omega_{-,\infty}$ univalently onto
 $\CC\setminus [\tau^2_{\infty},+\infty)$.
\end{proof}
\subparagraph{Mapping $\phi_{+,\ell}$.}
Since $\hat{\Omega}^{k-1}_{-,\ell} \cap \RR \subset
(-\infty,x_{0,\ell})$,
\[ G_{\ell} :\: \hat{\Omega}^k_{+,\ell}=G^{-1}_{\ell}(\hat{\Omega}^{k-1}_{-,\ell}) \rightarrow
\hat{\Omega}^{k-1}_{-,\ell} \setminus \{0\} \]
is a covering. Since $\phi_{-,\ell}(0)=\log\tau^{-2}_{\ell}$, it
follows that
\[ G_{\ell} :\: \hat{\Omega}^{k}_{+,\ell} \setminus
(\phi_{-,\ell}\circ G_{\ell})^{-1}\bigl( \{ \log
\tau_{\ell}^{-2} \} \bigr) \rightarrow
\hat{\Omega}^{k-1}_{-,\ell} \setminus \phi^{-1}_{-,\ell}\bigl(\{ \log
\tau_{\ell}^{-2}\bigr) \})
\]
is a covering as well. Furthermore,
\[ \phi_{-,\ell} :\: \hat{\Omega}^{k-1}_{-,\ell} \setminus \phi^{-1}_{-,\ell}(\{ \log
\tau_{\ell}^{2j} :\: j=-1,\cdots,k-2 \} ) \rightarrow \varPi^{k-1}_{\ell}\setminus \{ \log
\tau^{-2}_{\ell}\} = \varPi^k_{\ell} - 2\log\tau_{\ell}\]
is also a covering by the hypothesis of induction.

To prove that their composition is also a covering, take
$z\in U \subset \varPi^k_{\ell} - 2\log\tau_{\ell}$ and recall
Fact~\ref{fa:3ha,1}. The every inverse branch defined of
$\phi_{-,\ell}$ defined on $U$ into $\hat{\Omega}^{k-1}_{-,\ell}$
has a range which is a disk in $\hat{\Omega}^{k-1}_{-,\ell} \setminus
\phi^{-1}_{-,\ell}(\{ \log\tau_{\ell}^{-2} \})$ which therefore avoids
$0$. Since $G_{\ell}$ is a a covering of
$\hat{\Omega}^{k-1}_{-,\ell}\setminus\{0\}$, for every such disk one
can find an
inverse branch of
$G_{\ell}$. Hence,
\[ \phi_{-,\ell}\circ G_{\ell} :\: \hat{\Omega}^k_{+,\ell} \setminus
(\phi_{-,\ell}\circ G_{\ell})^{-1}\left(\{ \log
\tau_{\ell}^{2j} :\: j=-1,\cdots,k-2 \} \right) \rightarrow
\varPi^k_{\ell} - 2\log\tau_{\ell} \]
is a covering.
By the functional equation
\[ \phi_{+,\ell} = \phi_{-,\ell} \circ G_{\ell} + 2\log\tau_{\ell} \]
which completes the proof of the third claim for $\phi_{+,\ell}$.

\subparagraph{Mapping $\phi_{-,\ell}$.}
The associated map $G_{\ell}$ maps $\hat{\Omega}^k_{-,\ell}$
univalently onto $\hat{\Omega}^k_{+,\ell}$ so that
\[ \phi_{+,\ell}\circ G_{\ell} :\: \hat{\Omega}^k_{-,\ell} \setminus
\mathbf{G}_{\ell}^{-1}\Bigl( \phi_{+,\ell}^{-1}\bigl(\{ \log
\tau_{\ell}^{2j} :\: j=0,\cdots,k-1 \}\bigr)\Bigr)\rightarrow
\varPi^k_{\ell} \]
is clearly a covering. By the functional equation
\[ \phi_{-,\ell} = \phi_{+,\ell}\circ G_{\ell} \]
which concludes the proof of Proposition~\ref{prop:2hp,1}.

Let us state the main result about the analytic continuation of
$\phi_{\pm,\ell}$.

\begin{theo}\label{theo:3hp,1}
  For every $\ell$ positive and even or infinite there exist domains
  $\hat{\Omega}_{\pm,\ell}$, disjoint, simply-connected and symmetric
  with respect to $\RR$. The following inclusions hold:
  \begin{eqnarray*}
  \Omega_{-,\ell} \subset &\hat{\Omega}_{-,\ell} &\subset \tau_{\ell}\Omega_{-,\ell}\\
  \Omega_{+,\ell} \subset & \hat{\Omega}_{+,\ell} &\subset \tau_{\ell}\Omega_{\ell}\\
  \hat{\Omega}_{\pm,\infty} &\subset& \tau_{\infty}\Omega_{-,\infty}\; .
  \end{eqnarray*}

  Furthermore, $\phi_{\pm,\ell}$ continue analytically to
  $\hat{\Omega}_{\pm,\ell}$, respectively, with non-zero
  derivative and mappings
  \[ \phi_{\pm,\ell} :\: \hat{\Omega}_{\pm,\ell} \setminus
  \phi_{\pm,\ell}^{-1}\bigl( \{ \log \tau_{\ell}^{2j} :\:
  j=0,1,\cdots\}\bigr) \rightarrow \varPi^{\infty}_{\ell} \]
  are coverings, cf. Definition~\ref{defi:3hp,1}.
\end{theo}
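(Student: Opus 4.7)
The natural approach is to take the increasing union
\[ \hat{\Omega}_{\pm,\ell} := \bigcup_{k\geq 0}\hat{\Omega}^k_{\pm,\ell} \]
of the domains supplied by Proposition~\ref{prop:2hp,1}, and transfer the stated properties to the limit $k\to\infty$. I would first verify by induction on $k$ that $\hat{\Omega}^k_{\pm,\ell}\subset\hat{\Omega}^{k+1}_{\pm,\ell}$: the base case is trivial, and the inductive step follows from the obvious inclusion $\varPi^k_\ell\subset\varPi^{k+1}_\ell$ lifted through the coverings of Proposition~\ref{prop:2hp,1}(3) together with the functional equations $\phi_{+,\ell}=\phi_{-,\ell}\circ G_\ell+2\log\tau_\ell$ and $\phi_{-,\ell}=\phi_{+,\ell}\circ G_\ell$ that swap the two sides. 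Once monotonicity is established, disjointness of the $\pm$-pieces, the lower inclusions $\Omega_{\pm,\ell}\subset\hat{\Omega}_{\pm,\ell}$, and symmetry with respect to $\RR$ are inherited directly from the corresponding facts at each finite $k$. The upper inclusions are equally routine: $\hat{\Omega}^k_{-,\ell}=\mathbf{G}^{-1}_\ell(\hat{\Omega}^k_{+,\ell})$ lies in the range $\tau_\ell\Omega_{-,\ell}$ of the principal inverse branch by Fact~\ref{fa:1hp,4}; $\hat{\Omega}^k_{+,\ell}$ is contained in the domain $\tau_\ell\Omega_\ell$ of $G_\ell$; and the extra restriction to $\tau_\infty\Omega_{-,\infty}$ in the $\ell=\infty$ case is built into the inductive definition and therefore survives the union.

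The covering claim onto $\varPi^\infty_\ell$ then reduces to Fact~\ref{fa:3ha,1}. For any simply-connected $W\subset\subset\varPi^\infty_\ell$, compactness forces $W\subset\varPi^k_\ell$ for $k$ sufficiently large, so Proposition~\ref{prop:2hp,1}(3) supplies univalent inverse branches of $\phi_{\pm,\ell}$ on $W$ with images in $\hat{\Omega}^k_{\pm,\ell}\subset\hat{\Omega}_{\pm,\ell}$. The global analytic continuations of $\phi_{\pm,\ell}$ are then defined unambiguously by pasting the continuations on the $\hat{\Omega}^k$ using the identity theorem, and the non-vanishing of their derivatives is preserved at each level.

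The main obstacle will be simple-connectedness. Since an increasing union of open simply-connected planar sets is simply connected, it is enough to prove by induction that each $\hat{\Omega}^k_{\pm,\ell}$—taken to be the component containing $\Omega_{\pm,\ell}$—is simply connected. For the $-$ side this is immediate, because $\hat{\Omega}^k_{-,\ell}=\mathbf{G}^{-1}_\ell(\hat{\Omega}^k_{+,\ell})$ is a univalent image of a simply connected set. For the $+$ side the $\ell$-valence of $G_\ell$ means that the full preimage $G^{-1}_\ell(\hat{\Omega}^{k-1}_{-,\ell})$ can split into several components, and one must isolate the distinguished one containing $\Omega_{+,\ell}$. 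The cleanest route is to combine the covering $\phi_{+,\ell}:\hat{\Omega}^k_{+,\ell}\setminus F_k\to\varPi^k_\ell$, with $F_k$ the finite set of removed preimages, with the Riemann--Hurwitz formula to confirm that the retained component has the Euler characteristic of a disk; an alternative is to track the boundary curve $\mathfrak{w}_\ell$ and its successive preimages by $G_\ell$. The degenerate case $\ell=\infty$ is easier: by Fact~\ref{fa:1hp,4}, $G_\infty$ is already univalent on $\tau_\infty\Omega_{-,\infty}$, so each preimage is automatically a single simply connected piece, which is precisely why the extra intersection with $\tau_\infty\Omega_{-,\infty}$ appears in the definition.
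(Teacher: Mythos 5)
Your proposal follows essentially the same route as the paper: define $\hat{\Omega}_{\pm,\ell}$ as the increasing union of the $\hat{\Omega}^k_{\pm,\ell}$ from Proposition~\ref{prop:2hp,1}, observe that monotonicity, symmetry, simple-connectedness and the inclusions pass to the limit, and verify the covering property via Fact~\ref{fa:3ha,1} by noting that any compactly contained simply-connected $U\subset\varPi^{\infty}_{\ell}$ sits in some $\varPi^{k_0}_{\ell}$ and any preimage in some $\hat{\Omega}^{k_1}_{\pm,\ell}$. The only difference is that you elaborate on simple-connectedness of each $\hat{\Omega}^k_{+,\ell}$, which the paper treats as an immediate consequence of the second claim of Proposition~\ref{prop:2hp,1}; your extra care there is harmless and arguably a useful supplement.
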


\paragraph{Proof of Theorem~\ref{theo:3hp,1}.}
From the second claim of Proposition~\ref{prop:2hp,1} one easily
concludes each of the sequences
$\left(\hat{\Omega}^k_{\pm,\ell}\right)_{k=0}^{\infty}$ is a
non-decreasing sequence of simply-connected domain, symmetric with
respect to $\RR$ and contained in the appropriate component of the
domain of $G_{\ell}$. If we set
\[ \hat{\Omega}_{\pm,\ell} : = \bigcup_{k=0}^{\infty}
\hat{\Omega}^k_{\pm,\ell} \]
then the only claim of Theorem~\ref{theo:3hp,1} which is not obvious
concerns the maps being coverings.

We will use the criterion of Fact~\ref{fa:3ha,1}. Fix
$z\in\varPi^{\infty}_{\ell}$ and let $U$ be its simply-connected
neighborhood compactly contained in $\varPi^{\infty}_{\ell}$ and hence
bounded. The $U\subset \varPi^{k_0}_{\ell}$ for some finite
$k_0$. Then pick a preimage $u$ of $z$ which is also contained in some
$\hat{\Omega}_{\pm,\ell}^{k_1}$ with $k_1$ finite. Then by
Proposition~\ref{prop:2hp,1} there is an inverse branch of
$\mathbf{\phi}^{-1}_{\pm,\ell} :\: U \rightarrow
\hat{\Omega}^{\max(k_0,k_1)}_{\pm,\ell}$. Its range obviously avoids
the set $\phi_{\pm,\ell}^{-1}\bigl( \{ \log \tau_{\ell}^{2j} :\:
  j=0,1,\cdots\}\bigr)$. So, the condition of Fact~\ref{fa:3ha,1} is
  satisfied.

  \subsection{Special considerations for $\ell$ finite}
  We will write for $0\leq k\leq \infty$
  \[ \mathit{P}_{\ell}^{k} := \CC\setminus \left( \left\{ \tau_{\ell}^{2j} :\: 0\leq j<k\right\} \cup [\tau^{2k}_{\ell},+\infty)\right) .\]
\paragraph{Coverings and slits.}
  From Theorem~\ref{theo:3hp,1} we derive the following corollary.
  \begin{coro}\label{coro:6ha,1}
    For $\sigma=\pm$ mappings
    \begin{equation*}
      \exp(\phi_{\sigma,\infty}) :\: \hat{\Omega}_{\sigma,\infty}\setminus \left(\phi_{\sigma,\infty}\right)^{-1}\bigl( \{ \tau^{2j}_{\infty} :\: j=0,1,\cdots \}\bigr)  \rightarrow  \mathit{P}^{\infty}_{\infty}
    \end{equation*}
    are coverings and their domains are contained in
    $\tau_{\infty}\Omega_{-,\infty}$.
  \end{coro}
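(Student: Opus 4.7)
The plan is to derive the corollary by post-composing the covering from Theorem~\ref{theo:3hp,1} with the universal cover $\exp:\CC\to\CC^{*}$. That theorem gives us, for $\ell=\infty$ and $\sigma=\pm$, that
\[ \phi_{\sigma,\infty}:\ \hat{\Omega}_{\sigma,\infty}\setminus\phi_{\sigma,\infty}^{-1}\bigl(\{\log\tau_{\infty}^{2j}:\ j\geq 0\}\bigr) \rightarrow \varPi^{\infty}_{\infty} \]
is a covering, and both inclusions $\hat{\Omega}_{-,\infty}\subset \tau_{\infty}\Omega_{-,\infty}$ and $\hat{\Omega}_{+,\infty}\subset \tau_{\infty}\Omega_{-,\infty}$ are recorded there, so the containment claim of the corollary is immediate. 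The task is thus reduced to showing that postcomposing with $\exp$ turns this into a covering onto $\mathit{P}^{\infty}_{\infty}$.

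Next I would exploit that $\exp$ is itself a covering onto $\CC^{*}$ with fibre $\log\tau_{\infty}^{2j}+2\pi i\ZZ$ above each $\tau_{\infty}^{2j}$. The set $\varPi^{\infty}_{\infty}=\CC\setminus\{\log\tau_{\infty}^{2j}:\ j\geq 0\}$ removes only one point from each such fibre, so restricting $\exp$ further to
\[ \varPi^{\infty}_{\infty}\setminus\bigl\{\log\tau_{\infty}^{2j}+2\pi i k:\ j\geq 0,\ k\in\ZZ\setminus\{0\}\bigr\} \]
produces a covering onto $\CC^{*}\setminus\{\tau_{\infty}^{2j}:\ j\geq 0\}$, which coincides with $\mathit{P}^{\infty}_{\infty}$ because $0$ is never in the image of $\exp$. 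The cleanest verification is via the lifting criterion of Fact~\ref{fa:3ha,1}: any simply-connected, relatively compact $W\subset \CC^{*}\setminus\{\tau_{\infty}^{2j}\}$ stays away from a neighbourhood of every $\tau_{\infty}^{2j}$, so every local inverse branch of $\exp$ extends holomorphically on $W$ and, by construction, lands in the reduced total space.

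Composing the two coverings will then give $\exp\circ\phi_{\sigma,\infty}$ as a covering onto $\mathit{P}^{\infty}_{\infty}$ once one removes its full preimage of $\{\tau_{\infty}^{2j}\}$, which is what the statement denotes by $\phi_{\sigma,\infty}^{-1}(\{\tau_{\infty}^{2j}\})$, with the preimage read through the composite map. The one delicate point will be the bookkeeping of branch fibres: Theorem~\ref{theo:3hp,1} removes only the real lifts $\log\tau_{\infty}^{2j}$ from the $\phi_{\sigma,\infty}$-domain, whereas the composition requires the whole $2\pi i$-periodic fibres to be removed. These additional points are precisely the $\phi_{\sigma,\infty}$-preimages of the non-real lifts of $\tau_{\infty}^{2j}$ that Theorem~\ref{theo:3hp,1} still keeps in the domain, and matching up the two removals is the only step in the argument requiring genuine care; everything else follows by functoriality of the covering property.
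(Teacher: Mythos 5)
Your proposal is correct and follows essentially the same route as the paper, whose entire proof is the two-line citation of Theorem~\ref{theo:3hp,1} for the covering property and the inclusions together with the criterion of Fact~\ref{fa:3ha,1} to see that post-composition with $\exp$ preserves the covering property. Your extra care about removing the full $2\pi i$-periodic fibres of $\exp$ over the points $\tau_{\infty}^{2j}$ (rather than only the real lifts $\log\tau_{\infty}^{2j}$), and your remark that $0$ never lies in the image, are points the paper's terse proof and slightly loose notation leave implicit, but they do not change the argument.
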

  \begin{proof} The mappings
    are seen to be coverings by Theorem~\ref{theo:3hp,1} using the criterion of Fact~\ref{fa:3ha,1}. The inclusion of domains also follows from Theorem~\ref{theo:3hp,1}.
  \end{proof}

  However, an analogous statement for finite $\ell$ instead of
  $\infty$ would be false for two reasons. First, the inclusion of
  domains would fail since $\hat{\Omega}_{+,\ell}$ extends to the
  right of $\tau_{\ell}x_{0,\ell}$ and secondly $\exp$ is not a
  covering of the plane by a vertical strip. We will now address these
  difficulties one by one.

  \paragraph{Restricting the domain of $H_{+,\ell}$.}
\begin{lem}\label{lem:6ha,1}
   Suppose that $\mathfrak{S}$ is either $(-\infty,0]$ or
   $[0,+\infty)$.
    Suppose $u_0\in\Omega_{+,\ell}$ is mapped  into
    \[ \mathbf{\Pi} := \{ z\in \varPi^{\infty}_{\ell}\setminus\mathfrak{S} :\: |\Im z|<\pi\} \]
   by $\phi_{+,\ell}$.
   Then there is a
   covering of $\mathbf{\Pi}$ by the analytic continuation of $\phi_{+,\ell}$
   restricted to a domain which is contained in $\hat{\Omega}_{+,\ell} \setminus
    [\tau_{\ell} x_{0,\ell},\infty)$ and contains $u_0$.
   \end{lem}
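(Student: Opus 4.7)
The strategy is to define $\hat V$ as the connected component containing $u_0$ of an appropriate preimage in $\hat{\Omega}_{+,\ell}\setminus[\tau_\ell x_{0,\ell},\infty)$, namely
\[ \hat V := \text{component at } u_0 \text{ of } \bigl\{u\in\hat{\Omega}_{+,\ell}\setminus[\tau_\ell x_{0,\ell},\infty):\phi_{+,\ell}(u)\in\mathbf{\Pi}\bigr\}, \]
and then to verify that $\phi_{+,\ell}|_{\hat V}:\hat V\to\mathbf{\Pi}$ is a covering by means of the criterion of Fact~\ref{fa:3ha,1}. This is legitimate because $u_0\in\Omega_{+,\ell}$ lies off $[\tau_\ell x_{0,\ell},\infty)$ and $\phi_{+,\ell}(u_0)\in\mathbf{\Pi}$ by hypothesis.

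As a preliminary step I would pin down the action of $\phi_{+,\ell}$ on the forbidden ray. Applying the functional equation $\phi_{+,\ell}=\phi_{-,\ell}\circ G_\ell+2\log\tau_\ell$ at $z=\tau_\ell x_{0,\ell}$, with $G_\ell(\tau_\ell x_{0,\ell})=H_\ell(x_{0,\ell})=0$ and $\phi_{-,\ell}(0)=\log(\tau_\ell^{-2}H_{-,\ell}(0))=-2\log\tau_\ell$ (using the normalization $H_\ell(0)=1$), one gets $\phi_{+,\ell}(\tau_\ell x_{0,\ell})=0$. Real-analyticity and strict monotonicity of $\phi_{+,\ell}$ on the real interval $\hat{\Omega}_{+,\ell}\cap\RR$ then imply that $\phi_{+,\ell}$ sends $[\tau_\ell x_{0,\ell},\infty)\cap\hat{\Omega}_{+,\ell}$ into $[0,+\infty)$ with a unique real preimage for each value.

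To verify the covering criterion, fix $v\in\mathbf{\Pi}$, a simply-connected $W\ni v$ with $\overline W$ compact in $\mathbf{\Pi}$, and $u\in\hat V$ with $\phi_{+,\ell}(u)=v$. Theorem~\ref{theo:3hp,1} supplies a univalent inverse branch $\psi:W\to\hat{\Omega}_{+,\ell}$ with $\psi(v)=u$; since $\psi(W)$ is connected, simply-connected and contains $u\in\hat V$, everything reduces to showing $\psi(W)\cap[\tau_\ell x_{0,\ell},\infty)=\emptyset$. In the case $\mathfrak{S}=[0,+\infty)$ this is immediate: any real point of $\psi(W)$ in $[\tau_\ell x_{0,\ell},\infty)$ would map, by the preliminary, into $[0,+\infty)$, contradicting $\phi_{+,\ell}\circ\psi=\mathrm{id}_W$ and $W\cap[0,+\infty)=\emptyset$.

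The principal obstacle is $\mathfrak{S}=(-\infty,0]$, where $[0,+\infty)\cap\mathbf{\Pi}\neq\emptyset$ and the direct argument above fails. Here the hypothesis $\phi_{+,\ell}(u_0)\in\mathbf{\Pi}$ forces $u_0\notin\RR$ (else $\phi_{+,\ell}(u_0)\in(-\infty,0]\subset\mathfrak{S}$); assume without loss of generality $u_0\in\HH_+$. Then $\hat V$ contains no real points at all: any real point of $\hat{\Omega}_{+,\ell}\setminus[\tau_\ell x_{0,\ell},\infty)$ lies in $(x_{0,\ell},\tau_\ell x_{0,\ell})$, where by the preliminary $\phi_{+,\ell}$ takes values in $(-\infty,0)\subset\mathfrak{S}$. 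Consequently $\hat V\subset\HH_+$. The real-symmetry of $\phi_{+,\ell}$ and the positivity of its real derivative then force the inverse branch $\psi$ through $u\in\HH_+$ to remain in $\HH_+$, and hence off $\RR\supset[\tau_\ell x_{0,\ell},\infty)$. The most delicate step will be to justify this half-plane confinement of $\psi(W)$ throughout the (non-simply-connected) target $\mathbf{\Pi}$: one must track how $\psi$ lifts loops in $W$ around the punctures $2j\log\tau_\ell$, using the local orientation-preserving biholomorphism of $\phi_{+,\ell}$ near $\hat{\Omega}_{+,\ell}\cap\RR$ and exploiting the fact that any forced crossing of $\RR$ by $\psi$ would land in $[\tau_\ell x_{0,\ell},\infty)$, which has already been excluded.
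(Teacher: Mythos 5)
Your preliminary computation of the image of the forbidden ray is wrong, and the error lands exactly on the case you declare ``immediate''. You correctly get $\phi_{+,\ell}(\tau_{\ell}x_{0,\ell})=0$, but the continuation of $\phi_{+,\ell}$ is \emph{not} strictly monotone on $\hat{\Omega}_{+,\ell}\cap\RR$: differentiating the fixed-point equation gives $DH_{\ell}(\tau_{\ell}x_{0,\ell})=DH_{\ell}(0)\,DH_{\ell}(x_{0,\ell})=0$, so $\tau_{\ell}x_{0,\ell}$ is a critical point of the continued $H_{\ell}$ and $\phi_{+,\ell}$ attains a maximum value $0$ there. Concretely, for real $z>\tau_{\ell}x_{0,\ell}$ in $\hat{\Omega}_{+,\ell}$ one has $G_{\ell}(z)\in(0,x_{0,\ell})$ and $\phi_{+,\ell}(z)=\phi_{-,\ell}(G_{\ell}(z))+2\log\tau_{\ell}=\log H_{\ell}(G_{\ell}(z))\in(-\infty,0)$, since $0<H_{\ell}<1$ on $(0,x_{0,\ell})$. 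So $[\tau_{\ell}x_{0,\ell},\infty)\cap\hat{\Omega}_{+,\ell}$ is mapped into $(-\infty,0]$, not into $[0,+\infty)$. When $\mathfrak{S}=[0,+\infty)$ the negative half-line is \emph{contained} in $\mathbf{\Pi}$, so a point of $\psi(W)$ on the forbidden ray would be a perfectly legitimate preimage of a point of $W$; no contradiction with $W\cap[0,+\infty)=\emptyset$ arises, and your argument for this case collapses. This is in fact the case where the paper does real work: it uses that $\mathbf{\Pi}$ is then simply connected, so the covering is univalent, and a path-lifting argument to show that the unique preimage of a negative real number lies in $\overline{\Omega}_{\ell}$, whose real trace $[y_{\ell},\tau_{\ell}x_{0,\ell}]$ misses $(\tau_{\ell}x_{0,\ell},+\infty)$ (with $\tau_{\ell}x_{0,\ell}$ itself excluded because its image $0$ is not in $\mathbf{\Pi}$). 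To repair your proof you need an argument of this monodromy type, or the observation that the inverse branch through $u_0\in\Omega_{+,\ell}$ is precisely $\left(\phi_{+,\ell}|_{\Omega_{+,\ell}}\right)^{-1}$, whose range stays in $\Omega_{+,\ell}$.

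Ironically, the case $\mathfrak{S}=(-\infty,0]$, which you treat as the principal obstacle, is the easy one, and your key observation there --- that every real point of $\hat{\Omega}_{+,\ell}$ is sent by $\phi_{+,\ell}$ into $(-\infty,0)\subset\mathfrak{S}$ --- is correct and is exactly the paper's argument. Since the full $\phi_{+,\ell}$-preimage of $\mathbf{\Pi}$ in $\hat{\Omega}_{+,\ell}$ is then disjoint from $\RR$, the covering from Theorem~\ref{theo:3hp,1} restricts directly and there is nothing to ``confine'' to a half-plane; the elaborate discussion of lifting loops around the punctures $2j\log\tau_{\ell}$ is unnecessary.
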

   \begin{proof}
   Recall that $\phi_{+,\ell} = \phi_{-,\ell} \circ G_{\ell} +
   2\log\tau_{\ell}$. Then $G_{\ell}$ maps $\hat{\Omega}_{+,\ell}\cap\RR =
   (x_0,\tau^2_{\ell} x_0)$ into $(0,\tau_{\ell})$. Subsequently, $(0,\tau_{\ell})$ is transformed by
   $\phi_{-,\ell}+2\log\tau_{\ell}$, inasmuch as it fits into its
   domain, into $(-\infty,0)$. Hence, when $\mathfrak{S} = (-\infty,0]$
    the domain of the covering does not intersect $\RR$ and the claim follows.

    When $\mathfrak{S}=[0,+\infty)$, the $\mathbf{\Pi} = \{ z\in\CC :\: |\Im z|<\pi\} \setminus [0,+\infty)$ is simply connected and
        the covering is univalent.
        Clearly, $\phi_{+,\ell}(u_0)$ can be connected to any point
        $x$ in the negative half-line by a path inside $\mathbf{\Pi}$
        which is otherwise disjoint from $\RR$. The lifting of this
        path by $\phi_{+,\ell}$ avoids $\partial\Omega_{\ell}$ except
        for the endpoint which is the preimage of $x$ by the
        covering. Then this preimage must be in the closure of $\Omega_{\ell}$
        which avoids $(\tau_{\ell}x_0,+\infty)$. Neither is
        $\tau_{\ell}x_{0,\ell}$ possible as the preimage, since it
        goes to $-\infty$ by $\phi_{+,\ell}$.
   \end{proof}

   \paragraph{Covering slit domains by $\tau^{-n}_{\ell}H_{\pm,\ell}$.}
   Now we will address the second difficulty.

   \begin{prop}\label{prop:6hp,1}
   Suppose that $\ell>0$ is even or infinite.
   Let $V$ be any one of $V^+,V^-,V^{\circ}$ where
   \begin{align*}
   V^-  := & \CC\setminus \bigl( \{ \tau^{2j}_{\ell} :\: j=1,\cdots\}
   \cup (-\infty,1] \bigr)\\
 V^+  := & \CC \setminus [0,+\infty)\\
   V^{\circ} := & \CC\setminus \bigl( [1,+\infty) \cup (-\infty,0] \bigr)
   \end{align*}
Let $\sigma=\pm$, $u_0\in\Omega_{\sigma,\ell}$ and suppose
$\exp\left(\phi_{\sigma,\ell}(u_0)\right) \in V$.

Then there is a domain $U, u_0\in U$ such that $\exp\phi_{\sigma,\ell} :\: U
\rightarrow V$ is a covering. $U$ is contained in
     \begin{itemize}
     \item
       $\hat{\Omega}_{-,\ell}\cup\Omega_{+,\ell} \setminus
       [x_{0,\ell},+\infty) $ when $\sigma=-$,
     \item
       $\hat{\Omega}_{+,\ell}\cup\Omega_{-,\ell} \setminus
       \bigl( (-\infty,x_{0,\ell}] \cup
       [\tau_{\ell}x_{0,\ell},+\infty) \bigr)
         $ when $\sigma=+$,
       \item
         $\CC\setminus\RR$ provided that $|\Im\phi_{\sigma,\ell}(u_0)|\geq\pi$.
     \end{itemize}
   \end{prop}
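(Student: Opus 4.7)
The plan is to apply the covering criterion of Fact~\ref{fa:3ha,1}. Given a simply-connected neighborhood $W$ of $v\in V$ compactly contained in $V$ and a candidate preimage $u$ of $v$ under $\exp\phi_{\sigma,\ell}$, I will construct a univalent inverse branch $\omega:W\to\CC$ sending $v$ to $u$, and take $U$ to be the connected component of $u_0$ in the union of the ranges of all such $\omega$.

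The construction factors through $\log$ and then an inverse branch of $\phi_{\sigma,\ell}$. Since $W\subset V\subset\CC^*$ is simply connected, fix the branch $L$ of $\log$ on $W$ with $L(v)=\phi_{\sigma,\ell}(u)$. One then verifies for each choice of $V$ that $L(W)$ lies in the image of an available univalent continuation of $\phi_{\sigma,\ell}$: for $V^\circ$, $L(W)\subset\{|\Im z|<\pi\}\setminus[0,+\infty)=\varPi^0_\ell\subset\varPi^\infty_\ell$ and Theorem~\ref{theo:3hp,1} suffices; for $V^-$, $L(W)\subset\{|\Im z|<\pi\}$ with only the points $\{2j\log\tau_\ell\}_{j\geq 1}$ and $(-\infty,0]$ deleted, so $L(W)\subset\varPi^\infty_\ell$; for $V^+$, $L$ takes values in some horizontal substrip of height $2\pi$ which fits $\varPi^\infty_\ell$ only when $\ell\geq 4$, and for $\ell=2$ one must instead use the wider continuations of Fact~\ref{fa:6hp,1}, whose ranges $\{0<\Im z<\ell\pi\}$ and $\{-\ell\pi<\Im z<0\}$ accommodate each needed substrip. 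Once $L(W)$ lies in the appropriate target, Fact~\ref{fa:3ha,1} applied to $\phi_{\sigma,\ell}$ supplies an inverse branch on $L(W)$ sending $L(v)$ to $u$, and composition with $L$ produces $\omega$.

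The containment claims are then verified case by case. The summand $\hat{\Omega}_{\sigma,\ell}$ accounts for inverse branches coming from Theorem~\ref{theo:3hp,1}, while the summand $\Omega_{\mp,\ell}$ accounts for those from Fact~\ref{fa:6hp,1}, since its continuations onto $\Omega_\ell\cap\HH_\pm$ reach into $\Omega_{\mp,\ell}\cap\HH_\pm$. The excluded half-lines $[x_{0,\ell},+\infty)$ for $\sigma=-$ and $(-\infty,x_{0,\ell}]\cup[\tau_\ell x_{0,\ell},+\infty)$ for $\sigma=+$ are the real points at which no consistent inverse branch is available: extensions of $\phi_{\sigma,\ell}$ from $\HH_+$ and $\HH_-$ disagree there, or else $\hat{\Omega}_{+,\ell}$ continues past $\tau_\ell x_{0,\ell}$ into a region where $\exp\phi_{+,\ell}$ enters the slit defining $V$, as noted in the discussion preceding the statement. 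The third bullet follows from real-symmetry: if $|\Im\phi_{\sigma,\ell}(u_0)|\geq\pi$ then for $W$ small around $v_0=\exp\phi_{\sigma,\ell}(u_0)$ the branch $L$ satisfies $|\Im L|>0$ throughout $W$, so a hypothetical real $w\in\omega(W)$ would give $\phi_{\sigma,\ell}(w)$ real and hence $\Im L(\exp\phi_{\sigma,\ell}(w))=0$, a contradiction. The main obstacle will be the case-by-case identification of which continuations are needed and the precise real-axis exclusions; this parallels the analysis of Lemma~\ref{lem:6ha,1} and combines the functional equation $\phi_{+,\ell}=\phi_{-,\ell}\circ G_\ell+2\log\tau_\ell$ with the geometry of $\Omega_{\pm,\ell}$ from Fact~\ref{fa:1hp,3}.
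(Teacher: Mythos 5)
Your overall route---reduce everything to the analytic continuations of $\phi_{\sigma,\ell}$ via a branch of $\log$, invoke Theorem~\ref{theo:3hp,1} when the log-image sits in $\varPi^{\infty}_{\ell}$ and Fact~\ref{fa:6hp,1} when a full $2\pi$-strip is needed, and check the criterion of Fact~\ref{fa:3ha,1}---is the same as the paper's. The paper organizes the cases by whether $|\Im\phi_{\sigma,\ell}(u_0)|\geq\pi$ or $V=V^+$ (then $U$ is simply the preimage of one horizontal $2\pi$-strip under the extension of Fact~\ref{fa:6hp,1}, so the covering is univalent and $U\subset\Omega_{\ell}\cap\HH_{\pm}$ automatically), versus the remaining case $|\Im\phi_{\sigma,\ell}(u_0)|<\pi$, $V\in\{V^-,V^{\circ}\}$, where $\log V\subset\mathbf{\Pi}$ and $U$ is taken inside the covering domain of $\mathbf{\Pi}$ furnished by Theorem~\ref{theo:3hp,1}. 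So the existence of the covering is fine in your write-up.

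The gap is in the containment claims, which are where the proposition has content beyond Theorem~\ref{theo:3hp,1}. First, you build $U$ as a component of the union of ranges of inverse branches taken over all base points and all candidate preimages; those branches come from two different continuations ($\hat{\Omega}_{\sigma,\ell}$ versus $\Omega_{\ell}\cap\HH_{\pm}$), and you never verify that $\exp\phi_{\sigma,\ell}$ is single-valued on the resulting set---the paper avoids this by keeping $U$ inside a single continuation domain in each case. Second, and more seriously, your justification for excluding $[x_{0,\ell},+\infty)$ (for $\sigma=-$) and $(-\infty,x_{0,\ell}]\cup[\tau_{\ell}x_{0,\ell},+\infty)$ (for $\sigma=+$) is the heuristic that ``no consistent inverse branch is available'' there; that is not an argument. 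What is actually needed, and what the paper uses, is Lemma~\ref{lem:6ha,1}: its path-lifting argument shows that the component of the covering domain of $\mathbf{\Pi}$ containing $u_0$ has its real trace inside $\overline{\Omega}_{\ell}$, hence misses $[\tau_{\ell}x_{0,\ell},+\infty)$ even though $\hat{\Omega}_{+,\ell}$ itself extends past $\tau_{\ell}x_{0,\ell}$. You cite that lemma only as something your analysis ``parallels'' and explicitly defer the exclusions as ``the main obstacle,'' so this part of the proposition remains unproved. (A smaller inaccuracy: it is not only $\ell=2$ that forces recourse to Fact~\ref{fa:6hp,1} for $V^+$; whenever a preimage $u$ has $\Im\phi_{\sigma,\ell}(u)$ close to $\pm\ell\pi/2$, the $2\pi$-strip through it leaves $\varPi^{\infty}_{\ell}$.)
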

   \begin{proof}
   If $|\Im\phi_{\sigma,\ell}(u_0)|\geq\pi$, then
   $\phi_{\sigma,\ell}(u_0)$ belongs to a horizontal strip of width
   $2\pi$ which is mapped by $\exp$ univalently onto $V^+$ or
   $\CC\setminus (-\infty,0]$ which contains both $V^-, V^{\circ}$.
   $U$ is the preimage of the strip by $\phi_{\sigma,\ell}$ which may
       require the use of the extension from
       Fact~\ref{fa:6hp,1}. Accordingly, $U\subset\Omega_{\ell}\cap
       \HH_{\pm}$. This implies the inclusions postulated by
       Proposition~\ref{prop:6hp,1} and the covering in this case is
       just a univalent map.

    Similar case is $V=V^+$ since here again $\phi_{\sigma,\ell}(u_0)$
    is contained in a horizontal strip which is mapped onto $V^+$ by
    $\exp$ and $U$ is constructed in the same way.

   The remaining case is when $\left|
   \Im\phi_{\sigma,\ell}(u_0) \right| < \pi$
   and $V=V^-,V^{\circ}$ which implies $V\subset \CC\setminus
   (-\infty,0]$.   Then $V$ is equivalent by $\mathbf{\log}$ to a subset of
   $\mathbf{\Pi} := \{ z\in \varPi^{\infty}_{\ell} :\: |\Im z|<\pi$ and $U$ is chosen
 as a subset of $U'$, where $U'$
   covers $\mathbf{\Pi}$ by $\phi_{\sigma,\ell}$.
   By Theorem~\ref{theo:3hp,1} such a covering by $\phi_{\sigma,\ell}$
   exists with $U' \subset \hat{\Omega}_{\sigma,\ell}$. Moreover,
   by Lemma~\ref{lem:6ha,1}, $U'$ does not intersect
   $[\tau_{\ell}x_{0,\ell},+\infty)$.
   \end{proof}

\subsection{Convergence estimates.}
Fact~\ref{fa:1hp,2} states almost uniform convergence of mappings
$\phi_{\pm,\ell}^{-1}$. That is not good enough for our purposes.
The goal is the following statement.

\begin{prop}\label{prop:11jp,1}
Mappings $\phi_{\pm,\ell}^{-1}$ converge to $\phi^{-1}_{\pm,\infty}$
uniformly, i.e.
\[ \lim_{\ell\rightarrow\infty} \sup \bigl\{
\left|\phi^{-1}_{\pm,\ell}(\zeta)-\phi^{-1}_{\pm,\infty}(\zeta)\right|
:\: |\Im\zeta|<\frac{\ell\pi}{2},\, \zeta\notin [0,+\infty) \bigr\} =
  0 .\]
  Additionally,
  \[ \lim_{|\zeta|\rightarrow\infty} \phi^{-1}_{\pm,\infty}(\zeta) =
  x_{0,\infty} \; .\]
\end{prop}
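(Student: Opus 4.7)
The plan has two parts: first the pointwise limit $\phi^{-1}_{\pm,\infty}(\zeta)\to x_{0,\infty}$, then the uniform convergence, which I would attack by contradiction.

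For the pointwise limit: $\varPi_\infty=\CC\setminus[0,+\infty)$ is simply connected on the Riemann sphere, with $\infty$ a simple boundary point (every spherical neighborhood of $\infty$ meets $\overline{\varPi_\infty}$ in a simply connected set), hence a single prime end. Since $\Omega_{\pm,\infty}$ is a Jordan domain, Carath\'eodory's theorem gives $\phi^{-1}_{\pm,\infty}$ a continuous prime-end extension into $\overline{\Omega_{\pm,\infty}}$, so $\infty$ has a unique image $x_*\in\partial\Omega_{\pm,\infty}$. To identify $x_*=x_{0,\infty}$, I would approach $\infty$ along the negative real axis: $\Re\phi_{\pm,\infty}(z)\to-\infty$ forces $|H_\infty(z)|\to 0$, and by Fact~\ref{fa:1hp,1} (together with the Jordan structure of $\overline{\Omega_{\pm,\infty}}$) the only point where $H_\infty$ vanishes on the boundary is $x_{0,\infty}$.

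For the uniform convergence, suppose otherwise and extract $\ell_n\to\infty$, $\zeta_n\in\varPi_{\ell_n}$ with $|\phi^{-1}_{\pm,\ell_n}(\zeta_n)-\phi^{-1}_{\pm,\infty}(\zeta_n)|\geq\varepsilon_0$. Pass to a subsequence with $\zeta_n\to\zeta_*$ in the spherical closure of $\varPi_\infty$. An interior limit is ruled out immediately by compact convergence in Fact~\ref{fa:1hp,2}. A limit on the open slit $(0,+\infty)$ is handled by a Carath\'eodory boundary argument: the slit is common to all $\varPi_\ell$, and the uniform convergence $H_\ell\to H_\infty$ on compact subsets of $\Omega_\infty$ away from $x_{0,\infty}$ forces the continuous boundary extensions of $\phi^{-1}_{\pm,\ell}$ to converge on any compact subarc of the slit.

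The main obstacle is $\zeta_*=\infty$. The first part yields $\phi^{-1}_{\pm,\infty}(\zeta_n)\to x_{0,\infty}$ and I need the analogue for $\phi^{-1}_{\pm,\ell_n}(\zeta_n)$. Here I would exploit the iterated functional equation derivable from Fact~\ref{fa:1hp,4}, $\phi_{-,\ell}\circ G_\ell^k=\phi_{-,\ell}-k\log\tau_\ell$, to exchange horizontal travel in $\varPi_\ell$ for $G_\ell$-iterates. In the model case $\Re\zeta_n\to-\infty$, write $\zeta_n=\eta_n-k_n\log\tau_{\ell_n}$ with $\eta_n$ in a fixed compact subset of $\varPi_\infty$ and $k_n\to\infty$; then $\phi^{-1}_{-,\ell_n}(\zeta_n)=G_{\ell_n}^{k_n}\bigl(\phi^{-1}_{-,\ell_n}(\eta_n)\bigr)$ and the attracting dynamics of $G_{\ell_n}$ at $x_{0,\ell_n}$, combined with compact convergence $\phi^{-1}_{-,\ell_n}\to\phi^{-1}_{-,\infty}$ and $G_{\ell_n}\to G_\infty$, drives the iterate to $x_{0,\ell_n}\to x_{0,\infty}$. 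Other directions of escape ($\Re\zeta_n\to+\infty$ or $|\Im\zeta_n|\to\infty$) are reduced to this model case using the analytic continuations of Proposition~\ref{prop:2hp,1} on the appropriate branches. The truly delicate point will be making the dynamical attraction uniform in $\ell$, since the rate at $x_{0,\ell}$ degenerates to neutral at $\ell=\infty$; this forces one to control the choice of $k_n$ (keeping $\eta_n$ in one fixed compact) and to lean on the universal bound $\Omega_\ell\subset D(0,\tau_\ell)$ from Fact~\ref{fa:1hp,3}.
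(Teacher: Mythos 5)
Your overall architecture---establish the limit at $\infty$ first, then prove uniform convergence by a compactness/contradiction argument, using the functional equation $\phi^{-1}_{\pm,\ell}(\zeta-\log\tau_{\ell}^2)=G_{\ell}^2\circ\phi^{-1}_{\pm,\ell}(\zeta)$ to trade horizontal translation for $G_{\ell}$-iteration---matches the paper's in spirit, and your prime-end argument for $\lim_{|\zeta|\rightarrow\infty}\phi^{-1}_{\pm,\infty}(\zeta)=x_{0,\infty}$ is a reasonable soft alternative (it does assume $\partial\Omega_{\pm,\infty}$ is locally connected, which is nowhere stated). The first real gap is that the step you flag as ``truly delicate'' is the entire content of the proposition, and your outline does not resolve it. In the model case you need $G_{\ell_n}^{2k_n}(z_n)\rightarrow x_{0,\infty}$ for $z_n$ in a fixed compact set and $k_n\rightarrow\infty$ along a diagonal sequence; since the multiplier at $x_{0,\ell}$ tends to $1$, no soft absorbing-neighborhood argument survives the limit, and ``controlling the choice of $k_n$'' is not a mechanism. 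The paper's mechanism is not attraction at all: it combines (i) the uniform lower bound $|G_{\ell}^2(z)-z|\geq\varepsilon(r)$ on the annulus $r\leq|z-x_{0,\infty}|\leq R_0$, extracted from the cubic term of $G_{\ell}^2-\mathrm{id}$ (Lemma~\ref{lem:17hp,1}); (ii) the hyperbolic-metric estimate that $|G_{\ell}^2\circ\phi^{-1}_{\pm,\ell}(\zeta)-\phi^{-1}_{\pm,\ell}(\zeta)|$ is small whenever $\dist(\zeta,[0,+\infty))$ is large, uniformly in $\ell$ (Lemma~\ref{lem:10jp,1}); and (iii) an open--closed connectedness argument (Lemma~\ref{lem:10jp,2}) showing that the image of the far-from-slit region cannot cross the annulus and is therefore trapped in $D(x_{0,\infty},r)$. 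Some quantitative input of this kind is indispensable.

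The second gap is that the reduction of the case $\Re\zeta_n\rightarrow+\infty$ to the model case is incorrect. Moving right in $\zeta$ corresponds to iterating the inverse branches $\mathbf{G}^{-2}_{\pm,\ell}$, whose dynamics in $\HH_{\pm}$ is governed by the repelling period-two cycle $x_{\pm,\ell}$ of $G_{\ell}$ (attracting for those inverse branches), not by the fixed point $x_{0,\ell}$; the analytic continuations of Proposition~\ref{prop:2hp,1} enlarge the domains of $\phi_{\pm,\ell}$ but do not convert right-escape into left-escape. The paper needs a separate ingredient here---the wedge lemma (Lemma~\ref{lem:15hp,1}) combined with the uniform convergence of iterated inverse branches (Corollary~\ref{coro:11jp,1})---to show that $\phi^{-1}_{\pm,\ell}$ maps a right half-strip into $D(x_{0,\infty},r)$ (Lemma~\ref{lem:11jp,4}); your outline has no counterpart for this regime. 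Finally, the treatment of a limit point $\zeta_*$ on the open slit is only gestured at: compact convergence of $H_{\ell}$ does not by itself give boundary convergence of the inverses, and the paper instead shifts the bounded near-slit region left by a multiple of $\log\tau_{\ell}^2$ into a compact subset of $\varPi_{\infty}$, applies Fact~\ref{fa:1hp,2} there, and transports back with $\mathbf{G}_{\ell}^{-2k}$ using Corollary~\ref{coro:11jp,1}.
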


The proof will be achieved in a sequence of Lemmas.

\begin{lem}\label{lem:17hp,1}
There exists $R_0>0$ such that for every $0<r\leq R_0$ there are
$\varepsilon(r)>0$ and $\ell(r)<\infty$ such that for every
$\ell\geq\ell(r)$ even and every $z\in \overline{B(x_{0,\infty},R_0)}
\setminus B(x_{0,\infty},r) $ the estimate
$|G^2_{\ell}(z)-z| \geq \varepsilon(r)$ holds.
\end{lem}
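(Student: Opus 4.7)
The plan is to reduce the claim to the analogous statement for the limit map $G_\infty$ via uniform convergence $G_\ell^2\to G_\infty^2$ near $x_{0,\infty}$. Since $x_{0,\ell}$ is a fixed point of $G_\ell$ for every even $\ell$ and for $\ell=\infty$, the analytic function $G_\infty^2(z)-z$ vanishes at $x_{0,\infty}$; once I show this zero is isolated and the convergence holds uniformly on a fixed disk about $x_{0,\infty}$, the estimate follows by compactness and a $\delta/2$ argument.

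To obtain the uniform convergence I would write $G_\ell(z)=H_\ell(\tau_\ell^{-1}z)$ and note that for $z$ near $x_{0,\infty}$ the point $\tau_\ell^{-1}z$ is near $\tau_\infty^{-1}x_{0,\infty}$, which lies strictly inside $\Omega_{-,\infty}$ (on the real segment between $0$ and $x_{0,\infty}$). Its image $\phi_{-,\infty}(\tau_\infty^{-1}x_{0,\infty})=\log(\tau_\infty^{-2}x_{0,\infty})$ is a negative real number, in particular an interior point of $\varPi_\infty=\CC\setminus[0,+\infty)$ away from the slit. A small closed disk about this point is therefore a compact subset of $\varPi_\infty$ on which Fact~\ref{fa:1hp,2} yields uniform convergence of $\phi_{-,\ell}^{-1}$. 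Inverting gives uniform convergence of $\phi_{-,\ell}$, and hence of $H_{-,\ell}$, on a small disk about $\tau_\infty^{-1}x_{0,\infty}$; pulling back by $\tau_\ell^{-1}$ and composing once more yields $G_\ell^2\to G_\infty^2$ uniformly on a fixed complex neighborhood $N$ of $x_{0,\infty}$.

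For isolatedness of the zero I would invoke Fact~\ref{fa:1hp,4}: $G_\infty$ sends $\Omega_{-,\infty}$ univalently onto $\Omega_{+,\infty}$ and $\Omega_{+,\infty}$ onto $\Omega_{-,\infty}\setminus(-\infty,0]$, so $G_\infty^2$ maps $\Omega_{-,\infty}$ univalently into the proper subset $\Omega_{-,\infty}\setminus(-\infty,0]$ of itself. Hence $G_\infty^2$ is not the identity on the connected open set $\Omega_{-,\infty}$, the analytic function $G_\infty^2(z)-z$ is not identically zero there, and $x_{0,\infty}$ is an isolated zero; the same reasoning applied with the roles of the sides interchanged handles approach from $\Omega_{+,\infty}$.

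Choosing $R_0$ small enough that $\overline{B(x_{0,\infty},R_0)}\subset N$ contains no other zero of $G_\infty^2-\mathrm{id}$, the continuous function $|G_\infty^2(z)-z|$ attains a positive minimum $\delta(r)$ on the compact annulus $\overline{B(x_{0,\infty},R_0)}\setminus B(x_{0,\infty},r)$; by the uniform convergence one takes $\varepsilon(r):=\delta(r)/2$ for $\ell$ beyond some $\ell(r)$. The main technical point I foresee is the first step: one has to verify that the convergence of inverse branches of $\phi_{-,\ell}$ on compacts of $\varPi_\infty$ genuinely transfers to uniform convergence of the forward iterates $G_\ell^2$ on a full two-sided complex neighborhood of the pinch point $x_{0,\infty}$, bearing in mind that the arc $\mathfrak{w}_\ell$ through $x_{0,\ell}$ degenerates in the limit.
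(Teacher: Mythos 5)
Your proof is correct but follows a genuinely different route than the paper's. You argue qualitatively: establish that $G_\ell^2\to G_\infty^2$ uniformly on a fixed neighborhood of $x_{0,\infty}$, show $x_{0,\infty}$ is an isolated zero of $G_\infty^2-\mathrm{id}$, then take $\varepsilon(r)$ to be half the minimum of $|G_\infty^2(z)-z|$ on the compact annulus. The paper instead works directly with the Taylor expansion $G_\ell^2(z)-z=a_{0,\ell}+a_{1,\ell}\zeta+a_{2,\ell}\zeta^2+a_{3,\ell}\zeta^3+g_{4,\ell}(\zeta)\zeta^4$ (with $\zeta=z-x_{0,\infty}$), using that $a_{0,\ell},a_{1,\ell},a_{2,\ell}\to 0$ while $a_{3,\ell}\to a_{3,\infty}\neq 0$, so the cubic term dominates on $C(x_{0,\infty},r)$ for $r\le R_0$; this yields the explicit value $\varepsilon(r)=\tfrac{r^3}{4}|a_{3,\infty}|$. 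Your approach buys simplicity and avoids coefficient bookkeeping, at the price of an implicit $\varepsilon(r)$; the paper's buys an explicit cubic rate, although none of the downstream uses (Lemmas~\ref{lem:10jp,2},~\ref{lem:17ha,2}) actually require that rate, only positivity. The technical point you flag — that uniform convergence of the inverse branches $\phi_{-,\ell}^{-1}$ (Fact~\ref{fa:1hp,2}) must be transferred to uniform convergence of the forward iterates $G_\ell^2$ on a two-sided disk about the pinch point — is real but is resolved exactly by the observation you make: $G_\ell=H_\ell\circ\tau_\ell^{-1}$, and $\tau_\ell^{-1}z$ lands near $\tau_\infty^{-1}x_{0,\infty}$, an interior point of $\Omega_{-,\infty}$ well away from the cusp, so the pinching never enters; the argument-principle step that yields forward convergence from inverse convergence is carried out in the paper's own proof of Lemma~\ref{lem:18ha,1} and can be quoted verbatim. (One further small gap in your write-up: your isolatedness argument works on $\Omega_{-,\infty}$, which has $x_{0,\infty}$ only on its boundary; you should add one line noting that $G_\infty^2$ is analytic on a full disk about $x_{0,\infty}$ — for the same reason as above — so uniqueness of analytic continuation carries non-vanishing of $G_\infty^2-\mathrm{id}$ from $\Omega_{-,\infty}$ to that disk.)
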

\begin{proof}
For $\ell$ sufficiently large and $z$ in a neighborhood of
$x_{0,\infty}$ we can represent
\begin{equation}\label{equ:17hp,1}
    G^2_{\ell}(z) - z = a_{0,\ell} + a_{1,\ell}\zeta + a_{2,\ell}\zeta^2 +
    a_{3,\ell}\zeta^3 + g_{4,\ell}(\zeta)\zeta^4
\end{equation}
    where $\zeta:= z-x_{0,\infty}$, $a_{0,\ell},a_{1,\ell},a_{2,\ell}$ all
tend to $0$ as $\ell\rightarrow\infty$, $\lim_{\ell\rightarrow\infty}
a_{3,\ell} = a_{3,\infty} < 0$ and $g_{4,\ell}(\zeta)$ are a sequence
of analytic functions convergent in a neighborhood of $0$ to
$g_{4,\infty}$, see Theorem 2 in~\cite{leswi:feig}.

Then $R_0$ is chosen so that $C(x_{0,\infty},R_0)$ fits inside
the required neighborhoods and $\left|a_{3,\infty}\right| > 3R_0
\sup\{|g_{4,\infty}(\zeta) :\: |\zeta|\leq R_0\}$. For $\ell$ large
enough
\[ \left|a_{3,\ell}\right| > 2R_0 \sup\Bigl\{\bigl|g_{4,\ell}(\zeta)\bigr| :\: |\zeta|\leq R_0\Bigr\}. \]  This implies that
\[ |a_{3,\ell}\zeta^3 + g_{4,\ell}(\zeta)\zeta^4| >
\frac{1}{2}|a_{3,\ell}|r^3 \]
for $\zeta\in C(0,r)$, $r\leq R_0$.  Set $\varepsilon(r) = \frac{r^3}{4} |a_{3,\infty}|$. The proof is finished by
choosing $\ell(r)$ so that for all $\ell\geq \ell(r)$
\begin{align*}
  |a_{0,\ell}| + |a_{1,\ell}|R_0 + |a_{2,\ell}|R_0^2 < & \frac{\varepsilon(r)}{2}\\
  \frac{|a_{3,\ell}|}{|a_{3,\infty}|} > & \frac{3}{4} .
\end{align*}
\end{proof}

\begin{lem}\label{lem:10jp,1}
  \begin{multline*}
    \forall R_0,\epsilon>0\; \exists L(\epsilon),\ell(\epsilon)<\infty\\
    \forall \ell :\: \ell(\epsilon)\leq\ell\leq\infty\; \forall
    \zeta :\: |\Im\zeta|<\frac{\ell\pi}{2},\, \dist\left(\zeta, [0,+\infty)\right) > L(\epsilon) \\
      \left| G^2_{\ell}\circ\phi^{-1}_{\pm,\ell}(\zeta) - \phi^{-1}_{\pm,\ell}(\zeta) \right| < \epsilon .
  \end{multline*}
\end{lem}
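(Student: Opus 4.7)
The natural first step is to invoke the functional equations $\phi_{-,\ell} = \phi_{+,\ell}\circ G_\ell$ and $\phi_{+,\ell} = \phi_{-,\ell}\circ G_\ell + 2\log\tau_\ell$ from Fact~\ref{fa:1hp,4}, which combine to yield
\[ G_\ell^2 \circ \phi^{-1}_{\pm,\ell}(\zeta) \;=\; \phi^{-1}_{\pm,\ell}(\zeta - 2\log\tau_\ell). \]
The claim thus reduces to the oscillation estimate $|\phi^{-1}_{\pm,\ell}(\zeta - 2\log\tau_\ell) - \phi^{-1}_{\pm,\ell}(\zeta)| < \epsilon$ for the univalent $\phi^{-1}_{\pm,\ell}$ under a horizontal shift of uniformly bounded size $2\log\tau_\ell$. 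I would further reduce this to the pointwise condition $\phi^{-1}_{\pm,\ell}(\zeta)\in B(x_{0,\infty},r)$ for $r$ chosen so small that $|G_\ell^2(z)-z| < \epsilon$ whenever $z\in B(x_{0,\infty},r)$ and $\ell\geq \ell_1$; this last implication follows from the uniform convergence $G_\ell^2\to G_\infty^2$ on a neighbourhood of $x_{0,\infty}$ combined with $G_\infty^2(x_{0,\infty}) = x_{0,\infty}$.

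For $\ell = \infty$ this pointwise condition comes from a Carath\'eodory-type argument. Viewed in $\hat{\CC}$, $\varPi_\infty$ is the complement of the closed Jordan arc $[0,+\infty]\subset\hat{\CC}$, hence a simply connected domain whose prime-end boundary is a topological circle. The conformal isomorphism $\phi^{-1}_{\pm,\infty}:\varPi_\infty\to\Omega_{\pm,\infty}$ therefore extends continuously to this boundary with $\phi^{-1}_{\pm,\infty}(\infty) = x_{0,\infty}$, the value identified from the fact that $H_{\pm,\infty}$ extends continuously to $x_{0,\infty}$ with value $0$, whence $\phi_{\pm,\infty}(z)\to -\infty$ as $z\to x_{0,\infty}$. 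Since the hypothesis $\dist(\zeta,[0,+\infty))>L$ entails $|\zeta|>L$, the closeness of $\phi^{-1}_{\pm,\infty}(\zeta)$ to $x_{0,\infty}$ is uniform in $\zeta$ for $L$ large. For finite $\ell$ with $|\Im\zeta|$ bounded away from $\pm\ell\pi/2$, combining this with Fact~\ref{fa:1hp,2} (uniform convergence of $\phi^{-1}_{\pm,\ell}$ to $\phi^{-1}_{\pm,\infty}$ on compact subsets of $\varPi_\infty$) transfers the condition to $\phi^{-1}_{\pm,\ell}$ for all sufficiently large $\ell$.

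The main obstacle is the finite-$\ell$ regime with $|\Im\zeta|$ close to $\pm\ell\pi/2$, where $\zeta$ leaves every fixed compact of $\varPi_\infty$ and Fact~\ref{fa:1hp,2} cannot be applied directly. Here I would iterate the functional equation: the forward orbit $z_k := \phi^{-1}_{\pm,\ell}(\zeta - 2k\log\tau_\ell) = G_\ell^{2k}(z_0)$ remains in the bounded set $\Omega_{\pm,\ell}$ (the admissible region in $\zeta$ is preserved by leftward horizontal shifts) and converges to $x_{0,\ell}$ as $k\to\infty$, since $\Re(\zeta - 2k\log\tau_\ell)\to-\infty$ pushes $\phi^{-1}$ to the zero of $H_\ell$. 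By Lemma~\ref{lem:17hp,1}, as long as $z_k$ sits in the annulus $\overline{B(x_{0,\infty},R_0)}\setminus B(x_{0,\infty},r)$ and $\ell\geq\ell(r)$, consecutive iterates are separated by $\varepsilon(r)$, so at most $\diam(\Omega_{\pm,\ell})/\varepsilon(r)$ of them can lie in the annulus. A monotonicity argument using the attracting nature of $x_{0,\ell}$ for finite $\ell$ and parabolicity for $\ell=\infty$ then forces $z_0$ itself into $B(x_{0,\infty},r)$ provided $L$ is chosen large enough to ensure $\zeta$ lies past this uniformly bounded transient phase.
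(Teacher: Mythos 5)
Your reduction via the functional equation $G_{\ell}^2\circ\phi^{-1}_{\pm,\ell}(\zeta)=\phi^{-1}_{\pm,\ell}(\zeta-\log\tau_{\ell}^2)$ is the right starting point, but from there you head in a direction that both inverts the logical order of the section and contains two genuine gaps. You reduce the claim to the much stronger statement that $\phi^{-1}_{\pm,\ell}(\zeta)$ lies in a small disk around $x_{0,\infty}$ whenever $\dist(\zeta,[0,+\infty))$ is large; that statement is exactly Lemma~\ref{lem:10jp,2}, which the paper derives \emph{from} the present lemma (together with Lemma~\ref{lem:17hp,1}) by an open--closed connectedness argument. The two gaps: (a) for $\ell=\infty$, the Carath\'{e}odory step needs more than the prime-end boundary of $\varPi_{\infty}$ being a circle --- continuity of $\phi^{-1}_{\pm,\infty}$ up to the boundary requires local connectivity of $\partial\Omega_{\pm,\infty}$, or at least that the impression of the prime end at $\infty$ is the single point $\{x_{0,\infty}\}$; your observation that $\phi_{\pm,\infty}(z)\rightarrow-\infty$ as $z\rightarrow x_{0,\infty}$ along $\RR$ only shows $x_{0,\infty}$ belongs to that impression, not that it exhausts it. This is essentially the second claim of Proposition~\ref{prop:11jp,1}, which the paper only reaches at the end of the subsection. (b) In the final paragraph, the fact that consecutive iterates in the annulus are separated by at least $\varepsilon(r)$ does not bound the number of orbit points in the annulus by $\diam(\Omega_{\pm,\ell})/\varepsilon(r)$: a sequence with steps of size $\geq\varepsilon(r)$ can revisit a bounded region arbitrarily often unless some quantity is monotone along the orbit, and no such quantity is exhibited (``attracting for finite $\ell$, parabolic for $\ell=\infty$'' does not by itself give monotonicity of $|z_k-x_{0,\infty}|$, and the whole difficulty is uniformity in $\ell$ as the multiplier degenerates to $1$).

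The paper's proof avoids all of this and is purely metric: the hyperbolic distance between $\zeta$ and $\zeta-\log\tau_{\ell}^2$ in the slit strip $\{|\Im\zeta|<\frac{\ell\pi}{2}\}\setminus[0,+\infty)$ (or, for $\Im\zeta$ close to $\pm\frac{\ell\pi}{2}$, in the shifted strip $\{0<\Im\zeta<\ell\pi\}\setminus[0,+\infty)$ supplied by Fact~\ref{fa:6hp,1}) tends to $0$ uniformly in $\ell$ as $\dist(\zeta,[0,+\infty))\rightarrow\infty$; the univalent map $\phi^{-1}_{\pm,\ell}$ does not expand hyperbolic distance; and since its image is bounded and omits $x_{0,\ell}$, the Euclidean metric there is dominated by a bounded multiple of the hyperbolic metric. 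No information about \emph{where} $\phi^{-1}_{\pm,\ell}(\zeta)$ sits is needed. I would encourage you to prove the lemma this way and only afterwards combine it with Lemma~\ref{lem:17hp,1} to locate $\phi^{-1}_{\pm,\ell}(\zeta)$ near $x_{0,\infty}$, as in Lemma~\ref{lem:10jp,2}.
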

\begin{proof}
  If $0\leq \Im\zeta < \frac{\ell\pi}{2}$, then for any $M$ if $\ell$ is sufficiently large and finite and the distance from $\zeta$
  to the slit $[),+\infty)$ is sufficiently large as well, then
  points $\zeta$ and  $\zeta-\log\tau_{\ell}^2$ can be surrounded by an annulus with modulus $M$ in at least one of the domains
  \begin{equation}\begin{split}
  \left\{ \xi\in\CC\setminus [0,+\infty) :\:
  |\Im\xi| < \frac{\ell\pi}{2}\right\}\;  \text{or}\\
  \left\{ \xi\in\CC\setminus [0,+\infty) :\:
  0<\Im\xi < \ell\pi \right\}
  \end{split}\end{equation}
  When $\ell=\infty$ the same is true and the first type domain which is $\CC\setminus [0,+\infty)$ suffices.

  The first type domains
    are mapped by $\phi^{-1}_{\pm,\ell}$ univalently on $\Omega_{\pm,\ell}$
    by Fact~\ref{fa:1hp,1} and those of the second type to $\Omega_{\ell} \cap \HH_+$
    by Fact~\ref{fa:6hp,1}. Also, $G_{\ell}^2\circ\phi^{-1}_{\pm,\ell}(\zeta) = \phi_{\pm,\ell}^{-1}(\zeta-\log\tau^2_{\ell})$.
    Any of those domains has diameter bounded independently of $\ell$ by Fact~\ref{fa:1hp,3}. Hence, the distance between
    $G_{\ell}^2\circ\phi^{-1}_{\pm,\ell}(\zeta)$ and $\phi_{\pm,\ell}^{-1}(\zeta)$ is less than $\epsilon$ if $M\geq M(\epsilon)$.
\end{proof}

\begin{lem}\label{lem:10jp,2}
  \begin{multline*}
    \forall r>0\; \exists L(r),\ell(r)<\infty\; \forall \ell\geq \ell(r)\; \forall
    \zeta :\: |\Im\zeta|<\frac{\ell\pi}{2},\, \dist\left(\zeta, [0,+\infty)\right) > L(r) \\
      \left| \phi^{-1}_{\pm,\ell}(\zeta) - x_{0,\infty} \right| \leq r .
  \end{multline*}
\end{lem}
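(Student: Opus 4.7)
}
The plan is to combine Lemma~\ref{lem:17hp,1} and Lemma~\ref{lem:10jp,1} into a dichotomy for $\phi^{-1}_{\pm,\ell}(\zeta)$ and then close the argument by a connectedness/continuity consideration anchored at a single point deep in the left half-plane. Throughout we may assume $r\leq R_0$. Applying Lemma~\ref{lem:10jp,1} with $\epsilon:=\varepsilon(r)/2$ produces constants $L_1,\ell_1$ such that for every $\ell\geq\ell_1$ and $\dist(\zeta,[0,+\infty))>L_1$ one has $|G^2_\ell(\phi^{-1}_{\pm,\ell}(\zeta))-\phi^{-1}_{\pm,\ell}(\zeta)|<\varepsilon(r)/2$. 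Combined with Lemma~\ref{lem:17hp,1} this excludes $\phi^{-1}_{\pm,\ell}(\zeta)$ from the closed annulus $\overline{B(x_{0,\infty},R_0)}\setminus B(x_{0,\infty},r)$, so the preimage must lie either inside $B(x_{0,\infty},r)$ or outside $\overline{B(x_{0,\infty},R_0)}$.

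Next I would produce an anchor coming from $\ell=\infty$. By Fact~\ref{fa:1hp,1}, $\phi_{\pm,\infty}$ is a conformal bijection of the bounded domain $\Omega_{\pm,\infty}$ onto $\varPi_\infty=\CC\setminus[0,+\infty)$, and $H_{\pm,\infty}$ has no zero in $\Omega_{\pm,\infty}$, with $x_{0,\infty}$ the only boundary point where $H_{\pm,\infty}$ can accumulate the value $0$. A standard compactness argument on the bounded set of values $\phi^{-1}_{\pm,\infty}(\zeta_n)$ then forces every subsequential limit, taken along any sequence with $\dist(\zeta_n,[0,+\infty))\to\infty$, to equal $x_{0,\infty}$. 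Pick $M>L_1$ with $|\phi^{-1}_{\pm,\infty}(-M)-x_{0,\infty}|<r/2$. Applying Fact~\ref{fa:1hp,2} at the singleton compact $\{-M\}$ then yields $\ell_2$ such that $\phi^{-1}_{\pm,\ell}(-M)\in B(x_{0,\infty},r)$ for all $\ell\geq\ell_2$.

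Finally, for $\ell\geq\ell(r):=\max(\ell_1,\ell_2,\lceil 2L_1/\pi\rceil+1)$ the set $U:=\{\zeta:|\Im\zeta|<\ell\pi/2,\ \dist(\zeta,[0,+\infty))>L_1\}$ is open, connected and contains $-M$. Its continuous image $\phi^{-1}_{\pm,\ell}(U)$ is therefore connected and, by the dichotomy above, contained in the disjoint open union $B(x_{0,\infty},r)\cup\bigl(\CC\setminus\overline{B(x_{0,\infty},R_0)}\bigr)$; the anchor forces it into the first component. Setting $L(r):=L_1$ then completes the plan.

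I expect the main obstacle to be the $\ell=\infty$ anchor, which rests on the prime-end structure of the slit plane: one must verify that $H_{\pm,\infty}$ admits a continuous extension to $\overline{\Omega}_{\pm,\infty}$ which stays bounded away from $0$ outside any neighborhood of $x_{0,\infty}$, equivalently that the unique prime end of $\varPi_\infty$ at $\infty$ is paired with $x_{0,\infty}$ under the conformal bijection $\phi_{\pm,\infty}$. The remaining steps are essentially bookkeeping on top of the preceding two lemmas and Fact~\ref{fa:1hp,2}.
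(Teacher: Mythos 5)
Your argument is essentially the paper's: the proof there makes exactly the same combination of Lemma~\ref{lem:17hp,1} with Lemma~\ref{lem:10jp,1} (taking $\epsilon:=\varepsilon(r)/2$) to exclude $\phi^{-1}_{\pm,\ell}(\zeta)$ from the annulus $\overline{B(x_{0,\infty},R_0)}\setminus B(x_{0,\infty},r)$, and then runs the same connectedness argument, phrased as the set $S_{\ell}$ of good $\zeta$ being non-empty, closed and open in the region $\dist\left(\zeta,[0,+\infty)\right)>L(\epsilon)$. The only divergence is the anchor: the paper uses $\lim_{x\rightarrow -\infty}\phi^{-1}_{\pm,\ell}(x)=x_{0,\ell}$ for each finite $\ell$ together with $|x_{0,\ell}-x_{0,\infty}|<\frac{r}{2}$, which sidesteps the $\ell=\infty$ boundary-behaviour issue you flag; your anchor also works, and more simply than you fear, since only the single real point $-M$ is needed and there the limit follows from the monotone real trace of $H_{\pm,\infty}$ on $\Omega_{\pm,\infty}\cap\RR$ rather than from any prime-end analysis.
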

\begin{proof}
Fix $R_0$ from Lemma~\ref{lem:17hp,1} and take any $r :\: r\in(0,R_0)$.
From Lemma~\ref{lem:17hp,1} we then get $\varepsilon(r)$ and set $\epsilon :=
\varepsilon(r)/2$ in Lemma~\ref{lem:10jp,1}. The bound $\ell(r)$ can now be
fixed so that for $\ell\geq\ell(r)$ both Lemmas apply and $|x_{0,\ell}-x_{0,\infty}|<\frac{r}{2}$. Set also
$L(r) := L(\epsilon)$ given by Lemma~\ref{lem:10jp,1}.

For any $\ell\geq\ell(r)$ we consider the set
\[ S_{\ell} := \bigl\{ \zeta\in\CC :\: |\Im\zeta|<\frac{\ell\pi}{2},\,\dist\left(\zeta,[0,+\infty)\right)>L(\epsilon),\,
\left|\phi_{\pm,\ell}^{-1}(\zeta) -x_{0,\infty}\right| \leq r \bigr\} .\]

$S_{\ell}$ is obviously closed in $\bigl\{ \zeta\in\CC :\: |\Im\zeta|<\frac{\ell\pi}{2},\,\dist\left(\zeta,[0,+\infty)\right)>L(\epsilon)$ and also non-empty
  since $\lim_{x\rightarrow\infty} \phi^{-1}_{\pm,\ell}(x) = x_{0,\ell}$.
The proof is finished once we have shown that $S_{\ell}$ is also
open. If $\zeta$ is a non-interior point, we must have
$\left|\phi_{\pm,\ell}^{-1}(\zeta) -x_{0,\infty}\right| = r$. Then by
Lemma~\ref{lem:17hp,1} we get $\left|
G_{\ell}^2\circ\phi_{\pm,\ell}^{-1}(\zeta) -
\phi^{-1}_{\pm,\ell}(\zeta)\right| \geq \varepsilon(r) > \epsilon$.
But by Lemma~\ref{lem:10jp,1},   $\left|
G_{\ell}^2\circ\phi_{\pm,\ell}^{-1}(\zeta) -
\phi^{-1}_{\pm,\ell}(\zeta)\right| \geq \varepsilon(r) < \epsilon$
for all $\zeta\in S_{\ell}$. Hence, there are no non-interior points.
\end{proof}

\begin{lem}\label{lem:11jp,2}
  For any $L,\epsilon>0$ define the set
  \[ V(L,\epsilon) :=    \bigl\{ \zeta\in\CC :\: \Re\zeta \leq
  L,\,|\Im\zeta|<\pi,\,\dist\left(\zeta,[0,\infty)\right)\geq \epsilon\bigr\} .\]
  Then the family $\left(\phi_{-,\ell}\right)_{\ell}$, $\ell$ positive
  and even or infinite, is
  equicontinuous on $V(L,\epsilon)$ and converges uniformly to
  $\phi_{-,\infty}^{-1}$.
\end{lem}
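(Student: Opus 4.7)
The plan is to combine three ingredients already at hand: uniform boundedness of the family $\phi_{-,\ell}^{-1}$ inherited from Fact~\ref{fa:1hp,3}, almost uniform convergence on compact subsets of $\varPi_\infty$ from Fact~\ref{fa:1hp,2}, and the clustering estimate Lemma~\ref{lem:10jp,2} which forces $\phi_{-,\ell}^{-1}(\zeta)$ to lie close to $x_{0,\infty}$ whenever $\dist(\zeta,[0,+\infty))$ is large. Since $|\Im\zeta|<\pi\leq \ell\pi/2$ for every $\ell\geq 2$ even or infinite, $V(L,\epsilon)$ is contained in every $\varPi_\ell$ as well as in $\varPi_\infty$, and the image $\phi_{-,\ell}^{-1}(V(L,\epsilon))\subset \Omega_{-,\ell}\subset D(0,\tau_\ell)$ is uniformly bounded because $\tau_\ell\to\tau_\infty$. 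Montel's theorem then makes the family normal on any open superset of $V(L,\epsilon)$, for instance on the interior of $V(L+1,\epsilon/2)$.

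For the uniform convergence I would fix $\delta>0$ and set $r:=\delta/3$. By Lemma~\ref{lem:10jp,2} applied with this $r$ (which is valid for every $\ell\geq\ell(r)$ including $\ell=\infty$ by the same argument that established it for finite $\ell$), any $\zeta\in V(L,\epsilon)$ with $\dist(\zeta,[0,+\infty))>L(r)$ satisfies $|\phi_{-,\ell}^{-1}(\zeta)-x_{0,\infty}|\leq r$; in particular $|\phi_{-,\ell}^{-1}(\zeta)-\phi_{-,\infty}^{-1}(\zeta)|\leq 2r<\delta$ on this tail. What remains of $V(L,\epsilon)$ is a compact subset $K$ of $\varPi_\infty$, on which Fact~\ref{fa:1hp,2} yields uniform convergence $\phi_{-,\ell}^{-1}\to \phi_{-,\infty}^{-1}$, so the bound $\delta$ can be arranged on $K$ as well for $\ell$ large enough. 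Equicontinuity then follows from the same splitting: away from the tail the family is normal, hence equicontinuous on $K$ by Arzela-Ascoli, while on the tail every member lies within $2r$ of the single point $x_{0,\infty}$, so is trivially equicontinuous with modulus controlled by the diameter of the image.

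The principal difficulty, compared with the almost uniform statement of Fact~\ref{fa:1hp,2}, is the unbounded part of $V(L,\epsilon)$ where $\Re\zeta\to -\infty$: Fact~\ref{fa:1hp,2} gives no uniform control there, and nothing in Proposition~\ref{prop:11jp,1} as stated prevents the $\phi_{-,\ell}^{-1}$ from drifting apart along that direction. Lemma~\ref{lem:10jp,2} is precisely the tool that prevents this by collapsing every $\phi_{-,\ell}^{-1}$ to a small neighborhood of $x_{0,\infty}$ deep in the left half-strip. Once that tail control is in hand the two regions of $V(L,\epsilon)$ can be matched and the argument reduces to standard compactness considerations.
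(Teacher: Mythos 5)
Your proof takes essentially the same route as the paper's: Lemma~\ref{lem:10jp,2} collapses the tail $\Re\zeta\to-\infty$ onto a small neighborhood of $x_{0,\infty}$, Fact~\ref{fa:1hp,2} handles the bounded remainder, and equicontinuity follows from that splitting (the paper establishes uniform continuity of each $\phi_{-,\ell}^{-1}$ and then deduces equicontinuity from uniform convergence, while you invoke Montel and Arzel\`a--Ascoli on the compact part, a cosmetic difference). One detail to tighten: the bounded remainder of $V(L,\epsilon)$ is not itself compact since the constraint $|\Im\zeta|<\pi$ is open, so you must pass to its closure, which touches the lines $\Im\zeta=\pm\pi$; for finite $\ell$ the map $\phi_{-,\ell}^{-1}$ only reaches there by the analytic continuation supplied by Fact~\ref{fa:6hp,1}, which the paper invokes explicitly and your write-up should as well.
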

\begin{proof}
Let us begin by proving that for each $\ell$,
$\phi^{-1}_{-,\ell}$ is uniformly continuous on
$V(L,\epsilon)$. That is clear on the set
$V(L',L,\epsilon) := \{ \zeta\in V(L,\epsilon) :\: \Re\zeta \geq L'\}$
for any $L'$ by
compactness and in particular since $\phi_{-,\ell}^{-1}$ extends
through each line $\{ \zeta :\: \Im\zeta = \pm \pi\}$ by
Fact~\ref{fa:6hp,1}. It remains to see that
\begin{equation}\label{equ:11jp,2}
  \lim_{\Re\zeta\rightarrow-\infty} \phi_{-,\ell}^{-1}(\zeta) =
  x_{0,\ell} .
\end{equation}
  This is the case when $\zeta$ is real and any other
sequence of points remains in a bounded hyperbolic distance from $\RR$
in an extended domain $-\frac{\pi}{2} \leq \Im\zeta \leq
\frac{3}{2}\pi$ or its symmetric image.

Now equicontinuity will follows if we show uniform convergence. That
again is clear on $V(L',L,\epsilon)$ for any $L'$ by
Fact~\ref{fa:1hp,2}. On the set $V(L,\epsilon)\setminus
V(L',L,\epsilon)$ we conclude from Lemma~\ref{lem:10jp,2} that for any
$r>0$ there is $L'(r)$ sufficiently close to $\infty$ and $\ell(r)$
that $\phi^{-1}_{-,\ell}\left( V(L,\epsilon)\setminus
V(L'(r),L,\epsilon) \right) \subset D(x_{0,\infty},r)$ for all
$\ell\geq\ell_0(r)$. Uniform convergence follows.
\end{proof}

Recall the principal inverse branch $\mathbf{G}_{\ell}^{-1}$,
cf. Definition~\ref{defi:1hp,1}.

\begin{lem}\label{lem:11jp,1}
  For any $L,\epsilon>0$ define the set
  \[ W(L,\epsilon) := \bigl\{ z\in\CC :\: |z|\leq L,\,\dist\left(z,[\tau_{\infty},\infty)\right)\geq \epsilon\bigr\} \setminus (-\infty,0] .\]
  Then for some $\ell_0$ the sequence
  $\left(\mathbf{G}_{\ell}^{-1}\right)_{\ell=\ell_0}^{\infty}$ is
  equicontinuous and
  converges to $\mathbf{G}^{-1}_{\infty}$ uniformly on
  $W(L,\epsilon)$.
\end{lem}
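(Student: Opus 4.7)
The plan is to reduce the lemma to the already-established behaviour of $\phi_{-,\ell}^{-1}$ via an explicit formula. Using Fact~\ref{fa:1hp,1} to write $H_{-,\ell}(z)=\tau_\ell^2\exp\phi_{-,\ell}(z)$ on $\Omega_{-,\ell}$, together with the fact (Fact~\ref{fa:1hp,4}) that $\mathbf{G}_\ell^{-1}$ takes values in $\tau_\ell\Omega_{-,\ell}$, inverting $G_\ell(z)=H_\ell(\tau_\ell^{-1}z)=w$ yields
\[ \mathbf{G}_\ell^{-1}(w)=\tau_\ell\,\phi_{-,\ell}^{-1}\!\bigl(\log(\tau_\ell^{-2}w)\bigr), \]
meaningful precisely on $\CC\setminus\bigl((-\infty,0]\cup[\tau_\ell^2,+\infty)\bigr)$. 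I choose $\ell_0$ large enough that $\tau_\ell^2>\tau_\infty$ for $\ell\geq\ell_0$, so $[\tau_\ell^2,+\infty)\subset[\tau_\infty,+\infty)$ and the defining constraints of $W(L,\epsilon)$ guarantee the formula (and its analogue for $\ell=\infty$) applies on all of $W(L,\epsilon)$. Setting $\zeta_\ell(w):=\log(\tau_\ell^{-2}w)$, the elementary identity $\zeta_\ell(w)-\zeta_\infty(w)=2(\log\tau_\infty-\log\tau_\ell)$ shows that $\zeta_\ell\to\zeta_\infty$ completely uniformly in $w$.

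I then split $W(L,\epsilon)$ according to $|w|\leq\delta$ and $|w|\geq\delta$ for a small $\delta>0$ and treat the pieces separately. On the first piece, $\Re\zeta_\ell(w)\leq\log\delta-2\log\tau_\ell$ can be made arbitrarily negative by shrinking $\delta$, and since $\Re\zeta_\ell<0$ I have $\dist(\zeta_\ell(w),[0,+\infty))\geq|\Re\zeta_\ell(w)|$; Lemma~\ref{lem:10jp,2} then drives $\phi_{-,\ell}^{-1}(\zeta_\ell(w))$ into any prescribed neighbourhood of $x_{0,\infty}$ for all sufficiently large $\ell$, so both $\mathbf{G}_\ell^{-1}(w)$ and $\mathbf{G}_\infty^{-1}(w)$ stay close to $\tau_\infty x_{0,\infty}$, yielding uniform convergence there. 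On the second piece my aim is to enclose $\{\zeta_\ell(w):w\in W\cap\{|w|\geq\delta\},\,\ell\geq\ell_0\}$ inside a single $V(L',\epsilon')$ from Lemma~\ref{lem:11jp,2}; boundedness $\Re\zeta_\ell\leq\log L$ and $|\Im\zeta_\ell|<\pi$ are automatic, and granted a uniform lower bound $\dist(\zeta_\ell(w),[0,+\infty))\geq c>0$, the decomposition
\[ \mathbf{G}_\ell^{-1}(w)-\mathbf{G}_\infty^{-1}(w) = \tau_\ell\bigl(\phi_{-,\ell}^{-1}(\zeta_\ell)-\phi_{-,\infty}^{-1}(\zeta_\ell)\bigr) + \tau_\ell\bigl(\phi_{-,\infty}^{-1}(\zeta_\ell)-\phi_{-,\infty}^{-1}(\zeta_\infty)\bigr) + (\tau_\ell-\tau_\infty)\phi_{-,\infty}^{-1}(\zeta_\infty), \]
combined with the uniform convergence and equicontinuity of $\phi_{-,\ell}^{-1}$ on $V(L',\epsilon')$ supplied by Lemma~\ref{lem:11jp,2} and the uniform smallness of $\zeta_\ell-\zeta_\infty$ and $\tau_\ell-\tau_\infty$, finishes the uniform convergence.

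The main obstacle is precisely the lower bound on $\dist(\zeta_\ell(w),[0,+\infty))$ on $W\cap\{|w|\geq\delta\}$, which I intend to establish by contradiction: a hypothetical sequence $w_n$ in this set together with $\ell_n\to\infty$ and $\dist(\zeta_{\ell_n}(w_n),[0,+\infty))\to 0$ admits a convergent subsequence $w_n\to w_*$ with $|w_*|\geq\delta$; the limit cannot lie on $(-\infty,0]$, since otherwise $|\Im\zeta_{\ell_n}(w_n)|\to\pi$ would keep the distance bounded below, so continuity of the principal logarithm forces $\log(\tau_\infty^{-2}w_*)\in[0,+\infty)$, i.e.\ $w_*\in[\tau_\infty^2,+\infty)\subset[\tau_\infty,+\infty)$, contradicting $\dist(w_*,[\tau_\infty,+\infty))\geq\epsilon$. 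Equicontinuity then follows from the uniform convergence together with the analytic nature of each member: on compact subsets of $W$ bounded away from $(-\infty,0]$ the $\mathbf{G}_\ell^{-1}$ form a uniformly bounded holomorphic family and Cauchy's estimate yields uniform Lipschitz bounds, while near $w=0$ all members are uniformly close to $\tau_\infty x_{0,\infty}$ and hence equicontinuous there as well.
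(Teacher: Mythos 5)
Your proof is correct and follows essentially the same route as the paper: both rest on the representation $\mathbf{G}_{\ell}^{-1}(w)=\tau_{\ell}\,\phi_{-,\ell}^{-1}\bigl(\log w-\log\tau_{\ell}^2\bigr)$ and then reduce everything to the uniform convergence and equicontinuity of $\phi_{-,\ell}^{-1}$ on the sets $V(L,\epsilon)$ from Lemma~\ref{lem:11jp,2}, treating a neighbourhood of $w=0$ separately via the limit $\phi_{-,\ell}^{-1}(\zeta)\rightarrow x_{0,\ell}$ as $\Re\zeta\rightarrow-\infty$. You merely make explicit (by the compactness argument) the step the paper leaves implicit, namely that $\log(\tau_{\ell}^{-2}W(L,\epsilon)\cap\{|w|\geq\delta\})$ lands in a single $V(L',\epsilon')$ uniformly in $\ell$.
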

\begin{proof}
The basis of the proof is the representation
\begin{equation}\label{equ:11jp,1}
  \mathbf{G}_{\ell}^{-1}(z) =
  \tau_{\ell}\phi_{-,\ell}^{-1}\left(\log(z) - \log\tau_{\ell}^2\right)
\end{equation}
where the principal branch of the $\log$ is used.
Then uniform convergence follows from the
representation~(\ref{equ:11jp,1}) and Lemma~\ref{lem:11jp,2}.

It remains to show uniform continuity of $\mathbf{G}_{\ell}^{-1}$ for
each $\ell$. Here $\ell_0$ should be chosen so that for
$\ell\geq\ell_0$ the difference
$\left|\log\tau^2_{\infty}-\log\tau^2_{\ell}\right|$ is less than
$\epsilon/2$. Then uniform continuity also follows from
Lemma~\ref{lem:11jp,2} on a set
where $\log(z)$ is uniformly continuous, which is outside of
$D(0,\eta),\eta>0$. Additionally, by formula~(\ref{equ:11jp,2})
$\mathbf{G}_{\ell}^{-1}$ can be extended continuously to $0$ by
setting $\mathbf{G}_{\ell}^{-1}(0) := x_{0,\ell}$ and uniform
continuity follows.
\end{proof}

\begin{lem}\label{lem:11jp,3}
There exist $\ell_0$ and $L_0,\epsilon_0>0$ such that for
every $L, \epsilon>0$ and $\ell\geq\ell_0$ the
distance from the
set $\mathbf{G}_{\ell}^{-1}\left(W(L,\epsilon)\right)$ to
$[\tau_{\infty},\infty)$ is at least $\epsilon_0$ and the set is
  contained in $\{ \zeta\in\CC :\: \Re\zeta < L_0\}$.

\end{lem}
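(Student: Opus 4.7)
The plan is to reduce both statements to uniform bounds on $\tau_\ell \overline{\Omega_{-,\ell}}$ themselves. By Fact~\ref{fa:1hp,4}, every value taken by $\mathbf{G}_\ell^{-1}$ lies in $\tau_\ell \Omega_{-,\ell}$, regardless of the argument. Hence $\mathbf{G}_\ell^{-1}(W(L,\epsilon)) \subset \tau_\ell \Omega_{-,\ell}$ for every $L,\epsilon>0$, provided $\ell$ is large enough that $W(L,\epsilon)$ lies in the domain of $\mathbf{G}_\ell^{-1}$ (which holds once $\tau_\ell^2 > \tau_\infty$, since $W(L,\epsilon)$ already avoids $(-\infty,0]$ and $[\tau_\infty,\infty)$). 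So both constants $L_0$ and $\epsilon_0$ only have to be read off from the sets $\tau_\ell \Omega_{-,\ell}$ in the limit $\ell\to\infty$.

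For the real-part bound, Fact~\ref{fa:1hp,3} gives $\overline{\Omega_\ell} \subset D(0,\tau_\ell)$, hence $\tau_\ell \overline{\Omega_{-,\ell}} \subset D(0,\tau_\ell^2)$, and since $\tau_\ell\to\tau_\infty$ any $L_0 > \tau_\infty^2$ is admissible. For the distance from $[\tau_\infty,\infty)$ I would first treat the limit $\ell=\infty$: by Fact~\ref{fa:1hp,3}, $\tau_\infty \overline{\Omega_{-,\infty}} \cap \RR = [\tau_\infty y_\infty,\tau_\infty x_{0,\infty}]$, and $x_{0,\infty}<1$ gives $\tau_\infty x_{0,\infty} < \tau_\infty$, so this interval is disjoint from $[\tau_\infty,\infty)$; the non-real points of $\tau_\infty \overline{\Omega_{-,\infty}}$ cannot lie on the real ray $[\tau_\infty,\infty)$ at all. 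Compactness of $\tau_\infty \overline{\Omega_{-,\infty}}$ against a closed set then yields
\[ 2\epsilon_0 := \dist\bigl(\tau_\infty \overline{\Omega_{-,\infty}},\, [\tau_\infty,\infty)\bigr) > 0 . \]
To propagate this bound to finite $\ell$, I would combine Proposition~\ref{prop:11jp,1} with $\tau_\ell\to\tau_\infty$ to obtain Hausdorff convergence of the closures $\tau_\ell \overline{\Omega_{-,\ell}}$ to $\tau_\infty \overline{\Omega_{-,\infty}}$; for $\ell$ beyond some $\ell_0$ the Hausdorff distance drops below $\epsilon_0$, so $\dist(\tau_\ell \overline{\Omega_{-,\ell}},[\tau_\infty,\infty)) \geq \epsilon_0$ as required.

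The only non-routine step is passing from the uniform convergence of the inverse maps $\phi_{\pm,\ell}^{-1}$ on the slit strips, as given by Proposition~\ref{prop:11jp,1}, to Hausdorff convergence of the closed domains $\tau_\ell \overline{\Omega_{-,\ell}}$. The supplementary clause in Proposition~\ref{prop:11jp,1} that $\phi^{-1}_{\pm,\infty}(\zeta) \to x_{0,\infty}$ as $|\zeta|\to\infty$ is exactly what handles this: it controls those arcs of $\partial\Omega_{-,\ell}$ produced by the far ends of the strips and forces them to cluster near $x_{0,\infty}$, a point already shown to be bounded away from $[\tau_\infty,\infty)$. With this ingredient in place, the Hausdorff convergence follows and the proof concludes.
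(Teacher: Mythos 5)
Your skeleton coincides with the paper's: the inclusion $\mathbf{G}_{\ell}^{-1}\left(W(L,\epsilon)\right)\subset\tau_{\ell}\Omega_{-,\ell}$ from Fact~\ref{fa:1hp,4}, compact containment of $\tau_{\infty}\Omega_{-,\infty}$ in $\CC\setminus[\tau_{\infty},+\infty)$ together with the bound on the real part (which you justify in more detail than the paper, via Fact~\ref{fa:1hp,3}), and then a convergence argument to carry the bounds over to finite $\ell$. The difficulty is in that last step. You invoke Proposition~\ref{prop:11jp,1} — explicitly including its clause $\lim_{|\zeta|\rightarrow\infty}\phi^{-1}_{\pm,\infty}(\zeta)=x_{0,\infty}$ — to obtain Hausdorff convergence of $\tau_{\ell}\overline{\Omega_{-,\ell}}$. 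But in the paper's logical order Proposition~\ref{prop:11jp,1} is proved \emph{after} Lemma~\ref{lem:11jp,3}: its proof passes through Lemma~\ref{lem:11jp,5} and Lemma~\ref{lem:11jp,4}, which rest on Corollary~\ref{coro:11jp,1}, whose proof begins by picking ``$L_0,\epsilon_0$ as in Lemma~\ref{lem:11jp,3}''. So the argument is circular. The circularity is not cosmetic: the part of $\partial\Omega_{-,\ell}$ you must control to get Hausdorff convergence is the image of the region of bounded imaginary part and large positive real part near the slit, and uniform (in $\ell$) control of exactly that region is the content of Lemma~\ref{lem:11jp,4}, which is unavailable before Lemma~\ref{lem:11jp,3} is established. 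The tools that do precede the Lemma (Fact~\ref{fa:1hp,2}, Lemma~\ref{lem:10jp,2}, Lemma~\ref{lem:11jp,2}) do not by themselves yield Hausdorff convergence of the full closed domains.

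The non-circular route — and what the paper's one-line proof intends — is to use Lemma~\ref{lem:11jp,1} instead: via the representation $\mathbf{G}_{\ell}^{-1}(z)=\tau_{\ell}\phi_{-,\ell}^{-1}\left(\log z-\log\tau_{\ell}^{2}\right)$, for $z\in W(L,\epsilon)$ the point $\log z-\log\tau_{\ell}^2$ stays in a region where Lemma~\ref{lem:11jp,2} already gives uniform convergence of $\phi_{-,\ell}^{-1}$; hence $\mathbf{G}_{\ell}^{-1}\rightarrow\mathbf{G}_{\infty}^{-1}$ uniformly on $W(L,\epsilon)$, the image $\mathbf{G}_{\infty}^{-1}\left(W(L,\epsilon)\right)$ lies in the compactly contained set $\tau_{\infty}\Omega_{-,\infty}$, and both conclusions persist for $\ell$ large. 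Your instinct to bound the whole domain $\tau_{\ell}\Omega_{-,\ell}$ at once (so that $\epsilon_0,L_0,\ell_0$ are manifestly independent of $L,\epsilon$) is a good one, and your treatment of the case $\ell=\infty$ and of the bound $\Re\zeta<L_0$ can be kept as is; only the propagation step needs to be re-based on Lemma~\ref{lem:11jp,1} rather than on Proposition~\ref{prop:11jp,1}.
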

\begin{proof}
  We begin by observing that
  \[
  \mathbf{G}_{\ell}^{-1}\left(W(L,\epsilon)\right)\subset\tau_{\ell}\Omega_{-,\ell}
  .\]

For
$\ell=\infty$, $\tau_{\infty}\Omega_{-,\infty}$  is compactly
contained in $\CC\setminus [\tau_{\infty},+\infty)$ and hence the
  distance from the claim of this Lemma is positive and is bounded
  from the right.
By uniform convergence from Lemma~\ref{lem:11jp,1} this situation persists
for all $\ell$ sufficiently large.
\end{proof}

\begin{coro}\label{coro:11jp,1}
There exist $\ell_0, L_0<\infty$ and $\epsilon_0>0$ such that for
every $L\geq L_0$, $\epsilon :\: 0<\epsilon\leq\epsilon_0$ and $k\in\ZZ
:\: k>0$ the
family $\left(\mathbf{G}_{\ell}^{-k}\right)_{\ell\geq\ell_0}$ is
equicontinuous and uniformly convergent in $W(L,\epsilon)$.
\end{coro}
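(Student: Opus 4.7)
The proof is by induction on $k$, with base case $k=1$ supplied directly by Lemmas~\ref{lem:11jp,1} and~\ref{lem:11jp,3}. For the inductive step I would decompose $\mathbf{G}_\ell^{-k} = \mathbf{G}_\ell^{-1}\circ\mathbf{G}_\ell^{-(k-1)}$. The central structural fact provided by Lemma~\ref{lem:11jp,3} is that the constants $L_0,\epsilon_0$ are universal (independent of the input parameters $L,\epsilon$), so that the image $\mathbf{G}_\ell^{-1}(W(L,\epsilon))$ lies in a fixed bounded region at distance $\geq\epsilon_0$ from $[\tau_\infty,+\infty)$; iterating, one sees that all higher inverse images are trapped in a universal compact subset of $\tau_\ell\Omega_{-,\ell}$, on which Lemma~\ref{lem:11jp,1} furnishes both equicontinuity and uniform convergence of the innermost inverse branch $\mathbf{G}_\ell^{-1}$.

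The inductive step then closes by the standard composition argument. For uniform convergence, set $w_\ell := \mathbf{G}_\ell^{-(k-1)}(z)$ and $w_\infty := \mathbf{G}_\infty^{-(k-1)}(z)$ and apply the triangle inequality
\[
\bigl|\mathbf{G}_\ell^{-k}(z)-\mathbf{G}_\infty^{-k}(z)\bigr| \;\leq\; \bigl|\mathbf{G}_\ell^{-1}(w_\ell)-\mathbf{G}_\infty^{-1}(w_\ell)\bigr| + \bigl|\mathbf{G}_\infty^{-1}(w_\ell)-\mathbf{G}_\infty^{-1}(w_\infty)\bigr| .
\]
The first summand tends to zero uniformly by Lemma~\ref{lem:11jp,1} applied on the universal target $W(L_0,\epsilon_0)$; the second vanishes by uniform continuity of $\mathbf{G}_\infty^{-1}$ on the same compact set combined with the inductive hypothesis $|w_\ell - w_\infty|\to 0$. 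Equicontinuity of $(\mathbf{G}_\ell^{-k})_{\ell\geq\ell_0}$ follows by composing the moduli of continuity of $\mathbf{G}_\ell^{-1}$ on $W(L_0,\epsilon_0)$ with those of $\mathbf{G}_\ell^{-(k-1)}$ on $W(L,\epsilon)$.

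The main obstacle is ensuring that the iteration is genuinely well-defined: $\mathbf{G}_\ell^{-1}$ is only analytic on $\CC\setminus\bigl((-\infty,0)\cup(\tau_\ell^2,+\infty)\bigr)$, while the image of $W(L,\epsilon)$ sits in $\tau_\ell\Omega_{-,\ell}$, whose intersection with the real axis $\tau_\ell[y_\ell, x_{0,\ell}]$ does meet the forbidden cut $(-\infty,0]$. Lemma~\ref{lem:11jp,3} controls the distance to $[\tau_\infty,+\infty)$ but says nothing about the negative cut. I expect to handle this either by tracking the one-dimensional real dynamics of $G_\ell$ on $\tau_\ell[y_\ell, x_{0,\ell}]$ together with the $\RR$-symmetry of all objects to localize where iterates of the real segment can land, or—more conceptually—by re-reading $\mathbf{G}_\ell^{-k}$ as the analytic continuation encoded in the nested enlargements $\hat{\Omega}^k_{\pm,\ell}$ of Proposition~\ref{prop:2hp,1}, which permits the branch cut to be relocated away from orbits originating in $W(L,\epsilon)$.
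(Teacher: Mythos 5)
Your proposal follows the paper's (terse) proof exactly: an induction driven by Lemma~\ref{lem:11jp,1}, with the uniform constants $L_0,\epsilon_0$ of Lemma~\ref{lem:11jp,3} ensuring that each application of $\mathbf{G}_\ell^{-1}$ lands back in a universal $W(L_0,\epsilon_0)$.

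The obstacle you flag is genuine and the paper glosses over it. Indeed $G_\ell(0)=H_\ell(0)=1$ gives $\mathbf{G}_\ell^{-1}(1)=0$, and since $\mathbf{G}_\ell^{-1}$ is strictly decreasing on $(0,\tau_\ell^2)\subset\RR$ it maps $(1,\tau_\infty-\epsilon)$ into $(\tau_\ell y_\ell,0)\subset(-\infty,0)$, i.e. onto the cut of the principal branch; thus $\left(\mathbf{G}_\ell^{-1}\right)^k$ is undefined on part of $W(L,\epsilon)\cap\RR$ once $k\geq 2$ and $L,\tau_\infty-\epsilon>1$, and Lemma~\ref{lem:11jp,3} is indeed silent about the negative cut. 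Your two proposed remedies do not quite close this: tracking the real dynamics only confirms that the real orbit does hit the cut, while continuation along the $\hat{\Omega}^k_{\pm,\ell}$ of Proposition~\ref{prop:2hp,1} produces a different branch of the inverse rather than the iterated principal one. The reading that makes the corollary correct, and which is all that the applications (Lemma~\ref{lem:11jp,4} and the proof of Proposition~\ref{prop:11jp,1}) actually use, is the restriction to $W(L,\epsilon)\cap\left(\HH_+\cup\HH_-\right)$, together with a small real neighbourhood of $x_{0,\infty}$: since $\mathbf{G}_\ell^{-1}$ is $\RR$-symmetric with negative derivative at its real fixed point $x_{0,\ell}$, it interchanges $\HH_+$ and $\HH_-$, so a non-real orbit never meets $\RR$, while Lemma~\ref{lem:11jp,3} confines it to the universal $W(L_0,\epsilon_0)$; your inductive argument then goes through verbatim on this set.
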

\begin{proof}
  This follows from an inductive use of Lemma~\ref{lem:11jp,1}
  once we pick $L_0,\epsilon_0$ as in Lemma~\ref{lem:11jp,3}.
\end{proof}

   \paragraph{Wedge lemma.}
   For every $\ell$ even, positive and finite there is a repelling
   orbit of period $2$
under $G_{\ell}$ which consists of points $x_{+,\ell}\in \HH_+$ and
$x_{-,\ell}\in \HH_-$. When $\ell\rightarrow\infty$ points
$x_{\pm,\ell}$ tend to $x_{0,\infty}$.

The key observation is that there are two inverse branches of
$G^2_{\ell}$, which will be written as $\mathbf{G}^{-2}_{\pm,\ell}$ and
transform $H_{\pm}$ into itself, respectively. Then $x_{\pm,\ell}$ are fixed points
of the corresponding $\mathbf{G}^{-2}_{\pm,\ell}$ which attract
$\HH_{\pm}$.

The lemma below is stated for the upper half-plane without loss of
generality.
\begin{lem}\label{lem:15hp,1}
Suppose that $z_0\in\HH_+$ and $0<\eta\leq \Im z_0$. Furthermore,
assume that $\frac{1}{3}\pi < \arg(z_0-x_{0,\infty}) <
\frac{2}{3}\pi$. For every $\eta>0$ there is $\ell(\eta)<\infty$ such
that whenever $\ell(\eta)\leq \ell <\infty$, then the forward orbit of
$z_0$ under $\mathbf{G}^{-2}_{+,\ell}$ is contained in $D(x_{0,\ell},e\Im
z_0)$.
\end{lem}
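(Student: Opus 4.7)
}
The strategy is to use the expansion of $G^2_\ell$ near $x_{0,\infty}$ from Lemma~\ref{lem:17hp,1} to show that a vertical wedge at $x_{0,\infty}$ is forward invariant under $\mathbf{G}^{-2}_{+,\ell}$ with contracting modulus. Set $\zeta := z - x_{0,\infty}$ and $c := -a_{3,\infty} > 0$. For $|\zeta|\leq R_0$ and $\ell$ large, Lemma~\ref{lem:17hp,1} gives $G^2_\ell(z) - z = -c\zeta^3 + P_\ell(\zeta) + O(|\zeta|^4)$, where $P_\ell$ is a polynomial of degree at most $2$ whose coefficients tend to $0$ as $\ell\to\infty$. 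Inverting to leading order yields $\mathbf{G}^{-2}_{+,\ell}(z) - z = c\zeta^3 + \text{(small)}$.

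The central computation is the wedge invariance. Let $W := \{z : \pi/3 < \arg(z - x_{0,\infty}) < 2\pi/3\}$ and $\zeta = re^{i\theta}$ with $\theta\in(\pi/3, 2\pi/3)$. Then $\cos(2\theta) \in [-1,-1/2]$, so
\[ |\zeta + c\zeta^3|^2 = |\zeta|^2\bigl(1 + 2cr^2\cos(2\theta) + c^2 r^4\bigr) < |\zeta|^2 \]
whenever $cr^2 < 1$. Moreover $\arg(\zeta + c\zeta^3) - \arg\zeta = \arg(1+c\zeta^2) \approx cr^2\sin(2\theta)$ attracts $\theta$ toward $\pi/2$ (vanishing at $\pi/2$ and with the correct sign on each side) and at the boundary rays $\theta = \pi/3, 2\pi/3$ points strictly inward. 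Shrinking $R_0$ so that $cR_0^2 < 1$ with slack, and taking $\ell\geq\ell_0$ large enough that the perturbation $P_\ell + O(|\zeta|^4)$ is dominated, one concludes that $W \cap D(x_{0,\infty}, R_0)$ is strictly forward invariant under $\mathbf{G}^{-2}_{+,\ell}$ with $|\zeta_n|$ strictly decreasing along the orbit.

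To finish, given $\eta > 0$, split on the size of $\Im z_0$. If $\Im z_0 \leq R_0\sqrt{3}/2$, the wedge hypothesis forces $|\zeta_0| \leq (2/\sqrt{3})\Im z_0 \leq R_0$, so $z_0 \in W \cap D(x_{0,\infty}, R_0)$ and the invariance above gives $|z_n - x_{0,\infty}| \leq (2/\sqrt{3})\Im z_0$ for all $n$. If $\Im z_0 > R_0\sqrt{3}/2$, the image $\mathbf{G}^{-2}_{+,\ell}(\HH_+) \subset \tau_\ell\Omega_\ell \subset D(0, \tau_\ell^2)$ by Fact~\ref{fa:1hp,3} is uniformly bounded, so for $n\geq 1$ one has $|z_n - x_{0,\ell}| \leq C$ for a constant $C$ depending only on $\tau_\infty$; choosing $R_0$ so that $eR_0\sqrt{3}/2 \geq C$ (or else iterating once and reducing to the previous case) gives $|z_n - x_{0,\ell}| \leq C \leq e\Im z_0$. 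For $n = 0$, the wedge bound $|z_0 - x_{0,\ell}| \leq (2/\sqrt{3})\Im z_0 + |x_{0,\ell} - x_{0,\infty}|$ is at most $e\Im z_0$ once $\ell \geq \ell(\eta)$ is large enough that $|x_{0,\ell} - x_{0,\infty}| < (e - 2/\sqrt{3})\eta$. The main technical obstacle is the wedge-invariance step: the coefficients $a_{0,\ell}, a_{1,\ell}, a_{2,\ell}$ vanish only in the limit and displace the actual fixed point to $x_{+,\ell}$, so strict invariance fails at radii $r$ comparable to these vanishing coefficients; there one appeals to the attraction at the fixed point $x_{+,\ell}$ of $\mathbf{G}^{-2}_{+,\ell}$ to confine the orbit to a small neighborhood sitting inside $D(x_{0,\ell}, e\Im z_0)$.
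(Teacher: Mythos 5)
Your proposal takes a genuinely different route from the paper. The paper's argument is a short hyperbolic-geometry computation: choose $\ell(\eta)$ so that the stunted wedge at $x_{0,\infty}$ (with $\Im z\geq\eta$) is contained in a wider wedge $\mathfrak{W}$ at the fixed point $x_{+,\ell}$; for $z_0\in\mathfrak{W}$ the hyperbolic distance in $\HH_+$ from $z_0$ to $x_{+,\ell}$ is bounded by $\log\left(\Im z_0/\Im x_{+,\ell}\right)+O(1)$; since $\mathbf{G}^{-2}_{+,\ell}$ maps $\HH_+$ into itself and fixes $x_{+,\ell}$, Schwarz--Pick shows this hyperbolic distance never increases along the forward orbit, and converting the resulting hyperbolic ball about $x_{+,\ell}$ back to Euclidean terms gives the claimed disk immediately, uniformly in the iterate and in $\ell$. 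Your route instead relies on Taylor expansion and explicit wedge invariance to leading cubic order.

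The difficulty is that your argument contains a genuine gap at precisely the step you flag at the end. The wedge-invariance computation with strictly decreasing $|\zeta_n|$ is valid only for $|\zeta|$ above a threshold $r_0(\ell)$ at which the vanishing coefficients $a_{0,\ell},a_{1,\ell},a_{2,\ell}$ start to compete with $c|\zeta|^3$; below that threshold you invoke ``attraction at $x_{+,\ell}$'', but attraction is an asymptotic property and does not by itself give a bound, uniform in the iterate and in $\ell$, on the orbit during transit through the perturbation-dominated region -- a priori the orbit could make an excursion before converging. Making this confinement rigorous requires exactly the kind of a priori estimate the hyperbolic non-expansion argument supplies in a single stroke, at which point the entire wedge computation becomes redundant. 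A secondary issue is the large-$\Im z_0$ case: you need $R_0$ small for the leading-order wedge estimate yet large enough that $e R_0\sqrt{3}/2\geq C$, and the fallback of ``iterating once and reducing to the previous case'' does not close the gap because $z_1$ need not lie in the wedge, so the case analysis cannot be restarted from it.
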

\begin{proof}
  We choose $\ell(\eta)$ so that for every $\ell\geq \ell(\eta)$
  the stunted wedge $\{ z\in\HH_+ :\: \Im z\geq\eta ,\, \frac{1}{3}\pi <
  \arg(z-x_{0,\infty}) < \frac{2}{3}\pi \}$ is contained in the wedge
  $\mathfrak{W} := \{ z\in \HH_+ :\: \frac{1}{4}\pi < \arg (z-x_{+,\ell}) <
  \frac{3}{4}\pi \}$.

  Then for any $z_0\in\mathfrak{W}$ the hyperbolic distance in $\HH_+$
  from $z_0$ to $x_{+,\ell}$ is less than $\log\frac{\Im z_0}{\Im
    x_{+,\ell}} + 2$. It is not expanded by the action
  $\mathbf{G}^{-2}_{+,\ell}$. Given a hyperbolic distance the
  maximum Euclidean distance is of obtained when the real parts
  coincide, which yields the estimate of the Lemma.
\end{proof}

A consequence of the wedge lemma is the following estimate.
\begin{lem}\label{lem:11jp,4}
For every $r,H>0$ there exists $K(r,H)$
and $\ell(r,H)$ such that for every $\ell\geq\ell(r,H)$
we get
\[ \phi_{\pm,\ell}^{-1} \left( \{ \zeta\in\CC :\:
|\Im\zeta|\leq H,\,\Re\zeta>K(r,H)\}\right) \subset D(x_{0,\infty},r) .\]
\end{lem}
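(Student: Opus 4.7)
The strategy is to use the shift identity $\phi^{-1}_{\pm,\ell}(\zeta+2\log\tau_\ell)=\mathbf{G}^{-2}_{\pm,\ell}\bigl(\phi^{-1}_{\pm,\ell}(\zeta)\bigr)$ to transport a point $\zeta_0$ with $|\Im\zeta_0|\leq H$ and large $\Re\zeta_0$ leftward into a region where Lemma~\ref{lem:10jp,2} already yields a good estimate, and then to re-propagate the bound back via the Wedge Lemma (Lemma~\ref{lem:15hp,1}). By the conjugation symmetry of $\phi^{-1}_{\pm,\ell}$, I may assume $\Im\zeta_0>0$; I will treat $\sigma=+$, the case $\sigma=-$ being analogous.

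Fix a small parameter $\eta>0$ to be chosen at the end (roughly $\eta=r/(3e)$) and let $L_0:=L(\eta)$, $\ell_0:=\ell(\eta)$ be produced by Lemma~\ref{lem:10jp,2}. For each $\zeta_0$ in the target set, let $m\geq 1$ be the smallest integer such that $\Re\zeta_m\leq -L_0$, where $\zeta_m:=\zeta_0-2m\log\tau_\ell$. Setting $K(r,H):=L_0+2\log\tau_\infty+H+1$ guarantees that $m\geq 1$ is well-defined and $\Re\zeta_m\in(-L_0-2\log\tau_\ell,\,-L_0]$ uniformly for $\ell$ sufficiently large (since $\log\tau_\ell\to\log\tau_\infty$). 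Because $\Re\zeta_m<0$ and $|\Im\zeta_m|\leq H$, we have $\dist(\zeta_m,[0,+\infty))\geq L_0$, so Lemma~\ref{lem:10jp,2} yields $z_m:=\phi^{-1}_{+,\ell}(\zeta_m)\in D(x_{0,\infty},\eta)\cap\HH_+$.

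The shift identity gives $\phi^{-1}_{+,\ell}(\zeta_0)=\mathbf{G}^{-2m}_{+,\ell}(z_m)$, reducing the task to bounding the forward orbit of $z_m$ under $\mathbf{G}^{-2}_{+,\ell}$. For $\ell$ large, $x_{+,\ell}\to x_{0,\infty}$ and $\Im x_{+,\ell}\to 0$, so the wedge $\mathfrak{W}=\{z\in\HH_+:\pi/4<\arg(z-x_{+,\ell})<3\pi/4\}$ used in the Wedge Lemma contains a half-neighborhood of $x_{0,\infty}$ in $\HH_+$ except for two thin horizontal sectors. Since $\mathbf{G}^{-2}_{+,\ell}$ is a strong hyperbolic contraction of $\HH_+$ toward its attracting fixed point $x_{+,\ell}$, at most one preparatory application places $z_m$ inside $\mathfrak{W}$ while keeping the imaginary part at most $O(\eta)$. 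From then on the Wedge Lemma traps every subsequent iterate inside $D(x_{0,\ell},e\cdot O(\eta))$, which for $\eta$ small and $\ell$ large enough so that $|x_{0,\ell}-x_{0,\infty}|<r/3$ lies inside $D(x_{0,\infty},r)$, as required.

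The principal obstacle is the verification in the preceding paragraph that $z_m$, or at worst one iterate $\mathbf{G}^{-2}_{+,\ell}(z_m)$, lies in $\mathfrak{W}$ with imaginary part comparable to $\eta$. Lemma~\ref{lem:10jp,2} controls only the Euclidean distance $|z_m-x_{0,\infty}|$ and not the argument $\arg(z_m-x_{0,\infty})$, so one must use the hyperbolic geometry of $\HH_+$ together with the asymptotic data $x_{+,\ell}\to x_{0,\infty}$, $\Im x_{+,\ell}\to 0$ to rule out the exceptional thin horizontal sectors. All other ingredients are straightforward: the shift identity comes from the definitions, and the base estimate is supplied by Lemma~\ref{lem:10jp,2}.
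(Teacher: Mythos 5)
Your overall scheme --- shift $\zeta$ leftward via the functional equation into a bounded region and propagate back via the Wedge Lemma --- matches the structure of the paper's proof, but the step you yourself flag as ``the principal obstacle'' is a genuine gap that your sketch does not close. Lemma~\ref{lem:10jp,2} controls only the Euclidean distance $|z_m-x_{0,\infty}|$, saying nothing about $\arg(z_m-x_{+,\ell})$. You then assert that since $\mathbf{G}^{-2}_{+,\ell}$ is a hyperbolic contraction of $\HH_+$, ``at most one preparatory application places $z_m$ inside $\mathfrak{W}$.'' This is not justified and is false in general: the multiplier $D\mathbf{G}^{-2}_{+,\ell}(x_{+,\ell})=|DG_\ell(x_{+,\ell})|^{-2}$ is real and positive, so near $x_{+,\ell}$ the dynamics is essentially a radial contraction, and an orbit approaching $x_{+,\ell}$ from a direction outside $\mathfrak{W}$ can stay outside $\mathfrak{W}$ for arbitrarily many iterates. (It is also not a \emph{strong} contraction when $\ell$ is large, since the multiplier tends to $1$.) Hyperbolic contraction toward $x_{+,\ell}$ controls distance, not angle of approach; the Wedge Lemma takes an angle hypothesis as input, and Lemma~\ref{lem:10jp,2} simply does not supply it.

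The paper obtains the missing angle control from a different source and in fact does not use Lemma~\ref{lem:10jp,2} in this proof at all. For $\ell=\infty$ the map $\phi^{-1}_{\pm,\infty}$ is a repelling Fatou coordinate, hence carries each horizontal half-ray to a curve converging to $x_{0,\infty}$ tangentially to the repelling direction; this gives both the radius bound and a narrow wedge bound at once. To transfer this to large finite $\ell$, the paper factors $\phi^{-1}_{\pm,\ell}$ on the relevant fundamental half-strip as $\mathbf{G}^{-2(k(r)+1)}_\ell$ applied to the $\phi^{-1}_{\pm,\ell}$-image of a fixed bounded strip, and then invokes Corollary~\ref{coro:11jp,1}, which gives uniform convergence of the fixed iterate $\mathbf{G}^{-2(k(r)+1)}_\ell$ on a compact set away from the slit. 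The wedge inclusion for $W(r,H,\ell)$ thus comes from the $\ell=\infty$ Fatou geometry plus Corollary~\ref{coro:11jp,1}, not from the asymptotics of hyperbolic contraction, and only then is Lemma~\ref{lem:15hp,1} applicable. Replacing this input with Lemma~\ref{lem:10jp,2} discards exactly the directional information the Wedge Lemma needs.
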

\begin{proof}
Since for $\ell=\infty$ the mapping $\phi^{-1}_{\pm,\infty}$ is a Fatou
coordinate, it maps every horizontal half-ray
\[ \{ \zeta\in\CC :\: \Im\zeta=H,
\Re\zeta>0\} \] to a curve convergent to $x_{0,\infty}$ and tangent to
the repelling direction. Hence, given $r,H$ for some $K(r,H)$ the set
$\left\{ \zeta\in\CC :\: |\Im\zeta|\leq H,\,\Re\zeta > K(r,H) -2\log\tau^2_{\infty}\right\}$ is
mapped into $\left\{ z\in\CC :\: |z-x_{0,\infty}|<\frac{r}{10},\,
\arg\bigl(-(z-x_{0,\infty})^2\bigr) < \frac{\pi}{10} \right\}$. Now choose an
integer $k(r)$ so that
\begin{equation}\label{equ:11jp,3}
  \left(k(r)-1\right)\log\tau^2_{\infty} \in \left( K(r,H)-2\log\tau^2_{\infty},
  K(r,H)\right) .
\end{equation}
  On the other hand, for any $\ell$ consider
\begin{multline*} W(r,H,\ell) := \phi_{\pm,\ell}^{-1}\bigl( \left\{ \zeta\in\CC :\: |\Im\zeta|\leq H,\,
\left(k(r)-1\right)\log\tau^2_{\ell} \leq \Re\zeta \leq k(r)\log\tau^2_{\ell} \right\}
\bigr) =\\  \mathbf{G}^{-2\left(k(r)+1\right)}_{\ell}\circ\phi^{-1}_{\pm,\ell}\bigl( \left\{ \zeta\in\CC :\:
|\Im\zeta|\leq H,\, -2\log\tau^2_{\ell}\leq \Re\zeta\leq
-\log\tau^2_{\ell}\right\}\bigr) .\end{multline*}
By Corollary~\ref{coro:11jp,1} for all $\ell$ sufficiently large
\[ W(r,H,\ell) \subset \left\{ z\in\CC :\: |z-x_{0,\infty}|<\frac{r}{3},\,
\arg\bigl(-(z-x_{0,\infty})^2\bigr) < \frac{\pi}{3} \right\} .\]
Then by Lemma~\ref{lem:15hp,1} all subsequent images of $W(r,H,\ell)$
by iterates of $\mathbf{G}_{\ell}^{-1}$ are contained in
$D(x_{0,\infty},r)$. But that means the entire set
\[ \phi_{\pm,\ell}^{-1}\bigl( \left\{ \zeta\in\CC :\: |\Im\zeta|\leq H,\,
\left(k(r)-1\right)\log\tau^2_{\ell} \leq \Re\zeta \right\}\bigr) \subset
D(x_{0,\infty},r) . \]
Recalling expression~(\ref{equ:11jp,3}),  we get the claim.
\end{proof}

\begin{lem}\label{lem:11jp,5}
For every $r>0$ there is $K(r)$ and $\ell(r)$ so that for any
$\ell\geq\ell(r)$ and $\zeta :\: |\zeta|>K(r),\, |\Im\zeta| <
\frac{\ell\pi}{2},\, \zeta\notin [0,+\infty)$
  we get $\phi^{-1}_{\pm,\ell}(\zeta) \in D(x_{0,\infty},r)$.
\end{lem}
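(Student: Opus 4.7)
The plan is to combine the two previous lemmas: Lemma~\ref{lem:10jp,2} handles points $\zeta$ that are far from the slit $[0,+\infty)$, while Lemma~\ref{lem:11jp,4} handles points that are close to the slit but have large real part. A point $\zeta$ satisfying $|\zeta| > K(r)$ for $K(r)$ chosen large enough must fall into at least one of these two regimes, and in both regimes the conclusion $\phi_{\pm,\ell}^{-1}(\zeta)\in D(x_{0,\infty},r)$ is already available.

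More precisely, first I would invoke Lemma~\ref{lem:10jp,2} with the given $r>0$ to obtain constants $L(r)$ and $\ell_1(r)$ such that whenever $\ell\geq\ell_1(r)$, $|\Im\zeta|<\frac{\ell\pi}{2}$ and $\dist(\zeta,[0,+\infty))>L(r)$, we have $\phi_{\pm,\ell}^{-1}(\zeta)\in D(x_{0,\infty},r)$. Next I would apply Lemma~\ref{lem:11jp,4} with the parameter $H:=L(r)$ to obtain $K_1(r):=K(r,L(r))$ and $\ell_2(r)$ so that whenever $\ell\geq\ell_2(r)$, $|\Im\zeta|\leq L(r)$ and $\Re\zeta>K_1(r)$, again $\phi_{\pm,\ell}^{-1}(\zeta)\in D(x_{0,\infty},r)$.

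I then set $K(r):=K_1(r)+L(r)$ and $\ell(r):=\max\bigl(\ell_1(r),\ell_2(r)\bigr)$. Given any $\zeta$ with $|\zeta|>K(r)$, $|\Im\zeta|<\frac{\ell\pi}{2}$ and $\zeta\notin[0,+\infty)$, there are two cases. If $\dist(\zeta,[0,+\infty))>L(r)$, the first invocation closes the argument. Otherwise $\dist(\zeta,[0,+\infty))\leq L(r)$, which forces $|\Im\zeta|\leq L(r)$ and either $\Re\zeta\geq 0$ or $|\zeta|\leq L(r)$; the latter contradicts $|\zeta|>K(r)>L(r)$, so $\Re\zeta\geq 0$ and therefore
\[ \Re\zeta = \sqrt{|\zeta|^2-|\Im\zeta|^2} \geq |\zeta|-|\Im\zeta| > K(r)-L(r) = K_1(r), \]
which places $\zeta$ in the region covered by Lemma~\ref{lem:11jp,4}.

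There is no genuine obstacle here; the lemma is a routine assembly of the two preceding statements. The only point that requires attention is bookkeeping: one must choose $K(r)$ large enough relative to $L(r)$ so that the cylindrical strip $\{|\Im\zeta|\leq L(r)\}\cap\{|\zeta|>K(r)\}$ automatically sits in the half-plane $\{\Re\zeta>K_1(r)\}$ where Lemma~\ref{lem:11jp,4} applies, and to take $\ell(r)$ as the maximum of the two previous thresholds.
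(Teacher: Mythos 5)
Your proposal is correct and follows exactly the paper's own argument: Lemma~\ref{lem:10jp,2} covers the region far from the slit, Lemma~\ref{lem:11jp,4} with $H:=L(r)$ covers the half-strip near the slit with large real part, and the choice $K(r):=K(r,L(r))+L(r)$ makes the remaining uncovered set empty. The only difference is that you spell out the elementary bookkeeping which the paper compresses into ``what remains is a bounded set.''
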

\begin{proof}
  Let us begin with Lemma~\ref{lem:10jp,2} which implies the claim for
$\zeta :\: |\Im\zeta|<\frac{\ell\pi}{2},
  \dist\left(\zeta,[0,+\infty)\right) > L(r)$.

 Then invoke Lemma~\ref{lem:11jp,4} with $H:=L(r)$ to conclude that
 the claim also holds on the infinite half-strip $\bigl\{ \zeta\in\CC
 :\: |\Im\zeta|\leq L(r),\, \Re\zeta >
 K\left(r,L(r)\right)\bigr\}$. What remains is a bounded set.
 \end{proof}

\paragraph{Proof of Proposition~\ref{prop:11jp,1}.}
The limit at $\infty$ for $\phi^{-1}_{\pm,\infty}$ follows from
Lemma~\ref{lem:11jp,5}. It remains for check uniform convergence.
Fix $r>0$
By Lemma~\ref{lem:11jp,5}, for $|\zeta|>K\left(\frac{r}{2}\right)$ we
get $\phi^{-1}_{\pm\ell}(\zeta) \in D(x_{0,\infty},\frac{r}{2})$ for
all $\ell$ large enough and hence
$\left|\phi^{-1}_{\pm,\ell}(\zeta)-\phi^{-1}_{\pm,\infty}(\zeta)\right|
< r$. The remaining bounded set after shifting by some multiple
of $\log\tau^2_{\infty}$ is compactly contained in $\CC\setminus
[0,+\infty)$. Hence uniform convergence follows form
  Fact~\ref{fa:1hp,1} and Corollary~\ref{coro:11jp,1}.

\paragraph{Diameter of $\mathfrak{w}_{\ell}$.}
Recall the arc $\mathfrak{w}_{\ell}$ which for finite $\ell$ separates
$\Omega_{+,\ell}$ from $\Omega_{-,\ell}$. $\mathfrak{w}_{\ell}\cap\HH_+$ is
invariant under $\mathbf{G}_{+,\ell}^{-2}$.

\begin{lem}\label{lem:17ha,3}
For every $\epsilon>0$ there is $\ell(\epsilon)$ such for any
$\ell\geq \ell(\epsilon)$, even and finite, and any
$z\in \mathfrak{w}_{\ell}\cap\HH_+$
the hyperbolic diameter in $\HH_+$ of the subarc of $\mathfrak{w}$
between $z$ and $G^2_{\ell}(z)$ is bounded by $\epsilon$.
\end{lem}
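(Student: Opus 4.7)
My plan is to combine Schwarz--Pick contraction of the holomorphic self-map $\mathbf{G}^{-2}_{+,\ell}$ of $\HH_+$ with a direct asymptotic analysis at the attracting real fixed point $x_{0,\ell}$ of $G^2_\ell$. The invariant arc $\mathfrak{w}_\ell \cap \HH_+$ has $x_{+,\ell}$ as an attracting fixed point of $\mathbf{G}^{-2}_{+,\ell}$ at one end, and its forward $G^2_\ell$-orbit accumulates at $x_{0,\ell}\in\RR$ at the other; I will show that the hyperbolic length of any fundamental sub-arc is dominated by a quantity that tends to $0$ as $\ell\to\infty$, which is enough since hyperbolic diameter is bounded by hyperbolic length.

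First, I would fix $z\in \mathfrak{w}_\ell\cap\HH_+$ and set $w_n := G^{2n}_\ell(z)$. By invariance of the arc and Fact~\ref{fa:1hp,4}, $w_n\in \mathfrak{w}_\ell\cap\HH_+$ for all $n\geq 0$ and $w_n\to x_{0,\ell}$. Since $\mathbf{G}^{-2}_{+,\ell}$ is a holomorphic self-map of $\HH_+$, Schwarz--Pick contracts the hyperbolic density, so the hyperbolic length $L_n$ of the sub-arc of $\mathfrak{w}_\ell$ between $w_n$ and $w_{n+1}$ satisfies $L_n\leq L_{n+1}$, because this sub-arc is exactly $\mathbf{G}^{-2}_{+,\ell}$ applied to the next one. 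The hyperbolic diameter of any fundamental arc is bounded by its hyperbolic length, so it suffices to bound $\sup_n L_n = \lim_{n\to\infty} L_n$.

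Second, I would compute this limit by linearization at $x_{0,\ell}$. By Fact~\ref{fa:1hp,4}, $x_{0,\ell}$ is an attracting real fixed point of $G_\ell$ with multiplier $\mu_\ell := G'_\ell(x_{0,\ell})^2\in(0,1)$; as $\ell\to\infty$, $G_\ell\to G_\infty$ and $G'_\infty(x_{0,\infty}) = 1$, so $\mu_\ell\to 1^-$. Because $\mathfrak{w}_\ell$ is tangent to the vertical line $\{\Re z = x_{0,\ell}\}$ at $x_{0,\ell}$ and the linearization is real, the orbit $w_n$ approaches $x_{0,\ell}$ tangentially to the vertical direction, and a Koenigs-type expansion (using that $\mu_\ell$ is real and bounded away from $0$) yields $w_n = x_{0,\ell} + iy_n(1+o_n(1))$ with $y_n = C\mu_\ell^n(1+o_n(1))$. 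The hyperbolic length of the sub-arc from $w_n$ to $w_{n+1}$ is then asymptotic to $\log(y_n/y_{n+1})\to |\log\mu_\ell|$ as $n\to\infty$.

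Combining, $d^{\HH_+}_{\mathrm{hyp}}(z,G^2_\ell(z))\leq L_0 \leq \lim_n L_n = |\log\mu_\ell|+o_\ell(1)$ uniformly in $z\in\mathfrak{w}_\ell\cap\HH_+$, and $|\log\mu_\ell|\to 0$ as $\ell\to\infty$. The main difficulty I anticipate is the asymptotic step: showing rigorously that $L_n \to |\log\mu_\ell|$ requires controlling non-linear corrections in the Koenigs linearization uniformly in $n$ and confirming that the vertical tangency of $\mathfrak{w}_\ell$ persists along the whole orbit. Both follow from the $\mathbf{G}^{-2}_{+,\ell}$-invariance of the arc together with its tangency at $x_{0,\ell}$, but the bookkeeping of error terms near the boundary of $\HH_+$ (where the hyperbolic metric blows up) is the delicate part of the argument.
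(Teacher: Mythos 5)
Your proof is correct and follows essentially the same route as the paper's: Schwarz--Pick monotonicity of the hyperbolic size of the fundamental sub-arcs under $\mathbf{G}^{-2}_{+,\ell}$, combined with a Koenigs-type linearization at $x_{0,\ell}$ to identify the limiting value as $-\log\mu_\ell=-2\log|G'_\ell(x_{0,\ell})|\to 0$. (A minor slip: $G'_\infty(x_{0,\infty})=-1$, not $1$, but you only use the multiplier $\mu_\ell=(G'_\ell(x_{0,\ell}))^2\to 1$, which is correct.)
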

\begin{proof}
Let $\mathfrak{w}(z)$ denote the segment of $\mathfrak{w}_{\ell}$ between $z$ and
$G^2_{\ell}(z)$. Then its hyperbolic diameter is bounded by the
hyperbolic diameter of $\mathfrak{w}\left(G^{2n}_{\ell}(z)\right)$ for
any $n$ positive by Schwarz' Lemma. On the other hand,
$\mathfrak{w}_{\ell}$ is a preimage of a line by an analytic mapping,
hence a smooth curve at $x_{0,\ell}$ tangent to the vertical line
$x_{0,\ell}+\iota\RR$. It develops that the limit of the hyperbolic
diameter of $\mathfrak{w}\left(G^{2n}(x)\right)$ as
$n\rightarrow\infty$ is $-2\log G'_{\ell}(x_{0,\ell})$ which tends to
$0$ as $\ell\rightarrow\infty$.
\end{proof}

\begin{lem}\label{lem:17ha,2}
  \[ \lim_{\ell\rightarrow\infty} \diam (\mathfrak{w}_{\ell}) = 0 \; .\]
\end{lem}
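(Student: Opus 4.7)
The plan is to use the univalent map $\phi_{+,\ell}\colon \Omega_{+,\ell}\to \varPi_\ell=\{|\Im\zeta|<\ell\pi/2\}\setminus[0,+\infty)$ to identify the arc $\mathfrak{w}_\ell$ with the top and bottom horizontal boundaries of the slit strip, and then apply Lemma~\ref{lem:11jp,5} to pull back a neighborhood of $\infty$ in $\varPi_\ell$ to a small Euclidean neighborhood of $x_{0,\infty}$.

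First I would pin down the boundary correspondence of $\phi_{+,\ell}$. The Jordan curve $\partial\Omega_{+,\ell}$ is traversed as $x_{0,\ell}\to(\mathfrak{w}_\ell\cap\HH_+)\to w_\ell^+\to(\partial\Omega_\ell\cap\HH_+)\to \tau_\ell x_{0,\ell}\to(\partial\Omega_\ell\cap\HH_-)\to w_\ell^-\to(\mathfrak{w}_\ell\cap\HH_-)\to x_{0,\ell}$, where $w_\ell^{\pm}\in\partial\Omega_\ell$ denote the endpoints of the crosscut. The boundary of $\varPi_\ell$ in the Riemann sphere consists of the top line $\{\Im\zeta=\ell\pi/2\}$, the upper and lower sides of the slit $[0,+\infty)$, and the bottom line $\{\Im\zeta=-\ell\pi/2\}$, all joined at $\pm\infty$ and at $0$. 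Two anchor points identify the correspondence: $\phi_{+,\ell}(x_{0,\ell})=-\infty$ (since $H_\ell(x_{0,\ell})=0$) and $\phi_{+,\ell}(\tau_\ell x_{0,\ell})=0$ (since $H_\ell(\tau_\ell x_{0,\ell})=\tau_\ell$ by the functional equation). Using a local calculation near $x_{0,\ell}$ (where $H_\ell(z)\sim c(z-x_{0,\ell})^\ell$ with $c>0$, giving $\Im\phi_{+,\ell}\to \ell\pi/2$ as one approaches $\mathfrak{w}_\ell$ from $\Omega_{+,\ell}\cap\HH_+$), together with connectedness of $\mathfrak{w}_\ell\cap\HH_+$ and the fact that $\phi_{+,\ell}$ sends the interior segment $(x_{0,\ell},\tau_\ell x_{0,\ell})$ to $(-\infty,0)$, the remaining matching is forced to yield
\[ \phi_{+,\ell}(\mathfrak{w}_\ell\cap\HH_+)=\{\Im\zeta=\ell\pi/2\},\qquad \phi_{+,\ell}(\mathfrak{w}_\ell\cap\HH_-)=\{\Im\zeta=-\ell\pi/2\}, \]
with $w_\ell^{\pm}$ mapping to the point at $+\infty$.

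With this identification the main estimate is immediate. Fix $r>0$ and take $K(r),\ell(r)$ from Lemma~\ref{lem:11jp,5}. For any $\ell\ge\ell(r)$ large enough that also $\ell\pi/2>K(r)$, every $\zeta$ with $\Im\zeta=\pm\ell\pi/2$ satisfies $|\zeta|\ge\ell\pi/2>K(r)$ and $\zeta\notin[0,+\infty)$; extending Lemma~\ref{lem:11jp,5} to the boundary by continuity from inside the strip yields $\phi_{+,\ell}^{-1}(\zeta)\in\overline{D(x_{0,\infty},r)}$. Hence $(\mathfrak{w}_\ell\cap\HH_+)\cup(\mathfrak{w}_\ell\cap\HH_-)\subset\overline{D(x_{0,\infty},r)}$, and combined with $x_{0,\ell}\in\mathfrak{w}_\ell$ and $x_{0,\ell}\to x_{0,\infty}$ this gives $\mathfrak{w}_\ell\subset\overline{D(x_{0,\infty},r)}$ for all sufficiently large $\ell$, whence $\diam(\mathfrak{w}_\ell)\le 2r$. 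Since $r>0$ is arbitrary, $\diam(\mathfrak{w}_\ell)\to 0$.

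The only non-routine step is the boundary identification in the second paragraph: it requires tracking the prime-end correspondence at the corners $x_{0,\ell}\mapsto -\infty$ and $w_\ell^{\pm}\mapsto +\infty$ and using continuity of the analytic extension of $\phi_{+,\ell}$ across $\mathfrak{w}_\ell$ granted by Fact~\ref{fa:6hp,1}. Once the image of $\mathfrak{w}_\ell$ is pinned to the top and bottom lines of the strip, the conclusion is a direct appeal to Lemma~\ref{lem:11jp,5}.
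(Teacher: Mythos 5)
Your proof is correct, and it takes a genuinely different route from the paper's. You use the conformal identification $\phi_{+,\ell}:\Omega_{+,\ell}\to\varPi_{\ell}$ to pin $\mathfrak{w}_{\ell}\cap\HH_{\pm}$ to (subsets of) the horizontal boundary lines $\{\Im\zeta=\pm\ell\pi/2\}$, and since $\ell\pi/2\to\infty$, every such $\zeta$ eventually has $|\zeta|>K(r)$, so Lemma~\ref{lem:11jp,5} together with the analytic continuation of $\phi_{+,\ell}$ across $\mathfrak{w}_{\ell}$ from Fact~\ref{fa:6hp,1} forces the whole crosscut into $\overline{D(x_{0,\infty},r)}$. (Note you do not actually need $\phi_{+,\ell}(\mathfrak{w}_{\ell}\cap\HH_{\pm})$ to equal the \emph{full} line, only to lie inside it, which slightly lightens the boundary-correspondence step.) The paper instead argues by contradiction: if $\mathfrak{w}_{\ell}\cap\HH_{+}$ met a fixed circle $C(x_{0,\infty},r)$ at a point $z_0$, Lemma~\ref{lem:17hp,1} would give a Euclidean lower bound $\varepsilon(r)$ on the displacement $|G_{\ell}^2(z_0)-z_0|$, while Lemma~\ref{lem:17ha,3} says the hyperbolic diameter of the subarc of $\mathfrak{w}_{\ell}$ from $z_0$ to $G_{\ell}^2(z_0)$ tends to $0$; since the arc stays in a bounded region of $\HH_{+}$, these are incompatible. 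Your version is more direct and uses the same uniform estimate (Lemma~\ref{lem:11jp,5}) that underlies Proposition~\ref{prop:11jp,1}, at the cost of tracking the prime-end correspondence of $\phi_{+,\ell}$ near the corners $x_{0,\ell}\mapsto-\infty$ and $w_{\ell}^{\pm}\mapsto+\infty$; the paper's version sidesteps the boundary identification and instead pairs a Euclidean displacement bound with hyperbolic contraction along $\mathfrak{w}_{\ell}$, reusing Lemma~\ref{lem:17ha,3}, which it needs later anyway in the proof of Lemma~\ref{lem:31na,1}.
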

\begin{proof}
  It is enough to prove the claim for $\mathfrak{w}_{\ell} \cap
  \HH_+$. Fix $r>0$ and suppose that for $\ell$ arbitrary large
  $\mathfrak{w}_{\ell} \cap \HH_+$ intersects $C(x_{0,\infty},r)$ at $z_0$.
  Then by Lemma~\ref{lem:17hp,1} for $\ell\geq \ell(r)$ the Euclidean
  diameter of the subarc of $\mathfrak{w}_{\ell}$ between $z_0$ and
  $G^2_{\ell}(z_0)$ is at least $\varepsilon(r)$. But by
  Lemma~\ref{lem:17ha,3} the hyperbolic diameter of the same arc tends
  to $0$ as $\ell\rightarrow\infty$ which yields a contradiction.
\end{proof}

\section{Dynamics near an almost parabolic point.}
 \subsection{Elementary estimates.}

\paragraph{Double wedge in $\Omega_{\ell}$.}
Start with the following fact:
\begin{fact}\label{fa:18ha,1}
  For any $\delta>0$ there is $r(\delta)>0$ such that the double wedge
  \[  \left\{ x_{0,\infty}+\zeta :\: |\zeta|<r(\delta),\, |\arg\zeta^2| <
  \pi-\delta \right\} \]
is contained in $\Omega_{\infty}$.
\end{fact}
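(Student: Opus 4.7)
The plan is to recognize $x_{0,\infty}$ as a parabolic fixed point of $G_\infty^2$ and apply Leau--Fatou theory. Specialization of the expansion (\ref{equ:17hp,1}) to $\ell=\infty$ gives
\[
G_\infty^2(z) - z = a_{3,\infty}(z-x_{0,\infty})^3 + g_{4,\infty}(z-x_{0,\infty})(z-x_{0,\infty})^4
\]
with $a_{3,\infty}<0$, so $G_\infty^2$ has at $x_{0,\infty}$ a parabolic fixed point of Fatou--flower multiplicity $k=2$ with leading coefficient real and negative. The attracting directions, determined by the condition $a_{3,\infty}e^{2i\theta}<0$, are $\theta=0$ and $\theta=\pi$, so there are two attracting petals $P_+$ and $P_-$ with tangent cones of opening angle $2\pi/k=\pi$ at $x_{0,\infty}$, pointed along the positive and negative real axes respectively and tangent there to the vertical line $\{\Re z = x_{0,\infty}\}$.

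Next I would verify that near $x_{0,\infty}$ the petals $P_\pm$ lie in $\Omega_{\pm,\infty}$. Fact~\ref{fa:1hp,4} together with the definitions of $\phi_{\pm,\infty}$ yields the functional equation $\phi_{\pm,\infty}\circ G_\infty^2 = \phi_{\pm,\infty} - 2\log\tau_\infty$ on $\Omega_{\pm,\infty}$, identifying $\phi_{\pm,\infty}$ (up to an affine rescaling) as a Fatou coordinate of $G_\infty^2$ on its domain. Uniqueness of Fatou coordinates up to an additive constant, combined with the Leau normal form $u(w) = 1/(2|a_{3,\infty}|w^2)+o(w^{-2})$ for a local Fatou coordinate satisfying $u\circ G_\infty^2 = u + 1$, yields the leading asymptotic
\[
\phi_{+,\infty}^{-1}(\zeta)-x_{0,\infty} = \sqrt{\tfrac{\log\tau_\infty}{|a_{3,\infty}|}}\cdot\frac{1}{\sqrt{-\zeta}}\,\bigl(1+o(1)\bigr)
\]
as $|\zeta|\to\infty$ within $\varPi_\infty$, where the principal branch of the square root is taken on $\CC\setminus(-\infty,0]$.

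Since $1/\sqrt{-\zeta}$ maps $\varPi_\infty$ conformally onto the open right half-plane, the asymptotic shows that for every $\delta>0$ one can choose $r(\delta)>0$ such that the right wedge $\{x_{0,\infty}+w:\,0<|w|<r(\delta),\,|\arg w|<\pi/2-\delta/2\}$ lies in the image of $\phi_{+,\infty}^{-1}$, hence in $\Omega_{+,\infty}$. The symmetric argument applied to $\phi_{-,\infty}$ places the left wedge $\{x_{0,\infty}+w:\,0<|w|<r(\delta),\,|\arg(-w)|<\pi/2-\delta/2\}$ in $\Omega_{-,\infty}$. The union of these two wedges is exactly the set $\{x_{0,\infty}+\zeta:\,|\zeta|<r(\delta),\,|\arg\zeta^2|<\pi-\delta\}$, which is therefore contained in $\Omega_\infty = \Omega_{+,\infty}\cup\Omega_{-,\infty}$.

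The main obstacle is to extract the leading asymptotic of $\phi_{\pm,\infty}^{-1}$ rigorously from the functional equation alone. This is a standard but non-trivial piece of parabolic dynamics: one constructs a local Fatou coordinate $u$ via the Leau normal form, then matches $\phi_{\pm,\infty}$ with $-2\log\tau_\infty\cdot u + \mathrm{const}$ using uniqueness. The convergence statement in Proposition~\ref{prop:11jp,1}, namely that $\phi_{\pm,\infty}^{-1}(\zeta) \to x_{0,\infty}$ as $|\zeta|\to\infty$ in $\varPi_\infty$, is what ensures the comparison extends across a complete attracting petal at $x_{0,\infty}$ rather than only a small part of it, so that no sector near $x_{0,\infty}$ is missing from $\Omega_{\pm,\infty}$.
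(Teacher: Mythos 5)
The paper itself gives no proof of Fact~\ref{fa:18ha,1}: it is presented in a ``Fact'' environment, which the paper uses for statements recalled from the earlier references (here presumably \cite{leswi:feig} and standard parabolic dynamics), so there is no internal argument to compare against. Your sketch, by contrast, supplies one, and the route you choose is the natural one. The identification of $x_{0,\infty}$ as a degenerate parabolic fixed point of $G_\infty^2$ with expansion $G_\infty^2(z)-z = a_{3,\infty}\zeta^3 + O(\zeta^4)$, $a_{3,\infty}<0$, is consistent with equation~(\ref{equ:17hp,1}); the count $k=2$ giving two attracting petals of opening angle arbitrarily close to $\pi$ along the attracting directions $\arg\zeta\in\{0,\pi\}$ is the standard Leau--Fatou statement; and the Abel equation $\phi_{\pm,\infty}\circ G_\infty^2=\phi_{\pm,\infty}-2\log\tau_\infty$, which follows by composing the two relations used inside the proof of Proposition~\ref{prop:2hp,1}, does indeed identify an affine rescaling of $\phi_{\pm,\infty}$ as a Fatou coordinate. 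The paper itself relies on this identification later (e.g.\ in Lemma~\ref{lem:11jp,4} and again in Lemma~\ref{lem:11na,1}, where the asymptotics $\log H_\ell(u)\sim -C_{\text{Fatou}}/(u-x_{0,\infty})^2$ appear), so your picture is the one the authors have in mind.

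The place where your write-up stops short of a proof is exactly the one you flag: turning the Abel relation into the uniform asymptotics $\phi_{\pm,\infty}^{-1}(\zeta)-x_{0,\infty}=c/\sqrt{-\zeta}\,(1+o(1))$ as $|\zeta|\to\infty$ throughout $\varPi_\infty$, and then converting that into the inclusion of a wedge. Both steps are standard but deserve a sentence: the asymptotics come from the Leau normal form for the attracting Fatou coordinate of a multiplicity-two parabolic point, and the matching across the entire domain $\Omega_{\pm,\infty}$ (not merely a small petal) uses that $\phi_{\pm,\infty}$ is a global conjugacy of $G_\infty^2$ to a translation on the slit plane together with $\lim_{|\zeta|\to\infty}\phi_{\pm,\infty}^{-1}(\zeta)=x_{0,\infty}$ (Proposition~\ref{prop:11jp,1}). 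Once you have the uniform $o(|\zeta|^{-1/2})$ error, the image of $\{\zeta\in\varPi_\infty:|\zeta|>K\}$ under $\phi_{\pm,\infty}^{-1}$ contains a wedge of opening $\pi-\delta$ by the argument principle, which is the last small step worth making explicit. With those two points filled in, your argument is a correct and self-contained justification of a statement the paper simply quotes.
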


We will now work to obtain a similar estimate for finite $\ell$,
uniform in $\ell$.

\begin{defi}\label{defi:24ha,1}
  For $\delta>0$ and $0<r<R$ and $s\in\{+,-,0\}$ we will write
  \[W_s(\delta,r,R) := \left\{ x_{s,\infty}+\zeta :\:
r<|\zeta|<R,\, |\arg\zeta^2| < \pi-\delta \right\} \; .\]
\end{defi}

\begin{lem}\label{lem:18ha,1}
For every $\delta>0$ and $s\in\{+,-,0\}$ there is $r(\delta)>0$ and, additionally, for
every $r_1>0$   there is $\ell(\delta,r_1)<\infty$ such that
\[ \forall\ell\geq\ell(\delta,r_1)\; W_s(\delta,r_1,r(\delta)) \subset
\Omega_{\ell} \; ,\]
cf. Definition~\ref{defi:24ha,1}.
\end{lem}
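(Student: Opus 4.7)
The plan is to combine Fact~\ref{fa:18ha,1}, which handles the limiting case $\ell=\infty$, with the uniform convergence of inverse branches from Proposition~\ref{prop:11jp,1}, upgraded from a proximity statement to an inclusion statement by a Rouché argument.

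First, I would apply Fact~\ref{fa:18ha,1} to fix $r(\delta)>0$ such that $W_s(\delta,0,r(\delta))\subset\Omega_\infty$; since $x_{\pm,\infty}=x_{0,\infty}$, a single constant $r(\delta)$ serves all three choices of $s$. Given $r_1\in(0,r(\delta))$, the set $W:=W_s(\delta,r_1,r(\delta))$ is compact in $\Omega_\infty$. The angular restriction $|\arg\zeta^2|<\pi-\delta$ keeps $W$ at positive Euclidean distance from the vertical tangent line $x_{0,\infty}+i\RR$, so $W$ splits as a disjoint union $W^+\sqcup W^-$ with $W^{\pm}$ compactly contained in $\Omega_{\pm,\infty}$ respectively.

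Next, for each $z_0\in W^{\pm}$, I would set $\zeta_0:=\phi_{\pm,\infty}(z_0)\in\varPi_\infty$ and choose $\rho>0$ with $\overline{D(\zeta_0,\rho)}\subset\varPi_\infty$. Univalence of $\phi^{-1}_{\pm,\infty}$ on $\overline{D(\zeta_0,\rho)}$ together with the Koebe $\tfrac{1}{4}$-theorem yields $r(z_0)>0$ such that $D(z_0,2r(z_0))\subset\phi^{-1}_{\pm,\infty}(D(\zeta_0,\rho))$. Proposition~\ref{prop:11jp,1} then provides $\ell(z_0)<\infty$ with $\overline{D(\zeta_0,\rho)}\subset\varPi_\ell$ and $\sup_{\overline{D(\zeta_0,\rho)}}\bigl|\phi^{-1}_{\pm,\ell}-\phi^{-1}_{\pm,\infty}\bigr|<\tfrac{1}{2}r(z_0)$ whenever $\ell\geq\ell(z_0)$. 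A Rouché argument on $\partial D(\zeta_0,\rho)$, applied to $\zeta\mapsto\phi^{-1}_{\pm,\ell}(\zeta)-w$ and $\zeta\mapsto\phi^{-1}_{\pm,\infty}(\zeta)-w$, shows that for every $w\in D(z_0,r(z_0))$ the equation $\phi^{-1}_{\pm,\ell}(\zeta)=w$ admits a solution in $D(\zeta_0,\rho)$; equivalently, $D(z_0,r(z_0))\subset\phi^{-1}_{\pm,\ell}(\varPi_\ell)=\Omega_{\pm,\ell}\subset\Omega_\ell$.

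Finally, by compactness of $W^+\cup W^-$, finitely many of these disks cover $W$, and taking $\ell(\delta,r_1)$ to be the maximum of the corresponding thresholds $\ell(z_0)$ completes the argument. The main obstacle is the Rouché step, which requires not merely almost-uniform but genuinely uniform convergence of $\phi^{-1}_{\pm,\ell}$ up to the boundary slit of $\varPi_\infty$ --- precisely the content of Proposition~\ref{prop:11jp,1} --- together with a positive uniform lower bound for the Koebe radius $r(z_0)$ on $W^{\pm}$; the latter is automatic since $\bigl(\phi^{-1}_{\pm,\infty}\bigr)'$ is bounded away from $0$ on any compact subset of $\Omega_{\pm,\infty}$.
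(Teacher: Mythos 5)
Your proof is correct and follows essentially the same route as the paper's: start from Fact~\ref{fa:18ha,1} for $\ell=\infty$, compactly fatten the double wedge, and use convergence of $\phi^{-1}_{\pm,\ell}$ plus an argument-principle/Rouch\'{e} step to show the center points remain in $\Omega_{\pm,\ell}$. One small misconception in your closing remark: you do not need the genuinely uniform convergence of Proposition~\ref{prop:11jp,1} here --- since $\overline{W_s(\delta,r_1,r(\delta))}$ is compact in $\Omega_{\infty}$, its $\phi_{\pm,\infty}$-image is compact in $\varPi_{\infty}$, so the almost-uniform convergence already recorded in Fact~\ref{fa:1hp,2} suffices, which is what the paper actually invokes.
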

\begin{proof}
By Fact~\ref{fa:18ha,1} for every $r_1>0$ and $r(\delta)$ taken from
that Fact the set $W_0(\delta,r_1,r(\delta))$ is compactly contained in
$\Omega_{\infty}$. Moreover, for some $\epsilon(r_1,\delta)>0$,
\[ \bigcup_{z\in W_0(\delta,r_1,r(\delta))}
\overline{D\left(z,\epsilon(r_1,\delta)\right)} \]
remains compactly contained in $\Omega_{\infty}$.
By Fact~\ref{fa:1hp,2}, for $\ell$ large enough, the mappings
$\phi_{\pm,\ell}^{-1} \circ \phi_{\pm,\infty}$ send
$C\left(z,\epsilon(r_1,\delta)\right)$ to a Jordan curve which
surrounds $z$. By the argument principle, $z$ also has a preimage
$\phi_{\pm,\ell}^{-1}(z)$.
The claim for $s=\pm$ follows since $\lim_{\ell\rightarrow\infty}
|x_{0,\ell}-x_{s,\ell}| = 0$.
\end{proof}

\subsection{Main theorem.}
\begin{defi}\label{defi:27ha,1}
For an analytic function $g$, $z$ which can be forever iterated by $g$
and $\sigma>0$ define
\[ P(g,z,\sigma) := \sum_{k=0}^{\infty} |Dg^k(z)|^{\sigma} \; .\]
\end{defi}

We now state a general theorem whose hypotheses are satisfied by
functions $G_{\ell}$ we considered so far. In particular, the geometric
condition of $\Omega_{\ell}$ follows from Lemma~\ref{lem:18ha,1}.

Recall that a mapping $g$ symmetric about $\RR$ and
defined in $\CC$ doubly slit along the
real axis is in the {\em Epstein class} if its derivative does not
vanish in $\RR$ and has an inverse branch defined on $\HH_+$ which
maps into $\HH_+$ or $\HH_{-}$.

\begin{theo}\label{theo:27ha,1}
  Suppose that $(G_{\ell})$ is a sequence of mappings which are all defined
  of $\CC\setminus\left( (-\infty, X_1] \cup [ X_2,+\infty)\right)$, $X_1<X_2$,
  which are holomorphic, symmetric about $\RR$ and in the Epstein
  class.
  Next, for some sequence
  $(x_{0,\ell})$ of points contained in $(X_1,X_2)$ and convergent to
  $x_{0,\infty}\in (X_1,X_2)$, there is
  a representation
  \[ G_{\ell}(z+x_{0,\ell})-x_{0,\ell} = \sum_{k=1}^{\infty} \alpha_{k,\ell} z^k \]
  where $\forall \ell\; \alpha_{1,\ell}\in (-1,0)$ and
  $\lim_{\ell\rightarrow\infty} \alpha_{1,\ell} = -1$. Suppose finally
    that $G_{\ell}$ converge almost uniformly in their domain.

  For every $\ell$, let $\Omega_{\ell}$ be a domain which is fully
  invariant under $G_{\ell}$ and assume further
  \begin{multline*} \exists \delta_0 > \frac{\pi}{2}\; \exists R_0>0\; \forall r>0\;
  \exists \ell_0(r)<\infty \; \forall \ell\geq\ell_0(r)\; \\
  \left\{ x_{0,\infty}+z :\: r<|z|<R_0,\, |\arg -z^2| < \delta_0\right\}
\subset \Omega_{\ell} \; . \end{multline*}

Then, for some $R_1>0$, every $\ell$ and $\sigma>\frac{4}{3}$
\[ \int_{D(x_{0,\infty},R_1)\setminus\Omega_{\ell}}
P(\mathbf{G}_{\ell}^{-2},x+\iota y,\sigma),\, dx\, dy\]
are uniformly bounded for all $\ell$, where $\mathbf{G}^{-2}_{\ell}$
is the inverse branch of $G^2_{\ell}$ which fixes $x_{0,\ell}$.
\end{theo}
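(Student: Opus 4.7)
The approach is to combine an approximate Fatou-coordinate analysis of the almost parabolic dynamics of $G^2_\ell$ near $x_{0,\ell}$ with the cusp geometry of the complement of $\Omega_\ell$. Since $\alpha_{1,\ell}\to -1$, the multiplier of $G_\ell^2$ at $x_{0,\ell}$ is $\alpha_{1,\ell}^2\to 1$, and the cubic term $a_{3,\ell}(z-x_{0,\ell})^3$ with $a_{3,\infty}<0$ (supplied by Lemma~\ref{lem:17hp,1}) governs the nonlinear part. First I would construct, uniformly in $\ell$ large, approximate Fatou coordinates $F_\ell$ on a fixed neighborhood of $x_{0,\ell}$ satisfying $F_\ell\circ G_\ell^2 = F_\ell + 1$ exactly for $\ell=\infty$, and up to an error of order $|1-\alpha_{1,\ell}^2|$ for finite $\ell$, with asymptotic $F_\ell(z)\asymp (z-x_{0,\ell})^{-2}$ along the repelling direction. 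The hypothesis $\delta_0>\pi/2$ confines the integration region to the repelling sector, where backward orbits of $G_\ell^2$ remain.

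Next I would establish the key pointwise estimate on the iterated derivatives. From the Fatou identity $D\mathbf{G}^{-2k}_\ell(z) = F'_\ell(z)/F'_\ell\bigl(\mathbf{G}^{-2k}_\ell(z)\bigr)$ and $|F'_\ell(w)|\asymp |w-x_{0,\ell}|^{-3}$, iterating along the repelling sector gives
\[ |D\mathbf{G}^{-2k}_\ell(z)|\lesssim \bigl(1+k|z-x_{0,\ell}|^2\bigr)^{-3/2}\]
as long as the orbit remains at the parabolic scale. For finite $\ell$, past the transition from the parabolic scale to the Koenigs scale $|w-x_{0,\ell}|\asymp \sqrt{1-\alpha_{1,\ell}^2}$, the dynamics becomes linear with expansion factor $\alpha_{1,\ell}^{-2}$; this contribution sums geometrically and, combined with Koebe distortion bounds furnished by the Epstein class hypothesis on inverse branches of $G_\ell$, adds a bounded tail. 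Summation over $k$ then yields a pointwise bound on $P(\mathbf{G}^{-2}_\ell,z,\sigma)$ in terms of $|z-x_{0,\ell}|$ and $\sigma$, uniform in $\ell$.

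Finally I would integrate over $D(x_{0,\infty},R_1)\setminus\Omega_\ell$. The full invariance of $\Omega_\ell$ under $G_\ell$, together with the wedge inclusion in the attracting directions, forces $\partial\Omega_\ell$ to be tangent at $x_{0,\ell}$ to the repelling direction to a definite order, so the complement is a cusp whose area element at scale $r$ is dominated by $r^{1+\gamma}\,dr$ for some $\gamma>0$ (this is the dynamical counterpart of the behavior of the arc $\mathfrak{w}_\ell$ in the Feigenbaum case). Combining the cusp area estimate with the pointwise bound on $P$ and balancing the $k$-sum against the $r$-integral produces the stated convergence exactly under $\sigma>4/3$.

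The main obstacle is uniformity in $\ell$ as the dynamics degenerates from parabolic ($\ell=\infty$) to almost hyperbolic (finite $\ell$ with $\alpha_{1,\ell}^2\to 1$). A naive Koenigs bound $|D\mathbf{G}^{-2k}_\ell|\sim \alpha_{1,\ell}^{-2k}$ is useless as $\ell\to\infty$ because the escape time $k_\ell\asymp |\log|z-x_{0,\ell}||/|1-\alpha_{1,\ell}^2|$ diverges. The correct strategy is to apply the parabolic Fatou estimate throughout the near-$x_{0,\ell}$ phase and only switch to the linearized Koenigs picture once the orbit is well inside the linearization disk; matching the two regimes via the uniform convergence $a_{3,\ell}\to a_{3,\infty}$ and Koebe distortion controls along the macroscopic excursion is the technically delicate step.
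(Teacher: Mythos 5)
Your two--regime analysis (parabolic-type Fatou estimates up to the transition scale $|w-x_{0,\ell}|\asymp\sqrt{1-\alpha_{1,\ell}^2}$, then geometric decay in the K\"{o}nigs regime) is exactly the engine of the paper's proof, and your derivative bound $|D\mathbf{G}^{-2k}_{\ell}(z)|\lesssim(1+k|z-x_{0,\ell}|^2)^{-3/2}$ followed by a geometric tail matches Lemma~\ref{lem:30hp,2}. (The paper centers its generalized Fatou coordinate at the period-two repelling point $x_{+,\ell}$ rather than at $x_{0,\ell}$, which is why a non-removable $\sqrt{\rho_{\ell}\zeta}$ term appears in~(\ref{equ:25hp,1}) and has to be shown to help rather than hurt in Lemma~\ref{lem:27hp,1}; your centering at $x_{0,\ell}$ trades that term for a contracting linear coefficient and a finite attracting fixed point of the conjugated map, which is workable but is precisely where ``the error of order $|1-\alpha_{1,\ell}^2|$'' accumulated over $\sim\rho_{\ell}^{-1}$ iterates must be controlled as in Lemma~\ref{lem:27hp,2}.)

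The genuine gap is your final step. You integrate a pointwise bound on $P$ against an asserted cusp-area estimate ``$r^{1+\gamma}\,dr$ for some $\gamma>0$,'' claiming this is forced by full invariance and the wedge inclusion. It is not: the hypotheses of the theorem only give that $\CC\setminus\Omega_{\ell}$ lies in a wedge of positive opening $\pi-\delta_0$ about the imaginary direction, and with a mere wedge (area element $\asymp s\,ds$) your pointwise bound $P\asymp s^{-2}$ integrates to a divergent $\int s^{-1}\,ds$. So the cusp estimate carries the entire weight of the proof, and proving it uniformly in $\ell$ --- in particular across the transition scale $\sqrt{\rho_{\ell}}$, where the complement near $x_{+,\ell}$ is no longer cusp-like --- requires exactly the fundamental-domain structure you never set up: one needs that every backward orbit in $\HH_+\setminus\Omega_{\ell}$ passes through a fixed annular wedge $W(r,\delta)$ (Lemma~\ref{lem:24ha,1}, which uses the Epstein-class hyperbolic contraction and the wedge hypothesis), so that the complement at depth $k$ has area $\asymp|D\mathbf{G}^{-2j}|^2\,|W|$. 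The paper short-circuits this by never writing an area estimate at all: Lemma~\ref{lem:28hp,1} unfolds the integral onto $W(r,\delta)$ by the change of variables, replacing $P$ with $\hat{P}(z,\sigma)=\sum_k\sum_{j<k}|D\mathbf{G}_{\ell}^{-2j}(z)|^{2-\sigma}|D\mathbf{G}_{\ell}^{-2k}(z)|^{\sigma}$, and the exponent $\sigma>\frac43$ then falls out of $\sum_k k\cdot k^{-3\sigma/2}<\infty$. Your sketch, as written, neither proves the cusp estimate nor explains how $\frac43$ emerges from your balancing (your route would in fact give a different threshold), so the concluding step does not stand on its own.
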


From these hypotheses for every $\ell$ we get a repelling periodic
orbit of period $2$, $\{x_{\pm,\ell}\}$ under $G_{\ell}$.

The next lemma is stated $\HH_+$ without loss of generality, since by
symmetry the analogous statement holds in the lower half-plane.
\begin{lem}\label{lem:24ha,1}
For some $\delta<\frac{\pi}{4}$ there is $r_0>0$ such that for every $r
:\: 0<r<r_0$ there exists
$\ell(r)<\infty$ so that the following claim holds.

If $u\in \HH_+ \cap D\left(x_{+,\ell},\frac{r}{2}\right)$ and
$u\notin\Omega_{\ell}$, then for some positive $n$ and all $\ell\geq\ell(r)$
\[ G^{2n}_{\ell}(u) \in \left\{ x_{+,\ell}+\iota z\in \HH_+ :\:
\frac{r}{2} < |z| < r,\, |\arg z| <\delta\right\} .\]
\end{lem}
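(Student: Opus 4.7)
The plan is to combine the wedge hypothesis of Theorem~\ref{theo:27ha,1}---namely that $\Omega_\ell$ contains $\{x_{0,\infty}+z:r<|z|<R_0,\,|\arg z^2|<\delta_0\}$ for some $\delta_0>\pi/2$---with the nearly parabolic expansion at $x_{+,\ell}$. Since $a_{3,\infty}<0$ in expansion~(\ref{equ:17hp,1}), the repelling petals of the limiting parabolic fixed point of $G^2_\infty$ at $x_{0,\infty}$ point in the $\pm\iota$ directions; and since $\delta_0>\pi/2$, the complement of $\Omega_\ell$ near $x_{0,\infty}$ is confined to a wedge of half-angle $\pi/2-\delta_0/2<\pi/4$ around the imaginary axis. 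A point $u\notin\Omega_\ell$ near $x_{+,\ell}$ is therefore forced to escape along a repelling petal.

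I would begin by fixing constants. Set $\delta:=\pi/2-\delta_0/2+\eta$ for a small $\eta>0$ with $\delta<\pi/4$ (possible by $\delta_0>\pi/2$), and take $r_0:=R_0/4$. For $r\in(0,r_0)$ and $\ell\geq\ell_0(r/4)$ from the theorem hypothesis,
\[ \bigl\{x_{0,\infty}+z:\,r/4<|z|<R_0,\,|\arg z^2|<\delta_0\bigr\}\subset\Omega_\ell. \]
A direct calculation shows that in $\HH_+$ the negation $|\arg z^2|\geq\delta_0$ is equivalent to $|\arg z-\pi/2|\leq\pi/2-\delta_0/2$. Hence any $v\in\HH_+\setminus\Omega_\ell$ with $r/4<|v-x_{0,\infty}|<R_0$ is nearly vertically above $x_{0,\infty}$.

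Next I would construct the iterate. The multiplier $\lambda_\ell:=(G^2_\ell)'(x_{+,\ell})=G'_\ell(x_{+,\ell})G'_\ell(x_{-,\ell})=|G'_\ell(x_{+,\ell})|^2$ is real positive with $\lambda_\ell>1$ and $\lambda_\ell\to 1$ as $\ell\to\infty$. Full invariance of $\Omega_\ell$ under $G_\ell$ renders $\CC\setminus\Omega_\ell$ forward invariant under $G^2_\ell$, so the iterates $u_k:=G^{2k}_\ell(u)$ remain outside $\Omega_\ell$. Since $u\neq x_{+,\ell}$ and $\lambda_\ell>1$, there is a least $n\geq 1$ with $|u_n-x_{+,\ell}|\geq r/2$; applying
\[ G^2_\ell(w)=x_{+,\ell}+\lambda_\ell(w-x_{+,\ell})+O\bigl((w-x_{+,\ell})^2\bigr) \]
at $u_{n-1}$ yields $|u_n-x_{+,\ell}|\leq\lambda_\ell\cdot r/2+Cr^2<r$ once $r$ and $1/\ell$ are small enough. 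The orbit remains in $\HH_+$ up to time $n$ because, by the Epstein class property together with the real positivity of $\lambda_\ell$, $G^2_\ell$ is a local biholomorphism of $\HH_+$ to itself near $x_{+,\ell}$ (equivalently, $\mathbf{G}^{-2}_{+,\ell}$ preserves $\HH_+$).

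Finally, taking $\ell$ large enough that $|x_{+,\ell}-x_{0,\infty}|\leq r/4$, we have $r/4\leq|u_n-x_{0,\infty}|\leq r+r/4<R_0$, so the first step applies: $|\arg(u_n-x_{0,\infty})-\pi/2|\leq\pi/2-\delta_0/2$. The angular cost of transporting from $x_{0,\infty}$ to $x_{+,\ell}$ is at most $\arcsin\bigl(|x_{+,\ell}-x_{0,\infty}|/(r/2)\bigr)$, which falls below $\eta$ for $\ell$ sufficiently large. Combining, $|\arg(u_n-x_{+,\ell})-\pi/2|<\delta$, which is the claimed wedge. The main obstacle is the mismatch of base points between the geometric hypothesis (anchored at $x_{0,\infty}$) and the conclusion (anchored at $x_{+,\ell}$); this is overcome only because the lower bound $|u_n-x_{+,\ell}|\geq r/2$ is fixed while $|x_{+,\ell}-x_{0,\infty}|$ tends to $0$ as $\ell\to\infty$.
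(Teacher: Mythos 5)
The overall plan of your proof matches the paper's: fix a first exit time $n$ from a small disk around $x_{+,\ell}$, use a derivative/multiplier estimate to show $|u_n-x_{+,\ell}|<r$, and derive the angular constraint from the geometric hypothesis of Theorem~\ref{theo:27ha,1}. Your careful treatment of the base-point shift from $x_{0,\infty}$ to $x_{+,\ell}$ via the $\arcsin$ bound is correct and roughly parallels the paper's remark that ``$\arg z$ close to $\pi$ can be ruled out when $\ell$ is large enough.''

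However, the sentence ``Since $u\neq x_{+,\ell}$ and $\lambda_\ell>1$, there is a least $n\geq 1$ with $|u_n-x_{+,\ell}|\geq r/2$'' is a genuine gap. Repulsion in the linear approximation does not by itself guarantee escape from a disk of radius $r/2$: Koenigs linearization holds only on some small domain which may shrink to a point relative to $r$ as $\ell\to\infty$ (indeed $\lambda_\ell\to 1$ and the fixed point degenerates), and once the orbit leaves that small linearization domain you have no control keeping it out of $D(x_{+,\ell},r/2)$. In particular, nothing you have said prevents the orbit from remaining forever inside $D(x_{+,\ell},r/2)\cap\HH_+$ while drifting toward the real line. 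This is exactly what the paper's proof addresses: it argues that $\mathbf{G}^{-2}_{+,\ell}$ is a strict hyperbolic contraction of $\HH_+$ into itself, so $G^2_\ell$ expands the Poincar\'e metric and the orbit must leave every compact subset of $\HH_+$; if the orbit never left $D(x_{+,\ell},2r/3)$ this would force $\Im G^{2n}_\ell(u)\to 0$, whence eventually $G^{2n}_\ell(u)\in [x_{0,\ell}-r,x_{0,\ell}+r]\subset\Omega_\ell$, contradicting the forward invariance of $\CC\setminus\Omega_\ell$. You state the forward invariance but never bring it to bear on the escape. Without that contradiction argument (or an equivalent one) the existence of $n$ is unproven. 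A secondary, lesser issue: the claim that the orbit stays in $\HH_+$ because ``$\mathbf{G}^{-2}_{+,\ell}$ preserves $\HH_+$'' is logically inverted---that only controls the image under the particular inverse branch, not the forward image of all of $\HH_+\cap D(x_{+,\ell},r/2)$; here again the wedge constraint from $u_k\notin\Omega_\ell$ is what actually keeps the orbit in $\HH_+$, and it deserves to be the stated reason.
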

\begin{proof}
  Initially choose $\ell(r)$ so large that $|x_{0,\ell}-x_{+,\ell}|<\frac{r}{2}$ for all $\ell\geq\ell(r)$. Additionally,
  when $r$ is small enough and $\ell$ large, then
  $G_{\ell}^2\left(D(x_{+,\ell},\frac{r}{2}) \cap \HH_+\right) \subset
  \HH_+$.

  Consider the orbit of $u$ under $G_{\ell}^2$. First we show that for
  some $n$ it must leave $D(x_{+,\ell},\frac{2r}{3})$. Suppose
  not. Since $G_{\ell}^2$ expands the hyperbolic metric of $\HH_+$,
  the orbit must eventually leave every compact neighborhood of
  $x_{+,\ell}$. It follows that $\lim_{n\rightarrow\infty}
  \Im G^{2n}_{\ell}(u) = 0$. By choosing $r$ small, we can make sure
  that $[x_{0,\ell}-r,x_{0,\ell}+r]\subset\Omega_{\ell}$ for all
  $\ell$. Thus, $G^{2n}_{\ell}(u) \in\Omega_{\ell}$ which contradicts
  the hypothesis of Theorem~\ref{theo:27ha,1} by which
  $\Omega_{\ell} \cap \HH_+$ is
  completely invariant under $G^2_{\ell}$.

  Now we see that for some $n\geq 0$ we have
  \begin{align*}
  \left|G^{2n}_{\ell}(u)-x_{+,\ell}\right| \leq & \frac{r}{2},\, \text{but}\\
  \left|G^{2(n+1)}_{\ell}(u)-x_{+,\ell}\right| > & \frac{r}{2} .
  \end{align*}
  Since
  $G_{\ell} \rightarrow G_{\infty}$ uniformly on compact neighborhoods
  of $x_{0,\infty}$ and the derivative is $1$ at that point, by
  choosing $r$ small and $\ell$ large, we can have
  \[\left|
  \frac{G^{2n}_{\ell}(u)-x_{+,\ell}}{G^{2(n+1)}_{\ell}(u)-x_{+,\ell}}
  \right| > \frac{1}{2} .\]
  Hence
  \[ G^{2(n+1)}(u) \in  \left\{ z\in \HH_+ :\: \frac{r}{2} < |z-x_{+,\ell}|
    < r\right\} \setminus \Omega_{\ell} \; .\]
  The condition on the argument follows from the geometric hypothesis
  of Theorem~\ref{theo:27ha,1} . The possibility of $\arg z$ being close to
  $\pi$ can be ruled out when $\ell$ is made sufficiently large so
  that $|x_{0,\ell}-x_{+,\ell}|$ becomes small compared to $r$.
\end{proof}

\subsection{Generalized Fatou coordinate.}
Let us write
\[ \mathbf{G}^{-2}_{\ell}(x_{0,\ell}+z) - x_{0,\ell} = \sum_{k=1}^{\infty}
a_{k,\ell} z^k .\]
For $a_{2,\ell}$ the condition of dominant convergence is satisfied and
so it can be removed by a change of coordinate which for all $\ell$
belongs to a compact family of diffeomorphisms of a fixed neighborhood
of $x_{0,\infty}$, see~\cite{profesorus1}, proof of Theorem 7.2.
With a slight abuse of notations we internalize this change of coordinate
simply assuming $a_{2,\ell}=0$.

Next, we write $a_{1,\ell}=1+\frac{\rho_{\ell}}{4}$ where
$\rho_{\ell}>0$. We also know that  $\lim_{\ell\rightarrow\infty} a_{3,\ell} = a_{3,\infty} > 0$.

Now
$x_{+,\ell}=x_{0,\ell}+\iota\sqrt{\frac{\rho_{\ell}}{4a_{3,\ell}}}\mathfrak{E}(\ell)$

where we shall write $\mathfrak{E}(\ell) := \exp\bigl(O\left(\sqrt{\rho_{\ell}}\right)\bigr)$.

Consider the development of $\mathbf{G}^{-2}_{\ell}$ at $x_{+,\ell}$:
\begin{equation*}
\mathbf{\Gamma}_{\ell}(z) := \iota^{-1}\left(\mathbf{G}^{-2}_{\ell}(x_{+,\ell}+\iota
z)-x_{+,\ell}\right) = \sum_{k=1}^{\infty} \hat{a}_{k,\ell} z^k
\end{equation*}
where
\begin{equation}\label{equ:25hp,2}
  \begin{split}
  \hat{a}_{1,\ell} = &  \left(1-\frac{\rho_{\ell}}{2}\mathfrak{E}(\ell)\right)\\
  \hat{a}_{2,\ell} = & -
  \frac{3}{2}\sqrt{a_{3,\ell}\rho_{\ell}}\mathfrak{E}(\ell)\\
  \hat{a}_{3,\ell} = & -a_{3,\ell}\mathfrak{E}(\ell) .
  \end{split}
\end{equation}

Observe two features of $\mathbf{\Gamma}_{\ell}$ which make its
analysis non-standard. First, the quadratic term cannot be removed or
neglected as $\ell\rightarrow\infty$, i.e. no dominant convergence in
the sense of~\cite{profesorus1}.
\paragraph{Definition of the generalized Fatou coordinate.}
\begin{defi}\label{defi:30hp,1}
Define $\zeta_{\ell} :\: \CC \rightarrow \hat{\CC}\setminus\{0\}$
by
\[ \zeta_{\ell}(z) = \frac{1}{2A_{\ell} z^2} \; \]
where $A_{\ell} = -\frac{\hat{a}_{3,\ell}}{\hat{a}_{1,\ell}} + 3\frac{\hat{a}^2_{2,\ell}}{\hat{a}^2_{1,\ell}} =
a_{3,\ell}\mathfrak{E}(\ell)$.
\end{defi}

Let us introduce a variable $\zeta$ on the Riemann surface of the
function $\zeta_{\ell}$ which means that $\zeta_{\ell}^{-1}$ is well
defined as well as
$\bm{\gamma}_{\ell}(\xi):=\zeta_{\ell}\circ\mathbf{\Gamma}_{\ell}\left(\frac{1}{\xi\sqrt{2A_{\ell}}}\right)$,
where the principal branch of the root is applicable since $\arg
A_{\ell}$ is close to $0$. Then
\[ \zeta_{\ell}\circ\mathbf{\Gamma}_{\ell} \circ
\zeta_{\ell}^{-1}(\zeta) =
\left(\bm{\gamma}_{\ell}\circ\sqrt{}\right)(\zeta) .\]

We get the representation:
\begin{equation}\label{equ:25hp,1}
  \zeta_{\ell}\left(\mathbf{\Gamma}_{\ell}(z)\right) =
\gamma_{\ell}\left(\sqrt{\zeta}\right) := \hat{a}^{-2}_{1,\ell}\zeta+1
  +\sqrt{\frac{9}{2}\rho_{\ell}}\mathfrak{E}(\ell)\sqrt{\zeta} + O\left(|\zeta|^{-1/2}\right) \; .
\end{equation}

Observe that $\sqrt{\zeta}$ is correctly defined
by substituting
\begin{equation}\label{equ:25ka,1}
  \sqrt{\zeta} = \frac{1}{\zeta_{\ell}^{-1}(\zeta) \sqrt{2A_{\ell}}} .
  \end{equation}
In particular, for $\Re z > 0$ one should choose the principal branch
of $\sqrt{\zeta}$. When the principal branch of $\sqrt{\zeta}$ is
used, we will talk of the principal branch $\bm{\gamma}_{\ell}$.

\begin{lem}\label{lem:25hp,1}
There exist constants $\ell_0<\infty$, $R_0,K_1,K_2,K_3$ such that for any
$z :\: |z| \geq R_0$ and $\ell\geq\ell_0$
\[
\left|\bm{\gamma}_{\ell}(\sqrt{\zeta})-\zeta-\rho_{\ell}\zeta-\sqrt{\frac{9}{2}\rho_{\ell}\zeta}-1\right|
\leq K_1 \rho_{\ell}^{3/2} |\zeta| + K_2 \rho_{\ell} \sqrt{|\zeta|} + K_3
|\zeta|^{-1/2} \; .\]
\end{lem}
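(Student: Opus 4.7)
The plan is to treat the claim as an exercise in bookkeeping applied to~(\ref{equ:25hp,1}). Subtracting the asserted leading terms on the left-hand side of the inequality and using the representation of $\gamma_{\ell}(\sqrt{\zeta})$ from~(\ref{equ:25hp,1}) splits the difference into three pieces:
\begin{equation*}
\gamma_{\ell}(\sqrt{\zeta}) - \zeta - \rho_{\ell}\zeta - \sqrt{\tfrac{9}{2}\rho_{\ell}\zeta} - 1 = \bigl(\hat{a}_{1,\ell}^{-2} - 1 - \rho_{\ell}\bigr)\zeta + \sqrt{\tfrac{9}{2}\rho_{\ell}}\,\bigl(\mathfrak{E}(\ell) - 1\bigr)\sqrt{\zeta} + \mathcal{R}_{\ell}(\zeta),
\end{equation*}
where $\mathcal{R}_{\ell}(\zeta)$ denotes the remainder abbreviated as $O(|\zeta|^{-1/2})$ in~(\ref{equ:25hp,1}). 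I would then bound each piece in turn by one of the three terms on the right of the claim.

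The first two pieces are handled by a direct algebraic calculation. By~(\ref{equ:25hp,2}) we have $\hat{a}_{1,\ell} = 1 - (\rho_{\ell}/2)\mathfrak{E}(\ell)$, so a binomial expansion yields $\hat{a}_{1,\ell}^{-2} = 1 + \rho_{\ell}\mathfrak{E}(\ell) + O(\rho_{\ell}^{2})$. Combined with the elementary bound $\mathfrak{E}(\ell) = \exp\bigl(O(\sqrt{\rho_{\ell}})\bigr) = 1 + O(\sqrt{\rho_{\ell}})$, this gives $\hat{a}_{1,\ell}^{-2} - 1 - \rho_{\ell} = \rho_{\ell}\bigl(\mathfrak{E}(\ell) - 1\bigr) + O(\rho_{\ell}^{2}) = O(\rho_{\ell}^{3/2})$, producing the $K_{1}\rho_{\ell}^{3/2}|\zeta|$ bound on the first piece. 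The second piece is bounded the same way: $\sqrt{(9/2)\rho_{\ell}}\,|\mathfrak{E}(\ell) - 1| = O(\rho_{\ell})$, so this contribution is at most $K_{2}\rho_{\ell}\sqrt{|\zeta|}$.

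The real work is making the third piece, i.e.\ the $|\zeta|^{-1/2}$ bound on $\mathcal{R}_{\ell}$, uniform in $\ell$; this is the main obstacle. I would revisit the derivation of~(\ref{equ:25hp,1}) by substituting the full Taylor series $\Gamma_{\ell}(z) = \sum_{k\geq 1}\hat{a}_{k,\ell}z^{k}$ into $\zeta_{\ell}\circ\Gamma_{\ell}(z) = (2A_{\ell}\Gamma_{\ell}(z)^{2})^{-1}$. Expanding the reciprocal as a Laurent series in $z$ and then setting $z^{-1} = \sqrt{2A_{\ell}\zeta}$ with the appropriate branch of the square root, the $z^{-2}$, $z^{-1}$, and $z^{0}$ coefficients reproduce the $\zeta$, $\sqrt{\zeta}$, and constant contributions respectively, with the constant normalised to $+1$ precisely by the choice of $A_{\ell}$ in Definition~\ref{defi:30hp,1}. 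Each further term $z^{k}$ with $k \geq 1$ contributes a multiple of $|\zeta|^{-(k+1)/2}$, the dominant being the $z^{1}$ term which yields $|\zeta|^{-1/2}$. Uniformity of $K_{3}$ in $\ell$ therefore reduces to uniform Cauchy estimates for the Taylor coefficients $\hat{a}_{k,\ell}$ on a fixed closed disk about the origin; these follow from the almost uniform convergence of $\mathbf{G}_{\ell}^{-2}$ in a neighborhood of $x_{0,\infty}$ guaranteed by the hypotheses of Theorem~\ref{theo:27ha,1}, together with the fact that $A_{\ell} = a_{3,\ell}\mathfrak{E}(\ell)$ stays bounded and bounded away from $0$ for $\ell$ large, by $\lim_{\ell\to\infty}a_{3,\ell} = a_{3,\infty} > 0$.
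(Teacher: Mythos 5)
Your proposal is correct and follows the same decomposition as the paper's proof: the linear and root terms of~(\ref{equ:25hp,1}) give the $K_1\rho_{\ell}^{3/2}|\zeta|$ and $K_2\rho_{\ell}\sqrt{|\zeta|}$ contributions exactly as you compute, and the $O(|\zeta|^{-1/2})$ remainder is carried over. The extra paragraph justifying the $\ell$-uniformity of that remainder via uniform Cauchy estimates on a fixed disk fills in a point the paper merely asserts, and is a sound addition (the only slip is cosmetic: the $z^{k}$ term contributes $|\zeta|^{-k/2}$ rather than $|\zeta|^{-(k+1)/2}$, which does not affect your $|\zeta|^{-1/2}$ conclusion).
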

\begin{proof}
From formula~(\ref{equ:25hp,1}) the linear term in
$\bm{\gamma}_{\ell}(\sqrt{\zeta})-\zeta$ is $(\hat{a}_{1,\ell}^{-2}-1)\zeta=\rho_{\ell}\zeta +
O(\rho_{\ell}^{3/2})\zeta$ which gives rise to the term of order
$|\zeta|$ in the claim of the Lemma.

The root term in formula~(\ref{equ:25hp,1}) is
\[ \sqrt{\frac{9}{2}\rho_{\ell}}\mathfrak{E}(\ell)\sqrt{\zeta} =
 \sqrt{\frac{9}{2}\rho_{\ell}}\sqrt{\zeta} +
 O\left(\rho_{\ell}\sqrt{|\zeta|}\right) \]
 and the $O\left(|\zeta|^{-1/2}\right)$ term is directly copied.
\end{proof}

\subsection{Dynamics in the $\zeta$ coordinate.}
Although the goal of Theorem~\ref{theo:27ha,1} is an estimate uniform
in $\ell$, the description of the dynamics will be split into cases
depending on $\ell$: the mid-range case of $\zeta =
O\left(\rho^{-1}_{\ell}\right)$ which generally reminiscent of a
parabolic point and the far-range for larger $\zeta$ where the
true nature of the fixed point at $x_{+,\ell}$ becomes evident.

\begin{lem}\label{lem:27hp,1}
For any $\delta :\: 0<\delta<\frac{\pi}{2}$ and $Q\geq 1$ there are
$r(\delta)$ and $\ell_0(\delta,Q)$
such that for every $\ell\geq\ell_0(\delta,Q)$
if $\zeta :\: r(\delta)<\Re\zeta<Q\rho^{-1}_{\ell},\, |\arg\zeta|<\delta$, then
$\Re \bm{\gamma}_{\ell}(\sqrt{\zeta}) > \Re \zeta + \frac{1}{2}$
and $|\arg \bm{\gamma}_{\ell}(\sqrt{\zeta})| < \delta$.
\end{lem}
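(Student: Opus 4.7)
The plan is to start from Lemma~\ref{lem:25hp,1}, which gives
\[ \mathbf{\gamma}_\ell(\sqrt\zeta) = \zeta + \rho_\ell\zeta + \sqrt{\tfrac{9}{2}\rho_\ell\zeta} + 1 + E(\zeta) \]
with $|E(\zeta)|\leq K_1\rho_\ell^{3/2}|\zeta| + K_2\rho_\ell\sqrt{|\zeta|} + K_3|\zeta|^{-1/2}$, using the principal branch of the root throughout. In the hypothesized range $r(\delta)<\Re\zeta<Q\rho_\ell^{-1}$, $|\arg\zeta|<\delta$, one has $r(\delta)\leq|\zeta|\leq Q/(\rho_\ell\cos\delta)$, so the three pieces of $|E|$ are bounded by $K_1 Q\sqrt{\rho_\ell}/\cos\delta$, $K_2\sqrt{Q\rho_\ell/\cos\delta}$, and $K_3/\sqrt{r(\delta)}$ respectively. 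Given any prescribed $\eta>0$, I would first fix $r(\delta)$ so that $K_3/\sqrt{r(\delta)}<\eta/2$, and then fix $\ell_0(\delta,Q)$ so that the two $\rho_\ell$--dependent bounds are jointly $<\eta/2$, ensuring $|E|<\eta$ uniformly in the range.

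By the symmetry $\zeta\mapsto\bar\zeta$ I may assume $\theta := \arg\zeta\in[0,\delta)$. The principal branch then gives $\arg\sqrt{\tfrac{9}{2}\rho_\ell\zeta}=\theta/2\in[0,\delta/2)$, so each of the three main corrections $\rho_\ell\zeta$, $\sqrt{\tfrac{9}{2}\rho_\ell\zeta}$, and $1$ has non-negative real part, with the constant contributing exactly one. The real-part claim is then immediate:
\[ \Re(\mathbf{\gamma}_\ell(\sqrt\zeta)-\zeta) \geq 1-|E(\zeta)| > \tfrac{1}{2} \]
as soon as $|E|<1/2$.

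For the argument claim, I would reduce the inequality $\arg\mathbf{\gamma}_\ell(\sqrt\zeta)<\delta$ to the positivity of $\tan\delta\cdot\Re\mathbf{\gamma}_\ell(\sqrt\zeta)-\Im\mathbf{\gamma}_\ell(\sqrt\zeta)$. A direct computation, using $\sin\delta\cos\alpha-\cos\delta\sin\alpha=\sin(\delta-\alpha)$ applied with $\alpha=\theta$ and $\alpha=\theta/2$, yields
\[ \tan\delta\cdot\Re\mathbf{\gamma}_\ell(\sqrt\zeta)-\Im\mathbf{\gamma}_\ell(\sqrt\zeta) \geq (1+\rho_\ell)|\zeta|\tfrac{\sin(\delta-\theta)}{\cos\delta} + \tfrac{3}{\sqrt 2}\sqrt{\rho_\ell|\zeta|}\tfrac{\sin(\delta-\theta/2)}{\cos\delta} + \tan\delta - (\tan\delta+1)|E|. \]
Since $0\leq\theta<\delta<\pi/2$, both $\sin(\delta-\theta)$ and $\sin(\delta-\theta/2)$ are non-negative, so this is bounded below by $\tan\delta-(\tan\delta+1)|E|$, which is strictly positive whenever $|E|<\tan\delta/(\tan\delta+1)$. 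Combining with the real-part step, I would pick $r(\delta)$ and $\ell_0(\delta,Q)$ so that $|E(\zeta)|<\min(1/2,\tan\delta/(\tan\delta+1))$ throughout the range.

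The hard part, I expect, is making sure that the pull toward the real axis supplied by the constant $+1$ in the expansion does not become negligible at the outer end $|\zeta|\sim\rho_\ell^{-1}$, where the linear term $(1+\rho_\ell)\zeta$ is vastly larger than everything else in $\mathbf{\gamma}_\ell(\sqrt\zeta)$. The identity above is precisely what makes the estimate uniform: after passing to the auxiliary combination $\tan\delta\cdot\Re - \Im$, the constant $+1$ contributes the $\zeta$- and $\ell$-independent positive margin $\tan\delta$, which absorbs the perturbation $E$ throughout the entire mid-range and secures forward invariance of the cone $\{|\arg\zeta|<\delta\}$.
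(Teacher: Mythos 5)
Your proposal is correct and takes essentially the same route as the paper: both start from the expansion in Lemma~\ref{lem:25hp,1}, observe that the terms $\rho_\ell\zeta$, $\sqrt{\tfrac{9}{2}\rho_\ell\zeta}$, and $1$ all push $\zeta$ rightward and toward the positive real axis, and then choose $r(\delta)$ and $\ell_0(\delta,Q)$ to make the error $|E|$ small uniformly over the mid-range. The one difference is in how the cone-invariance claim $|\arg\mathbf{\gamma}_\ell(\sqrt\zeta)|<\delta$ is closed out: the paper invokes (implicitly) the convexity of the cone $\{\Re>0,\ |\arg|<\delta\}$ once each summand is placed in it, while you make this quantitative via the auxiliary linear functional $\tan\delta\cdot\Re-\Im$, which gives an explicit margin $\tan\delta-(\tan\delta+1)|E|$ absorbing the error uniformly --- a somewhat more careful rendering of the same idea.
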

\begin{proof}
  According to Lemma~\ref{lem:25hp,1}
  \[ \bm{\gamma}_{\ell}(\sqrt{\zeta})-\zeta= \rho_{\ell}\zeta +
  \sqrt{\frac{9\rho_{\ell}}{2}\zeta} + 1 + \text{corrections}
  \; .\]

  Both the linear and root terms are helping the estimate of the
  Lemma, by increasing the real part of the expression and bringing
  its argument closer to $0$. So we ignore them. What is left is $1$
  and the corrections. We make each of them less than
  $\frac{\delta}{30}$. For the $K_3$ term this requires making $\zeta$
  sufficiently large depending on $\delta$. The next term is bounded
  by
  $K_2\sqrt{Q}\rho_{\ell}^{1/2}$ and requires $\ell$ large enough depending
  on $\delta,Q$ and the first term is estimated similarly.

  Thus,
  \[  \bm{\gamma}_{\ell}(\sqrt{\zeta})-\zeta= 1 + E(\delta) +
  \text{terms of $\Re>0$ and $|\arg| < \delta$}  \]
  with $\left|E(\delta)\right| < \frac{\delta}{10}$. The claim of the Lemma follows.
  \end{proof}

\begin{lem}\label{lem:27hp,2}
  With the same notations as in the previous lemma, for every $\delta>0$
  there are $r(\delta)>0, L(Q)<\infty$ such that if
  $r(\delta) < \Re\zeta, |\arg\zeta|<\delta$ and for every $j=0,\cdots, k$,
  $\Re \left(\bm{\gamma}_{\ell}\circ\sqrt{}\right)^j(\zeta) < Q\rho^{-1}_{\ell}$, then
  \[ \forall \ell\geq\ell_0(\delta,Q)\; \Bigl| D_{\zeta}\left(\bm{\gamma}_{\ell}\circ\sqrt{}\right)^k(\zeta) \Bigr| < L(Q) \; .\]
\end{lem}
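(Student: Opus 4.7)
}
The plan is to estimate $|D_\zeta \mathbf{\gamma}_\ell^k(\zeta_0)|$ via the chain rule
$|D_\zeta \mathbf{\gamma}_\ell^k(\zeta_0)| = \prod_{j=0}^{k-1} |\mathbf{\gamma}_\ell'(\zeta_j)|$, where $\zeta_j := \mathbf{\gamma}_\ell^j(\zeta_0)$, and to control the product by a pointwise bound on the factor combined with the drift estimate of Lemma~\ref{lem:27hp,1} applied along the orbit.

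First I would derive a derivative version of Lemma~\ref{lem:25hp,1}. Either by differentiating the expansion~(\ref{equ:25hp,1}) term by term, or (more robustly) by applying Cauchy's estimate on a disk of radius comparable to $|\zeta|/2$ around $\zeta$ on which Lemma~\ref{lem:25hp,1} propagates, one obtains, for some constants $K'_1,K'_2,K'_3$,
\[ \left| \mathbf{\gamma}_\ell'(\zeta) - 1 \right| \leq K'_1\rho_\ell + K'_2\sqrt{\rho_\ell/|\zeta|} + K'_3 |\zeta|^{-3/2} \]
valid for $|\zeta|\geq R_0$ and $\ell\geq\ell_0$. In particular
\[ \log|\mathbf{\gamma}_\ell'(\zeta)| \leq K'_1\rho_\ell + K'_2\sqrt{\rho_\ell/|\zeta|} + K'_3|\zeta|^{-3/2} . \]

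Next, Lemma~\ref{lem:27hp,1} (applied with the same $\delta$ and $Q$, after enlarging $\ell_0(\delta,Q)$ if needed) guarantees that under the hypothesis of Lemma~\ref{lem:27hp,2} each intermediate $\zeta_j$ still satisfies $|\arg\zeta_j|<\delta$, so that inductively
\[ \Re\zeta_j \geq r(\delta) + \tfrac{j}{2}, \qquad |\zeta_j| \geq r(\delta) + \tfrac{j}{2} . \]
Combined with the hypothesis $\Re\zeta_k < Q\rho_\ell^{-1}$, this forces $k \leq 2Q\rho_\ell^{-1}$.

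Summing the pointwise estimate along the orbit yields
\[ \log|D_\zeta\mathbf{\gamma}_\ell^k(\zeta_0)| \leq k K'_1\rho_\ell + K'_2 \sqrt{\rho_\ell}\sum_{j=0}^{k-1}\bigl(r(\delta)+\tfrac{j}{2}\bigr)^{-1/2} + K'_3 \sum_{j=0}^{\infty}\bigl(r(\delta)+\tfrac{j}{2}\bigr)^{-3/2}. \]
The first term is bounded by $2K'_1 Q$. The second, by comparison with $\int_0^{2Q\rho_\ell^{-1}}(r(\delta)+t/2)^{-1/2}\,dt$, is bounded by a constant times $\sqrt{\rho_\ell}\cdot\sqrt{Q\rho_\ell^{-1}} = C\sqrt{Q}$. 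The third sum converges to a constant depending only on $\delta$. Exponentiating provides the desired $L(Q)$, depending only on $\delta$ and $Q$.

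The main technical obstacle is the first step, i.e.\ converting the size estimate of Lemma~\ref{lem:25hp,1} into a derivative estimate in a form clean enough to sum along the orbit; this is where one must be careful that the neighborhoods on which the expansion is valid are large enough to apply a Cauchy estimate, and that the principal branch of $\sqrt{\zeta}$ behaves holomorphically throughout the sector $|\arg\zeta|<\delta$. Once this is in hand, the remaining bookkeeping is entirely driven by the arithmetic/geometric bounds on $|\zeta_j|$ coming from Lemma~\ref{lem:27hp,1}.
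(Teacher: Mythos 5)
Your proof follows essentially the same route as the paper's: differentiate (by Cauchy) the expansion of Lemma~\ref{lem:25hp,1}, sum $\log|\gamma_\ell'|$ along the orbit using the growth $\Re\zeta_j\geq r(\delta)+j/2$ from Lemma~\ref{lem:27hp,1}, and then bound $k\lesssim Q\rho_\ell^{-1}$. In fact your intermediate bound $K_2'\sqrt{\rho_\ell/|\zeta|}$ is the correct dominant contribution coming from the $\sqrt{\rho_\ell\zeta}$ term (the paper's $K_2\rho_\ell|\zeta|^{-1/2}$ reads like a typo missing a square root), and if you take $r(\delta)>1$ as the paper does, the tail sum $\sum (r(\delta)+j/2)^{-3/2}$ becomes an absolute constant, making $L$ genuinely independent of $\delta$ as the statement's notation $L(Q)$ requires.
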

\begin{proof}
  We choose $r(\delta)$ at least as large as in Lemma~\ref{lem:27hp,1}
  and as a consequence of Lemma~\ref{lem:25hp,1} and Cauchy's estimates we get
  \[ \bigr|\log\left(D_{\zeta}\bm{\gamma}_{\ell}(\sqrt{\zeta})\right)\bigr| \leq K_1\rho_{\ell} + K_2\rho_{\ell}|\zeta|^{-1/2} + K_3|\zeta|^{-3/2} \]
  for $|\zeta|, \ell$ greater than some constants.

  By Lemma~\ref{lem:25hp,1}, $\|\bm{\gamma}_{\ell}^j(\sqrt{\zeta})\|\geq r(\delta)+\frac{j}{2}$. If $r(\delta)>1$, this leads to the following estimate:
   \[ \bigl|\log\left(D^k_{\zeta}\bm{\gamma}_{\ell}(\sqrt{\zeta})\right)\bigr| \leq K_1k \rho_{\ell} + K_2\rho_{\ell}\sqrt{k} + 2K_3\sum_{j=1}^{\infty} j^{-3/2} \; . \]

   At the same time, since $\Re\bm{\gamma}_{\ell}^k(\zeta)<Q\rho^{-1}_{\ell}$, $k<2Q\rho^{-1}_{\ell}$ which yields the claim of the Lemma.
\end{proof}
\paragraph{Far-range dynamics.}
Here we $|\zeta|\geq Q\rho_{\ell}^{-1}$.

\begin{lem}\label{lem:27hp,3}
  For every $\eta>0$ there is $Q(\eta) :\: 1<Q(\eta)<\infty$ and $\ell_0(\eta)$ such that
  for every $\ell\geq \ell_0(\eta)$ and $\zeta :\: |\zeta| \geq Q(\eta)\rho_{\ell}^{-1}$
  \begin{itemize}
    \item
      \[ \left|\bm{\gamma}_{\ell}(\sqrt{\zeta})\right| \geq \left| \zeta\right|(1+\rho_{\ell})^{1-\eta} \; ,\]
      \item
      \[ \left|D_{\zeta}\bm{\gamma}_{\ell}(\sqrt{\zeta})\right| \leq (1+\rho_{\ell})^{1+\eta} \; ,\]
   \end{itemize}
\end{lem}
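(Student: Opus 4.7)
}
The entire argument rests on Lemma~\ref{lem:25hp,1}, which gives the explicit expansion
\[ \gamma_\ell(\sqrt{\zeta}) = (1+\rho_\ell)\zeta + \sqrt{\tfrac{9}{2}\rho_\ell \zeta} + 1 + E_\ell(\zeta), \qquad |E_\ell(\zeta)|\leq K_1\rho_\ell^{3/2}|\zeta|+K_2\rho_\ell\sqrt{|\zeta|}+K_3|\zeta|^{-1/2} .\]
The strategy is to show that, in the regime $|\zeta|\geq Q\rho_\ell^{-1}$ with $Q$ large and $\ell$ large, the only ``significant'' term is $(1+\rho_\ell)\zeta$; all remaining terms are bounded, when measured relative to $|\zeta|$, by a quantity that is $o(\rho_\ell)$ as $Q\to\infty$ and $\ell\to\infty$. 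From there both inequalities of the lemma fall out by comparing with the Taylor expansion $(1+\rho_\ell)^{\pm\eta}=1\pm\eta\rho_\ell+O(\rho_\ell^2)$.

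\medskip
First I would divide the expansion by $\zeta$ and estimate each term. Using $|\zeta|\geq Q/\rho_\ell$ one checks directly that
\[ \frac{|\sqrt{(9/2)\rho_\ell\zeta}|}{|\zeta|}\leq \tfrac{3}{\sqrt{2Q}}\,\rho_\ell ,\qquad \frac{1}{|\zeta|}\leq \frac{\rho_\ell}{Q},\qquad \frac{|E_\ell(\zeta)|}{|\zeta|}\leq K_1\rho_\ell^{3/2}+\tfrac{K_2}{\sqrt{Q}}\,\rho_\ell^{3/2}+\tfrac{K_3}{Q^{3/2}}\,\rho_\ell^{3/2}. \]
All four quantities are bounded by $\varepsilon(Q,\ell)\,\rho_\ell$ where $\varepsilon(Q,\ell)\to 0$ as $Q\to\infty$ and $\ell\to\infty$. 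The triangle inequality then yields
\[ \left|\frac{\gamma_\ell(\sqrt{\zeta})}{\zeta}\right|\geq (1+\rho_\ell)-\varepsilon\rho_\ell \geq (1+\rho_\ell)(1-\varepsilon\rho_\ell),\]
so choosing $Q(\eta)$ and $\ell_0(\eta)$ so that $\varepsilon(Q,\ell)\leq \eta/2$ (and $\rho_\ell$ so small that the quadratic remainder in the Taylor expansion of $(1+\rho_\ell)^{-\eta}$ is absorbed) gives the first inequality.

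\medskip
For the second inequality I differentiate the expansion term-by-term. The leading contribution to $D_\zeta\gamma_\ell(\sqrt{\zeta})$ is $(1+\rho_\ell)$; the root term contributes $\sqrt{9\rho_\ell/(8\zeta)}$, again bounded in modulus by a constant times $\rho_\ell/\sqrt{Q}$. The derivative of $E_\ell$ is handled by Cauchy's estimate on a disc of radius $|\zeta|/2$ centred at $\zeta$ (on which the same bound for $|E_\ell|$ holds up to a constant factor), giving
\[ |E_\ell'(\zeta)|\leq C\bigl(K_1\rho_\ell^{3/2}+K_2\rho_\ell/\sqrt{|\zeta|}+K_3/|\zeta|^{3/2}\bigr), \]
which is again $\leq \varepsilon(Q,\ell)\rho_\ell$. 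Hence $|D_\zeta\gamma_\ell(\sqrt{\zeta})|\leq (1+\rho_\ell)(1+\varepsilon\rho_\ell)\leq (1+\rho_\ell)^{1+\eta}$ after the same choices of $Q$ and $\ell_0$.

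\medskip
The only real obstacle is bookkeeping: one must verify that \emph{every} error term, after normalization (by $|\zeta|$ for the first claim, without normalization for the derivative claim), is genuinely small relative to $\rho_\ell$ rather than merely small in absolute terms. The delicate combinations are the root term and the $K_1\rho_\ell^{3/2}|\zeta|$ correction, since these are not controlled by making $Q$ large unless one also invokes $\rho_\ell\to 0$. Once the two regimes (large $Q$ vs. large $\ell$) are separated the estimates become elementary.
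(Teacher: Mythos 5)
Your proposal is correct and follows essentially the same route as the paper: both arguments normalize the expansion of Lemma~\ref{lem:25hp,1} by the dominant term $(1+\rho_{\ell})\zeta$, show that in the regime $|\zeta|\geq Q\rho_{\ell}^{-1}$ every remaining contribution is at most $\varepsilon(Q,\ell)\,\rho_{\ell}$ with $\varepsilon\rightarrow 0$ as $Q\rightarrow\infty$ and $\ell\rightarrow\infty$, and then convert $1\mp\varepsilon\rho_{\ell}$ into $(1+\rho_{\ell})^{\mp\eta}$ (the paper via the inequality $1-K\rho_{\ell}\geq(1+\rho_{\ell})^{-2K}$, you via the Taylor expansion — a cosmetic difference), with the derivative bound obtained in both cases from Cauchy's estimate applied to the error term. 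Your explicit remark that the $K_1\rho_{\ell}^{3/2}|\zeta|$ correction is controlled only by letting $\ell\rightarrow\infty$, not by enlarging $Q$, is exactly the point the paper handles through its choice of $\ell_0(\eta)$.
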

\begin{proof}
  From Lemma~\ref{lem:25hp,1} we conclude that for $|\zeta| \geq
  Q\rho_{\ell}^{-1}$, $\ell\geq\ell_0$,
  \[
  \left|\frac{\bm{\gamma}_{\ell}\left(\sqrt{\zeta}\right)}{\zeta+\rho_{\ell}\zeta}\right|
  \geq 1-Q^{-1}\rho_{\ell} - \rho_{\ell}\sqrt{\frac{9}{2Q}} -
  KQ^{-1/2} \rho^{3/2}_{\ell} = 1 - K(\ell,Q)\rho_{\ell}\]
  where $\forall \ell_0\; \lim_{Q\rightarrow\infty} \sup\{ K(\ell,Q)
  :\: \ell\geq\ell_0\} = 0$.
  For $\rho_{\ell}$ small enough and $K(\ell,Q)\leq 1$ this leads to

  \[
  \left|\frac{\bm{\gamma}_{\ell}\left(\sqrt{\zeta}\right)}{\zeta+\rho_{\ell}\zeta}\right|
  \geq \left(1+\rho_{\ell}\right)^{-2K(\ell,Q)} \]
  and it suffices to choose $Q(\eta)$ so that $\sup\left\{
  K(\ell,Q(\eta)) :\: \ell\geq\ell_0\right\} <
  \frac{\eta}{2}$
in order to obtain the first claim.

For the second claim, we similarly get from Lemma~\ref{lem:25hp,1} that
  \[ \left| D_{\zeta}\bm{\gamma}_{\ell}(\sqrt{\zeta})\right| \leq 1+ \rho_{\ell} + \rho_{\ell} \sqrt{\frac{9}{2Q}} + K_1\rho_{\ell}^{3/2} \]
  for $\ell$ and $Q$ suitably bounded below. Similar to the previous
  case the right side can be bounded above by
  $\left(1+\rho_{\ell}\right)^{1+2K'(\ell,Q)}$ and the second claim follows.
\end{proof}

\paragraph{Joint estimates.}
We will now write general estimates on the absolute value and
derivative along the orbits in the $\zeta$ coordinate, i.e. compositions of functions
$\bm{\gamma}_{\ell}\circ\sqrt{}$.

\begin{lem}\label{lem:30hp,1}
 For every $\delta :\: 0<\delta<\frac{\pi}{2}$ there is
 $r_0(\delta)>0$ and for every $\eta>0$ there are
 $\ell_0(\delta,\eta), L(\eta), Q(\eta)>1$ such that
 \begin{multline*}
   \forall \ell\geq \ell_0(\delta,\eta)\; \forall \zeta\in\CC :\:
 |\zeta|>r_0,\,|\arg\zeta|<\delta \;  \\ \exists k(\zeta,\ell)\;
 \Re\left(\bm{\gamma}_{\ell}\circ\sqrt{}\right)^{k(\zeta,\ell)}(\zeta)\geq Q(\eta)\rho_{\ell}^{-1} :\:
 \end{multline*}
 \begin{itemize}
\item $\forall 0\leq k\leq k(\zeta,\ell)\;
\Re\left(\bm{\gamma}_{\ell}\circ\sqrt{}\right)^k(\zeta)\geq \Re\zeta+\frac{k}{2}$,
\item \begin{multline*}
  \forall k\geq 0\;    \Bigl|\left(\bm{\gamma}_{\ell}\circ\sqrt{}\right)^k(\zeta)\Bigr|
    \geq \\
     \Bigl(\Re\left(\bm{\gamma}_{\ell}\circ\sqrt{}\right)^{\min\left(k,k(\zeta,\ell)\right)}(\zeta)\Bigr)(1+\rho_{\ell})^{\max\left(k-k(\zeta,\ell\right),0)(1-\eta)} ,\end{multline*}
\item $\forall k\geq 0\;\bigl|D_{\zeta}\left(\bm{\gamma}_{\ell}\circ\sqrt{}\right)^k(\zeta)\bigr| \leq
 L(\eta) (1+\rho_{\ell})^{\max\left(k-k(\zeta,\ell),0\right)(1+\eta)}$ .
\end{itemize}
 \end{lem}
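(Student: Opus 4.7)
The plan is to splice together the mid-range control from Lemma~\ref{lem:27hp,1}, the mid-range derivative bound from Lemma~\ref{lem:27hp,2}, and the far-range estimates from Lemma~\ref{lem:27hp,3}; the threshold $|\zeta|=Q(\eta)\rho_\ell^{-1}$ at which the transition happens is exactly what defines the crossing time $k(\zeta,\ell)$ postulated in the lemma.

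First I would fix the constants. Given $\delta\in(0,\pi/2)$ and $\eta>0$, choose $Q(\eta)$ from Lemma~\ref{lem:27hp,3}, let $r_0(\delta)$ be a slight inflation of the $r(\delta)$ of Lemma~\ref{lem:27hp,1} so that $|\zeta|>r_0(\delta)$ together with $|\arg\zeta|<\delta$ forces $\Re\zeta>r(\delta)$, set $L(\eta):=L(Q(\eta))$ as in Lemma~\ref{lem:27hp,2}, and pick $\ell_0(\delta,\eta)$ large enough that all three lemmas apply with parameter $Q=Q(\eta)$ for every $\ell\geq\ell_0(\delta,\eta)$.

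Given $\zeta$ with $|\zeta|>r_0(\delta)$ and $|\arg\zeta|<\delta$, induction on $j$ via Lemma~\ref{lem:27hp,1} shows that as long as $\Re\gamma_\ell^j(\sqrt{\zeta})<Q(\eta)\rho_\ell^{-1}$ one has $|\arg\gamma_\ell^j(\sqrt{\zeta})|<\delta$ and $\Re\gamma_\ell^{j+1}(\sqrt{\zeta})\geq\Re\gamma_\ell^j(\sqrt{\zeta})+\tfrac12$. Hence a first crossing index $k(\zeta,\ell)$ exists and the first displayed estimate of the lemma holds for $0\leq k\leq k(\zeta,\ell)$. In this range $|\arg\gamma_\ell^k(\sqrt{\zeta})|<\delta<\pi/2$ forces $|\gamma_\ell^k(\sqrt{\zeta})|\geq\Re\gamma_\ell^k(\sqrt{\zeta})$, which matches the modulus bound with the trivial factor $(1+\rho_\ell)^0=1$; and the derivative bound for $k\leq k(\zeta,\ell)$ is precisely Lemma~\ref{lem:27hp,2}.

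For $k>k(\zeta,\ell)$ I would iterate Lemma~\ref{lem:27hp,3} starting from $\gamma_\ell^{k(\zeta,\ell)}(\sqrt{\zeta})$: the first estimate of that lemma yields $|\gamma_\ell^k(\sqrt{\zeta})|\geq|\gamma_\ell^{k(\zeta,\ell)}(\sqrt{\zeta})|(1+\rho_\ell)^{(k-k(\zeta,\ell))(1-\eta)}$, which is bounded below by $\Re\gamma_\ell^{k(\zeta,\ell)}(\sqrt{\zeta})(1+\rho_\ell)^{(k-k(\zeta,\ell))(1-\eta)}$ as required; the iteration is self-sustaining because that same estimate keeps the modulus growing past $Q(\eta)\rho_\ell^{-1}$. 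For the derivative, the chain rule factors $D_\zeta\gamma_\ell^k(\sqrt{\zeta})=D_\zeta\gamma_\ell^{k-k(\zeta,\ell)}\bigl(\gamma_\ell^{k(\zeta,\ell)}(\sqrt{\zeta})\bigr)\cdot D_\zeta\gamma_\ell^{k(\zeta,\ell)}(\sqrt{\zeta})$; the first factor is bounded by iterating the second estimate of Lemma~\ref{lem:27hp,3} to give $(1+\rho_\ell)^{(k-k(\zeta,\ell))(1+\eta)}$, while the second is at most $L(\eta)$ by Lemma~\ref{lem:27hp,2}, completing the third inequality.

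The proof is essentially bookkeeping: the only point worth checking carefully is the self-sustaining character of the far-range iteration and the compatibility of the constants $r(\delta)$, $L(Q(\eta))$ produced by Lemmas~\ref{lem:27hp,1}--\ref{lem:27hp,2} with the $Q(\eta)$ chosen from Lemma~\ref{lem:27hp,3}, which is why $Q(\eta)$ must be fixed before $\ell_0(\delta,\eta)$.
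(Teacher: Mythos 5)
Your proposal is correct and follows the same strategy as the paper's proof: use Lemma~\ref{lem:27hp,1} for the forward march by $\frac{1}{2}$ in the mid-range, Lemma~\ref{lem:27hp,2} for the mid-range derivative bound $L(Q)$, and Lemma~\ref{lem:27hp,3} for the far-range geometric growth and derivative bound once the real part crosses $Q(\eta)\rho_\ell^{-1}$. The paper's own proof is just a two-sentence sketch of this plan; your write-up supplies the bookkeeping (the trivial inequality $|\zeta|\geq\Re\zeta$ in the sector $|\arg\zeta|<\pi/2$, the chain-rule split of the derivative at $k(\zeta,\ell)$, and the self-sustaining nature of the far-range region) that the paper leaves implicit, so it is faithful rather than genuinely different.
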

\begin{proof}
 By Lemma~\ref{lem:27hp,1} when $\zeta$ is chosen in the specified
 set, it will move inside the same set by at least $\frac{1}{2}$ to
 the right by each iterate of $\left(\bm{\gamma}_{\ell}\circ\sqrt{}\right)$, which then must be the
 principal branch. $Q(\eta)$ is chosen by Lemma~\ref{lem:27hp,3}. The
 key point is the choice of $k(\zeta,\ell)$ which the smallest $k$ for
 which $\Re\left(\bm{\gamma}_{\ell}\circ\sqrt{}\right)^k(\zeta) \geq Q(\eta)\rho_{\ell}^{-1}$. Until
 that point the dynamics is controlled by Lemma~\ref{lem:27hp,1} and
 the estimate of Lemma~\ref{lem:27hp,2} on the derivative,
 while
 afterwards the dynamics becomes complicated, but simple estimates of
 Lemma~\ref{lem:27hp,3} hold.
\end{proof}

Now we draw conclusions for iterates of $\mathbf{G}_{\ell}^{-2}$.
\begin{lem}\label{lem:30hp,2}
 For every $\delta :\: 0<\delta<\frac{\pi}{4}$ there is
 $r_0(\delta)>0$ and for every $\eta>0$ and $r :\: 0<r<r_0$ there are
 $\ell_0(\delta,\eta), L(\eta,r)$ such that
 \begin{equation*}
   \forall \ell\geq \ell_0(\delta,\eta)\; \forall z \in \bigl\{ z\in\CC :\:
 r<|z-x_{+,\ell}|<r_0,\,\left|\arg
 \iota^{-1}(z-x_{+,\ell})\right|<\delta \bigr\}\;  \exists
 k(z,\ell)
 \end{equation*}
 \begin{equation*}
   \begin{split}
 \forall 0\leq k \leq k(z,\ell)\; &
 \left| D_z\mathbf{G}_{\ell}^{-2k}(z)\right|  \leq
 L(\eta,r) \bigl(1 + \frac{k}{2}\bigr)^{-3/2}\\
 \forall k\geq k(z,\ell)\;
 &\left| D_z\mathbf{G}_{\ell}^{-2k}(z)\right|  \leq
 L(\eta,r)\rho_{\ell}^{3/2}\left(1+\rho_{\ell}\right)^{(-\frac{1}{2}+3\eta)\max\left(k-k(z,\ell),0\right)}
 .
 \end{split}
 \end{equation*}
\end{lem}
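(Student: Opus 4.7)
The plan is to transfer the estimates of Lemma~\ref{lem:30hp,1} on the dynamics of $\gamma_\ell$ in the $\zeta$-coordinate back to iterates of $\mathbf{G}_\ell^{-2}$ in the original coordinate. Write $w=x_{+,\ell}+\iota z_0$; by the very definition of $\mathbf{\Gamma}_\ell$ we have $\mathbf{G}_\ell^{-2k}(w)=x_{+,\ell}+\iota\mathbf{\Gamma}_\ell^k(z_0)$ and consequently $|D_w\mathbf{G}_\ell^{-2k}(w)|=|D_{z_0}\mathbf{\Gamma}_\ell^k(z_0)|$. The functional relation $\zeta_\ell\circ\mathbf{\Gamma}_\ell=F\circ\zeta_\ell$ with $F(\zeta):=\gamma_\ell(\sqrt{\zeta})$, cf.\ equation~(\ref{equ:25hp,1}), iterates to $\zeta_\ell\circ\mathbf{\Gamma}_\ell^k=F^k\circ\zeta_\ell$. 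Differentiating, and using $\zeta_\ell'(z)=-(A_\ell z^3)^{-1}$ together with the identity $|z|^2=(2|A_\ell||\zeta_\ell(z)|)^{-1}$, yields the key formula
\begin{equation}\label{equ:plan,1}
|D_{z_0}\mathbf{\Gamma}_\ell^k(z_0)|\;=\;|D_\zeta F^k(\zeta)|\cdot\left(\frac{|\zeta|}{|F^k(\zeta)|}\right)^{3/2},\qquad \zeta:=\zeta_\ell(z_0).
\end{equation}

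The geometric hypothesis $r<|z_0|<r_0$ with $|\arg z_0|<\delta<\pi/4$ translates, under $\zeta_\ell$, into $|\zeta|=(2|A_\ell||z_0|^2)^{-1}$ (hence bounded above by $C r^{-2}$ and, for $r_0$ small, exceeding the $R_0(\delta)$ of Lemma~\ref{lem:30hp,1}) and $|\arg\zeta|\leq 2\delta+O(\sqrt{\rho_\ell})<\pi/2$; the restriction $\delta<\pi/4$ is precisely what accommodates the argument-doubling by $\zeta_\ell$. Set $k(z_0,\ell):=k(\zeta,\ell)$ as in Lemma~\ref{lem:30hp,1}. For $0\leq k\leq k(z_0,\ell)$ the bounds $|D_\zeta F^k(\zeta)|\leq L(\eta)$ and $|F^k(\zeta)|\geq\Re\zeta+k/2\geq 1+k/2$ (we enforce $|\zeta|\cos\delta\geq 1$ by taking $r_0$ small), plugged into~(\ref{equ:plan,1}) together with $|\zeta|^{3/2}\leq L'(r)$, give $|D_w\mathbf{G}_\ell^{-2k}(w)|\leq L(\eta,r)(1+k/2)^{-3/2}$. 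For $k\geq k(z_0,\ell)$ combine the growth bound $|F^k(\zeta)|\geq Q(\eta)\rho_\ell^{-1}(1+\rho_\ell)^{(k-k(z_0,\ell))(1-\eta)}$ with the derivative bound $|D_\zeta F^k(\zeta)|\leq L(\eta)(1+\rho_\ell)^{(k-k(z_0,\ell))(1+\eta)}$ in~(\ref{equ:plan,1}): the factor $Q(\eta)^{-3/2}$ and $|\zeta|^{3/2}$ are absorbed into $L(\eta,r)$, the $\rho_\ell^{-3/2}$ coming from $|F^{k(z_0,\ell)}(\zeta)|^{-3/2}$ is raised to $\rho_\ell^{3/2}$, and the exponent of $(1+\rho_\ell)$ becomes $(k-k(z_0,\ell))\bigl[(1+\eta)-\tfrac{3}{2}(1-\eta)\bigr]=(k-k(z_0,\ell))(-\tfrac{1}{2}+\tfrac{5}{2}\eta)$.

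Only bookkeeping remains: since $\tfrac{5}{2}\eta\leq 3\eta$, the exponent $-\tfrac{1}{2}+\tfrac{5}{2}\eta$ is bounded above by the claimed $-\tfrac{1}{2}+3\eta$, so $(1+\rho_\ell)^{(k-k(z_0,\ell))(-1/2+5\eta/2)}\leq(1+\rho_\ell)^{(k-k(z_0,\ell))(-1/2+3\eta)}$ and the stated estimate follows. No substantive obstacle survives beyond Lemma~\ref{lem:30hp,1}; the points requiring care are (i) the translation of the angular condition via $\zeta_\ell$, which is what forces $\delta<\pi/4$, (ii) the uniform choice of $r_0$, dictated by the requirement that $|\zeta|\geq R_0(\delta)$ hold simultaneously for all sufficiently large $\ell$, and (iii) the propagation of the $\eta$-exponents through the $3/2$-power in~(\ref{equ:plan,1}).
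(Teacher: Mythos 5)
Your proposal is correct and takes essentially the same route as the paper: push forward through the change of coordinate $\zeta_\ell$, use the explicit relation $|D\zeta_\ell(z)|\asymp|\zeta_\ell(z)|^{3/2}$ to convert the $\zeta$-derivative bounds of Lemma~\ref{lem:30hp,1} into bounds on $|D\mathbf{G}_\ell^{-2k}|$, and track the argument-doubling (hence $\delta<\pi/4$) and the shrinking of $r_0$ so that $\Re\zeta\geq 1$. Your explicit formula~(\ref{equ:plan,1}) and the exponent calculation $(1+\eta)-\tfrac32(1-\eta)=-\tfrac12+\tfrac52\eta\leq-\tfrac12+3\eta$ simply spell out details the paper's two-sentence proof leaves implicit.
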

\begin{proof}
This is a consequence of Lemma~\ref{lem:30hp,1} and the change of
coordinate $\zeta := \zeta_{\ell}(z)$,
cf. Definition~\ref{defi:30hp,1}. The derivative of that change of
coordinate is bounded above in terms of $r$ and the derivative of the
inverse change is bounded above by a constant times $|\zeta|^{-3/2}$.
The bound on the argument $\delta$ doubles by the generalized Fatou
coordinate, hence different values in the hypotheses of the Lemmas.
Now the claim follows directly from Lemma~\ref{lem:30hp,1}, except
that by taking $r_0(\delta)$ small enough we guarantee $\Re\zeta\geq
1$.
\end{proof}

\subsection{Estimates of the Poincar\'{e} series.}
Define a domain
$W(r,\delta) : = \{ x_{+,\ell}+\iota z \in\CC\setminus\Omega_{\ell}:\: \frac{r}{2} < |z| < r,\, |\arg z|
< \delta \}$ where $\delta<\frac{\pi}{4}$.

For $z\in W(r,\delta_0)$, where $\delta_0$ comes from the hypothesis
of Theorem~\ref{theo:27ha,1}, define
\begin{equation}\label{equ:30hp,2}
  \hat{P}(z,\sigma) := \sum_{k=1}^{\infty}\sum_{j=1}^{k}
|D\mathbf{G}_{\ell}^{-2j}(z)|^{2-\sigma}
|D\mathbf{G}_{\ell}^{-2k}(z)|^{\sigma} \; .
\end{equation}

\begin{lem}\label{lem:28hp,1}
  For some $r_0>0$ and $0<\delta<\frac{\pi}{4}$, any $0<r<r_0$
  and every $\ell\geq \ell_0(r)$,
  \[ \int_{D(x_{0,\infty},\frac{r}{4})\setminus\Omega_{\ell}}
  P(\mathbf{G}_{\ell}^{-2},x+\iota y,\sigma)\,dx\,dy \leq 2 \int_{W(r,\delta)}
  \hat{P}(x+\iota y,\sigma)\,dx\,dy \; .\]
\end{lem}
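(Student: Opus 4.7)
The approach is to push the Poincar\'e series outside $\Omega_\ell$ back onto the collar $W(r,\delta)$ by iterated branches of $\mathbf{G}_\ell^{-2}$. Since $\mathbf{G}_\ell^{-2}$ and $\Omega_\ell$ are symmetric about $\RR$, the integral over $D(x_{0,\infty},r/4)\setminus\Omega_\ell$ splits into two equal parts, one in each half-plane; this accounts for the factor~$2$ on the right. Focusing on $\HH_+$, choose $\ell$ large enough that $|x_{+,\ell}-x_{0,\infty}|<r/4$; then $D(x_{0,\infty},r/4)\cap\HH_+\subset D(x_{+,\ell},r/2)$, and this set is disjoint from $W(r,\delta)$ since the latter requires $|w-x_{+,\ell}|>r/2$. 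For each $w$ in the upper-half-plane part of the domain of integration, Lemma~\ref{lem:24ha,1} yields the least integer $n=n(w)\geq 1$ with $G_\ell^{2n}(w)\in W(r,\delta)$. Partition the domain by $E_n:=\{w:n(w)=n\}$ and set $F_n:=G_\ell^{2n}(E_n)\subset W(r,\delta)$.

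On each $E_n$ the map $G_\ell^{2n}$ is univalent, with inverse equal to the $n$-th iterate of the attracting branch of $\mathbf{G}_\ell^{-2}$ at $x_{+,\ell}$; by the forward invariance of $\Omega_\ell$ the entire backward orbit from $F_n$ remains in $\HH_+\setminus\Omega_\ell$. Change variables $w=\mathbf{G}_\ell^{-2n}(z)$: then $dA(w)=|D\mathbf{G}_\ell^{-2n}(z)|^2\,dA(z)$, and the chain rule $|D\mathbf{G}_\ell^{-2m}(\mathbf{G}_\ell^{-2n}(z))|=|D\mathbf{G}_\ell^{-2(m+n)}(z)|/|D\mathbf{G}_\ell^{-2n}(z)|$ converts each summand of $P(\mathbf{G}_\ell^{-2},w,\sigma)$ into $|D\mathbf{G}_\ell^{-2(m+n)}(z)|^\sigma\,|D\mathbf{G}_\ell^{-2n}(z)|^{2-\sigma}$. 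Isolating the $m=0$ term and substituting $k=m+n$ for $m\geq 1$ re-indexes $\{(n,m):n\geq 1,\,m\geq 1\}$ as $\{(n,k):k\geq 2,\,1\leq n\leq k-1\}$, which is exactly the index set of $\hat P$. Using $F_n\subset W(r,\delta)$ and summing over $n$ gives
\begin{equation*}
  \int_{D(x_{0,\infty},r/4)\cap\HH_+\setminus\Omega_\ell} P(\mathbf{G}_\ell^{-2},w,\sigma)\,dA(w) \leq \int_{W(r,\delta)}\hat P(z,\sigma)\,dA(z)+\sum_{n\geq 1}\mathrm{area}(E_n).
\end{equation*}

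The leftover $m=0$ contributions sum to at most $\mathrm{area}(D(x_{0,\infty},r/4)\cap\HH_+)\lesssim r^2$. On the other hand, for $z\in W(r,\delta)$ and $\ell$ large the factors $|D\mathbf{G}_\ell^{-2}(z)|$ and $|D\mathbf{G}_\ell^{-4}(z)|$ stay within a fixed factor of $1$ by the almost-parabolic estimates already established (e.g.\ Lemma~\ref{lem:30hp,2}), so the single $k=2,\,j=1$ term forces $\hat P(z,\sigma)\geq c_0>0$ on $W(r,\delta)$; hence $\int_{W(r,\delta)}\hat P\gtrsim\mathrm{area}(W(r,\delta))\gtrsim\delta r^2$. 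Choosing $\delta$ bounded below (still less than $\pi/4$) lets the lower bound on $\int_W\hat P$ absorb the area term, and doubling for the $\HH_-$ half by symmetry produces the overall factor~$2$. The main obstacle is the bookkeeping of the change of coordinates: verifying that for each $n$ the branch $\mathbf{G}_\ell^{-2n}$ is genuinely univalent on a neighborhood of $F_n$ and maps it diffeomorphically onto $E_n$, with the backward orbit remaining in the correct connected component of $\HH_+\setminus\Omega_\ell$ throughout; this rests on the forward invariance of $\Omega_\ell$ together with the attracting character at $x_{+,\ell}$ of the relevant inverse branch of $G_\ell^2$.
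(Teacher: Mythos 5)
Your core argument coincides with the paper's: Lemma~\ref{lem:24ha,1} makes $W(r,\delta)$ a fundamental domain for the forward $G_{\ell}^2$-orbits starting in $D(x_{+,\ell},\frac{r}{2})\cap\HH_+\setminus\Omega_{\ell}$, one reduces to one half-plane by symmetry (and this is the sole source of the factor $2$), changes variables by $\mathbf{G}_{\ell}^{-2n}$, applies the chain rule and re-indexes $k=j+p$, landing exactly on $\hat P$ after interchanging the order of summation. Up to that point your proposal is the paper's proof.

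The divergence is your treatment of the $m=0$ (identity) term of the Poincar\'{e} series. The paper silently drops it (it writes the inner sum as starting from $p=1$), so you are right to flag it; but your absorption argument does not work. You bound the leftover by the area of $D(x_{0,\infty},\frac{r}{4})\cap\HH_+$, of order $r^2$, and then claim $\int_{W(r,\delta)}\hat P \geq c\,\mathrm{area}\left(W(r,\delta)\right) \geq c\,\delta r^2$. The first inequality is fine, but the second is unjustified: by definition $W(r,\delta)\subset\CC\setminus\Omega_{\ell}$, and the hypotheses of Theorem~\ref{theo:27ha,1} only confine $\CC\setminus\Omega_{\ell}$ to a wedge around the vertical — they give no lower bound on its thickness. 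In the Feigenbaum setting the complement of $\Omega_{\ell}$ is a cusp of width $O(\rho^3)$ at distance $\rho$ from $x_{0,\infty}$ (cf. Proposition~\ref{prop:3xp,1}), so $\mathrm{area}\left(W(r,\delta)\right)=O(r^4)$, far below $r^2$, and your inequality fails. (Your constant bookkeeping also does not close: the factor $2$ is already spent on the up/down symmetry and cannot be spent again on absorbing the area term.) A correct repair within your scheme is to push the identity term onto $W(r,\delta)$ by the same change of variables, obtaining $\int_{W(r,\delta)}\sum_{j\geq 1}|D\mathbf{G}_{\ell}^{-2j}(z)|^2\,dx\,dy$, and to compare this termwise with the $k=j+1$ terms of $\hat P$ using the uniform two-sided bounds on $|D\mathbf{G}_{\ell}^{-2}|$ near $x_{+,\ell}$; this replaces $2$ by a larger absolute constant, which is harmless since Theorem~\ref{theo:27ha,1} only needs uniform boundedness of the integral.
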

\begin{proof}
Constants $r_0$ and $\delta$ are chosen from Lemma~\ref{lem:24ha,1}
which asserts that $W(r,\delta)$ is a fundamental domain such that
every orbit which starts in $\HH_+ \cap
D\left(x_{+,\ell},\frac{r}{2}\right)\setminus\Omega_{\ell}$ passes through it under the
forward iteration by $G^2_{\ell}$. For $\ell$ sufficiently large
depending on $r$, $D\left(x_{0,\infty},\frac{r}{4}\right) \subset
D\left(x_{+,\ell},\frac{r}{2}\right)$. Taking into account the
symmetry about $\RR$, the claim of the Lemma is reduced to
  \begin{equation}\label{equ:30hp,1} \int_{D(x_{+,\ell},\frac{r}{2}) \cap \HH_+\setminus\Omega_{\ell}}
  P(\mathbf{G}_{\ell}^{-2},x+\iota y,\sigma)\,dx\,dy \leq  \int_{W(r,\delta)}
  \hat{P}(x+\iota y,\sigma)\,dx\,dy \; .
  \end{equation}
By the fundamental domain property
 \begin{multline*} \int_{D(x_{+,\ell},\frac{r}{2}) \cap \HH_+\setminus\Omega_{\ell}}
   P(\mathbf{G}_{\ell}^{-2},x+\iota y,\sigma)\,dx\,dy \leq \\
   \int_{W(r,\delta)} \sum_{j=1}^{\infty}
  P\left(\mathbf{G}_{\ell}^{-2},\mathbf{G}_{\ell}^{-2j}(x+\iota y),\sigma\right)
  \left|D_z\mathbf{G}_{\ell}^{-2j}(x+\iota y)\right|^2\,dx\,dy \; .
 \end{multline*}
Representing the Poincar\'{e} series from the definition, we evaluate
the sum under the second integral:
\begin{multline*} \sum_{j=1}^{\infty}
  P\left(\mathbf{G}_{\ell}^{-2},\mathbf{G}_{\ell}^{-2j}(x+\iota y),\sigma\right)
  \left|D_z\mathbf{G}_{\ell}^{-2j}(x+\iota y)\right|^2 =\\
  \sum_{j=1}^{\infty} \sum_{p=0}^{\infty} \bigl|
  D_z\mathbf{G}_{\ell}^{-2p}\left( \mathbf{G}_{\ell}^{-2j}(x+\iota
  y)\right)\bigr|^{\sigma} \left|D_z\mathbf{G}_{\ell}^{-2j}(x+\iota
  y)\right|^2 = \\ \sum_{j=1}^{\infty} \sum_{k=j}^{\infty} \bigl|
  D_z\mathbf{G}_{\ell}^{-2k}\left(x+\iota y\right)\bigr|^{\sigma} \left|D_z\mathbf{G}_{\ell}^{-2j}(x+\iota
  y)\right|^{2-\sigma}
  \end{multline*}
  with $k:=j+p$ and estimate~(\ref{equ:30hp,1}) follows by
  interchanging the order of summation.
  \end{proof}

\paragraph{Proof of Theorem~\ref{theo:27ha,1}.}
The proof will follow from Lemmas~\ref{lem:30hp,2}
and~\ref{lem:28hp,1}. We begin by setting the parameters, starting with
$\delta$ of Lemma~\ref{lem:28hp,1}. Given that,
we choose $2r$ in Lemma~\ref{lem:30hp,2} then $r_0(\delta)$ as well as
$r_0$ of Lemma~\ref{lem:28hp,1}.

Now $\eta$ is fixed so that $3\eta<\frac{1}{2}$, thus
$\eta:=\frac{1}{8}$ will do. Then all the bounds
$\ell_0(\delta,\eta),L(\eta,r)$  of
Lemma~\ref{lem:30hp,2} become constants and will be written simply as
$\ell_0,Q,L$. Only the dependence of $\ell$ through $k(z,\ell)$ and
$\rho_{\ell}$ remains.

By lemma~\ref{lem:30hp,2} for all $z\in W(r,\delta), \ell\geq\ell_0$ and $k\geq 0$,
$D_z \mathbf{G}_{\ell}^{-2k}(z)$ are uniformly bounded above. Then, by
inspecting the formula of Definition~\ref{defi:27ha,1} for
$g:=\mathbf{G}_{\ell}^{-2}$ we see that increasing $\sigma$ increases the sum
of the Poincar\'{e} series at most by a uniform constant for any $z\in
W(r,\delta)$. Hence, without loss of generality we can restrict our
considerations to $\frac{4}{3} < \sigma < 2$.

Then
\[ \hat{P}(z,\sigma) \leq K \sum_{k=1}^{\infty} k\left|
D_z\mathbf{G}^{-2k}_{\ell}(z) \right|^{\sigma} \; .\]

First we estimate the sum for $k\leq k(z,\ell)$:
\[ \sum_{k=1}^{k(z,\ell)} k\left| D_z\mathbf{G}^{-2k}_{\ell}(z)
\right|^{\sigma} \leq L^{\sigma} \sum_{k=1}^{\infty}
k\left(1+\frac{k}{2}\right)^{-\frac{3\sigma}{2}} \leq K \sum_{k=1}^{\infty}
k^{1-\frac{3}{2}\sigma} \leq K(\sigma) \]
for $\sigma>\frac{4}{3}$.

Now we deal with
\[ \sum_{k>k(z,\ell)} k\left| D_z\mathbf{G}^{-2k}_{\ell}(z)
\right|^{\sigma} \leq
L^{\sigma}\rho_{\ell}^{3\sigma/2}\sum_{k=0}^{\infty} k\left(1+\rho_{\ell}\right)^{-k\sigma/8}\]
using the estimate of Lemma~\ref{lem:30hp,2} with
$\eta=\frac{1}{8}$. Since $2>\sigma>\frac{4}{3}$,
$\rho_{\ell}^{3\sigma/2}\leq \rho_{\ell}^{2+\sigma'}$ where
$\sigma':=\frac{3}{2}\sigma-2>0$, while $L^{\sigma}$ is
just another constant $L'$. For $\rho_{\ell}$ sufficiently small
\[ \left(1+\rho_{\ell}\right)^{-\sigma/8} \leq 1 -
\frac{\sigma\rho_{\ell}}{9} .\]

Hence, for all $\ell$ sufficiently large,
\begin{multline*} \sum_{k>k(z,\ell)} k\left| D_z\mathbf{G}^{-2k}_{\ell}(z)
\right|^{\sigma} \leq L'\rho_{\ell}^{2+\sigma'} \sum_{k=0}^{\infty}
k\left(1-\frac{\sigma\rho_{\ell}}{9}\right )^k = \\L'\rho_{\ell}^{2+\sigma'}
\left(1-\frac{\sigma\rho_{\ell}}{9}\right)\left(\frac{9}{\sigma\rho_{\ell}}
\right)^2 \leq \frac{81 L' \rho_{\ell}^{\sigma'}}{\sigma^2}
\end{multline*}
which tends to $0$ as $\ell\rightarrow\infty$.

So, $\hat{P}(z,\sigma)$ is uniformly bounded for all $z\in
W(r,,\delta)$ and $\ell$ large enough. For such $\ell$
Theorem~\ref{theo:27ha,1}
follows from Lemma~\ref{lem:28hp,1}.

For each of the remaining
finitely many $\ell$ the point $x_{+,\ell}$ is a hyperbolic attractor
for $\mathbf{G}_{\ell}^{-2}$, so the Poincar\'{e} series is integrable
as well.

\section{Induced maps}
  \subsection{Induced mapping $T_{\ell}$.}
  \begin{defi}\label{defi:3hp,2}
  For every $\ell$ finite and even or infinite, consider the {\em
    fundamental annulus} $A_{\ell} :=
  \Omega_{\ell}\setminus\tau_{\ell}^{-1}\overline{\Omega}_{\ell} $.
  We further define {\em fundamental half-annuli} $A_{\pm,\ell} :=
  A_{\ell} \cap \mathbb{H}_{\pm}$.
  \end{defi}
  For $\ell=\infty$, the fundamental annulus is not in fact a
  topological annulus since it is pinched at $x_0$. However
  fundamental half-annuli are always topological disks by
  Fact~\ref{fa:1hp,3}.

\begin{defi}\label{defi:3hp,3}
For any $z\in A_{\ell}$ define $T_{\ell}(z) = \tau^{n(z)}
H_{\ell}(z)$ where $n(z)$ is chosen so that $T_{\ell}(z) \in
A_{+,\ell} \cup A_{-,\ell}$. The domain of $T_{\ell}$ is the set of
all $z\in A_{\ell}$ for which such $n(z)$ exists.
\end{defi}

From the definition of the fundamental
annulus at most one such $n(z)$ exists for each $z$. Moreover, it can
always by found if the condition is relaxed to $T_{\ell}(z) \in
\overline{A}_{\ell}$. Hence, $T_{\ell}$ is defined on $\Omega_{\ell}$
except for a countable union of analytic arcs.

\paragraph{Branches of $T_{\ell}$.}
Since $A_{\pm,\ell}$ is simply connected and avoids the singularities
of $H_{\ell}$ which are all on $\RR$, the mapping $\tau_{\ell}^{-n}
H_{\ell}$ has univalent inverse branches whose ranges for all $n$
cover the domain of $T_{\ell}$. Thus, the domain of $T_{\ell}$ is a countable union
of topological disks. The restriction of $T_{\ell}$ to any connected
component of its domain will be called a {\em branch} of
$T_{\ell}$. Any branch $\mathfrak{z}:=\mathfrak{z}_{\sigma,s,n,p,\ell}$ can be uniquely
determined by its
\begin{itemize}
\item  {\em side} $\sigma$ which can be $+$ or $-$ depending
on whether the domain of $\mathfrak{z}$ is in $\Omega_{+,\ell}$ or
$\Omega_{-,\ell}$,
\item
{\em sign} $s$ which can be $+$ or $-$ depending on whether the domain of
$\mathfrak{z}$ lies in the upper or lower half-plane,
\item
{\em level} $n$ defined by $\mathfrak{z} = \tau_{\ell}^{n}\exp(\phi_{\sigma,\ell})$
where $\sigma$ is the side of the branch and
\item
  {\em height} $p$. To determine the height map the domain of $\mathfrak{z}$ by
  $\phi_{\sigma,\ell}$. Since the range of $\mathfrak{z}$ which is
  equal to $\exp(\phi_{\sigma,\ell})$ rescaled by a power of
  $\tau_{\ell}$ avoids $\RR$, $\phi_{\sigma,\ell}\left(\Dm\mathfrak{z}\right)$ is contained in a horizontal strip $\{ z\in\CC :\: p\pi <
  \Im z < (p+1)\pi\}$ if the sign $s=+$, or   $\{ z\in\CC :\: (-p-1) \pi <
  \Im z < -p\pi\}$ if $s=-$.
\end{itemize}

So, the range of $\mathfrak{z}_{\sigma,s,n,p,\ell}$ is $A_{+,\ell}$ if and only if
$s(-1)^p=1$, since the statement holds true for $p=0$ and $s=+$ and
then flips each time $s$ changes or $p$ changes by $1$.

\begin{defi}\label{defi:3hp,4}
  A branch is called {\em inner} if its height is positive.
\end{defi}

\begin{lem}\label{lem:3hp,1}
$T_{\ell}$ has no branches of side $-$, height $0$ and level greater
than $1$.
\end{lem}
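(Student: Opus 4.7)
The plan is to reduce the non-existence claim to an incompatibility between the height-$0$ constraint on $H_{-,\ell}(z)$ and the level-$n$ condition $T_\ell(z)\in A_\ell$, first on the real axis and then extended to complex $z$ via the minimum principle.

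A branch of side $-$, height $0$, level $n$ has $\mathfrak{z}(z)=\tau_\ell^{n-2}H_{-,\ell}(z)$ on a domain $\Dm\mathfrak{z}\subset A_\ell\cap\Omega_{-,\ell}$, with $\phi_{-,\ell}(\Dm\mathfrak{z})$ contained in the principal strip $\{|\Im w|<\pi\}\setminus[0,+\infty)$. On the real axis, Fact~\ref{fa:1hp,3} yields $A_\ell\cap\Omega_{-,\ell}\cap\RR=(y_\ell,\tau_\ell^{-1}y_\ell)$. The Feigenbaum equation $\tau_\ell H_\ell^2(x)=H_\ell(\tau_\ell x)$ at $x=x_{0,\ell}$ gives $H_\ell(\tau_\ell x_{0,\ell})=\tau_\ell H_\ell(0)=\tau_\ell$, and then at $x=\tau_\ell^{-1}y_\ell$ (using $H_\ell(\tau_\ell^{-1}y_\ell)=\tau_\ell x_{0,\ell}$ from Fact~\ref{fa:1hp,3}) it gives $H_\ell(y_\ell)=\tau_\ell^2$. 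Monotonicity of $H_\ell$ on $(y_\ell,x_{0,\ell})$ then shows $H_\ell\bigl((y_\ell,\tau_\ell^{-1}y_\ell)\bigr)=(\tau_\ell x_{0,\ell},\tau_\ell^2)$. The level condition $T_\ell(z)\in A_\ell\cap\RR^+=(x_{0,\ell},\tau_\ell x_{0,\ell})$ forces $H_\ell(z)\in(\tau_\ell^{2-n}x_{0,\ell},\tau_\ell^{3-n}x_{0,\ell})$; nonempty intersection with $(\tau_\ell x_{0,\ell},\tau_\ell^2)$ requires $\tau_\ell^{3-n}x_{0,\ell}>\tau_\ell x_{0,\ell}$, i.e., $n<2$.

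To extend to complex $z$, I would apply the minimum principle to the harmonic function $\Re\phi_{-,\ell}(z)=\log|H_{-,\ell}(z)|-2\log\tau_\ell$ on the height-$0$ portion of $A_\ell\cap\Omega_{-,\ell}$. The infimum on the closure is attained on the boundary, which consists of: the outer part $\partial\Omega_\ell\cap\partial\Omega_{-,\ell}$, where $|H_{-,\ell}|$ is large because $E_\ell$ approaches its boundary singularities; the inner part $\partial(\tau_\ell^{-1}\overline{\Omega}_\ell)\cap\Omega_{-,\ell}$, where the identity $H_{-,\ell}(\tau_\ell^{-1}w)=G_\ell(w)$ for $w\in\partial\Omega_\ell$ together with Lemma~\ref{lem:2hp,1} locate $G_\ell(\partial\Omega_\ell)$ in the exterior of $D(0,\tau_\ell x_{0,\ell})$, with the minimum $\tau_\ell x_{0,\ell}$ attained only at the real endpoint $w=y_\ell$; and the strip-edge boundary $\phi_{-,\ell}^{-1}(\{|\Im w|=\pi\})$, which coincides with part of $\mathfrak{w}_\ell$ where $|H_\ell|$ is controlled by the prescribed values on $\mathfrak{w}_\ell$. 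This yields the strict lower bound $\Re\phi_{-,\ell}(z)>\log(x_{0,\ell}/\tau_\ell)$ in the interior. Combined with the upper bound on $\Re\phi_{-,\ell}(z)$ extracted from $T_\ell(z)\in A_\ell$ by the same real-line type analysis extended by continuity to the half-planes, this excludes $n\geq 2$.

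The main obstacle is the boundary analysis on the inner boundary off the real axis: identifying $G_\ell(\partial\Omega_\ell)$ precisely, and carefully handling the arc $\mathfrak{w}_\ell$, whose image under $H_\ell$ is either $[0,+\infty)$ or $(-\infty,0]$ depending on $\ell\bmod 4$, each sub-case demanding a slightly different treatment of the strip-edge boundary values.
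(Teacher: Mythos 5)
There is a genuine gap, and it sits at the case you most need: level $n=2$. Your strategy reduces the exclusion to a two-sided bound on $|H_{-,\ell}(z)|$ (equivalently on $\Re\phi_{-,\ell}(z)$), but the level of a branch is not detectable from $|H_{-,\ell}(z)|$ alone, because the fundamental annulus is far from round: $\overline{A_{\ell}}$ contains both $x_{0,\ell}$ (on its inner boundary $\tau_{\ell}^{-1}\partial\Omega_{\ell}$) and $\tau_{\ell}x_{0,\ell}$ (on its outer boundary), so the modulus ranges of $\overline{A_{\ell}}$ and of $\tau_{\ell}\overline{A_{\ell}}$ overlap at $\tau_{\ell}x_{0,\ell}$. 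Since side-$-$, height-$0$, level-$1$ branches genuinely exist (they are the parabolic branches $(-,\pm,1,0)$ of Section~\ref{sec:4qa,1}, and they map onto the full half-annuli $\tau_{\ell}A_{\pm,\ell}$ under $H_{-,\ell}$), any modulus interval valid for $|H_{-,\ell}(z)|$ on the region in question must meet the modulus range of $\tau_{\ell}A_{\ell}$ and therefore cannot be disjoint from that of $A_{\ell}$; so it cannot rule out $H_{-,\ell}(z)\in A_{\ell}$, which is exactly level $2$. Concretely, your lower bound $|H_{-,\ell}(z)|>\tau_{\ell}x_{0,\ell}$ together with the only upper bound available for complex $z$, namely $T_{\ell}(z)\in A_{\ell}\subset D(0,\tau_{\ell})$ hence $|H_{-,\ell}(z)|<\tau_{\ell}^{3-n}$, yields $x_{0,\ell}<\tau_{\ell}^{2-n}$, which is satisfied at $n=2$ because $x_{0,\ell}<1$. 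The sharper upper bound $|T_{\ell}(z)|<\tau_{\ell}x_{0,\ell}$ read off the real trace does not extend ``by continuity to the half-planes'': $A_{\ell}$ is a fixed two-dimensional region whose closure contains $\tau_{\ell}x_{0,\ell}$, and nothing better than Fact~\ref{fa:1hp,3} is on offer. Separately, the claimed boundary infimum is doubtful: on the inner boundary $H_{\ell}(\tau_{\ell}^{-1}w)=G_{\ell}(w)$ for $w\in\partial\Omega_{\ell}$, and since $G_{\ell}$ maps $\Omega_{\ell}$ onto $\Omega_{\ell}\setminus(-\infty,0]$ its boundary values lie in $\partial\Omega_{\ell}\cup[y_{\ell},0]$, which contains $0=G_{\ell}(\tau_{\ell}x_{0,\ell})$; moreover the closure of the height-$0$ part of $A_{\ell}\cap\Omega_{-,\ell}$ reaches $x_{0,\ell}$, where $\Re\phi_{-,\ell}\rightarrow-\infty$, so the minimum principle does not deliver $\Re\phi_{-,\ell}>\log(x_{0,\ell}/\tau_{\ell})$.

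What is missing is positional, not metric, information, and the paper extracts it from the functional equation rather than from a harmonic estimate. If the level is $n\geq 2$, then $H_{-,\ell}(z)=\tau_{\ell}^{2-n}T_{\ell}(z)\in\tau_{\ell}^{2-n}A_{\ell}\subset\Omega_{\ell}\setminus(-\infty,0]$. The height-$0$, side-$-$ condition says $z$ lies on the principal sheet of $H_{-,\ell}$ over the slit plane, and by Fact~\ref{fa:1hp,4} the map $G_{\ell}=H_{\ell}\circ\tau_{\ell}^{-1}$ carries $\Omega_{\ell}$ univalently onto $\Omega_{\ell}\setminus(-\infty,0]$; hence the principal $H_{\ell}$-preimage of $\Omega_{\ell}\setminus(-\infty,0]$ is exactly $\tau_{\ell}^{-1}\Omega_{\ell}$, which is excised from $A_{\ell}$ by the very definition of the fundamental annulus, so the domain of such a branch is empty. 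Your real-line computation is a correct shadow of this (it shows the real trace of $A_{\ell}\cap\Omega_{-,\ell}$ maps to $(\tau_{\ell}x_{0,\ell},\tau_{\ell}^2)$, i.e.\ to levels at most $1$), but the passage to complex $z$ has to go through the univalence of $G_{\ell}$ on $\Omega_{\ell}$, not through bounds on $\log|H_{-,\ell}|$.
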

\begin{proof}
Level greater than $1$ means that the image of the domain of the
branch
by $H_{\ell}$ is inside $\Omega_{\ell}$. Also, the range of any branch
avoids $\RR$ by definition. Height $0$ means that the
domain of such a branch touches the boundary of
$\Omega_{\ell}\cap\HH_s$, where $s$ is the sign of the branch.
However, the set $\partial\bigl(\Omega_{\ell}\cap\HH_s\bigr)
\setminus\RR$ is sent by $H_-$ to $(\tau^2_{\ell},+\infty)$ which is disjoint
from $\Omega_{\ell}$ and so the domain of each of those branches is
adjacent to $\RR$ an actually to $(-\infty,x_{0,\ell})$ since the side
is $-$.

Since $G_{\ell}$ maps $\Omega_{\ell}$ univalently onto
$\Omega_{\ell}\setminus (-\infty,0]$, it makes sense to consider
$G_{\ell}^{-1}\circ H_{\ell} = \tau_{\ell} H_{\ell}^{-1} \circ H_{\ell}$,
$H_{\ell}^{-1}$ is the inverse branch which sends $\Omega_{\ell}\cap
(0,+\infty)$ into $(-\infty,x_{0,\ell})$. It follows that it cancels
with $H_{\ell}$ on the domain of the branch, which is therefore
contained in $\tau^{-1}_{\ell}\Omega_{\ell}$, excluded from $A_{\ell}$
by Definition~\ref{defi:3hp,2}.
 \end{proof}

\paragraph{Generic branches.}
So far we consider the mapping $T_{\ell}$ and its branches which all
depend of $\ell$. However, since a branch is uniquely defined by its
symbol $(\sigma,s,n,p)$ we can also talk of $T$ as ``generic mapping''
independent of $\ell$ and consider its generic branches defined by
their symbols. The only limitation is on the height $p\leq
\frac{\ell}{2}$.

\subsection{Extensibility of compositions of branches.}
\begin{defi}\label{defi:3hp,5}
  Define for any $s\in\ZZ$ and $\ell$ positive and even or infinite:
\begin{align*}
  Z_{\ell} := & [0,\tau^{-1}_{\ell}] \cup \{1,\tau_{\ell}\} \cup
  [\tau^2_{\ell},+\infty)\\
  Z_{+,\ell}(s) := & [0,+\infty)\\
    Z_{-,\ell}(s) := & Z_{\ell} \cup (-\infty,\tau_{\ell}^s]\\
    Z_{\circ,\text{small},\ell}(s) := & (-\infty,0] \cup
  [\tau_{\ell}^s,+\infty)\\
    Z_{\circ,\ell}(s) := & Z_{\ell} \cup Z_{\circ,\text{small},\ell}(s)
    \end{align*}
\end{defi}

\begin{lem}\label{lem:3hp,2}
  Consider any composition of branches in the form
  \[ \xi := \mathfrak{z}_{\sigma_k,s_k,n_k,p_k,\ell}\circ \ldots \circ \mathfrak{z}_{\sigma_1,s_1,n_1,p_1,\ell}\; .\]

Then, there exists $s\in \ZZ$ such that for every
$\mathfrak{m}\in\{+,-,\circ\}$ and every $\hat{s}\in \ZZ$ there exists
$\hat{\mathfrak{m}}\in \{+,-,\circ\}$ and the mapping $\xi$ continues
analytically to a covering of the set $V^{\mathfrak{m}}_{\ell}(s):=\CC\setminus
Z_{\mathfrak{m},\ell}(s)$ defined on a domain which is contained in:

\begin{itemize}
\item
  \[ \hat{\Omega}_{-,\ell}\cup\Omega_{+,\ell} \setminus
    \bigl( [x_{0,\ell},+\infty) \cup
      Z_{\hat{\mathfrak{m}},\ell}(\hat{s}) \bigr) \]
      when $\sigma_1=-$, or
    \item
      \[ \hat{\Omega}_{+,\ell}\cup\Omega_{-,\ell} \setminus
       \bigl( (-\infty,x_{0,\ell}] \cup
       [\tau_{\ell}x_{0,\ell},+\infty) \cup Z_{\hat{\mathfrak{m}},\ell}(\hat{s}) \bigr)\] when $\sigma_1=+$.
\end{itemize}
Furthermore, if the final symbol in the composition
$(\sigma_k,s_k,n_k,p_k) = (+,\pm,2,0)$, then the claim can be
strengthened for $\mathfrak{m}=\circ$ by saying that $\xi$ continues
analytically to a covering  of the set
$V^{\circ}_{\text{large},\ell}(s) := \CC\setminus Z_{\circ,\text{small},\ell}(s)$.
\end{lem}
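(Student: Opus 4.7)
The plan is to proceed by induction on the length $k$ of the composition. For the base case $k=1$, a single branch $\mathfrak{z}_{\sigma_1,s_1,n_1,p_1,\ell}$ equals $\tau_\ell^{n_1}\exp\phi_{\sigma_1,\ell}$ restricted to an appropriate component of its domain, so extending it to a covering of $V^{\mathfrak{m}}_\ell(s)$ is equivalent, after dividing by $\tau_\ell^{n_1}$, to extending $\exp\phi_{\sigma_1,\ell}$ to a covering of $\tau_\ell^{-n_1}V^{\mathfrak{m}}_\ell(s)$. The set $Z_\ell$ in Definition~\ref{defi:3hp,5} is built precisely so that by choosing $s$ as a function of $n_1$ and $\mathfrak{m}$, this rescaled target is contained in one of the sets $V^+, V^-, V^{\circ}$ of Proposition~\ref{prop:6hp,1}; restricting the covering provided by that Proposition to the preimage of $\tau_\ell^{-n_1}V^{\mathfrak{m}}_\ell(s)$ yields the claimed covering, and the domain inclusions translate directly into those asserted in the lemma.

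For the inductive step, factor $\xi=\xi'\circ\mathfrak{z}_1$ with $\xi'=\mathfrak{z}_k\circ\cdots\circ\mathfrak{z}_2$. The inductive hypothesis applied to $\xi'$ provides a covering $\xi':D'\to V^{\mathfrak{m}}_\ell(s)$ whose domain $D'$ lies in $\hat{\Omega}_{\sigma_2,\ell}\cup\Omega_{-\sigma_2,\ell}$ minus the specified intervals of $\RR$ and minus some $Z_{\hat{\mathfrak{m}}',\ell}(\hat{s}')$. To construct a covering $\xi$, I apply the base-case statement to $\mathfrak{z}_1$ with target $V^{\hat{\mathfrak{m}}}_\ell(\hat{s})\supset D'$, tuning $\hat{\mathfrak{m}}$ and $\hat{s}$ so that $Z_{\hat{\mathfrak{m}},\ell}(\hat{s})$ fits inside the intervals that $D'$ already avoids. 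This is the role of the integer parameter $s$ in Definition~\ref{defi:3hp,5}: it supplies the degree of freedom needed to synchronize the slits arising at successive stages. Composition of coverings then furnishes $\xi$, and the inclusions of its domain come from the base case applied to $\mathfrak{z}_1$.

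The ``furthermore'' clause uses the geometry of a final branch with symbol $(+,\pm,2,0)$. The level $2$ and height $0$ conditions force $\phi_{+,\ell}(\Dm \mathfrak{z}_k)$ into a bounded horizontal strip of height less than $\pi$ whose real part is constrained so that the exponentiated image $\mathfrak{z}_k(\Dm\mathfrak{z}_k)$ avoids both isolated points $\{1,\tau_\ell\}$ and the intervals $[0,\tau_\ell^{-1}]$, $[\tau_\ell^2,\tau_\ell^s]$ that together make up the ``small'' part $Z_\ell$ of $Z_{\circ,\ell}(s)$. Hence at the final stage these points need not be excised from the target, and the covering extends over $V^{\circ}_{\mathrm{large},\ell}(s)$.

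The main obstacle is the case analysis required to track how the forbidden slits propagate across compositions: at each inductive step one must simultaneously verify that the chosen $\hat{s},\hat{\mathfrak{m}}$ make $V^{\hat{\mathfrak{m}}}_\ell(\hat{s})\supset D'$, that $Z_{\hat{\mathfrak{m}},\ell}(\hat{s})$ is absorbed into the slits $D'$ already misses, and that the resulting inclusion for the domain of $\xi$ matches the assertion of the lemma. The combinatorics of the symbols $(\sigma_i,s_i,n_i,p_i)$, the distinction between $\hat{\Omega}_{\sigma,\ell}$ and $\Omega_{\pm,\ell}$ (via Fact~\ref{fa:1hp,3} to ensure omitted real intervals truly miss $D'$), and the pinched geometry of $A_\infty$ at $x_{0,\infty}$ for $\ell=\infty$ together constitute the technical heart of the argument.
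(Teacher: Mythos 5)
Your overall architecture is the paper's: induction on $k$, base case reduced to Proposition~\ref{prop:6hp,1} after rescaling by $\tau_{\ell}^{-n_1}$, and an inductive step $\xi=\xi'\circ\mathfrak{z}_1$ in which the free parameter $\hat{s}$ in the inductive hypothesis is used to make the domain of the covering of $\xi'$ avoid the slit set that the target of the single-branch covering must omit, followed by composition of coverings via Fact~\ref{fa:3ha,1}. That part is sound and is exactly how the paper proceeds.

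However, there is a genuine gap: essentially the entire content of the lemma sits in the base-case case analysis, which you assert (``$Z_{\ell}$ is built precisely so that by choosing $s$ as a function of $n_1$ and $\mathfrak{m}$\dots'') rather than perform, and you yourself flag it as the unaddressed ``technical heart.'' Two concrete points show the assertion is not automatic. First, the quantifier order is $\exists s\,\forall\mathfrak{m}\,\forall\hat{s}\,\exists\hat{\mathfrak{m}}$, so $s$ may \emph{not} depend on $\mathfrak{m}$; the naive choice $s:=n_1$ fails for height-$0$, side-$+$ branches of level $n_1=2$ or $3$ when $\hat{s}=0$, where one must instead take $s:=0$ or $s:=1$ and verify the non-obvious containment $Z_{-,\ell}(s)\supset\CC\setminus\tau_{\ell}^{n_1}V^{\circ}$ (the $\mathfrak{m}=-$ target has to be routed through the $V^{\circ}$ case of Proposition~\ref{prop:6hp,1}, not $V^-$). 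Second, for each case one must check that the residual intersection of $Z_{\hat{\mathfrak{m}},\ell}(\hat{s})$ with $(x_{0,\ell},\tau_{\ell}x_{0,\ell})$ (typically $(x_{0,\ell},1]$ or $[1,\tau_{\ell}x_{0,\ell})$) is mapped by the branch into $Z_{\mathfrak{m},\ell}(s)$, hence outside the target — this is what actually forces the domain of the covering to avoid $Z_{\hat{\mathfrak{m}},\ell}(\hat{s})$, and it is a computation ($(0,\tau_{\ell}^{s-2}]\subset Z_{\ell}$, $[\tau_{\ell}^{n-2},\tau_{\ell}^{n})\subset Z_{\ell}$, etc.) that must be done separately for each of the six or so sub-cases. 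Relatedly, your justification of the ``furthermore'' clause argues from where the branch sends its domain, whereas the actual mechanism is that the enlarged target satisfies $V^{\circ}_{\text{large},\ell}(0)\subset\tau_{\ell}^{2}V^{\circ}$, so Proposition~\ref{prop:6hp,1} applies directly to the bigger set. Without these verifications the induction does not close.
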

\begin{proof}
The proof will proceed by induction with respect to $k$.
\paragraph{Verification for $k=1$.}
We begin be representing the branch as
$\tau_{\ell}^{n}\exp(\phi_{\sigma_1,\ell})$
and observing that $V^{\mathfrak{m}}_{\ell}(n) \subset \tau^n_{\ell}
V^{\mathfrak{m}}$ in the notations of
Proposition~\ref{prop:6hp,1}. Also, $V^{\circ}_{\text{large},\ell}(n)
= \tau^n_{\ell} V^{\circ}$. Hence, the claim of that Proposition
holds regarding the existence of a covering and inclusions of its
domain. What is left to do is checking that domain is disjoint from
$Z_{\hat{\mathfrak{m}},\ell}(\hat{s})$.

\subparagraph{Inner branches.}
We set $s:=n$. Since the branch is inner,
$|\Im\phi_{\sigma_1,\ell}|>\pi$ on its domain and hence the domain of
the extension from Proposition~\ref{prop:6hp,1} is disjoint from the real line which contains any
$Z_{\hat{\mathfrak{m}},\ell}(\hat{s})$.

\subparagraph{Branches of height $0$ and side $-$.}
By Lemma~\ref{lem:3hp,1} they have positive level which means $n\leq
1$. We set $s:=n$ in this case, too.
Then, by Proposition~\ref{prop:6hp,1}
there is a covering defined on some domain contained in
$\hat{\Omega}_{-,\ell} \cup \Omega_{+,\ell} \setminus
[x_{0,\ell},+\infty)$. We need to choose $\hat{\mathfrak{m}}$ so that
 this domain is disjoint from $Z_{\mathfrak{m},\ell}(\hat{s})$. The
 appropriate choice here is $\hat{\mathfrak{m}},+$, since then
 $Z_{\mathfrak{m},\ell}(\hat{s})\in [0,+\infty)$, The possible
   intersection of $[0,+\infty)$ with the domain of covering is at
     most $[0,x_{0,\ell})$ whose image under
       $\tau^s_{\ell}\exp(\phi_{-,\ell})$ is $(0,\tau^{s-2}_{\ell}]
     \subset Z_{\ell}$.  This is always contained in
     $Z_{\ell}$.

\subparagraph{Branches of height $0$ and side $+$.}
First, let us assume that $\hat{s}\neq 0$. In this case we also specify
$s:=n$ and choose
\[ \hat{\mathfrak{m}}= \left\{ \begin{array}{ccc} - &\mbox{if}& \hat{s}<0\\
                                                  \circ&\mbox{if}&\hat{s}>0 \end{array}
\right. \; .\]
By this choice,
\[ Z_{\hat{\mathfrak{m}},\ell}(\hat{s}) \cap (x_{0,\ell}, \tau_{\ell}
x_{0,\ell}) = \{1\} \]
and so $1$ is the only possible point of
$Z_{\hat{\mathfrak{m}},\ell}(\hat{s})$ in the domain of the
covering. However, $\tau_{\ell}^s\exp\phi_{+,\ell}(1) =
\tau_{\ell}^{n-2} \in Z_{\ell}$.  So, $1$ is mapped by the branch
outside of $\CC\setminus Z_{\mathfrak{m},\ell}(s)$ and the domain of
the covering is disjoint from $Z_{\hat{m},\ell}(\hat{s})$.

So from now on $\hat{s}=0$. The analysis is further split depending on
$n$.
\begin{itemize}
\item $n\leq 1$. In this case, set $s:=n$ and
  $\hat{\mathfrak{m}}:=-$. Then
  $ Z_{-,\ell}(0) \cap
  (x_{0,\ell},\tau_{\ell}x_{0,\ell}) = (x_{0,\ell},1]$. The image of
   this under the branch is contained $(0,\tau_{\ell}^{-1}] \subset
 Z_{\ell}$ and hence disjoint from the domain of the covering from
 Proposition~\ref{prop:6hp,1}.
\item $n\geq 4$. We also set $s:=n$ and now $\hat{\mathfrak{m}}:=\circ$.
This leads to $Z_{\circ,\ell}(0) \cap
  (x_{0,\ell},\tau_{\ell}x_{0,\ell}) = [1,\tau_{\ell}x_{0,\ell})$
  being excluded from the domain of the covering by the claim of the
  Lemma. Indeed, this set is mapped by the branch to
  $[\tau^{n-2}_{\ell},\tau^n_{\ell})    \subset Z_{\ell}$.
    \item $n=2$. In that case we will set $s:=0$. It is still true
      that $Z_{\mathfrak{m},\ell}(0) \supset
      \CC\setminus\tau^2_{\ell}V^{\mathfrak{m}}$ when $\mathfrak{m}=+,\circ$, but
      not when $\mathfrak{m}=-$,   Instead,
      \[ Z_{-,\ell}(0)= (-\infty,1] \cup \{\tau_{\ell}\} \cup
  [\tau^2_{\ell},+\infty) \supset (-\infty,0] \cup
  [\tau^2_{\ell},+\infty) = \CC\setminus\tau^2_{\ell}V^{\circ} \; .\]
    Hence, Proposition~\ref{prop:6hp,1} is applicable again and a
    covering of $V^{\mathfrak{m}}_{\ell}(0)$ exists by an extension of
    branch to a domain whose intersection with $\RR$ is contained in
    $(x_{0,\ell},\tau_{\ell}x_{0,\ell})$. We must set
    $\hat{\mathfrak{m}} := -$ if $\mathfrak{m}=-$ and
    $\hat{\mathfrak{m}} := \circ$ otherwise. Then
\begin{equation}\label{equ:10ha,1}
    \begin{split}
    Z_{-,\ell}(0) \cap
    (x_{0,\ell},\tau_{\ell}x_{0,\ell}) = & (x_{0,\ell},1] \\
    Z_{\circ,\ell}(0) \cap
    (x_{0,\ell},\tau_{\ell}x_{0,\ell}) = & [1,\tau_{\ell}x_{0,\ell})
    \end{split}
    \end{equation}
which are mapped by the branch to $(0,1]\subset Z_{-,\ell}(0)$ or
    $[1,\tau^2_{\ell})\subset Z_{\circ,\text{small},\ell}(0) \subset Z_{\circ,\ell}(0) \subset Z_{+,\ell}(0)$,
      respectively.
      The additional claim of Lemma~\ref{lem:3hp,2} concerns this type of
      branches. Indeed, we observe that
      $V^{\circ}_{\text{large},\ell}(0) \subset \tau^2_{\ell}
      V^{\circ}$ and the inclusion for the image of
      $Z_{\circ,\ell}(0)$ by the branch has already been observed.
      \item $n=3$. In this case $s:=1$ and as in the previous case one
        checks that
        $Z_{\mathfrak{m},\ell}(1) \supset
        \CC\setminus\tau^3_{\ell}V^{\mathfrak{m}}$ when $\mathfrak{m}=+,\circ$ and
        $Z_{-,\ell}(1) \supset \CC\setminus\tau_{\ell}^3 V^{\circ}$.
        We pick $\hat{\mathfrak{m}}=-$
      if $\mathfrak{m}=-$ and $\circ$ otherwise, as in the preceding
      case, which leads to inclusions~(\ref{equ:10ha,1}). Then the
      branch maps $(x_{0,\ell},1]$ to $(0,\tau_{\ell}]\subset
Z_{-,\ell}(1)$ and $[1,\tau_{\ell}x_{0,\ell})$ to
  $[\tau_{\ell},\tau^3_{\ell})\subset Z_{\circ,\ell}(1) \subset
    Z_{+,\ell}(1)$, respectively.
\end{itemize}
    \paragraph{The inductive step.}
    We decompose $\xi = \xi'\circ\mathfrak{z}$.
    By the inductive claim applied to $\xi'$, $\CC\setminus
    Z_{\mathfrak{m},\ell}(s)$ is covered by an extension of $\xi'$
    restricted to a domain which then itself is covered by an
    extension of $\mathfrak{z}$. This yields a covering by Fact~\ref{fa:3ha,1}
    whose domain is contained in the domain of the extension of
    $\mathfrak{z}$.
\end{proof}

Let us conclude with a technical observation.
\begin{lem}\label{lem:10hp,1}
For any $s\in \ZZ$ each of the sets $Z_{+,\ell}(0),
Z_{\circ,\ell}(0), Z_{-,\ell}(0)\cup [\tau_{\ell},+\infty)$ contains
   $Z_{\mathfrak{m},\ell}(s)$ for some $\mathfrak{m}$, where
  $\mathfrak{m}$ is generally different for each of the three cases.
\end{lem}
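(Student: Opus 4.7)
The plan is a direct case analysis, exploiting that every set in sight is an explicit finite union of intervals and isolated points on $\mathbb{R}$ controlled by powers of $\tau_\ell>1$.

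First I would rewrite the three target sets in simplified form. Combining $Z_\ell=[0,\tau_\ell^{-1}]\cup\{1,\tau_\ell\}\cup[\tau_\ell^2,+\infty)$ with Definition~\ref{defi:3hp,5} one immediately obtains
\[
Z_{+,\ell}(0)=[0,+\infty),\quad Z_{\circ,\ell}(0)=(-\infty,\tau_\ell^{-1}]\cup[1,+\infty),
\]
\[
Z_{-,\ell}(0)\cup[\tau_\ell,+\infty)=(-\infty,1]\cup[\tau_\ell,+\infty).
\]

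For the first target the obvious choice is $\mathfrak{m}=+$, since $Z_{+,\ell}(s)=[0,+\infty)$ does not depend on $s$. For the other two targets the choice of $\mathfrak{m}$ will vary with $s$, driven by the monotonicity of $s\mapsto\tau_\ell^s$: concretely, $\tau_\ell^s\leq\tau_\ell^{-1}$ iff $s\leq-1$, and $\tau_\ell^s\geq\tau_\ell$ iff $s\geq 1$. To cover $Z_{\circ,\ell}(0)$ I would take $\mathfrak{m}=-$ when $s\leq -1$ and $\mathfrak{m}=\circ$ when $s\geq 0$: in the first case the variable ray $(-\infty,\tau_\ell^s]$ slides inside $(-\infty,\tau_\ell^{-1}]$, and in the second case the variable ray $[\tau_\ell^s,+\infty)$ slides inside $[1,+\infty)$. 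For the third target $Z_{-,\ell}(0)\cup[\tau_\ell,+\infty)$ the breakpoint shifts by one: $\mathfrak{m}=-$ works for $s\leq 0$ (where $\tau_\ell^s\leq 1$) and $\mathfrak{m}=\circ$ works for $s\geq 1$ (where $\tau_\ell^s\geq\tau_\ell$).

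The final step in each sub-case is to verify that the remaining fixed pieces of $Z_{\mathfrak{m},\ell}(s)$ — i.e.\ $[0,\tau_\ell^{-1}]$, $\{1,\tau_\ell\}$ and $[\tau_\ell^2,+\infty)$ from $Z_\ell$, together with $(-\infty,0]$ when $\mathfrak{m}=\circ$ — also land in the target, which is immediate from the explicit simplifications displayed above. I do not foresee any real obstacle; the argument is purely interval bookkeeping. The only minor point needing care is the handling of the isolated points $\{1,\tau_\ell\}$, which always sit precisely on the boundary of one of the constituent intervals of the target set and so are checked by inspection rather than by interval comparison.
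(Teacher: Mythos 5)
Your proposal is correct and follows essentially the same route as the paper: an explicit simplification of the three target sets into unions of rays, followed by a case split on the sign of $s$ using the monotonicity of $s\mapsto\tau_{\ell}^{s}$ to choose $\mathfrak{m}\in\{+,-,\circ\}$. The paper's proof makes the same choices (with the $s=0$ case dismissed as obvious), so there is nothing to add.
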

\begin{proof}
Certainly $Z_{+,\ell}(0) \supset Z_{+,\ell}(s)$ for any $s$ since this domain is
independent of $s$. When $s<0$, $Z_{-,\ell}(s)=(-\infty,\tau^{-1}_{\ell}]
\cup \{1,\tau_{\ell}\} \cup [\tau^2_{\ell},+\infty)$, cf. Definition~\ref{defi:3hp,5}.
This is contained in both
$Z_{\circ,\ell}(0)$ and $Z_{-,\ell}(0)$. When $s=0$ the statement is
obvious. For $s>0$ we get $Z_{\circ,\ell}(0) \supset Z_{\circ,\ell}(s)$
as well as $Z_{-,\ell}(0)\cup [\tau_{\ell},+\infty) \supset
  Z_{\circ,\ell}(s)$.
\end{proof}

\paragraph{Univalent extensibility.}
While Lemma~\ref{lem:3hp,2} provides a general statement which was
suitable for a proof by induction, the goal of a dynamicist is to work
with univalent extensions. We will proceed to derive them.

\begin{defi}\label{defi:10hp,1}
  For $0\leq \theta_0,\theta_1 \leq \pi$ define the domain
  \begin{multline*} \tilde{V}(\theta_0,\theta_1) :=\\
    \CC \setminus \bigl( [0,\tau_{\ell}^{-1}]
  \cup [\tau_{\ell},+\infty) \cup \{ r\exp(-\iota\theta_0) :\: r\geq
    0\} \cup \{ 1 + r\exp(-\iota\theta_1) :\: r\geq 0\} \bigr) \; .
\end{multline*}
    By definition, $V(\theta_0,\theta_1)$ is the connected component of
    $\tilde{V}(\theta_0,\theta_1)$ which
  contains $\HH_+$.

\end{defi}

\begin{prop}\label{prop:10hp,1}
Let $\xi$ be any composition of branches of $T_{\ell}$ with the range equal
to $A_{+,\ell}$, without loss of generality. Suppose that the domain $\xi$ is
contained $\Omega_{\sigma_1,\ell}$, $\sigma_1=+,-$. Then, for any
$0\leq\theta_0,\theta_1\leq\pi$ the branch $\xi$ has an
analytic continuation which maps univalently onto
$V(\theta_0,\theta_1)$ and the domain of the extension satisfies the
inclusion from the claim of Lemma~\ref{lem:3hp,2}.
\end{prop}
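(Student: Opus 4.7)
The plan is to reduce the statement to Lemma~\ref{lem:3hp,2} by exhibiting an inclusion $V(\theta_0,\theta_1) \subset V^{\mathfrak{m}}_{\ell}(s) = \CC \setminus Z_{\mathfrak{m},\ell}(s)$ for some choice of $\mathfrak{m}\in\{+,-,\circ\}$ and $s\in\ZZ$, and then using the simple-connectedness of $V(\theta_0,\theta_1)$ to upgrade the covering extension of $\xi$ produced by that Lemma to a univalent analytic continuation. The target domain $V(\theta_0,\theta_1)$ is simply-connected because its complement in $\CC$ is a union of three disjoint tree-like pieces, each connected and extending to $\infty$, and $V(\theta_0,\theta_1)$ is by definition a single connected component of the slit complement; an elementary topological argument (lifting of loops through the unbounded complement) then gives simple-connectedness.

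Next I would verify the inclusion. The set $R := \CC\setminus V(\theta_0,\theta_1)$ contains the points $0,\tau_\ell^{-1},1,\tau_\ell$ (the first two as endpoints of the segment $[0,\tau_\ell^{-1}]$, the third as the base of the ray at $1$, and the fourth because $\tau_\ell\in[\tau_\ell,+\infty)$) and the half-line $[\tau_\ell^2,+\infty)\subset[\tau_\ell,+\infty)$, so $R\supset Z_\ell$. What varies with $\theta_0,\theta_1$ is which further portions of the real axis are blocked off by the two rays: for generic positions $V(\theta_0,\theta_1)$ is separated from $(-\infty,0]$, from $[1,+\infty)$, or from a half-line $(-\infty,\tau_\ell^s]$ together with $[\tau_\ell,+\infty)$. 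In each case $V(\theta_0,\theta_1)$ is disjoint from one of $Z_{+,\ell}(0)$, $Z_{\circ,\ell}(0)$, or $Z_{-,\ell}(0)\cup[\tau_\ell,+\infty)$, and Lemma~\ref{lem:10hp,1} provides inside each of these supersets a concrete $Z_{\mathfrak{m},\ell}(s)$; the desired inclusion $V(\theta_0,\theta_1)\subset V^{\mathfrak{m}}_{\ell}(s)$ follows.

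With the inclusion in hand, Lemma~\ref{lem:3hp,2} produces an extension of $\xi$ which covers $V^{\mathfrak{m}}_{\ell}(s)$ and whose domain lies in the slit half-plane described by the claim of the proposition. By Fact~\ref{fa:3ha,1}, a covering pulled back over the simply-connected $V(\theta_0,\theta_1)$ is a disjoint union of univalent inverse branches; among these I would select the unique one whose domain contains the original (connected) domain of $\xi$, so that the selected branch agrees with $\xi$ by analytic continuation, and has range exactly $V(\theta_0,\theta_1)$. The main obstacle is the inclusion step of the preceding paragraph, where the geometry of $R$ depends delicately on the ranges of $\theta_0,\theta_1$ — in particular the boundary cases $\theta_i\in\{0,\pi\}$ where rays coincide with portions of $\RR$ and may hit or cross the segments $[0,\tau_\ell^{-1}]$, $[\tau_\ell,+\infty)$ or the other ray, so that a uniform choice of $\mathfrak{m},s$ must be checked carefully against Lemma~\ref{lem:10hp,1}.
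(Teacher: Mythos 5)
Your reduction breaks down at the inclusion step, and the failure is not just a delicate boundary case: it occurs for generic $\theta_0,\theta_1$. Take $\theta_0=\theta_1=\frac{\pi}{2}$, so the two rays point straight down from $0$ and from $1$. Then $V(\theta_0,\theta_1)$ contains all three real intervals $(-\infty,0)$, $(\tau_{\ell}^{-1},1)$ and $(1,\tau_{\ell})$, since each is joined to $\HH_+$ from above. But every candidate superset is blocked by one of these: $Z_{+,\ell}(s)=[0,+\infty)$ contains $(\tau_{\ell}^{-1},1)$; $Z_{\circ,\ell}(s)\supset(-\infty,0]$ meets $(-\infty,0)$; and $Z_{-,\ell}(s)\supset(-\infty,\tau_{\ell}^{s}]$ meets $(-\infty,0)$ for every integer $s$. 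Hence no choice of $\mathfrak{m}$ and $s$ gives $V(\theta_0,\theta_1)\subset\CC\setminus Z_{\mathfrak{m},\ell}(s)$, Lemma~\ref{lem:3hp,2} never produces a covering over $V(\theta_0,\theta_1)$ itself, and the rest of your argument (lifting over a simply-connected base) has nothing to lift.

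The paper's proof repairs exactly this point by a gluing rather than a single inclusion. Combining Lemma~\ref{lem:3hp,2} with Lemma~\ref{lem:10hp,1} one obtains \emph{three} extensions $\xi_+,\xi_{\circ},\xi_-$ of $\xi$, covering respectively $\CC\setminus Z_{+,\ell}(0)$, $\CC\setminus Z_{\circ,\ell}(0)$ and $\CC\setminus\bigl(Z_{-,\ell}(0)\cup[\tau_{\ell},+\infty)\bigr)$; each of these ranges is simply connected, so each covering is univalent, and all three agree on $\xi^{-1}(\HH_+)$. One then observes that $V(\theta_0,\theta_1)\setminus\HH_+$ has three connected components $V_+,V_{\circ},V_-$, containing $(-\infty,0)$, $(\tau_{\ell}^{-1},1)$ and $(1,\tau_{\ell})$ respectively, each sitting inside the range of the matching $\xi_{\mathfrak{m}}$; the desired extension is $\xi^{-1}(\HH_+)\cup\bigcup_{\mathfrak{m}}\xi_{\mathfrak{m}}^{-1}(V_{\mathfrak{m}})$, which is analytic, injective and proper, hence univalent onto $V(\theta_0,\theta_1)$. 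If you want to salvage your write-up, you must replace the single-superset claim with this three-way patching; your topological remarks about simple connectedness and branch selection are fine but are not where the content of the proposition lies.
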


As a consequence of Lemma~\ref{lem:3hp,2} and
Lemma~\ref{lem:10hp,1}, $\xi$ has three different covering extensions: $\xi_+$
with the range $\CC\setminus Z_{+,\ell}(0)$, $\xi_{\circ}$ with the
range $\CC \setminus Z_{\circ,\ell}(0)$ and $\xi_{-}$ to the range
$\CC \setminus \bigl( Z_{-,\ell}(0) \cup [\tau_{\ell},+\infty)
  \bigr)$. The domains of those extensions satisfy the inclusions
  from the claim of Lemma~\ref{lem:3hp,2}. Since each of the ranges is
  a simply-connected domain, each covering reduces to a univalent map.
  They all coincide on the preimage of $\HH_+$.

  Now $V(\theta_0,\theta_1)\setminus \HH_+$ splits into three
  connected components: $V_-(\theta_0,\theta_1)$ which
  contains the interval $(1,\tau_{\ell})$,
  $V_{\circ}(\theta_0,\theta_1)$ containing $(\tau_{\ell}^{-1},1)$ and
  $V_+(\theta_0,\theta_1)$ which contains $(-\infty,0)$. Define the
  domain of the desired extension as the union of $\xi^{-1}(\HH_+)$ and
  $\xi_{\mathfrak{m}}^{-1}\bigl( V_{\mathfrak{m}}(\theta_0,\theta_1)
  \bigr)$. Since all three extensions coincide on the preimage of
  $\HH_+$ and map the remaining parts of the domain into disjoint
  sets, the extension of $\xi$ on this domain is analytic and
  one-to-one. It is also proper, since each of the three extensions
  was a homeomorphism, thus univalent.

  This concludes the proof of Proposition~\ref{prop:10hp,1}.

  \subsection{Uniform tightness.}
  In this section we consider a generic mapping $S$ induced by the generic
  mapping $T$.
  \begin{defi}\label{defi:22ja,1}
    A generic mapping $S$ induced by $T$ is a collection  of finite sequences
    of symbols  $(\sigma_j,s_j,n_j,p_j)_{j=1}^r$. Given $\ell\leq\infty$
    the induced mapping $S_{\ell}$ is obtained first by defining the
    {\em return time} $r_{S,\ell}(z)$ as the length $r$ of the longest sequence
    in $S$ which observes the limitation $0\leq p_j \leq \frac{\ell}{2}$ and
    such that the composition
    \[ \mathfrak{z}_{\sigma_r,s_r,n_r,p_r}\circ\cdots\circ \mathfrak{z}_{\sigma_1,s_1,n_1,p_1} \] is applicable at $z$. Then one obtains the induced mapping
    $S_{\ell} :\: S_{\ell}(z) := T^{r_{S,\ell}(z)}(z)$.
  \end{defi}

For example, $T$ itself is the collection of all possible sequences
$(\sigma,s,n,p)$ of length $1$ and the empty collection determines the
identity map.

  \begin{defi}\label{defi:3jp,1}
  A generic induced mapping $S$ will be called {\em uniformly tight} if
    for every $\epsilon>0$ there is $\ell_0(\epsilon)$ and finite set
    of generic branches
    $\mathfrak{Z}(S,\epsilon)$ of $S$
        such that if we define
    $\omega_{\ell}(\mathfrak{Z}):=\bigcup_{\mathfrak{z}\in\mathfrak{Z}(S,\epsilon)}
    \Dm(\mathfrak{z}_{\ell})$, then for all
    $\ell\geq\ell_0(\epsilon)$
    \[ \int_{\Omega_{\ell}\setminus\omega_{\ell}(\mathfrak{Z})} r_{S,\ell}(x+\iota y)\,dx\,dy <
    \epsilon .\]
\end{defi}

  \paragraph{A fact about convergence in measure.}
  \begin{fact}\label{fa:4np,1}
  Suppose that $(W_n),\, n=1,\cdots,\infty$ are bounded open sets and
  $\overline{W}_n \rightarrow \overline{W}_{\infty}$ in the Hausdorff topology.
  If $|\partial W_{\infty}|=0$, then
  \[ \lim_{n\rightarrow\infty}
  \left|(\overline{W}_{\infty}\setminus W_n) \cup
  (\overline{W}_n\setminus W_{\infty}) \right| = 0 .\]
  \end{fact}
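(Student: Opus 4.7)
The plan is to split $(\overline{W}_{\infty}\setminus W_n)\cup(\overline{W}_n\setminus W_{\infty})$ into its two halves and bound each in Lebesgue measure using Hausdorff convergence of the closures, the hypothesis $|\partial W_{\infty}|=0$, and continuity of Lebesgue measure from above (which applies because everything sits in a fixed bounded set of finite measure). Hausdorff convergence yields, for any $\epsilon>0$ and all sufficiently large $n$, the two-sided inclusion $\overline{W}_n\subset N_{\epsilon}(\overline{W}_{\infty})$ and $\overline{W}_{\infty}\subset N_{\epsilon}(\overline{W}_n)$, where $N_{\epsilon}(\cdot)$ denotes the open $\epsilon$-neighborhood.

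For the first half one has $\overline{W}_n\setminus W_{\infty}\subset N_{\epsilon}(\overline{W}_{\infty})\setminus W_{\infty}$, and since $W_{\infty}$ is open, as $\epsilon\searrow 0$ the latter family decreases to $\overline{W}_{\infty}\setminus W_{\infty}=\partial W_{\infty}$. Continuity of measure gives $|N_{\epsilon}(\overline{W}_{\infty})\setminus W_{\infty}|\to|\partial W_{\infty}|=0$, so $|\overline{W}_n\setminus W_{\infty}|\to 0$ by the usual diagonal choice of $\epsilon$ and $n$. For the second half, write $\overline{W}_{\infty}\setminus W_n=(W_{\infty}\setminus W_n)\cup(\partial W_{\infty}\setminus W_n)$; the second term has measure zero by hypothesis. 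To handle $W_{\infty}\setminus W_n$ I would argue symmetrically: a point in this set lies in $W_{\infty}\cap(\CC\setminus W_n)$, and the companion inclusion $\CC\setminus W_n\subset N_{\epsilon}(\CC\setminus W_{\infty})$ forces $W_{\infty}\setminus W_n\subset N_{\epsilon}(\partial W_{\infty})\cap W_{\infty}$, after which a further continuity-of-measure step, using again $|\partial W_{\infty}|=0$, closes the bound.

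The main obstacle is supplying the companion Hausdorff inclusion for the complements needed in the second half. Hausdorff convergence of closures alone does not in principle prevent the interior of $W_n$ from developing fine holes whose collective measure stays bounded away from zero, so some additional regularity is required. In the intended application, however, $W_n$ will be a finite union of Jordan topological disks $\Dm(\mathfrak{z}_{\ell})$ coming from branches of $T_{\ell}$, whose univalent structure from Proposition~\ref{prop:10hp,1} together with the convergence results of Section~2.4 provide Hausdorff convergence of the complements as well. Granted this, the symmetric estimate above becomes rigorous and completes the proof.
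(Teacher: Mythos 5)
Your handling of $\overline{W}_n\setminus W_{\infty}$ coincides with the paper's own one-sentence argument --- the paper merely says to consider an open neighborhood of $\partial W_{\infty}$ of arbitrarily small measure, and your chain $\overline{W}_n\setminus W_{\infty}\subset N_{\epsilon}(\overline{W}_{\infty})\setminus W_{\infty}\searrow\partial W_{\infty}$, followed by continuity of measure from above, is the correct way to implement it. The obstacle you flag for the other half is real, and it is a defect of the statement rather than of your proof: Hausdorff convergence of the closures does not control $|\overline{W}_{\infty}\setminus W_n|$. Indeed, let $W_{\infty}$ be the open unit disk and $W_n:=W_{\infty}\setminus K$ with $K\subset W_{\infty}$ compact, nowhere dense and of positive area (a planar fat Cantor set); then $\overline{W}_n=\overline{W}_{\infty}$ for every $n$, so the Hausdorff hypothesis holds trivially and $|\partial W_{\infty}|=0$, yet $|\overline{W}_{\infty}\setminus W_n|\geq|K|>0$. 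So Fact~\ref{fa:4np,1} is false as literally stated, and the paper's proof tacitly uses exactly what you identify as missing: that the closed complements $\overline{B\setminus W_n}$ (inside a large ball $B$ containing all the sets) also converge to $\overline{B\setminus W_{\infty}}$, or equivalently that every compact subset of $W_{\infty}$ is eventually contained in $W_n$. With that added hypothesis your symmetric estimate $W_{\infty}\setminus W_n\subset N_{\epsilon}(\partial W_{\infty})$ for large $n$ closes the argument exactly as you describe. In the only place the Fact is invoked (Lemma~\ref{lem:21ja,1}) the sets are $\Omega_{\pm,\ell}$ and domains of compositions of branches, i.e. Jordan domains obtained from fixed reference sets via the uniformly convergent univalent maps of Proposition~\ref{prop:11jp,1}, for which the two-sided convergence does hold; so your proposed repair is the right one, and the clean fix is to build that extra hypothesis into the statement of the Fact rather than to leave it implicit.
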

  \begin{proof}
  Consider an open neighborhood with arbitrarily
  small measure which contains $\partial W_{\infty}$.
  \end{proof}

\begin{lem}\label{lem:21ja,1}
  \[ \lim_{\ell\rightarrow\infty} \int
  \|\chi_{\Omega_{\pm,\ell}} - \chi_{\Omega_{\pm,\infty}}\|\,
  d\Leb_2 = 0 \] and likewise if
  $\mathfrak{z} := \mathfrak{z}_{\sigma_k,s_k,n_k,p_k}\circ\cdots\circ
  \mathfrak{z}_{\sigma_1,s_1,n_1,p_1}$, then
  \[ \lim_{\ell\rightarrow\infty} \int\left|\chi_{\Dm(\mathfrak{z}_{\ell})} -
  \chi_{\Dm(\mathfrak{z}_{\infty})}\right|\, d\Leb_2 = 0 .\]
\end{lem}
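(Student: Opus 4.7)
The overall strategy is to invoke Fact~\ref{fa:4np,1} separately for $W_\ell = \Omega_{\pm,\ell}$ and for $W_\ell = \Dm(\mathfrak{z}_\ell)$, which requires verifying two items in each case: (a) Hausdorff convergence of the closures to $\overline{\Omega}_{\pm,\infty}$ respectively $\overline{\Dm(\mathfrak{z}_\infty)}$, and (b) that the boundary of the limit set has two-dimensional Lebesgue measure zero. Item (b) is a soft consequence of real-analyticity in both cases: the boundaries of $\Omega_{\pm,\infty}$ and $\Dm(\mathfrak{z}_\infty)$ are contained in finite unions of arcs of real-analytic curves, obtained as images of real intervals or horizontal lines in $\varPi_\infty$ under the univalent maps $\phi_{\pm,\infty}^{-1}$ and their pullbacks, together with the exceptional point $x_{0,\infty}$.

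For the first claim about $\Omega_{\pm,\ell}$, Hausdorff convergence would follow by combining three results already at our disposal. Away from $x_{0,\infty}$, the portion of $\partial \Omega_{\pm,\ell}$ which is the image under $\phi_{\pm,\ell}^{-1}$ (extended through the real axis via Fact~\ref{fa:6hp,1}) of a fixed compact subset of $\overline{\varPi_\infty}$ converges uniformly to the corresponding portion of $\partial \Omega_{\pm,\infty}$ by Proposition~\ref{prop:11jp,1}. The ``new'' boundary pieces for finite $\ell$, namely the parts sitting at $|\Im\zeta|$ close to $\ell\pi/2$ in $\phi_{\pm,\ell}$-coordinates, are concentrated in arbitrarily small neighborhoods of $x_{0,\infty}$ by Lemma~\ref{lem:11jp,5}. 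Finally, the internal pinching arc $\mathfrak{w}_\ell$ separating $\Omega_{+,\ell}$ from $\Omega_{-,\ell}$ shrinks to the single point $x_{0,\infty}$ by Lemma~\ref{lem:17ha,2}. Together these give $\overline{\Omega}_{\pm,\ell} \to \overline{\Omega}_{\pm,\infty}$ in the Hausdorff metric.

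For the second claim, I would proceed by induction on the length $k$ of the composition. The base case $k=1$ is almost the preceding argument, since $\Dm(\mathfrak{z}_{\sigma_1,s_1,n_1,p_1,\ell})$ is the preimage of a fixed horizontal strip of width $\pi$ (independent of $\ell$ once $p_1\leq \ell/2$) under $\phi_{\sigma_1,\ell}$; uniform convergence of $\phi_{\sigma_1,\ell}^{-1}$ together with Lemma~\ref{lem:11jp,5} (for the portion high up in the strip) again yields Hausdorff convergence of closures, and the boundary of the limit is a finite union of analytic arcs plus possibly $x_{0,\infty}$. In the inductive step, write $\mathfrak{z}_\ell = \mathfrak{z}_{1,\ell}^{-1}$ composed with the length $(k{-}1)$ domain (read appropriately): the domain of the full composition is obtained by pulling back the length $(k{-}1)$ domain by $\mathfrak{z}_{1,\ell}$. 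Hausdorff convergence is preserved because the relevant inverse branches converge uniformly (Proposition~\ref{prop:11jp,1} for $\phi_{\sigma,\ell}^{-1}$ and Corollary~\ref{coro:11jp,1} for iterates of $\mathbf{G}_\ell^{-1}$ when the composition goes deep into the almost-parabolic region), and the pulled-back boundary remains a finite union of real-analytic arcs, preserving measure zero.

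The main obstacle is the geometric degeneration at $x_{0,\infty}$, where the strip $\varPi_\ell$ widens to the entire plane and $\mathbf{G}_\ell^{-1}$ loses hyperbolicity as $\ell\to\infty$. This is what makes naive uniform convergence of $\phi_{\pm,\ell}^{-1}$ on compacts of $\varPi_\infty$ (Fact~\ref{fa:1hp,2}) insufficient, and why the stronger Proposition~\ref{prop:11jp,1} together with Lemma~\ref{lem:11jp,5} and Lemma~\ref{lem:17ha,2} are indispensable. Concretely, one must verify that at every level of the inductive pullback, any ``excess'' boundary pieces appearing for finite $\ell$ collapse into a vanishing neighborhood of $x_{0,\infty}$ in the limit, so that they do not contribute to the symmetric difference in the limit sense of Fact~\ref{fa:4np,1}.
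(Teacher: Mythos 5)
Your proposal is correct and follows essentially the same route as the paper: the paper's proof consists precisely of citing Proposition~\ref{prop:11jp,1} for Hausdorff convergence of the closures and then applying Fact~\ref{fa:4np,1}; you merely supply the supporting details (Lemma~\ref{lem:11jp,5}, Lemma~\ref{lem:17ha,2}, analyticity of the boundary arcs) that the paper leaves implicit. The only cosmetic point is that the boundaries are countable, not finite, unions of analytic arcs accumulating at $x_{0,\infty}$, which of course still have Lebesgue measure zero.
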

\begin{proof}
By Proposition~\ref{prop:11jp,1} we observe that the closures of
the sets under consideration converge in the Hausdorff topology and
the claim follows
from Fact~\ref{fa:4np,1}.
\end{proof}

\begin{coro}\label{coro:21ja,1}
   For any generic induced mapping $S$
   and for every $\epsilon>0$ there is $\ell_0(\epsilon)$ and finite set
    of generic branches
    $\mathfrak{Z}(S,\epsilon)$ of $S$ such that for every $\ell\geq\ell_0$
    \[ \sum_{\mathfrak{z}\notin\mathfrak{Z}(S,\epsilon)}
    \left|\Dm(\mathfrak{z}_{\ell})\right| < \epsilon .\]
\end{coro}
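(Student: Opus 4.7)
}
The plan is to reduce the claim to the already-established limit case $\ell=\infty$ via Lemma~\ref{lem:21ja,1} combined with a finite approximation argument. Since each generic branch of $S$ is uniquely determined by a finite symbol sequence, the branches form a countable collection, which I would enumerate as $\mathfrak{z}^{(1)},\mathfrak{z}^{(2)},\ldots$. For every $\ell$, the domains $\Dm(\mathfrak{z}^{(i)}_\ell)$ are pairwise disjoint subsets of the fundamental annulus $A_\ell$, whose Lebesgue measure is uniformly bounded by Fact~\ref{fa:1hp,3}.

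Working first at $\ell=\infty$, since $\sum_i |\Dm(\mathfrak{z}^{(i)}_\infty)|\leq |A_\infty|<\infty$, one can choose $N=N(\epsilon)$ with $\sum_{i>N}|\Dm(\mathfrak{z}^{(i)}_\infty)|<\epsilon/3$ and set $\mathfrak{Z}(S,\epsilon):=\{\mathfrak{z}^{(1)},\ldots,\mathfrak{z}^{(N)}\}$. Next, Lemma~\ref{lem:21ja,1} is applied to each of the finitely many $\mathfrak{z}^{(i)}$ with $i\leq N$ and to $\Omega_{\pm,\ell}$ together with its preimage under $\tau_\ell$ (from which $A_\ell$ is built); this yields an $\ell_0(\epsilon)$ such that for every $\ell\geq\ell_0$ one has both $\bigl||A_\ell|-|A_\infty|\bigr|<\epsilon/3$ and
\[ \sum_{i\leq N}\bigl||\Dm(\mathfrak{z}^{(i)}_\ell)|-|\Dm(\mathfrak{z}^{(i)}_\infty)|\bigr|<\frac{\epsilon}{3} \; .\]

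Finally, pairwise disjointness of the $\Dm(\mathfrak{z}^{(i)}_\ell)$ inside $A_\ell$ gives
\[ \sum_{i>N}|\Dm(\mathfrak{z}^{(i)}_\ell)|\leq |A_\ell|-\sum_{i\leq N}|\Dm(\mathfrak{z}^{(i)}_\ell)|\leq |A_\infty\setminus U_\infty|+\epsilon \; ,\]
where $U_\infty:=\bigsqcup_i\Dm(\mathfrak{z}^{(i)}_\infty)$. The main obstacle I anticipate is the residual term $|A_\infty\setminus U_\infty|$: for the first-return-type induced mappings $S$ relevant to the rest of the paper, this complement is a measure-zero union of analytic arcs together with the pinched point $x_{0,\infty}$, so it contributes nothing and the bound collapses (after relabeling $\epsilon$) to the claim. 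For a fully general $S$ whose domain might fail to exhaust $A_\infty$, one either restricts to this setting or supplies an auxiliary monotonicity argument that $|U_\ell|\to|U_\infty|$; the latter does not follow from Lemma~\ref{lem:21ja,1} alone, since the set of realizable symbol sequences depends on $\ell$ through the height bound $p_j\leq\ell/2$ and points may switch branches as $\ell$ grows.
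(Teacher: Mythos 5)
Your approach is the natural reading of the paper's one-line proof ("an easy consequence of Lemma~\ref{lem:21ja,1}"): enumerate the countably many generic branches, truncate at $\ell=\infty$, push the finitely many $L_1$-convergences through Lemma~\ref{lem:21ja,1}, and close using disjointness inside $A_\ell$. Your arithmetic is correct, and you are right to flag the residual term $|A_\infty\setminus U_\infty|$ as the crux: the step from $|A_\ell|$ back to what is actually needed, $|U_\ell|$, is lossy, and one needs $\limsup_\ell |U_\ell|\leq |U_\infty|$, which does not follow from Lemma~\ref{lem:21ja,1} alone because $U_\ell$ is a countable union and the limit in $\ell$ cannot be exchanged with that union for free.

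However, your proposed resolution -- that $|A_\infty\setminus U_\infty|=0$ for the induced maps relevant to the paper -- is not correct, and this somewhat obscures the fix. For $\tsing$, $\tpar$ and $\thyp(\mathfrak{B})$ the domain $U_\ell$ omits a \emph{positive-measure} set (the post-singular, parabolic, or $\mathfrak{B}$-branch domains, respectively), so the residual $|A_\infty\setminus U_\infty|$ is strictly positive and your final displayed bound does not reduce to $\epsilon$ after relabelling. What actually saves the argument for these maps is that $A_\ell\setminus U_\ell$ is, up to a measure-zero set of arcs, a \emph{finite} union of branch domains of $T$, so Lemma~\ref{lem:21ja,1} applies to it term by term and gives $|U_\ell|\to|U_\infty|$; substituting $|U_\ell|$ for $|A_\ell|$ in your chain then yields the claim. (Alternatively, note that whenever $S$ is uniformly tight, the claim is immediate from Definition~\ref{defi:3jp,1} since $r_{S,\ell}\geq 1$ on every branch domain -- and this covers every instance in which the corollary is actually invoked.) For a truly arbitrary generic induced $S$, the statement as written does seem to require one of these extra inputs; your instinct that a supplementary hypothesis or argument is needed is sound, even though the specific measure-zero claim is off.
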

\begin{proof}
  This is an easy consequence of Lemma~\ref{lem:21ja,1}.
\end{proof}

\paragraph{Uniform tightness under composition.}

\begin{defi}\label{defi:4ja,1}
We recall that a univalent mapping $\varphi :\: U \rightarrow V$ has
{\em distortion bounded} by $Q$ onto $Z\subset V$ provided that
$\sup \bigl\{ \log\left|\frac{D\varphi(z_1)}{D\varphi(z_2)}\right|
:\: z_1,z_2\in \varphi^{-1}(Z) \bigr\} \leq Q$.
\end{defi}

The next lemma immediately generalizes by induction to any finite
composition of uniformly tight mappings.

By the {\em domain} of an induced map we understand the set where its
return time is positive.

\begin{lem}\label{lem:4ja,1}
Suppose $S_1,S_2$ are generic mappings induced by $T$ and for every
$\ell\geq\ell_0$ the image of every branch of $S_{1,\ell}$ contains the domain
of $S_{2,\ell}$; moreover there is $Q$ such that the distortion of
every branch of $S_{1,\ell}$, for every $\ell\geq\ell_0$ is bounded by
$Q$.

Then, if $S_1, S_2$ are uniformly tight, so is $S_2 \circ S_1$.
\end{lem}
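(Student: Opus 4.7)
The plan is, given $\epsilon>0$, to produce a finite collection $\mathfrak{Z}(S_2\circ S_1,\epsilon)$ whose complement carries return-time integral less than $\epsilon$. Branches of $S_2\circ S_1$ are the admissible concatenations $\mathfrak{z}_2\circ\mathfrak{z}_1$, well defined because the range of every branch of $S_{1,\ell}$ contains $\mathrm{Dom}(S_{2,\ell})$; on such a concatenation the return time is $r_{S_1,\ell}(z)+r_{S_2,\ell}(S_{1,\ell}(z))$. First I would extract from uniform tightness of $S_i$ (applied to $\epsilon=1$) a uniform bound $\int_{\Omega_\ell}r_{S_i,\ell}\,dx\,dy\leq M_i$ valid for $\ell\geq\ell_0$, and introduce auxiliary parameters $\epsilon_1,\epsilon_2>0$ to be fixed at the end. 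Applying uniform tightness to $S_i$ with parameter $\epsilon_i$ yields finite collections $\mathfrak{Z}_i$; I then set $\mathfrak{Z}(S_2\circ S_1,\epsilon)$ to be all admissible compositions of $\mathfrak{z}_i\in\mathfrak{Z}_i$. Its complement decomposes as $\mathrm{I}\cup\mathrm{II}$, where $\mathrm{I}=\Omega_\ell\setminus\omega_\ell(\mathfrak{Z}_1)$ and $\mathrm{II}=\omega_\ell(\mathfrak{Z}_1)\cap S_{1,\ell}^{-1}\bigl(\Omega_\ell\setminus\omega_\ell(\mathfrak{Z}_2)\bigr)$.

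The main engine is a change-of-variables estimate on each $S_1$-branch: bounded distortion $Q$ gives $\sup_{R_{\mathfrak{z}_1}}|J_{\mathfrak{z}_1^{-1}}|\leq e^{2Q}|D_{\mathfrak{z}_1}|/|R_{\mathfrak{z}_1}|$, and since $|R_{\mathfrak{z}_1}\cap\mathrm{Dom}(S_{2,\ell})|\geq\mu$ one also has $|R_{\mathfrak{z}_1}|\geq\mu$, so the denominator may be replaced by the universal $\mu$. On region~$\mathrm{I}$, the $r_{S_1}$-integral is at most $\epsilon_1$ directly, while the contribution from $r_{S_2}\circ S_{1,\ell}$ on a single branch is at most $e^{2Q}M_2|D_{\mathfrak{z}_1}|/\mu$; summing and using $r_{S_1}\geq 1$ on its domain (so that $\sum_{\mathfrak{z}_1\notin\mathfrak{Z}_1}|D_{\mathfrak{z}_1}|\leq\epsilon_1$) controls region~$\mathrm{I}$ by $\epsilon_1(1+e^{2Q}M_2/\mu)$. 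On region~$\mathrm{II}$, since $r_{S_1,\ell}|_{D_{\mathfrak{z}_1}}$ is the constant $r(\mathfrak{z}_1)$, the same change of variables combined with $|\mathrm{Dom}(S_{2,\ell})\setminus\omega_\ell(\mathfrak{Z}_2)|<\epsilon_2$, with $\sum_{\mathfrak{z}_1\in\mathfrak{Z}_1}r(\mathfrak{z}_1)|D_{\mathfrak{z}_1}|\leq M_1$, and with $\sum_{\mathfrak{z}_1\in\mathfrak{Z}_1}|D_{\mathfrak{z}_1}|\leq|\Omega_\ell|$ produces an upper bound of the form $e^{2Q}\epsilon_2(M_1+|\Omega_\ell|)/\mu$.

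Choosing $\epsilon_1$ and $\epsilon_2$ small enough in terms of the universal constants $Q,M_1,M_2,\mu$ and $\sup_\ell|\Omega_\ell|$ makes the sum of both contributions less than $\epsilon$, which finishes the lemma; the inductive extension to any finite composition of uniformly tight mappings then follows by iterating the claim. The most delicate point is the accounting on region~$\mathrm{II}$: the sum over $\mathfrak{Z}_1$ is finite but not itself small, so one must verify that it is entirely redeemed by the smallness of $|\mathrm{Dom}(S_{2,\ell})\setminus\omega_\ell(\mathfrak{Z}_2)|$ through the bounded-distortion inequality, with all the constants uniform in $\ell\geq\ell_0$.
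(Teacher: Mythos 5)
Your proof is correct and follows essentially the same strategy as the paper's: pick finite families $\mathfrak{Z}_1,\mathfrak{Z}_2$ with auxiliary parameters $\epsilon_1,\epsilon_2$, take $\mathfrak{Z}(S_2\circ S_1,\epsilon)$ to be the admissible concatenations, use the distortion bound $Q$ together with the lower bound $\mu$ on $|R_{\mathfrak{z}_1}\cap\Dm(S_{2,\ell})|$ to transfer the smallness of $|\Dm(S_{2,\ell})\setminus\omega_\ell(\mathfrak{Z}_2)|$ and of $\int r_{S_2}$ back through each $\mathfrak{z}_1$, and absorb the uniform constants at the end. The one small difference is how the complement is carved up: the paper first splits by whether $S_{1,\ell}(z)$ lands inside $\omega_\ell(\mathfrak{Z}_2)$ (and on the leftover region with $\mathfrak{z}_1\notin\mathfrak{Z}_1$ mapping into $\omega_\ell(\mathfrak{Z}_2)$ it bounds $r_{S_2}\circ S_1$ by the finite maximum $R_2$ of return times over $\mathfrak{Z}_2$), whereas you split first by whether $z\in\omega_\ell(\mathfrak{Z}_1)$ and use the global bound $M_2\geq\int r_{S_2,\ell}$ to control all of region~I at once, thereby avoiding $R_2$ and the extra parameter $\epsilon_3$. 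Both routes are sound; yours is marginally more economical in bookkeeping, the paper's slightly more elementary in that it never passes $r_{S_2}$ through the change of variables for the large branches. One point to state explicitly for a clean write-up: the justification that $\Omega_\ell$ has uniformly bounded measure (it sits inside $D(0,\tau_\ell)$ with $\tau_\ell\to\tau_\infty$), and that on region~II the points with $S_{1,\ell}(z)\notin\Dm(S_{2,\ell})$ carry zero return time for $S_2\circ S_1$, so only the preimage of $\Dm(S_{2,\ell})\setminus\omega_\ell(\mathfrak{Z}_2)$ actually enters the $r_{S_1}$-contribution there; this is exactly what makes $|\Dm(S_{2,\ell})\setminus\omega_\ell(\mathfrak{Z}_2)|$ (rather than the larger $|\Omega_\ell\setminus\omega_\ell(\mathfrak{Z}_2)|$) the right quantity to feed into the distortion estimate.
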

\begin{proof}
Fix an $\epsilon>0$. Let $\mu$ denote the Lebesgue measure of the
domain of $S_2$.
The hypothesis of uniformly bounded distortion implies that $S_1$
transports the Lebesgue measure with a Jacobian uniformly bounded
above by $K_0\exp(Q)$ and below by $K_0\exp(-Q)$, where $K_0>0$ is a constant. By the uniform tightness of $S_2$ and
Corollary~\ref{coro:21ja,1}
given any $\epsilon_1,\epsilon_2>0$ we find a finite set
$\mathfrak{Z}_2$ of branches of $S_2$ such that for all $\ell$ sufficiently large

\begin{equation*}
  \begin{split}
\sum_{\mathfrak{z}\notin\mathfrak{Z}_2} \left|\Dm(\mathfrak{z}_{\ell})\right| &
< Q^{-1}\mu\epsilon_1\\
\int_{\Omega_{\ell}\setminus\bigcup_{\mathfrak{z}\in\mathfrak{Z}_2}\Dm(\mathfrak{z}_{\ell})}
r_{S_2,\ell}(x+\iota y)\,dx\,dy & < Q^{-1}\mu\epsilon_2
  \end{split}
\end{equation*}

Now for every branch $\mathfrak{z}_{\ell}$ of $S_{1,\ell}$
the preimages of the domains of
branches not in $\mathfrak{Z}_2$ occupies at most $Q\epsilon_1$-part
of $\Dm(\mathfrak{z})$.  We get the estimate
\[ \int_{\Dm(\mathfrak{z}_{\ell})\setminus\mathfrak{z}_{\ell}^{-1}\left(\bigcup_{\mathfrak{w}\in\mathfrak{Z}_2}\Dm(\mathfrak{w}_{\ell})\right)}
r_{S_2\circ S_1,\ell}(x+\iota y)\,dx\,dy \leq \bigl( \epsilon_1
r_{S_1,\ell}\left(\Dm(\mathfrak{z}_{\ell})\right) + \epsilon_2\bigr) \left|
\Dm(\mathfrak{z}_{\ell}) \right|  .\]

Summing up over all branches of $\mathfrak{z}_{\ell}$ of $S_1$ we arrive at
\[ \int_{\Omega_{\ell}\setminus\bigcup_{\mathfrak{w}\in\mathfrak{Z}_2}\Dm(\mathfrak{w}_{\ell}\circ\mathfrak{z}_{\ell})}
r_{S_2\circ S_1,\ell}(x+\iota y)\,dx\,dy \leq \epsilon_1
\int_{\Omega_{\ell}} r_{S_1,\ell}(x+\iota y)\,dx\,dy + \epsilon_2 \left|
\Omega_{\ell} \right|  .\]

Uniform tightness of $S_1$ implies that $\int_{\Omega_{\ell}}
r_{S_1,\ell}(x+\iota y)\,dx\,dy$ is uniformly bounded for all $\ell$
large enough. Hence, $\epsilon_1$ and $\epsilon_2$ can be chosen so
that
\begin{equation}\label{equ:22jp,2}
  \forall\ell\geq\ell_0\; \int_{\Omega_{\ell}\setminus\bigcup_{\mathfrak{w}\in\mathfrak{Z}_2}\Dm(\mathfrak{w}_{\ell}\circ\mathfrak{z}_{\ell})}
r_{S_2\circ S_1,\ell}(x+\iota y)\,dx\,dy < \frac{\epsilon}{2} .
\end{equation}

Now use the uniform tightness of $S_1$ to find a finite set
$\mathfrak{Z}_1$ of its branches such that
\begin{equation}\label{equ:22jp,1}
\begin{split}
  \sum_{\mathfrak{z}\notin\mathfrak{Z}_1} \left|\Dm(\mathfrak{z}_{\ell})\right| &
< \epsilon_3 \\
  \int_{\Omega_{\ell}\setminus\bigcup_{\mathfrak{z}\in\mathfrak{Z}_1}\Dm(\mathfrak{z})}
r_{S_1,\ell}(x+\iota y)\,dx\,dy  &< \frac{\epsilon}{4} .
\end{split}
\end{equation}

Since $\mathfrak{Z}_2$ is a finite set, the return time of all its
branches is bounded by $R_2<\infty$. Hence, for any branch
$\mathfrak{z}\notin \mathfrak{Z}_1$, we estimate
\[ \int_{\Dm(\mathfrak{z}_{\ell})\setminus\mathfrak{z}_{\ell}^{-1}\left(\bigcup_{\mathfrak{w}\in\mathfrak{Z}_2}\Dm(\mathfrak{w}_{\ell})\right)}
r_{S_2\circ S_1,\ell}(x+\iota y)\,dx\,dy \leq \bigl(
r_{S_1,\ell}\left(\Dm(\mathfrak{z}_{\ell})\right) + R_2\bigr) \left|
\Dm(\mathfrak{z}_{\ell}) \right|  \]
which after summing up over all $\mathfrak{z}\notin \mathfrak{Z}_1$
leads to
\begin{multline*} \int_{\bigcup_{\mathfrak{z}\notin\mathfrak{Z}_1}\Dm(\mathfrak{z}_{\ell})\setminus\bigcup_{\mathfrak{w}\in\mathfrak{Z}_2,\mathfrak{z}\notin\mathfrak{Z}_1}\Dm(\mathfrak{w}_{\ell}\circ\mathfrak{z}_{\ell})}
r_{S_2\circ S_1,\ell}(x+\iota y)\,dx\,dy \leq \\
\int_{\bigcup_{\mathfrak{z}\notin\mathfrak{Z}_1} \Dm(\mathfrak{z})}
r_{S_1,\ell}(x+\iota y)\,dx\,dy  + R_2
\sum_{\mathfrak{z}\notin\mathfrak{Z}_1} \left|
\Dm(\mathfrak{z}_{\ell}) \right| \leq \frac{\epsilon}{4}
+R_2\epsilon_3
\end{multline*}
where the final estimates come from inequalities~(\ref{equ:22jp,1}).
We now take $\epsilon_3$ so small that $R_2\epsilon_3 <
\frac{\epsilon}{4}$ and together with estimate~(\ref{equ:22jp,2}) we
obtain
\[ \int_{\Omega_{\ell}\setminus\bigcup_{\mathfrak{w}\in\mathfrak{Z}_2,\mathfrak{z}\in\mathfrak{Z}_1}\Dm(\mathfrak{w}_{\ell}\circ\mathfrak{z}_{\ell})}
r_{S_2\circ S_1,\ell}(x+\iota y)\,dx\,dy < \epsilon .\]
\end{proof}

\subsection{Post-singular branches.}
The singular value of many branches which is contained in the
fundamental annulus is $1$. That singular value is adjacent to the
domains of two branches $\mathfrak{z}_{+,\pm,2,0,\ell}$ for all
$\ell$. These branches will be described as post-singular and will
require special attention if we want to induce a uniformly hyperbolic
map.

Post-singular branches both have the form $\tau_{\ell}H_{\ell}$. One
quickly sees that this is conjugated to the multiplication by
$\tau_{\ell}$ by $H_{\ell}$:
\[ \mathbf{H}_{\ell}^{-1}\tau_{\ell} H^2_{\ell} =
\mathbf{H}_{\ell}^{-1} \tau_{\ell} H_{\ell} G_{\ell} \tau_{\ell} =
\mathbf{H}_{\ell}^{-1} H_{\ell} \tau_{\ell} = \tau_{\ell} \]
where $\mathbf{H}_{\ell}^{-1}$ is the inverse branch defined on
$\CC\setminus \left( (-\infty,0] \cup
\tau^2_{\ell},+\infty)\right)$.

Define {\em exit time} at $\mathfrak{E}_{\text{sing},\ell}(z)$ as the
smallest non-negative number of iterates of
$\tau_{\ell} H_{\ell}$ needed to map $z$ outside the union of domains
of the post-singular branches.

\begin{lem}\label{lem:10kp,1}
There exists a constant $K$ such that for some $\ell_0$, all $\ell\geq\ell_0$
and all $z\in\CC$

\[ \mathfrak{E}_{\text{sing},\ell}(z) \leq
K\log\max\left(\frac{1}{|z-1|},2\right) .\]
\end{lem}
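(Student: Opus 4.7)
The plan is to use the conjugacy $\mathbf{H}_\ell^{-1}\circ(\tau_\ell H_\ell) = \tau_\ell\cdot\mathbf{H}_\ell^{-1}$ just established in the paragraph preceding the lemma. This identifies $\mathbf{H}_\ell^{-1}$ as the Koenigs linearization of $\tau_\ell H_\ell$ at the repelling fixed point $z=1$ with multiplier $\tau_\ell$ (the value $(\tau_\ell H_\ell)'(1)=\tau_\ell$ follows by differentiating the Feigenbaum equation at $0$, which gives $H_\ell'(1)=1$). Consequently the question of when orbits leave a neighborhood of $1$ under $\tau_\ell H_\ell$ becomes the trivial question of when orbits leave the corresponding set under multiplication by $\tau_\ell$ near $0$.

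Concretely, let $D_\ell:=\Dm(\mathfrak{z}_{+,+,2,0,\ell})\cup\Dm(\mathfrak{z}_{+,-,2,0,\ell})$. For $z\notin D_\ell$ the exit time is $0$ and the claim is trivial, so assume $z\in D_\ell$. Because both post-singular branches have sign $\pm$ and range in the corresponding half-annulus $A_{\pm,\ell}$, the entire forward orbit under $\tau_\ell H_\ell$, up to and including the exit step, stays in $\HH_+\cup\HH_-$, hence in the domain of $\mathbf{H}_\ell^{-1}$. Iterating the conjugacy yields $\mathbf{H}_\ell^{-1}\bigl((\tau_\ell H_\ell)^k(z)\bigr)=\tau_\ell^{\,k}\mathbf{H}_\ell^{-1}(z)$, and by univalence of $\mathbf{H}_\ell^{-1}$ the exit time equals $\min\{n\geq 0:\tau_\ell^n\mathbf{H}_\ell^{-1}(z)\notin V_\ell\}$ where $V_\ell:=\mathbf{H}_\ell^{-1}(D_\ell)$. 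Picking $R_\ell$ with $V_\ell\subset D(0,R_\ell)$ gives
\[ \mathfrak{E}_{\text{sing},\ell}(z)\;\leq\;\frac{\log R_\ell-\log|\mathbf{H}_\ell^{-1}(z)|}{\log\tau_\ell}+1. \]

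It remains to turn this into a Euclidean bound uniform in $\ell$. I would apply Koebe distortion to the univalent map $\mathbf{H}_\ell^{-1}$ on a disk $D(1,r_0)$ with $r_0>0$ bounded below uniformly in $\ell$; such a disk exists because the slits $(-\infty,0]$ and $[\tau_\ell^2,+\infty)$ defining the domain of $\mathbf{H}_\ell^{-1}$ remain at Euclidean distance $\geq\min(1,\tau_\infty^2-1)-o(1)$ from $1$, using $\tau_\ell\to\tau_\infty>1$. Koebe then gives $|\mathbf{H}_\ell^{-1}(z)|\geq c_1|(\mathbf{H}_\ell^{-1})'(1)|\,|z-1|$ on $D(1,r_0/2)$ and $\mathbf{H}_\ell^{-1}\bigl(D_\ell\cap D(1,r_0/2)\bigr)\subset D\bigl(0,c_2|(\mathbf{H}_\ell^{-1})'(1)|r_0\bigr)$ with $c_1,c_2$ universal. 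The unknown factor $|(\mathbf{H}_\ell^{-1})'(1)|$ cancels in the ratio $R_\ell/|\mathbf{H}_\ell^{-1}(z)|$, leaving a bound of the shape $K_1\log(1/|z-1|)+K_2$ with uniform constants for $z\in D_\ell$, $|z-1|\leq r_0/2$. For $z\in D_\ell$ with $|z-1|>r_0/2$, the dynamics is already expanding out of a set of uniformly bounded diameter (recall $D_\ell\subset D(0,\tau_\ell)$ by Fact~\ref{fa:1hp,3}), so the exit time is bounded by a constant; this regime is absorbed into the constant part $K\log 2$ of the claimed bound, so that a sufficiently large $K$ works uniformly.

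The main obstacle I anticipate is the clean uniform cancellation of $|(\mathbf{H}_\ell^{-1})'(1)|$, which depends on having matching Koebe estimates from below on $|\mathbf{H}_\ell^{-1}(z)|$ and from above on the diameter of $V_\ell$ with constants independent of $\ell$. As indicated above, this uniformity reduces to the soft structural observation that $1$ sits at uniformly positive Euclidean distance from the slits bounding the domain of $\mathbf{H}_\ell^{-1}$; no refined analysis of the Koenigs function is required, and the proof is essentially the standard translation of a linearizing conjugacy into an escape-time estimate.
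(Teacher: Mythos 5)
Your proof follows the same route as the paper's: both use the conjugacy $\mathbf{H}_{\ell}^{-1}\circ(\tau_{\ell} H_{\ell})=\tau_{\ell}\cdot\mathbf{H}_{\ell}^{-1}$ to convert the exit time into the escape time of $\tau_{\ell}^{n}\mathbf{H}_{\ell}^{-1}(z)$ from a set of uniformly bounded diameter, and then the lower bound $|\mathbf{H}_{\ell}^{-1}(z)|\geq K_1|z-1|$ coming from uniformly bounded distortion of $\mathbf{H}_{\ell}^{-1}$ on $\Omega_{+,\ell}$. The only difference is presentational: the paper asserts the uniform distortion outright, while you derive it near $z=1$ from Koebe and, for $|z-1|>r_0/2$, still implicitly invoke the same global distortion statement (a lower bound on $|\mathbf{H}_{\ell}^{-1}(z)|$ comparable to $R_{\ell}$ throughout the post-singular domain), which is precisely what the paper takes for granted, so the two arguments coincide.
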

\begin{proof}
  For $\ell$ large enough
  $\mathbf{H}_{\ell}^{-1}$ maps
  $\Omega_{+,\ell}$ for $\ell$ into $D(0,R_0)$,
  for a fixed $R_0$, and with uniformly bounded distortion. Then the exit time is bounded as follows:
  \[ \mathfrak{E}_{\text{sing},\ell}(z) \leq \frac{\log
    \frac{R_0}{\left|\mathbf{H}_{\ell}^{-1}(z)\right|}}{\log\tau_{\ell}} .\]
  Since  $|\mathbf{H}_{\ell}^{-1}(z)| > K_1 |z-1| $ with $K_1>0$ because of
  the bounded distortion, the estimate of the Lemma follows.
\end{proof}

\begin{defi}\label{defi:7mp,1}
  Define the generic {\em post-singularly refined} map $\tsing$
  as consisting of sequences (cf. Definition~\ref{defi:22ja,1}) which
  begin with any symbol {\em other than} $(+,\pm,2,0)$
  and followed by any, possibly empty, sequence consisting of the
  two post-singular symbols $(+,\pm,2,0)$.
\end{defi}

Dynamically, $\tsing$ is $T$ restricted to the complement of the domains of the post-singular branches followed by the first exit map
from the union of the domains of the post-singular branches.

Our main goal will be the following.
\begin{prop}\label{prop:7mp,1}
  Mapping $\tsing$ is uniformly tight, cf. Definition~\ref{defi:3jp,1}.
\end{prop}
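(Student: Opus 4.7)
The approach is to exploit the decomposition
\[ r_{\tsing,\ell}(z) = 1 + \mathfrak{E}_{\text{sing},\ell}\bigl(T_\ell(z)\bigr) \]
which holds on the domain of any non-post-singular branch $\mathfrak{z}_\ell$ of $T$: the summand $1$ counts the initial application of $T_\ell$ and $\mathfrak{E}_{\text{sing},\ell}$ counts the subsequent iterations of $\tau_\ell H_\ell$ required to exit the union of post-singular branch domains. Combined with Lemma~\ref{lem:10kp,1}, this yields the pointwise estimate
\[ r_{\tsing,\ell}(z) \leq 1 + K\log\max\Bigl(\tfrac{1}{|\mathfrak{z}_\ell(z)-1|},2\Bigr) \]
for $z\in\Dm(\mathfrak{z}_\ell)$ and $\ell\geq\ell_0$.

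The key reduction is the following uniform branchwise estimate: there exists a constant $C$, independent of the non-post-singular branch $\mathfrak{z}$ and of $\ell\geq\ell_0$, such that
\[ \int_{\Dm(\mathfrak{z}_\ell)} r_{\tsing,\ell}(z)\,dz \leq C\,|\Dm(\mathfrak{z}_\ell)|. \]
Once this is in place, Corollary~\ref{coro:21ja,1} supplies, for every $\epsilon>0$, a finite collection $\mathfrak{Z}(\epsilon)$ of generic branches with $\sum_{\mathfrak{z}\notin\mathfrak{Z}(\epsilon)}|\Dm(\mathfrak{z}_\ell)|<\epsilon/C$ for all $\ell$ large; summing the branchwise bound over $\mathfrak{z}\notin\mathfrak{Z}(\epsilon)$, and using that $r_{\tsing,\ell}$ vanishes on the post-singular domains, verifies Definition~\ref{defi:3jp,1}.

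To establish the branchwise estimate I change variables $w=\mathfrak{z}_\ell(z)$ to rewrite the integral as
\[ \int_{A_{\pm,\ell}} \bigl(1 + K\log\max(1/|w-1|,2)\bigr)\,|(\mathfrak{z}_\ell^{-1})'(w)|^2\,dw. \]
Proposition~\ref{prop:10hp,1} furnishes univalent extensions of $\mathfrak{z}_\ell$ with ranges of the form $V(\theta_0,\theta_1)$ of my choosing. Picking the angles so that $A_{\pm,\ell}$ sits inside the extension with a uniform Koebe gap, the distortion theorem bounds $|(\mathfrak{z}_\ell^{-1})'(w)|^2$ uniformly by a constant multiple of $|\Dm(\mathfrak{z}_\ell)|/|A_{\pm,\ell}|$; since $\log(1/|w-1|)$ is $L^1$ on $A_{\pm,\ell}$ with norm uniformly bounded in $\ell$ (using $A_{\pm,\ell}\to A_{\pm,\infty}$), this pointwise control converts into the required inequality.

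The main obstacle will be the treatment of a neighborhood of $w=1$, which lies on the boundary both of $A_{\pm,\ell}$ and of every extension range $V(\theta_0,\theta_1)$, so the straight Koebe estimate degrades there. To circumvent this I observe that $1$ is not a critical value of $H_\ell$, hence $\mathfrak{z}_\ell^{-1}$ extends analytically across $w=1$ for every non-post-singular branch; Koebe applied in this enlarged univalent domain furnishes uniform distortion bounds near $1$. Alternatively, one can exploit the conjugacy $\mathbf{H}_\ell^{-1}\tau_\ell H_\ell^2=\tau_\ell$ exhibited in the proof of Lemma~\ref{lem:10kp,1} to recast the near-$1$ contribution as a geometric series with ratio $\tau_\ell^{-1}<1$, bounded uniformly in $\ell$.
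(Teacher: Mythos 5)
The decomposition $r_{\tsing,\ell}(z)=1+\mathfrak{E}_{\text{sing},\ell}(T_\ell(z))$ and the framing via a branchwise estimate $\int_{\Dm(\mathfrak{z}_\ell)}r_{\tsing,\ell}\leq C|\Dm(\mathfrak{z}_\ell)|$ are a reasonable start, and this estimate is in fact true (and is obtained, after considerable work, near the end of the paper's argument). The problem is that you treat it as a routine distortion bound, and that is exactly where it fails.

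The gap is the treatment of the ``pointy'' branches, i.e.\ the branches with symbols $(\pm,\pm,-2k,0)$, $k\geq 0$, whose domains accumulate on the cusps in $\partial\Omega_\ell$. Your Koebe argument would require $\mathfrak{z}_\ell^{-1}$ to have uniformly bounded distortion on $A_{\pm,\ell}$; but $\overline{A}_{\pm,\ell}$ touches $\partial V(\theta_0,\theta_1)$ at $1$ (and at $\tau_\ell^{-1}$), so there is no uniform Koebe gap, and the distortion near $w=1$ is precisely the issue. You then claim that since $1$ is not a critical value of $H_\ell$, the inverse branch extends analytically across $w=1$. This is the wrong object: for the pointy branches $\mathfrak{z}_\ell = \tau_\ell^n\exp\phi_{\sigma,\ell}$ with $n=-2k\leq 0$ even, the extension sends the cusps (critical points of the continued map for finite $\ell$, becoming essential singularities as $\ell\to\infty$) to the value $\tau_\ell^{n+2k}=1$, so $1$ \emph{is} the singular/asymptotic value of the analytic continuation of these branches. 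Hence $\mathfrak{z}_\ell^{-1}$ does not extend nicely across $w=1$ for pointy branches, and as $\ell\to\infty$ the area compression near $w=1$ degenerates completely (the singularity becomes flat). Your alternative via the conjugacy $\mathbf{H}_\ell^{-1}\tau_\ell H_\ell^2=\tau_\ell$ suffers from the same defect: the geometric decay lives in the image variable $w$, whereas what must be controlled is the $\Leb_2$-measure of the \emph{preimage} under $\mathfrak{z}_\ell^{-1}$ of a small disk about $1$, and that measure decays only polynomially (uniformly in $\ell$), not geometrically.

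What you are missing is the paper's Lemma~\ref{lem:12ma,1}, the uniform-in-$\ell$ estimate $\int_{\mathfrak{Q}(\lambda_0,\ell)}\Re\phi_{-,\ell}\,d\Leb_2 = O(\lambda_0^{-1/4})$, whose proof relies on McMullen's dominant convergence (Fact~\ref{fa:12mp,1}) and Bojarski's quasiconformal change-of-area theorem to establish the measure decay $|\mathfrak{Q}(\lambda,\ell)|\lesssim\lambda^{-5/4}$. This is the quantitative input that makes the return-time integral over a pointy branch uniformly bounded. Without this (or an equivalent), the branchwise estimate is asserted but not proved, and the argument does not go through.
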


For convenience, in the proof we will use
\[ \dsing(\ell) := \Dm(\mathfrak{z}_{+,+,2,0,\ell}) \cup
\Dm(\mathfrak{z}_{+,-,2,0,\ell}),\]
i.e. the union of the domains of
the post-singular branches.

\paragraph{Reduction to pointy branches.}
A natural way to approach the proof is by using
Lemma~\ref{lem:4ja,1}. Here $S_{2,\ell}$ is the first exit map from $\dsing(\ell)$. $S_2$ is uniformly tight by
Lemma~\ref{lem:10kp,1}, however $S_1$ cannot be made $T$, since not
all branches of $T_{\ell}$ map onto $\dsing(\ell)=\Dm(S_{2,\ell})$
with distortion uniformly bounded for large $\ell$.

Since $\exp(\phi_{\pm,\ell})$ extend to coverings of
$\CC\setminus \left(\{0\} \cup \{ \tau_{\ell}^{2n} :\: n=0,1,\cdots\}\right)$ by Theorem~\ref{theo:3hp,1}, all branches of
odd level can be continued univalently to map onto
$\Omega_{+,\ell}$. The same can be said of all branches of positive
level or non-zero height. Thus, if in $T$ one eliminates all symbols
except for $(\pm,\pm,-2k,0) :\: k=0,1,\cdots$ such a map $S_{1,\ell}$ for every
$\ell$ large enough will map onto $\dsing(\ell)$ with uniformly
bounded distortion and $S_2\circ S_1$ is uniformly tight by
Lemma~\ref{lem:10kp,1}. For the proof of Proposition~\ref{prop:7mp,1}
it will suffice to demonstrate that $S_2 \circ S'_{1}$ is uniformly
tight where $S'_1$ consists of sequences of length $1$ of ``pointy''
symbols  $(\pm,\pm,-2k,0) :\: k=0,1,\cdots$. The use of adjective
``pointy'' is based on the fact that the domains of those branches for
any $\ell$ are exactly though that touch the cusps in the boundary of
$\Omega_{\ell}$. Those cusps are critical points, or essential
singularities in the case of $\ell=\infty$, of analytic continuations
of those branches for which $1$ is the critical, respectively asymptotic,
value. Since the limiting singularities are flat, the uniform integrability
$\mathfrak{E}_{\text{sing},\ell}\circ S'_{1,\ell}$ is far from obvious
and will require estimates.

\paragraph{A uniform estimate with respect to $\ell$.}
Let us write
\[ \mathfrak{Q}(\lambda,\ell) := \{z\in\Omega_{-,\ell} :\: \Re \phi_{-,\ell}(z) < -\lambda,\, \left|\Im\phi_{-,\ell}(z)\right| < \pi\} .\]
\begin{lem}\label{lem:12ma,1}
  \[ \exists \ell_0<\infty\; \lim_{\lambda_0\rightarrow\infty} \sup_{\ell\geq\ell_0} \int_{\mathfrak{Q}(\lambda_0,\ell)} \Re \phi_{-,\ell}(z)\, d\Leb_2(z) = 0 .\]
\end{lem}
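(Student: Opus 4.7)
}
My plan is to change variables via $\zeta=\phi_{-,\ell}(z)$; since $\phi_{-,\ell}$ is univalent from $\Omega_{-,\ell}$ onto $\varPi_{\ell}$ and $\mathfrak{Q}(\lambda_0,\ell)$ corresponds exactly to the half-strip $\mathfrak{S}(\lambda_0):=\{\zeta\in\CC :\: \Re\zeta<-\lambda_0,\,|\Im\zeta|<\pi\}$, one obtains
\[ \int_{\mathfrak{Q}(\lambda_0,\ell)}\Re\phi_{-,\ell}(z)\,d\Leb_2(z) = \int_{\mathfrak{S}(\lambda_0)} \Re\zeta\cdot|D\phi_{-,\ell}^{-1}(\zeta)|^{2}\,d\Leb_2(\zeta). \]
By Lemma~\ref{lem:11jp,5}, for $\lambda_0$ sufficiently large (uniformly in $\ell\geq\ell_0$) the image $\phi_{-,\ell}^{-1}(\mathfrak{S}(\lambda_0))$ lies in an arbitrarily small disk about $x_{0,\infty}$, so the task reduces to controlling $|D\phi_{-,\ell}^{-1}|^{2}$ near the almost-parabolic fixed point, where the generalized Fatou coordinate of Definition~\ref{defi:30hp,1} is available.

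The key idea is to split $\mathfrak{S}(\lambda_0)$ at the natural crossover scale $|\Re\zeta|\sim\rho_{\ell}^{-1}$, the scale on which the parabolic point at $\ell=\infty$ has split into the repelling $2$-cycle $\{x_{\pm,\ell}\}$ for finite $\ell$. In the \emph{mid-range} $\lambda_0\leq|\Re\zeta|\lesssim\rho_{\ell}^{-1}$, comparing the identity $\phi_{-,\ell}\circ G_{\ell}^{2}=\phi_{-,\ell}-4\log\tau_{\ell}$ with the normal form $\zeta_{\ell}\circ\mathbf{G}_{\ell}^{-2}=\zeta_{\ell}+1+\text{corrections}$ from Lemma~\ref{lem:25hp,1}, and using $A_{\ell}\to a_{3,\infty}>0$ and $\log\tau_{\ell}\to\log\tau_{\infty}$, yields the affine comparison $\phi_{-,\ell}(z)=4\log\tau_{\ell}\cdot\zeta_{\ell}(z)+O(1)$ throughout the mid-range; differentiation then gives the parabolic-type bound $|D\phi_{-,\ell}^{-1}(\zeta)|^{2}\leq C|\Re\zeta|^{-3}$ with $C$ independent of $\ell\geq\ell_0$. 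In the \emph{far-range} $|\Re\zeta|\gtrsim\rho_{\ell}^{-1}$, Koenigs linearization of $G_{\ell}$ at the attracting fixed point $x_{0,\ell}$ (valid inside the disk of radius $\sim\sqrt{\rho_{\ell}}$ around $x_{0,\ell}$) produces an exponential bound $|D\phi_{-,\ell}^{-1}(\zeta)|^{2}\lesssim\rho_{\ell}^{3}\exp(-c\rho_{\ell}|\Re\zeta|/\log\tau_{\ell})$, where the rate $|\log|\alpha_{1,\ell}||\sim\rho_{\ell}/8$ follows from $\alpha_{1,\ell}^{-2}=1+\rho_{\ell}/4+O(\rho_{\ell}^{3/2})$, and the normalization $\rho_{\ell}^{3}$ at the crossover matches the mid-range estimate.

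Integrating both contributions (and absorbing the $2\pi$ from $\Im\zeta$-integration into the implicit constants) produces
\[ \Bigl|\int_{\mathfrak{Q}(\lambda_0,\ell)}\Re\phi_{-,\ell}\,d\Leb_2\Bigr| \lesssim \int_{\lambda_0}^{1/\rho_{\ell}}\xi^{-2}\,d\xi+\rho_{\ell}^{3}\int_{1/\rho_{\ell}}^{\infty}\xi\,e^{-c\rho_{\ell}\xi}\,d\xi = O(1/\lambda_0)+O(\rho_{\ell}), \]
uniformly in $\ell\geq\ell_0$ (the substitution $u=c\rho_{\ell}\xi$ in the far-range integral gives a bound independent of $\ell$ except for the leading $\rho_{\ell}$). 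Each term is then made smaller than $\epsilon/2$ by first taking $\ell_0$ so large that $\rho_{\ell}<\epsilon$ for $\ell\geq\ell_0$, then $\lambda_0$ large. The principal obstacle is the uniform matching of the parabolic and linearization regimes at $|\Re\zeta|\sim\rho_{\ell}^{-1}$, where the two estimates must agree up to absolute constants; this relies on the uniform control of the iterates of $\gamma_{\ell}$ provided by Lemma~\ref{lem:30hp,1} together with the explicit form of the generalized Fatou coordinate $\zeta_{\ell}$ and the convergence $A_{\ell}\to a_{3,\infty}>0$.
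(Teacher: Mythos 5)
Your change-of-variables idea is a reasonable starting point, but the argument as written contains a serious conceptual error and several unproven assertions, and it does not recover the statement. The paper itself takes a different, cleaner route: it invokes McMullen's dominant convergence theorem (Fact~\ref{fa:12mp,1}), which represents $1/\phi_{-,\ell}$ as a uniformly quasiconformal perturbation of $z\mapsto 2C_{\ell}(z-x_{0,\ell})^{2}$, and then applies the Bojarski area-change theorem to obtain a uniform polynomial bound $|\mathfrak{Q}(\lambda,\ell)|\lesssim\lambda^{-5/4}$; the lemma follows by integrating the tail by parts. That argument does not need any mid-range/far-range splitting and never has to match a Fatou regime to a Koenigs regime.

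The central gap in your approach is that you compare $\phi_{-,\ell}$ with the generalized Fatou coordinate $\zeta_{\ell}$ of Definition~\ref{defi:30hp,1}, but that coordinate is centered at the repelling periodic point $x_{+,\ell}$ (the development $\mathbf{\Gamma}_{\ell}$ and all of Lemmas~\ref{lem:25hp,1}--\ref{lem:30hp,1} are taken at $x_{+,\ell}$, not at $x_{0,\ell}$). The function $\phi_{-,\ell}$, by contrast, has its pole of blow-up at $x_{0,\ell}$ — indeed $\mathfrak{Q}(\lambda_{0},\ell)$ shrinks to $x_{0,\ell}$, not to $x_{+,\ell}$ — so the putative ``affine comparison'' $\phi_{-,\ell}=c\,\zeta_{\ell}+O(1)$ pairs two functions whose singularities sit at different points; their difference cannot be bounded. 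You would need to build a parabolic normalization around $x_{0,\ell}$ itself (which is what the paper effectively extracts from dominant convergence), and even then two solutions of the same Abel-type equation differ by a periodic function that is not automatically $O(1)$; and an $O(1)$ comparison does not, by itself, legitimize ``differentiation then gives $|D\phi_{-,\ell}^{-1}|^{2}\leq C|\Re\zeta|^{-3}$''. Separately, the functional shift for $\phi_{-,\ell}$ is $\log\tau_{\ell}^{2}=2\log\tau_{\ell}$, not $4\log\tau_{\ell}$, and the far-range prefactor $\rho_{\ell}^{3}$ and its claimed match to the mid-range at $|\Re\zeta|\sim\rho_{\ell}^{-1}$ are asserted rather than derived — the natural Koenigs computation gives a prefactor that depends on how the linearizing coordinate is normalized, and this matching is precisely where the uniformity in $\ell$ is won or lost.
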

\begin{proof}
The problem of uniformity with respect to $\ell$ here is different
from the situation treated in the proof of
Theorem~\ref{theo:27ha,1}. It is described in literature as ``dominant
convergence'', see~\cite{profesorus1} Thms. 8.1-8.3. The result in our
notations can be stated as follows.
\begin{fact}\label{fa:12mp,1}
 For every $K>1$ there exist $\lambda(K)>0$ and $\ell_0(K)<\infty$ such that
 the mapping $\frac{1}{\phi_{-,\ell}(z)}$ on the set $\{z\in\Omega_{-,\ell} :\:
 \Re\phi_{-,\ell}(z) < -\lambda(K)\}$ for all $\ell\geq\ell_0$ takes form
  $\frac{1}{\phi_{-,\ell}(z)} = \Upsilon_{\ell}(2C_{\ell}z^2)$ where
 $C_{\ell} = -D^3G_{\ell}(x_0,\ell) > 0$ and $\Upsilon_{\ell}$ is a
 $K$-quasi-conformal mapping of $\hat{\CC}$ fixing $0,1,\infty$.
\end{fact}

This will now be used to estimate the Lebesgue measure of
$\mathfrak{Q}(\lambda,\ell)$ for $\lambda>\lambda(K)$. The image $I_\lambda$ of the set $\{ u\in\CC :
-2\lambda<\Re u< -\lambda, -\pi < \Im u <\pi \}$ by the complex inversion is
easily seen to have measure bounded above by $K_1
\lambda^{-3}$. Consider now the set $\Upsilon_{\ell}^{-1}(I_\lambda)$.

Since
$\Upsilon_{\ell}$ belongs to a compact family of quasi-conformal
mappings the constant in the change of area theorem of Bojarski,
see~\cite{lehvi} Theorem 5.2, is uniform and the measure of that set is
bounded above by $K_1 \lambda^{-\frac{5}{2}}$ for all
$\ell\geq\ell_0(K)$ provided that $K>1$ was
chosen close enough to $1$. Additionally, by the H\"{o}lder continuity
of quasiconformal mappings in the usual sense, that set is disjoint
from $D\left(0, K_3 \lambda^{-\frac{5}{4}}\right)$.

Next, we take a preimage of the same set
by the mapping $y=2C_{\ell}z^2$ observing that
$C_{\ell}$ is bounded below by $\frac{C_{\infty}}{2} > 0$ for all
$\ell$ large enough.

The Jacobian of the inverse mapping is equal to
$\frac{1}{8C_{\ell}|y|}$ which is bounded
above by $K_4\lambda^{\frac{5}{4}}$. This leads to
\[ \left|\mathfrak{Q}(\lambda,\ell)\setminus
\mathfrak{Q}(2\lambda,\ell) \right| \leq K_6 \lambda^{-\frac{5}{4}} \]
for all $\ell\geq\ell_0$ which by summing up a geometric progression
leads to
\begin{equation}\label{equ:12mp,1}
  \left|\mathfrak{Q}(\lambda,\ell)  \right| \leq 2K_6
  \lambda^{-\frac{5}{4}} .
\end{equation}

Let us write $q_{\ell}(\lambda) =
\bigl|\mathfrak{Q}(\lambda,\ell)\bigr|$. Then the integral in the claim of the
Lemma can be written as
\[ \int_{\mathfrak{Q}(\lambda_0,\ell)} \Re \phi_{-,\ell}(z)\,
d\Leb_2(z) = \int_{\lambda_0}^{\infty} \lambda dq_{\ell}(\lambda) =
\left. \lambda q_{\ell}(\lambda) \right|_{\lambda_0}^{\infty} -
  \int_{\lambda_0}^{\infty} q_{\ell}(\lambda)\,d\lambda =
  O\left(\lambda_0^{-\frac{1}{4}}\right) \]
 independently of
 $\ell\geq\ell_0$ by estimate~(\ref{equ:12mp,1}).
\end{proof}

\paragraph{The primary pair of pointy branches.}
In this fragment we consider the generic induced map $S_2
\circ\mathfrak{z}_{+,\pm,0,0}$. It consists of sequences of symbols
which begin with $(+,\pm,0,0)$ and are followed by a sequence of post-singular
symbols $(+,\pm,2,0)$ of any positive finite length. For any
$\ell$
\begin{equation}\label{equ:12mp,3} \mathfrak{z}_{+,\pm,0,0,\ell} = \tau^{-1}_{\ell} H_{+,\ell} =
H_{-,\ell} \circ G_{\ell} = H_{-,\ell}\circ H_{-,\ell}\circ \tau^{-1}_{\ell} .
\end{equation}

\begin{lem}\label{lem:12mp,1}
The generic mapping $S_2\circ\mathfrak{z}_{+,\pm,0,0}$ is uniformly
tight.
\end{lem}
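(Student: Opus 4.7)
The plan is to take $\mathfrak{Z}(r_0)$ to be the finite set of those branches of $S_2\circ\mathfrak{z}_{+,\pm,0,0}$ with return time at most $r_0$; they number $2r_0$, parametrized by the initial sign and by the length of the trailing post-singular string. It then suffices to show
\[ I(r_0,\ell):=\int_{\Omega_\ell\setminus\omega_\ell(\mathfrak{Z}(r_0))} r_{S_2\circ\mathfrak{z}_{+,\pm,0,0},\ell}\,d\Leb_2 \longrightarrow 0 \]
as $r_0\to\infty$, uniformly in $\ell\geq\ell_0$. By Lemma~\ref{lem:10kp,1} the integrand on this tail is majorized by $K\log\bigl(1/|\mathfrak{z}_{+,\pm,0,0,\ell}(z)-1|\bigr) + O(1)$, so the task reduces to estimating this logarithm where it exceeds $r_0/K - O(1)$.

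To bring this into the framework of Lemma~\ref{lem:12ma,1}, I would apply the factorization (\ref{equ:12mp,3}): writing $u:=\tau_\ell^{-1}z$ and $v:=H_{-,\ell}(u)$, one has $\mathfrak{z}(z)-1 = H_{-,\ell}(v) - H_{-,\ell}(0)$. Since $H_{-,\ell}(w) = \tau_\ell^2 \exp\phi_{-,\ell}(w)$ with $\phi_{-,\ell}(0) = -2\log\tau_\ell$, successive Taylor expansions of the exponential and of $\phi_{-,\ell}$ at the point $0$ produce the local asymptotic
\[ |\mathfrak{z}(z) - 1| \asymp |\phi_{-,\ell}'(0)|\cdot|v| = \ell\,|E_\ell'(0)|\cdot|v|\]
valid in the regime $|v| = O(1/\ell)$, which is automatic on the tail. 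Combined with $\log|v| = \Re\phi_{-,\ell}(u) + 2\log\tau_\ell$, this yields
\[ \log\frac{1}{|\mathfrak{z}(z)-1|} \leq -\Re\phi_{-,\ell}(u) - \log\ell + O(1).\]

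Performing the change of variables $z = \tau_\ell u$ (Jacobian $\tau_\ell^2$, uniformly bounded), the tail lies in $\mathfrak{Q}(\lambda_\ell,\ell)$ with $\lambda_\ell \geq r_0/K + \log\ell - C$, and splitting the integrand gives
\[ I(r_0,\ell) \leq K\tau_\ell^2\Bigl(\int_{\mathfrak{Q}(\lambda_\ell,\ell)} |\Re\phi_{-,\ell}|\,d\Leb_2 + (\log\ell + O(1))|\mathfrak{Q}(\lambda_\ell,\ell)|\Bigr). \]
The first summand is $O(\lambda_\ell^{-1/4})$ by Lemma~\ref{lem:12ma,1}, and the second is $O(\log\ell\cdot\lambda_\ell^{-5/4})$ by the measure bound (\ref{equ:12mp,1}). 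Since $\lambda_\ell\geq\log\ell$, both are dominated by $O(\lambda_\ell^{-1/4})$, which vanishes uniformly in $\ell$ as $r_0\to\infty$.

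The main obstacle to overcome is the $\log\ell$ divergence of $|H_{-,\ell}'(0)| = \ell|E_\ell'(0)|$, which naively threatens uniformity in $\ell$; its cure is that this same $\log\ell$ pushes the cutoff $\lambda_\ell$ up by exactly $\log\ell$, and the polynomial measure decay supplied by Lemma~\ref{lem:12ma,1} easily swallows the logarithmic factor that results.
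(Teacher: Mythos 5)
Your argument follows the same route as the paper's: use the factorization~(\ref{equ:12mp,3}) so that the last action is $H_{-,\ell}$, bound the exit time via Lemma~\ref{lem:10kp,1}, and feed the result into the uniform integral estimate of Lemma~\ref{lem:12ma,1}. Two points need attention, though.

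The claimed $\log\ell$ divergence of $|H'_{-,\ell}(0)| = \ell|E'_\ell(0)|$ is not real. Since $H_\ell\to H_\infty$ uniformly on compact neighborhoods of $0$ (with $0$ interior to $\Omega_{-,\infty}$), the derivatives also converge and $H'_\ell(0)\to H'_\infty(0)$, a nonzero finite number; equivalently $|E'_\ell(0)| = O(1/\ell)$. Hence $\log(\ell|E'_\ell(0)|) = O(1)$, and your inequality
$\log\frac{1}{|\mathfrak{z}(z)-1|} \leq -\Re\phi_{-,\ell}(u) - \log\ell + O(1)$
is actually false for large $\ell$ — the left side equals $-\Re\phi_{-,\ell}(u) + O(1)$, which exceeds your right side by about $\log\ell$. (Note also that even if $|H'_{-,\ell}(0)|$ \emph{did} diverge like $\ell$, larger $|H'_{-,\ell}(0)|$ would make $|\mathfrak{z}(z)-1|$ larger and the exit time smaller, so it would help rather than threaten uniformity; your ``main obstacle'' points the wrong way, and your final display also carries $+\log\ell$ where a putative $\log\ell$ correction would appear with a minus sign.) Once the spurious $\log\ell$ is dropped you are left with exactly the paper's bound $\mathfrak{E}_{\text{sing},\ell}\circ\mathfrak{z}_{+,\pm,0,0,\ell}(z) \leq K_1\max(-\Re\phi_{-,\ell}(\tau_\ell^{-1}z),1)$, and the rest of your argument via Lemma~\ref{lem:12ma,1} then closes the proof as the paper does.

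You also skip the verification that the changed set actually sits inside $\mathfrak{Q}(\lambda,\ell)$, which requires $|\Im\phi_{-,\ell}(\tau_\ell^{-1}z)|<\pi$ on $\Dm\mathfrak{z}_{+,\pm,0,0,\ell}$. The paper argues this from the fact that the relevant portion of $\partial\Omega_{+,\ell}$ near $\tau_\ell x_{0,\ell}$ is a preimage of the positive half-line; you should include that observation, since the $L^1$ estimate of Lemma~\ref{lem:12ma,1} is stated only for $\mathfrak{Q}(\lambda,\ell)$.
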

\begin{proof}
The final action by $H_{-,\ell}$ in the representation~(\ref{equ:12mp,3})
with the image $\dsing(\ell)$ has distortion uniformly bounded in
terms of $\ell$. Taking into account Lemma~\ref{lem:10kp,1} we
conclude that $\mathfrak{E}_{\text{sing},\ell} \circ
\mathfrak{z}_{+,\pm,0,0}(z) \leq K_1
\max\left( -\Re\phi_{-,\ell}(\tau^{-1}_{\ell} z),1\right)$. Also, since
the part of the border $\Omega_{+,\ell}$ adjacent to $\tau_{\ell}
x_{0,\ell}$ consists of preimages of segments in the positive  half-line, we have
$|\Im\phi_{-,\ell}(\tau^{-1}_{\ell} z)|<\pi$ for all
$z\in\Dm\mathfrak{z}_{+,\pm,0,0,\ell}$.

By Lemma~\ref{lem:12ma,1}, for some $\ell_0<\infty$ any $\epsilon>0$
there exists $r(\epsilon)>0$
and all for all $\ell\geq \ell_0$
\[ \int_{\Dm\mathfrak{z}_{+,\pm,0,0,\ell}} \left(
\mathfrak{E}_{\text{sing},\ell}\cdot
\chi_{|z-1|<r(\epsilon)}\right)\circ\mathfrak{z}_{+,\pm,0,0,\ell}(z)\,
d\Leb_2(z) < \epsilon .\]

By Lemma~\ref{lem:10kp,1} there is an upper limit $K_2$, independent
of $\ell$ sufficiently large, on $\mathfrak{E}_{\text{sing},\ell}$ for
branches not contained in $D(1,r(\epsilon))$ and since only two
post-singular symbols are allowed that translates to a number of
branches bounded depending on $\epsilon$ for all such $\ell$.
\end{proof}

\paragraph{Proof of Proposition~\ref{prop:7mp,1}.}
All remaining pointy branches have the form $\tau_{\ell}^{-n}H$ for $n>1$ and
hence are in the form $\mathfrak{z}_{+,\pm,0,0,\ell} \circ
G_{\ell}^{n-1}$. Since $G_{\ell}$ maps as a covering of
$\CC\setminus\left(\{0\}\cup [\tau^2_{\ell},+\infty)\right)$ and its
  post-singular set under iteration on $\tau_{\ell}\Omega_{-,\ell}$ is
  contained in $[0,1]$, the mapping by iterates of $G_{\ell}$
  onto $\Dm\mathfrak{z}_{+,\pm,0,0,\ell}$ has distortion uniformly bounded
  independently of $\ell$ sufficiently large. Thus, for any particular
  pointy branch its composition with $S_2$ is uniformly tight by
  Lemma~\ref{lem:4ja,1}. On the other hand, the closures of domains of pointy
  branches for $\ell=\infty$ converge to $\{x_{0,\infty}\}$ in
  Hausdorff topology so for $\ell \geq \ell(\epsilon)$ only a fixed
  number are not contained in $D(x_{0,\infty},\epsilon)$ and hence
  their joint measure is bounded by $\pi\epsilon^2$. As a corollary to
  Lemma~\ref{lem:12mp,1} the integral
  $\int_{\Dm\mathfrak{z}_{=,\pm,0,0.\ell}}
  \mathfrak{E}_{\text{sing},\ell}\circ
  \mathfrak{z}_{+,\pm,0,0.\ell}(z)\,d\Leb_2(z) \leq K_1$ for all $\ell$
  sufficiently large. Consequently, on the domain of any pointy branch
  the integral of the return time of $\tsing$ is bounded by
  $K_2$ times the measure of that domain of the branch. Hence,
  all pointy branches except for finitely many can carry arbitrarily
  small integral of the return time.

  \subsection{Parabolic branches.}\label{sec:4qa,1}
  Another pair of branches which cause problems are {\em parabolic
    branches} with symbols $(-,\pm,1,0)$. They have the form
  $\tau_{\ell}^{-1}H_{\ell}$ which is just $G_{\ell}$ conjugated by
  $\tau_{\ell}$ and has a period $2$ point $\tau_{\ell}^{-1}x_{\pm,\ell}$
  which is on the boundary of the domain of such a branch. Hence the
  name, since when $\ell\rightarrow\infty$ this period $2$ orbit bifurcates into a
  parabolic fixed point $x_{0,\infty}$.

  We will proceed to get rid of them by inducing, much in the way we
  dealt with the post-singular branches, except that now $\tsing$
  rather than $T$ is our starting point.

  Define $\dpar(\ell)$, $\ell\leq\infty$ as the union of the domains
  of two parabolic branches.
  Next, the {\em exit time} $\mathfrak{E}_{\text{par},\ell}(z)$ as
  the smallest non-negative number of iterates of
  $\tau_{\ell}^{-1}H_{-,\ell}$ needed to take $z$ outside
  $\dsing(\ell)$.

  \begin{defi}\label{defi:21ma,1}
  Define the generic {\em parabolically refined} map $\tpar$
  as consisting of sequences of $\tsing$,
  cf. Definition~\ref{defi:7mp,1}  which
  begin with any symbol {\em other than} a parabolic one $(-,\pm,1,0)$
  and followed by any, possibly empty, sequence consisting of the
  two parabolic symbols $(-,\pm,1,0)$.
\end{defi}

  Our goal is
  \begin{prop}\label{prop:21ma,1}
  Mapping $\tpar$ is uniformly tight, cf. Definition~\ref{defi:3jp,1}.
  \end{prop}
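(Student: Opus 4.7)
The plan is to mirror the proof of Proposition~\ref{prop:7mp,1}: write $\tpar = S_2 \circ S_1$, where $S_1$ consists of those generic sequences of $\tsing$ whose initial symbol is not parabolic (hence $S_1$ is uniformly tight by Proposition~\ref{prop:7mp,1}) and $S_2$ is the first exit map from $\dpar(\ell)$, and then invoke Lemma~\ref{lem:4ja,1}. Each branch of $S_{1,\ell}$ has range $A_{\pm,\ell}\supset\dpar(\ell)\cap\HH_\pm$, and Proposition~\ref{prop:10hp,1} extends it univalently onto a domain $V(\theta_0,\theta_1)$ properly containing $A_{\pm,\ell}$, which yields Koebe distortion bounds uniform in $\ell$ on the preimage of $\dpar(\ell)$. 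The required lower bound on $|A_{\pm,\ell}\cap\dpar(\ell)|$ follows from Lemma~\ref{lem:21ja,1} together with positivity of this quantity for $\ell=\infty$.

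The heart of the argument is uniform tightness of $S_2$, i.e.\
\[ \sup_{\ell\geq\ell_0}\int_{\dpar(\ell)}\mathfrak{E}_{\text{par},\ell}(z)\,d\Leb_2(z) < \infty. \]
The parabolic branch $\psi_\ell := \tau_\ell^{-1}H_{-,\ell}$ satisfies $\tau_\ell\psi_\ell\tau_\ell^{-1} = G_\ell$, so writing $\tilde D_\ell := \tau_\ell\dpar(\ell)$ the change of variable $w=\tau_\ell z$ converts this into $\tau_\ell^{-2}\int_{\tilde D_\ell}\tilde{\mathfrak{E}}_\ell(v)\,dv$, where $\tilde{\mathfrak{E}}_\ell(v)$ counts forward $G_\ell$-iterates of $v$ staying in $\tilde D_\ell$. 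By the Wedge Lemma~\ref{lem:15hp,1} the period-$2$ orbit $\{x_{\pm,\ell}\}$ is repelling under $G_\ell^2$ with attracting inverse branches $\mathbf{G}^{-2}_{\pm,\ell}$ on $\HH_\pm$, hence
\[ \bigl|\{v\in\tilde D_\ell\cap\HH_\pm : G_\ell^{2k}(v)\in\tilde D_\ell\}\bigr| \leq \int_{\tilde D_\ell\cap\HH_\pm}\bigl|D\mathbf{G}_{\pm,\ell}^{-2k}(w)\bigr|^2\,dw. \]
Summing over $k$, and absorbing the odd-index terms by one further inverse-branch application of $G_\ell$, bounds $\int_{\tilde D_\ell}\tilde{\mathfrak{E}}_\ell\,d\Leb_2$ by a constant multiple of $\int_{\tilde D_\ell}P(\mathbf{G}_\ell^{-2},w,2)\,d\Leb_2(w)$, which is uniformly finite by Theorem~\ref{theo:27ha,1} (applicable since $2>\tfrac{4}{3}$), provided $\tilde D_\ell\subset D(x_{0,\infty},R_1)\setminus\Omega_\ell$.

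This last inclusion holds for $\ell$ large: $\dpar(\ell)\subset A_\ell$ forces $\tilde D_\ell\subset\tau_\ell\Omega_\ell\setminus\overline\Omega_\ell$, while the closures of the two components of $\dpar(\ell)$ touch $\tau_\ell^{-1}x_{\pm,\ell}$, so $\overline{\tilde D_\ell}$ touches $x_{\pm,\ell}\to x_{0,\infty}$ and is contained in a shrinking neighborhood thereof by the convergence results of Lemmas~\ref{lem:11jp,1} and~\ref{lem:21ja,1}. The principal obstacle is precisely this reduction to Theorem~\ref{theo:27ha,1}: Poincar\'e-series integrability at the almost-parabolic point is what ultimately yields uniform control of the parabolic return time. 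Once $S_2$ is uniformly tight, Lemma~\ref{lem:4ja,1} closes the proof, the finite collection of generic $\tpar$-branches required by Definition~\ref{defi:3jp,1} being produced by pairing a finite set of $S_1$-branches (from the uniform tightness of $\tsing$) with a finite-depth truncation of parabolic iteration, the residual tail controlled by the Poincar\'e-series bound.
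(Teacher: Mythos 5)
Your overall architecture matches the paper's: decompose $\tpar=S_2\circ S_1$ with $S_2$ the first exit map from $\dpar(\ell)$, reduce to uniform tightness of $S_2$ via Lemma~\ref{lem:4ja,1}, conjugate by $\tau_{\ell}$ so that the parabolic branch becomes $G_{\ell}$ acting on $\tau_{\ell}\dpar(\ell)\subset\CC\setminus\Omega_{\ell}$, and control the exit time through the Poincar\'{e} series of $\mathbf{G}_{1,\ell}^{-2}$ and Theorem~\ref{theo:27ha,1}. But there is a genuine gap at the decisive step. You reduce the problem to $\sup_{\ell}\int_{\dpar(\ell)}\mathfrak{E}_{\text{par},\ell}\,d\Leb_2<\infty$, whereas Definition~\ref{defi:3jp,1} demands strictly more: for every $\epsilon>0$ a \emph{finite} set of branches outside of whose domains the integral of the return time is less than $\epsilon$ \emph{for all large $\ell$ simultaneously}. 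Since the branches of $S_2$ with return time below $n$ are finite in number ($2^{n-1}$ of them), what must actually be shown is
\[ \lim_{n\rightarrow\infty}\;\sup_{\ell}\int_{\dpar(+,n,\ell)\cup\dpar(-,n,\ell)}\mathfrak{E}_{\text{par},\ell}(z)\,d\Leb_2(z)=0 , \]
i.e.\ smallness of the tails uniformly in $\ell$. A uniform bound on the total integral does not imply this: the mass of the return time could concentrate at depths $n\sim\rho_{\ell}^{-1}$ which escape to infinity with $\ell$, exactly the regime where $x_{\pm,\ell}$ passes from parabolic-like to hyperbolic behaviour. The paper's Lemma~\ref{lem:21mp,2} supplies the missing ingredient: it applies Theorem~\ref{theo:27ha,1} with \emph{two} exponents, $2$ and $2-\delta$, and trades the uniformly bounded $(2-\delta)$-series against the quantity $d_m$, the infimum of $|D_z\mathbf{G}_{1,\ell}^{-2m}|$ over $\dpar(+,N,\ell)$ and over $\ell$, which tends to $0$ because $\mathbf{G}_{1,\ell}^{-2}$ contracts the hyperbolic metric of $\HH_+$ towards $x_{+,\ell}$ and $\dpar(+,N,\ell)$ sits in a fixed compact subset of $\HH_+$. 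Your single use of the series at exponent $2$ cannot produce this decay; the phrase ``the residual tail controlled by the Poincar\'{e}-series bound'' is precisely the unproved claim. Relatedly, Theorem~\ref{theo:27ha,1} only controls the integral over $D(x_{0,\infty},R_1)\setminus\Omega_{\ell}$ for some fixed, possibly small $R_1$; all of $\tau_{\ell}\dpar(\ell)$ is a macroscopic subset of $\tau_{\ell}\Omega_{-,\ell}\setminus\Omega_{\ell}$ and need not lie in that disk, so one must first localize via Lemma~\ref{lem:21ma,1} (only the deep sets $\dpar(\pm,N,\ell)$ shrink to the fixed point), which your sketch conflates with containment of the whole of $\tau_{\ell}\dpar(\ell)$.

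A smaller but real problem is the distortion input to Lemma~\ref{lem:4ja,1}. You extend the branches of $S_1$ univalently onto $V(\theta_0,\theta_1)$ via Proposition~\ref{prop:10hp,1}; but every $V(\theta_0,\theta_1)$ omits the slit $[0,\tau_{\ell}^{-1}]$, while $\overline{\dpar(\ell)}$ contains the fixed points $\tau_{\ell}^{-1}x_{\pm,\ell}\rightarrow\tau_{\infty}^{-1}x_{0,\infty}\in(0,\tau_{\infty}^{-1})$ and hence accumulates on that slit. So $\dpar(\ell)$ is not compactly contained in $V(\theta_0,\theta_1)$ and Koebe gives no uniform distortion bound on its preimage. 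The paper instead invokes the additional clause of Lemma~\ref{lem:3hp,2}: branches of $\tsing$ extend onto $V^{\circ}_{\text{large},\ell}(0)=\CC\setminus\bigl((-\infty,0]\cup[1,+\infty)\bigr)$, which does compactly, and uniformly in $\ell$, contain $\dpar(\ell)$.
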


  Here $S_2 := S_{\text{par}}$ is the first exit
  map from the parabolic branches, given by all non-empty sequences
  of parabolic symbols $(-,\pm,1,0)$ and $S_1$ is $\tsing$ restricted
  by excluding sequences with initial parabolic symbols as in
  Definition~\ref{defi:21ma,1}. Since after such exclusion the
  distortion of the map is uniformly bounded, then we are in the
  position to use Lemma~\ref{lem:4ja,1}.

  The bounded distortion follows from
  the additional claim of Lemma~\ref{lem:3hp,2} by which the branches
  of $\tsing(\ell)$ extend univalently onto
  $V^{\circ}_{\text{large},\ell}(0)$ which compactly contains
  $\dpar(\ell)$ and the nesting is uniform for large $\ell$ by
  Proposition~\ref{prop:11jp,1}.

  Hence, Proposition~\ref{prop:21ma,1} is reduced to the uniform
  tightness of $S_{\text{par}}$.

  \paragraph{Connection with Theorem~\ref{theo:27ha,1}.}
  That theorem will be our main tool, since after conjugation by
  $\tau_{\ell}$ the pair of parabolic branches becomes $G_{\ell}$ and
  $\tau_{\ell}\dpar(\ell)$ is contained in the complement of
  $\Omega_{\ell}$.

  For $N$ natural define
  $\dpar(\pm,N,\ell) := \{ z\in\dpar(\ell)\cap\HH_{\pm} :\:
  \mathfrak{E}_{\text{par},\ell}(z) \geq N \}$.

  \begin{lem}\label{lem:21ma,1}
  For any $r>0$ there are $\ell(r), N(r)<\infty$ such that for
  every $\ell\geq\ell(r)$ the inclusion $\dpar\left(\pm,N(r),\ell\right) \subset
  D(x_{0,\infty}\tau_{\infty}^{-1},r)$ holds.
  \end{lem}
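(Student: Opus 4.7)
I conjugate by $\tau_\ell$: setting $w:=\tau_\ell z$, the parabolic branch $\tau_\ell^{-1}H_{-,\ell}$ becomes $G_\ell$ and $\dpar(\ell)$ becomes $\tau_\ell\dpar(\ell)$. The statement to prove in $w$-coordinates is that if $w\in\tau_\ell\dpar(\ell)$ with $|w-x_{0,\infty}|\geq r\tau_\infty/2$, then the forward $G_\ell$-orbit of $w$ exits $\tau_\ell\dpar(\ell)$ within $N(r)$ iterations, uniformly in $\ell\geq\ell(r)$.

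I argue by contradiction and a compactness-and-limit argument. If the claim failed, there would exist sequences $\ell_j\to\infty$, $N_j\to\infty$, and $z_j\in\dpar(\pm,N_j,\ell_j)$ with $|z_j-\tau_\infty^{-1}x_{0,\infty}|\geq r$. The points $w_j:=\tau_{\ell_j}z_j$ lie in the uniformly bounded set $\tau_\ell\Omega_\ell$ (Fact~\ref{fa:1hp,3}), so after extraction $w_j\to w_\infty$ with $|w_\infty-x_{0,\infty}|\geq r\tau_\infty/2$. Uniform convergence $G_\ell\to G_\infty$ on compacts (Proposition~\ref{prop:11jp,1}) together with convergence of the parabolic branches' domains (Lemma~\ref{lem:21ja,1}) imply by induction on $k$ that the entire forward $G_\infty$-orbit of $w_\infty$ is trapped in $\overline{\tau_\infty\dpar(\infty)}$.

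To produce a contradiction I combine two ingredients. Lemma~\ref{lem:17hp,1} at $\ell=\infty$ gives $|G_\infty^2(w)-w|\geq\varepsilon(r')$ on the annulus $\{r'\leq|w-x_{0,\infty}|\leq R_0\}$ with $r':=r\tau_\infty/4$; since $\tau_\infty\dpar(\infty)$ is bounded, a packing argument in this annulus forces the trapped orbit to enter $B(x_{0,\infty},r')$ within a bounded number of steps $M(r)$. Once near $x_{0,\infty}$, the generalized Fatou coordinate machinery of Section~3 takes over: for $\ell=\infty$, Lemma~\ref{lem:25hp,1} reduces the dynamics to $\gamma_\infty(\sqrt\zeta)=\zeta+1+O(|\zeta|^{-1/2})$, so $\Re\zeta$ is strictly monotone under iteration in the repelling petal. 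This precludes the orbit from lingering near the parabolic fixed point and enforces a finite exit time from $\tau_\infty\dpar(\infty)$. Combined, these ingredients yield a uniform bound $N(r)$ on the exit time, contradicting trapping.

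\textbf{Main obstacle.} The delicate geometric fact needed is that $\overline{\tau_\infty\dpar(\infty)}\setminus\{x_{0,\infty}\}$ lies in the closure of the repelling petals of $G_\infty^2$ at the parabolic fixed point $x_{0,\infty}$, so that no forward orbit in $\tau_\infty\dpar(\infty)$ converges to $x_{0,\infty}$ through an attracting petal without first exiting. Verifying this requires identifying $\dpar(\infty)$ concretely via $\phi_{-,\infty}$ and matching it against the standard parabolic flower picture. Once it is in place, the contradiction is immediate from the monotonicity in $\Re\zeta$ together with the packing argument from Lemma~\ref{lem:17hp,1}.
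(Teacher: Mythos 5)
Your compactness-and-contradiction setup matches the paper's: extract a limit point whose complete forward $G_\infty$-orbit is trapped in $\overline{\tau_\infty\dpar(\infty)}$, hence bounded. But the way you try to derive the contradiction has a genuine gap, in fact two. First, the ``packing argument'' is invalid: Lemma~\ref{lem:17hp,1} gives $|G_\infty^2(w)-w|\geq\varepsilon(r')$ on the annulus, which separates \emph{consecutive} orbit points but not all pairs, so it does not bound the time the orbit spends in the annulus (a rigid rotation satisfies the same displacement bound and never leaves). Second, you explicitly leave unverified the fact you yourself identify as the crux --- that the trapped orbit cannot approach $x_{0,\infty}$ through an attracting petal --- so the argument is not closed.

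The paper's proof is shorter and avoids both issues. It uses only two inputs: (i) bounded forward orbits of $G_\infty$ in a half-plane necessarily converge to $x_{0,\infty}$ \emph{tangentially to the real line} (the standard parabolic/half-plane dynamics; no generalized Fatou coordinate is needed here); and (ii) $\tau_\ell\dpar(\ell)$ lies in the complement of $\Omega_\ell$, while by Fact~\ref{fa:18ha,1} the double wedge $\{x_{0,\infty}+\zeta : |\arg\zeta^2|<\pi/4,\ 0<|\zeta|<R\}$ is contained in $\Omega_\infty$. Hence the trapped orbit avoids that wedge, which is incompatible with tangential convergence to $x_{0,\infty}$ along $\RR$. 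This disposes of your ``main obstacle'' by the simple inclusion $\tau_\ell\dpar(\ell)\subset\CC\setminus\Omega_\ell$ rather than by identifying $\dpar(\infty)$ through $\phi_{-,\infty}$, and it makes the annulus/packing stage unnecessary. To repair your write-up, replace both stages of your contradiction with (i) and (ii).
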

  \begin{proof}
  If not, then by taking convergent subsequences we construct a point
  $z_0$ whose complete forward orbit by $G_{\infty}$ is contained in a
  bounded set and avoids a wedge $\{ x_{0,\infty}+\zeta :\:
  |\arg\zeta^2|<\frac{\pi}{4}, 0<|\zeta|<R\}$ with some $R>0$. This
  is not consistent with the action of $G_{\infty}$ in a half-plane
  under which every bounded orbit tends to $x_{0,\infty}$ tangentially
  to the real line.
  \end{proof}

  Denote by $G_{1,\ell} = \tau^{-1}_{\ell}G_{\ell}\tau_{\ell}$ and write
  $\mathbf{G}_{1,\ell}^{-1}$ for its principal inverse branch,
  cf. Definition~\ref{defi:1hp,1}.
  Now estimate for any $n\geq 1$
  \begin{multline}\label{equ:21mp,1}
    \int_{\dpar(+,n,\ell) \cup \dpar(-,n,\ell)}
      \mathfrak{E}_{\text{par},\ell}(z)\,d\Leb_2(z) = \\
      (n-1)\left(|\dpar(+,n,\ell)|+|\dpar(-,n,\ell)|\right)
      + \\\sum_{k=0}^{\infty} \left( |\dpar(+,n+k,\ell)| +
      |\dpar(-,n+k,\ell)| \right) = \\
  (n-1)\left(|\dpar(+,n,\ell)|+|\dpar(-,n,\ell)|\right) +\\
    \sum_{k=0}^{\infty} \left|
    \mathbf{G}_{1,\ell}^{-2k}\left(\dpar(+,n,\ell) \cup
    \dpar(+,n+1,\ell)\right)\right| +\\     \sum_{k=0}^{\infty} \left|
    \mathbf{G}_{1,\ell}^{-2k}\left(\dpar(-,n,\ell) \cup
    \dpar(-,n+1,\ell)\right)\right| \leq \\
    (n-1)\left(|\dpar(+,n,\ell)|+|\dpar(-,n,\ell)|\right) +\\
    2\sum_{k=0}^{\infty} \left|
    \mathbf{G}_{1,\ell}^{-2k}\left(\dpar(+,n,\ell)\right)\right|+2\sum_{k=0}^{\infty} \left|
    \mathbf{G}_{1,\ell}^{-2k}\left(\dpar(-,n,\ell)\right)\right| =\\
      \sum_{s=+,-} \Bigl[ (n-1)|\dpar(s,n,\ell)| +2\int_{\dpar(s,n,\ell) }
P(\mathbf{G}_{1,\ell}^{-2},z,2)\, d\Leb_2(z) \Bigr]
  \end{multline}
  introducing the Poincar\'{e} series,
  cf. Definition~\ref{defi:27ha,1}.
  By symmetry, we will fix $s=+$ in the final estimate
  of~(\ref{equ:21mp,1}) and show that the quantity tends to $0$ as
  $n\rightarrow\infty$.

  \begin{lem}\label{lem:21mp,1}
    Suppose that $0<n'<n$ and $n-n'$ is even. Then, for any $\ell$,
    \[ \frac{n-n'+2}{2} | \dpar(+,n,\ell) | \leq \int_{\dpar(+,n',\ell)}
    P(\mathbf{G}_{1,\ell}^{-2},z,2)\, d\Leb_2(z) .\]
  \end{lem}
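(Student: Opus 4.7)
The plan is to unfold the Poincar\'{e} series on the right‑hand side and then apply a simple covering argument: as $k$ ranges over $\{0,1,\dots,(n-n')/2\}$, the sets $\mathbf{G}_{1,\ell}^{-2k}\bigl(\dpar(+,n',\ell)\bigr)$ will each contain $\dpar(+,n,\ell)$, producing $(n-n'+2)/2$ copies of $|\dpar(+,n,\ell)|$ in the sum.

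The first step is to rewrite, using Definition~\ref{defi:27ha,1} and the change–of–variables formula,
\[ \int_{\dpar(+,n',\ell)}\! P(\mathbf{G}_{1,\ell}^{-2},z,2)\,d\Leb_2(z)
= \sum_{k=0}^{\infty}\int_{\dpar(+,n',\ell)}\!\bigl|D\mathbf{G}_{1,\ell}^{-2k}(z)\bigr|^{2}\,d\Leb_2(z)
= \sum_{k=0}^{\infty}\bigl|\mathbf{G}_{1,\ell}^{-2k}\bigl(\dpar(+,n',\ell)\bigr)\bigr|, \]
which requires only that each iterate of the principal (attractive) inverse branch at the fixed point $\tau_{\ell}^{-1}x_{+,\ell}$ is univalent, as guaranteed by the construction already set up before the lemma.

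The heart of the argument is the inclusion: for every $k$ with $0\leq 2k\leq n-n'$ and every $z\in\dpar(+,n,\ell)$, one has $z\in\mathbf{G}_{1,\ell}^{-2k}\bigl(\dpar(+,n',\ell)\bigr)$. This follows from two observations. First, the exit time decreases by exactly one under each forward iterate of $G_{1,\ell}$, so $\mathfrak{E}_{\text{par},\ell}\bigl(G_{1,\ell}^{2k}(z)\bigr)\geq n-2k\geq n'$. Second, $G_{1,\ell}^{2}$ preserves the upper half‑plane near $\tau_{\ell}^{-1}x_{+,\ell}$ (the attractive‑wedge property already exploited in Lemma~\ref{lem:15hp,1} and Lemma~\ref{lem:21ma,1}), so $G_{1,\ell}^{2k}(z)\in\HH_+$, and therefore $G_{1,\ell}^{2k}(z)\in\dpar(+,n',\ell)$. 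Since the principal inverse branch $\mathbf{G}_{1,\ell}^{-2k}$ is precisely the one inverting $G_{1,\ell}^{2k}$ along the attractive orbit through $z$, it sends $G_{1,\ell}^{2k}(z)$ back to $z$, yielding the inclusion.

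Combining these, and noting that $n-n'$ being even gives exactly $(n-n'+2)/2$ admissible values of $k$, we retain only those terms in the series:
\[ \sum_{k=0}^{\infty}\bigl|\mathbf{G}_{1,\ell}^{-2k}\bigl(\dpar(+,n',\ell)\bigr)\bigr|
\;\geq\; \sum_{k=0}^{(n-n')/2}\bigl|\dpar(+,n,\ell)\bigr|
\;=\; \frac{n-n'+2}{2}\,\bigl|\dpar(+,n,\ell)\bigr|, \]
which is the claim. The only delicate point is verifying that $\mathbf{G}_{1,\ell}^{-2k}$ is a genuinely defined univalent branch on $\dpar(+,n',\ell)$ and that it agrees with the pullback along the actual orbit of $z$; both are supplied by the wedge dynamics already in place, so no new estimate is needed and this is the whole proof.
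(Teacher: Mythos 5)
Your proof is correct, and it rests on exactly the same two ingredients as the paper's: the change-of-variables identity for the unfolded Poincar\'{e} series, and the fact that $\mathbf{G}_{1,\ell}^{-2}$ raises the exit time by $2$, i.e. $\mathbf{G}_{1,\ell}^{-2k}\left(\dpar(+,m,\ell)\right) = \dpar(+,m+2k,\ell)$. The difference is only in how the resulting double sum is sliced. The paper tiles $\dpar(+,n',\ell)$ by the fundamental annuli $\dpar(+,n-2k,\ell)\setminus\dpar(+,n-2k+2,\ell)$ of (essentially) exact exit time, notes that the Poincar\'{e} integral over the deepest such annulus equals $|\dpar(+,n,\ell)|$ exactly, and that the integral over each shallower annulus dominates it; summing over the $(n-n'+2)/2$ annuli gives the claim. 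You instead keep the full integral over $\dpar(+,n',\ell)$, unfold it into $\sum_{k}\bigl|\dpar(+,n'+2k,\ell)\bigr|$, and drop all but the first $(n-n'+2)/2$ terms, each of which dominates $|\dpar(+,n,\ell)|$ by the nesting of the sets. Your slicing is arguably the more direct one, since it uses only monotonicity of $|\dpar(+,m,\ell)|$ in $m$ rather than a comparison of integrals over fundamental domains. The one point you rightly flag --- that the inverse branch appearing in the Poincar\'{e} series genuinely inverts the forward orbit of a point of $\dpar(+,n,\ell)$, so that $\mathbf{G}_{1,\ell}^{-2k}\circ G_{1,\ell}^{2k}=\mathrm{id}$ there --- is the same identification the paper uses implicitly in formula~(\ref{equ:21mp,1}), so no new estimate is needed.
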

  \begin{proof}
    By the change of variable formula
    \[ |\dpar(+,n,\ell)| =
    \int_{\dpar(+,n,\ell)\setminus\dpar(+,n+2,\ell)}
    P(\mathbf{G}_{1,\ell}^{-2},z,2)\, d\Leb_2(z) .\]
    For the same reason, for $k>0$
       \begin{multline*} \int_{\dpar(+,n-2k,\ell)\setminus\dpar(+,n-2k+2,\ell)}
    P(\mathbf{G}_{1,\ell}^{-2},z,2)\, d\Leb_2(z) \geq\\  \int_{\dpar(+,n,\ell)\setminus\dpar(+,n+2,\ell)}
    P(\mathbf{G}_{1,\ell}^{-2},z,2)\, d\Leb_2(z) \end{multline*}
    and the Lemma~\ref{lem:21mp,1} follows.
  \end{proof}
  \begin{coro}\label{coro:21mp,1}
    For $n\geq 2$,
    \[ (n-1)\bigl|\dpar(+,n,\ell)\bigr| \leq 5  \int_{\dpar(+,\lfloor\frac{n}{2}\rfloor,\ell)}
    P(\mathbf{G}_{1,\ell}^{-2},z,2)\, d\Leb_2(z) .\]
  \end{coro}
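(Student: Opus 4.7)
The plan is to apply Lemma~\ref{lem:21mp,1} with an appropriate $n'$ chosen as close to $\lfloor n/2\rfloor$ as possible while respecting the two constraints of that lemma: (i) $n-n'$ must be even, i.e.\ $n'$ must have the same parity as $n$, and (ii) $0<n'<n$. In addition I want $n' \geq \lfloor n/2\rfloor$ so that the nesting $\dpar(+,n',\ell) \subseteq \dpar(+,\lfloor n/2\rfloor,\ell)$ gives the correct direction for monotonicity of the integral. Since $P(\mathbf{G}_{1,\ell}^{-2},z,2)\geq 0$, enlarging the domain of integration only increases the right-hand side, so this nesting will transfer the bound of Lemma~\ref{lem:21mp,1} from $\dpar(+,n',\ell)$ to $\dpar(+,\lfloor n/2\rfloor,\ell)$.

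First I would split into four cases according to the residue of $n$ modulo $4$ and write out, for each, the smallest even integer $\geq \lfloor n/2\rfloor$ (when $n$ is even) or the smallest odd integer $\geq \lfloor n/2\rfloor$ (when $n$ is odd). In each case the chosen $n'$ lies in $[\lfloor n/2\rfloor,\lfloor n/2\rfloor+1]$, and Lemma~\ref{lem:21mp,1} applies provided $n'\geq 1$, yielding
\[
\frac{n-n'+2}{2}\,|\dpar(+,n,\ell)| \leq \int_{\dpar(+,n',\ell)} P(\mathbf{G}_{1,\ell}^{-2},z,2)\,d\Leb_2(z)
\leq \int_{\dpar(+,\lfloor n/2\rfloor,\ell)} P(\mathbf{G}_{1,\ell}^{-2},z,2)\,d\Leb_2(z).
\]
A direct check shows $n-n'+2 \geq (n+2)/2 \geq 2(n-1)/5$ for all $n\geq 3$, so in those cases
\[
(n-1)\,|\dpar(+,n,\ell)| \leq \frac{2(n-1)}{n-n'+2}\int_{\dpar(+,\lfloor n/2\rfloor,\ell)} P\,d\Leb_2 \leq 5\int_{\dpar(+,\lfloor n/2\rfloor,\ell)} P\,d\Leb_2.
\]

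The only genuine obstacle is the boundary case $n=2$: here the only admissible $n'$ would be $n'=0$, which is excluded, so Lemma~\ref{lem:21mp,1} does not apply. For this case I would argue directly: since $g^0=\mathrm{id}$ contributes a term equal to $1$ to the Poincar\'{e} series of Definition~\ref{defi:27ha,1}, one has $P(\mathbf{G}_{1,\ell}^{-2},z,2)\geq 1$ pointwise. Combined with the trivial inclusion $\dpar(+,2,\ell)\subseteq\dpar(+,1,\ell)$ this gives $(n-1)|\dpar(+,2,\ell)| = |\dpar(+,2,\ell)| \leq |\dpar(+,1,\ell)| \leq \int_{\dpar(+,1,\ell)} P\,d\Leb_2$, which is even stronger than needed. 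Apart from verifying the numerical constant in the four parity cases, no further work is required; the entire argument is bookkeeping on top of Lemma~\ref{lem:21mp,1} and the monotonicity of the nested sets $\dpar(+,n,\ell)$.
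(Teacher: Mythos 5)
Your proposal is correct and is the natural derivation from Lemma~\ref{lem:21mp,1} that the paper leaves implicit: choose $n'$ of the same parity as $n$ just above $\lfloor n/2\rfloor$, use the nesting $\dpar(+,n',\ell)\subset\dpar(+,\lfloor n/2\rfloor,\ell)$ together with $P\ge 0$, and check the numerical constant. You also correctly isolate the one real subtlety, namely that $n=2$ admits no legal $n'$ in Lemma~\ref{lem:21mp,1} (the parity constraint forces $n'=0$), and dispose of it with the elementary bound $P\ge 1$ from the $k=0$ term in Definition~\ref{defi:27ha,1}; this case must be handled, and your direct argument is the right one.
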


  The main estimate is given by the next Lemma.
  \begin{lem}\label{lem:21mp,2}
  \[ \lim_{n\rightarrow\infty} \sup \left\{ \int_{\dpar(+,n,\ell)}
    P(\mathbf{G}_{1,\ell}^{-2},z,2)\, d\Leb_2(z) :\: \ell=2,4,\cdots,\infty\right\} = 0 . \]
  \end{lem}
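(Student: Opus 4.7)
The plan is to reduce the assertion to a uniform weighted estimate on the Lebesgue measures $|\dpar(+,2k,\ell)|$, and extract this estimate from the pointwise bounds already established in the proof of Theorem~\ref{theo:27ha,1}.

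First, the change-of-variable computation appearing in~(\ref{equ:21mp,1}) gives
\[ \int_{\dpar(+,n,\ell)} P(\mathbf{G}_{1,\ell}^{-2},z,2)\,d\Leb_2(z) = \sum_{k=0}^\infty |\dpar(+,n+2k,\ell)|, \]
so it is enough to show the right-hand side vanishes as $n\to\infty$ uniformly in $\ell$. By Lemma~\ref{lem:21ma,1} and the conjugation $G_{1,\ell}=\tau_\ell^{-1}G_\ell\tau_\ell$, for $n\geq N_0$ and $\ell\geq\ell_0$ the rescaled set $\tau_\ell\dpar(+,n,\ell)$ lies in $D(x_{0,\infty},R_1)\setminus\Omega_\ell$, so the hypotheses of Theorem~\ref{theo:27ha,1} are available.

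The heart of the argument is the uniform weighted bound
\[ \sum_{k\geq 1}(k-1)\,|\dpar(+,2k,\ell)| \leq M. \]
This is stronger than the literal statement of Theorem~\ref{theo:27ha,1}, but it is precisely the content of the pointwise uniform bound on the auxiliary series $\hat{P}(\cdot,2)$ of~(\ref{equ:30hp,2}) over $W(r,\delta)$ that is shown in the course of its proof. I would adapt the fundamental-domain identity of Lemma~\ref{lem:28hp,1} so that $\hat{P}$ replaces $P$ on both sides, then conjugate by $\tau_\ell$ to rewrite the resulting estimate for $\mathbf{G}_{1,\ell}^{-2}$, obtaining $\int_{\dpar(+,0,\ell)}\hat{P}(z,2)\,d\Leb_2(z) = \sum_{k\geq 1}(k-1)|\dpar(+,2k,\ell)| \leq M$ uniformly in large $\ell$.

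Given the weighted bound, the elementary tail estimate
\[ \sum_{k\geq 0}|\dpar(+,n+2k,\ell)| \leq \frac{2}{n-2}\sum_{k\geq n/2}(k-1)|\dpar(+,2k,\ell)| \leq \frac{2M}{n-2} \]
settles the case of even $n$, and the monotonicity $|\dpar(+,n+1,\ell)|\leq|\dpar(+,n,\ell)|$ reduces odd $n$ to the even case. The finitely many $\ell<\ell_0$ cause no trouble, since for each such $\ell$ the fixed point of $\mathbf{G}_{1,\ell}^{-2}$ near $\tau_\ell^{-1}x_{+,\ell}$ is hyperbolic and $\int_{\dpar(+,n,\ell)} P\to 0$ by ordinary absolute continuity of the integral, as $|\dpar(+,n,\ell)|\to 0$. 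The main obstacle is the adaptation of Lemma~\ref{lem:28hp,1} to $\hat{P}$: rather than the clean series rearrangement used there, one must carefully decompose $\tau_\ell\dpar(+,0,\ell)$ as a disjoint union of backward images of $W(r,\delta)$ under the parabolic-like inverse branch of $G_\ell^2$ and track the weight $(k-1)$ through the rearrangement.
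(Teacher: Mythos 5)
Your reduction $\int_{\dpar(+,n,\ell)}P(\mathbf{G}_{1,\ell}^{-2},z,2)\,d\Leb_2(z)=\sum_{k\geq 0}|\dpar(+,n+2k,\ell)|$ and the tail estimate are fine, but the ``heart of the argument'' --- the uniform bound $\sum_{k\geq 1}(k-1)\,|\dpar(+,2k,\ell)|\leq M$ --- is false, and the route you sketch for proving it cannot work. Pushing the weighted sum onto the fundamental domain $W(r,\delta)$ via the same rearrangement used in Lemma~\ref{lem:28hp,1} (replacing $P$ by $\hat{P}$ on both sides) converts the linear weight $(k-1)$ of $\hat{P}$ into a quadratic one: you end up needing a uniform bound on $\int_{W(r,\delta)}\sum_m m^2\,|D_z\mathbf{G}_{\ell}^{-2m}(z)|^2\,d\Leb_2(z)$. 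But Lemma~\ref{lem:30hp,2} gives $|D_z\mathbf{G}_{\ell}^{-2m}(z)|\lesssim(1+m/2)^{-3/2}$ in the mid-range $m\lesssim\rho_\ell^{-1}$, so the integrand behaves like $\sum m^{-1}$ up to $m\sim\rho_\ell^{-1}$ and the integral grows like $\log\rho_\ell^{-1}\to\infty$. You can also see the failure directly at $\ell=\infty$: the parabolic escape with cubic touch at $x_{0,\infty}$ gives $|\dpar(+,n,\infty)|\asymp n^{-2}$, so $\sum_k(k-1)|\dpar(+,2k,\infty)|$ diverges logarithmically. The restriction $\sigma>\tfrac{4}{3}$ in Theorem~\ref{theo:27ha,1} is precisely the reflection of this obstruction: $\sum_k k(1+k/2)^{-3\sigma/2}$ converges only for $\sigma>\tfrac{4}{3}$, whereas the quadratic-weight kernel your scheme requires would demand $\sigma>2$.

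The paper avoids this dead end entirely. It does not attempt a single uniform weighted bound but proves a per-$n$ statement via an exponent interpolation: it invokes Theorem~\ref{theo:27ha,1} with a slightly smaller exponent $\sigma=2-\delta$, $0<\delta<\tfrac{2}{3}$, to get $\int_{\dpar(+,N,\ell)}P(\mathbf{G}_{1,\ell}^{-2},z,2-\delta)\leq K_1$ for a fixed $N$ (chosen via Lemma~\ref{lem:21ma,1}), and separately observes that $|D_z\mathbf{G}_{1,\ell}^{-2m}(z)|\to 0$ uniformly for $z\in\dpar(+,N,\ell)$ and $\ell$ as $m\to\infty$ (the sets $\dpar(+,N,\ell)$ lie in a fixed compact subset of $\HH_+$ and $\mathbf{G}_{1,\ell}^{-2}$ contracts its hyperbolic metric). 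Writing $|D_z\mathbf{G}_{1,\ell}^{-2k}(z)|^{2}=|D_z\mathbf{G}_{1,\ell}^{-2k}(z)|^{2-\delta}|D_z\mathbf{G}_{1,\ell}^{-2k}(z)|^{\delta}$ and using the $\delta$-power smallness of the derivative in the tail then makes $\sum_{k\geq m}\int_{\dpar(+,N,\ell)}|D_z\mathbf{G}_{1,\ell}^{-2k}(z)|^2$ small uniformly in $\ell$ once $m$ is large. That interpolation step is what your argument is missing; without it there is no uniform control of the weighted series.
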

  \begin{proof}
  Let $\sigma$ be either $2$ or $2-\delta$ for some
  $0<\delta<\frac{2}{3}$. Using Lemma~\ref{lem:21ma,1} fix $N$ to use
  Theorem~\ref{theo:27ha,1} and assert that for all $\ell$
  \begin{equation}\label{equ:30zp,1}
    \int_{\dpar(+,N,\ell)}
  P(\mathbf{G}_{1,\ell}^{-2},z,\sigma)\, d\Leb_2(z) \leq K_1 .
  \end{equation}

  For $\ell\geq\ell_0$ all sets $\dpar(+,N,\ell)$ are contained in a
  compact subset of $\HH_+$. Since $\mathbf{G}^{-2}_{1,\ell}$ for
  $\ell\leq\infty$ is a
  contraction in the Poincar\'{e} metric of $\HH_+$ with the limit
  $x_{+,\ell}$, by taking
  convergent subsequences we get that $\lim_{n\rightarrow\infty} d_n =
  0$, where
  \[ d_n := \inf \left\{
  |D_z\mathbf{G}^{-2n}_{1,\ell}(z)| :\: z\in\dpar(+,N,\ell),\,
  \ell=2,4,\cdots,\infty \right\} .\]

  For $n\geq N$ and of the same parity and every $\ell$, we get
  \[ \int_{\dpar(+,n,\ell)}  P(\mathbf{G}_{1,\ell}^{-2},z,2) =
  \sum_{k\geq\frac{n-N}{2}} \int_{\dpar(+,N,\ell)}
  |D_z\mathbf{G}_{1,\ell}^{-2k}(z)|^2\, d\Leb_2(z)
  .\]
  On the other hand, for $\delta:\:0<\delta<\frac{2}{3}$, cf. estimate~(\ref{equ:30zp,1}),
  \begin{multline*} K_1 \geq\int_{\dpar(+,N,\ell)}
  P(\mathbf{G}_{1,\ell}^{-2},z,2-\delta) \geq\\
  \sum_{k\geq 0}\int_{\dpar(+,N,\ell)}
  |D_z\mathbf{G}_{1,\ell}^{-2k}(z)|^{2-\delta}\, d\Leb_2(z)| \geq\\
  \sup \left\{
  d^{-\delta}_m : m\geq \frac{n-N}{2}\right\} \int_{\dpar(+,N,\ell)}
  |D_z\mathbf{G}_{1,\ell}^{-2k}(z)|^2\, d\Leb_2(z) . \end{multline*}
  Since $d_m\rightarrow 0$, the Lemma~\ref{lem:21mp,2} follows.
    \end{proof}

  \paragraph{Conclusion of the proof of Proposition~\ref{prop:21ma,1}.}
  By formula~(\ref{equ:21mp,1}, Corollary~\ref{coro:21mp,1} and
  Lemma~\ref{lem:21mp,2}
  \[ \lim_{n\rightarrow\infty} \sup\left\{     \int_{\dpar(+,n,\ell) \cup \dpar(-,n,\ell)}
      \mathfrak{E}_{\text{par},\ell}(z)\,d\Leb_2(z) :\:
      \ell=2,4,\cdots,\infty \right\} = 0 .\]

  The are only $2^{n-1}$ ways to compose parabolic branches with
  return time less than $n$. Uniform tightness thus follows.

  \subsection{Outer branches.}
  Mappings $\tpar(\ell)$ already have uniformly bounded distortion by
  Lemma~\ref{lem:3hp,2}, since for any branch and $s$ it is
  possible to choose
  $\mathfrak{m}$ so that its domain is uniformly nested in
  $V^{\mathfrak{m}}_{\ell}(s)$. However, we would like to have a
  uniform expanding Markov structure. Such a structure is suggested by
  Proposition~\ref{prop:10hp,1} since any composition of branches can
  be extended univalently to map onto a slit plane
  $\CC\setminus \left(-\infty,\tau_{\ell}^{-1}]\cup [1,+\infty)
  \right)$.

  Let us choose a finite set $\mathfrak{B}$ of branches $T$
  which contains symbols $(+,\pm,2,0)$ and
  $(-,\pm,1,0)$ which correspond to post-singular and parabolic
  branches discussed before.

  \begin{defi}\label{defi:23mp,1}
  Define the generic {\em hyperbolic induced} mapping $\thyp(\mathfrak{B})$
  as consisting of sequences which
  begin with any symbol not in $\mathfrak{B}$
  and are followed by any, possibly empty, sequence consisting of
  exclusively of symbols from $\mathfrak{B}$.
  \end{defi}

  For any $\ell$, the domain of
  $\thyp\left(\mathfrak{B}\right)_{\ell}$ is the
  subset of $A_{\ell}$ with domains of the branches from $\mathfrak{B}$
  removed.

  Let $\vhyp$ be a bounded Jordan domain with smooth boundary chosen so that
  $x_{0,\infty} \in \vhyp$ and
  \[ \overline{\vhyp} \subset \CC\setminus
  \left(-\infty,\tau_{\ell}^{-1}]\cup [1,+\infty)\right) \]
  for all $\ell\geq\ell(\vhyp)$, where $\ell(\vhyp)<\infty$.

  \begin{theo}\label{theo:23mp,1}
  Fix any domain $\vhyp$ as specified above. Also choose
  a finite set of branches $\mathfrak{B}$ which contains post-singular and
  parabolic branches. Then the
  following properties hold.
  \begin{itemize}
  \item $\thyp(\mathfrak{B})$ is uniformly tight, cf.
    Definition~\ref{defi:3jp,1}.
  \item For every $\ell\geq\ell(\vhyp)$ any composition of branches of
    $\thyp(\ell)$ extends univalently onto
    \[ \CC\setminus\left(
    (-\infty,\tau_{\ell}^{-1}]\cup [1,+\infty) \right) .\]
  \item There exist a compact set $F_{\text{hyp}} \subset \vhyp$, a
    particular choice of $\mathfrak{B}$ and $\ell_0<\infty$  such
    that for every $\ell\geq\ell_0$ and every branch
    $\mathfrak{z}\in\thyp(\mathfrak{B})$, the inclusion
    $\mathfrak{z}_{\ell}^{-1}(\vhyp) \subset F_{\text{hyp}}$ holds, where
    $\mathfrak{z}_{\ell}^{-1}$ should be taken in the sense of the
    univalent extension of $\mathfrak{z}$ postulated by the previous claim.
  \end{itemize}
\end{theo}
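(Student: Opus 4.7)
The plan is to establish the three claims largely independently, reusing the machinery of Proposition~\ref{prop:10hp,1}, Lemma~\ref{lem:4ja,1}, and the tightness results of Propositions~\ref{prop:7mp,1} and~\ref{prop:21ma,1}. The second claim is the easiest: I would invoke Proposition~\ref{prop:10hp,1} with $\theta_0 = \pi$ and $\theta_1 = 0$, so that the two auxiliary rays become $(-\infty, 0]$ and $[1, +\infty)$. Together with $[0, \tau_{\ell}^{-1}]$ and $[\tau_{\ell}, +\infty)$ the removed set is precisely $(-\infty, \tau_{\ell}^{-1}] \cup [1, +\infty)$, and the connected component containing $\HH_+$ is the entire slit plane. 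Every composition of branches of $\thyp(\mathfrak{B})_\ell$ is in particular a composition of branches of $T$, so Proposition~\ref{prop:10hp,1} applies directly and delivers a univalent extension with the claimed domain nesting.

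For the first claim I would write $\thyp(\mathfrak{B}) = S_{\mathfrak{B}} \circ T_{\notin\mathfrak{B}}$, where $T_{\notin\mathfrak{B}}$ is $T$ restricted to initial branches outside $\mathfrak{B}$ (a single step, trivially uniformly tight, with bounded distortion on compact subsets of its image supplied by part~(2) together with Koebe) and $S_{\mathfrak{B}}$ is the first-exit map from the union of $\mathfrak{B}$-branch domains. To show $S_{\mathfrak{B}}$ is uniformly tight I would split its return-time integral according to the three disjoint classes of branches in $\mathfrak{B}$: post-singular, parabolic, and hyperbolic-extra $\mathfrak{B}':=\mathfrak{B}\setminus\{\text{post-sing},\text{par}\}$. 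The first two are controlled by Propositions~\ref{prop:7mp,1} and~\ref{prop:21ma,1}; for $\mathfrak{B}'$, which is a finite set of branches with no neutral fixed point on the closure of any of their domains, the associated inverse IFS contracts strictly in the hyperbolic metric of a fixed slit plane at $\ell=\infty$, and this contraction persists uniformly for all large $\ell$ by Proposition~\ref{prop:11jp,1}, so the $\mathfrak{B}'$-exit times have geometric tails. Lemma~\ref{lem:4ja,1} then assembles the two pieces into uniform tightness of $\thyp(\mathfrak{B})$.

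The third claim is the main technical obstacle. I would first fix $\vhyp$ as a Jordan neighborhood of $x_{0,\infty}$ compactly contained in $\CC\setminus((-\infty,\tau_{\infty}^{-1}]\cup[1,+\infty))$, large enough to contain with some margin all accumulation points of branch domains on $\partial A_{\infty}$, namely $x_{0,\infty}$, $\tau_{\infty} x_{0,\infty}$, and the singular value~$1$. I would then enlarge $\mathfrak{B}$ beyond the obligatory post-singular and parabolic symbols to contain every $T$-symbol whose domain fails to lie in a preselected compact $F_{\text{hyp}}\subset\vhyp$ for some $\ell\geq\ell_0$; by the Hausdorff convergence of domains from Lemma~\ref{lem:21ja,1} together with Corollary~\ref{coro:21ja,1} only finitely many symbols qualify, so the resulting $\mathfrak{B}$ is finite. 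For any branch $\mathfrak{z}$ of $\thyp(\mathfrak{B})_{\ell}$ the domain of $\mathfrak{z}$ is contained in the domain of its initial non-$\mathfrak{B}$ symbol, hence in $F_{\text{hyp}}$; Koebe applied to the univalent extension furnished by part~(2) then forces $\mathfrak{z}_{\ell}^{-1}(\vhyp)$ to be only a Koebe-bounded enlargement of this domain, which after slightly thickening $F_{\text{hyp}}$ inside $\vhyp$ yields the required inclusion. The hardest step is verifying that a single $\mathfrak{B}$ and a single compact $F_{\text{hyp}}$ can be chosen to work for all large $\ell$ simultaneously, and this rests on the uniform convergence of $\phi^{-1}_{\pm,\ell}$ from Proposition~\ref{prop:11jp,1} combined with the Hausdorff-continuity arguments of Section~4.3.
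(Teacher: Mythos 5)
Your treatment of the second claim is exactly the paper's: Proposition~\ref{prop:10hp,1} with $\theta_0=\pi$, $\theta_1=0$. Your third claim also follows the paper's route in substance: the paper controls $\mathfrak{z}_{\ell}^{-1}(\vhyp)$ by noting that $\vhyp$ has finite hyperbolic diameter in the slit plane, hence $\mathfrak{z}_{\ell}^{-1}(\vhyp)$ has bounded hyperbolic diameter in the extension domain of Lemma~\ref{lem:3hp,2}, whose boundary contains $x_{0,\ell}$, so its Euclidean diameter is comparable to its distance from $x_{0,\ell}$; it then sets $F_{\text{hyp}}:=\overline{D(x_{0,\infty},r)}$ and puts into $\mathfrak{B}$ the finitely many branches violating the inclusion, finiteness coming from Proposition~\ref{prop:11jp,1}. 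Your Koebe phrasing is an acceptable substitute for the hyperbolic-diameter bookkeeping. One slip: you ask that $\vhyp$ contain the singular value $1$, which is impossible since the theorem requires $\overline{\vhyp}\subset\CC\setminus\bigl((-\infty,\tau_{\ell}^{-1}]\cup[1,+\infty)\bigr)$; fortunately only $x_{0,\infty}$ matters as an accumulation point of branch domains, so nothing is lost by dropping that requirement.

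The genuine gap is in the first claim. You decompose $\thyp(\mathfrak{B})=S_{\mathfrak{B}}\circ T_{\notin\mathfrak{B}}$ and propose to prove uniform tightness of $S_{\mathfrak{B}}$, the first-exit map from $\bigcup_{\mathfrak{b}\in\mathfrak{B}}\Dm(\mathfrak{b}_{\ell})$, by ``splitting its return-time integral according to the three disjoint classes.'' This does not reduce to Propositions~\ref{prop:7mp,1} and~\ref{prop:21ma,1}: the branches of $S_{\mathfrak{B}}$ are \emph{arbitrary words} in $\mathfrak{B}$-symbols, and since every post-singular, parabolic or $\mathfrak{B}'$-branch maps onto a full half-annulus, an orbit can alternate between the classes indefinitely before exiting. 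The cited propositions only control homogeneous tails (a post-singular tail in $\tsing$, a parabolic tail appended to $\tsing$ in $\tpar$), and the total exit time of a mixed word is not bounded by any classwise decomposition of the integral. The paper avoids this precisely by never forming the joint exit map: it sets $T(0)=\tpar$ and then peels off the remaining branches of $\mathfrak{B}$ one at a time, $T(n+1)$ being the first-exit map from a single additional branch composed with $T(n)$, each single-branch exit map being uniformly tight by Lemma~\ref{lem:26ma,1} (via the ``block'' argument and Proposition~\ref{prop:10hp,1}), and each composition being handled by Lemma~\ref{lem:4ja,1}; finiteness of $\mathfrak{B}$ closes the induction. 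You have the right tool in hand --- you invoke Lemma~\ref{lem:4ja,1} at the outer level --- but you need to apply it iteratively \emph{inside} the construction of the exit map from $\mathfrak{B}$, and you need Lemma~\ref{lem:26ma,1} (which your sketch of the contracting inverse IFS for $\mathfrak{B}'$ essentially reproves) as the base ingredient for each non-singular, non-parabolic branch.
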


  \paragraph{Uniform tightness of exit maps.}
  Recall that for any generic branch $\mathfrak{z}$ the {\em first
    exit map} from $\mathfrak{z}$ consists of sequences which repeat
  the symbol of $\mathfrak{z}$ an arbitrary number of times.

  \begin{lem}\label{lem:26ma,1}
If $\mathfrak{z}$ is not post-singular or parabolic, then the first
exit map from $\mathfrak{z}$ is uniformly tight.
  \end{lem}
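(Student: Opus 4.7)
The first exit map from $\mathfrak{z}$ has as its $k$-th symbolic branch the composition $\mathfrak{z}^k$, with atomic domain $\Dm(\mathfrak{z}_\ell^k)\setminus\Dm(\mathfrak{z}_\ell^{k+1}) = \{z : r_{S,\ell}(z) = k\}$. Writing $r_{S,\ell} = \sum_k \chi_{\Dm(\mathfrak{z}_\ell^k)}$, one has $\int r_{S,\ell}\, d\Leb_2 = \sum_{k\geq 1} |\Dm(\mathfrak{z}_\ell^k)|$, so uniform tightness with the choice $\mathfrak{Z}(S,\epsilon) = \{\mathfrak{z}, \ldots, \mathfrak{z}^{K}\}$ reduces to proving that $\sum_{k>K} |\Dm(\mathfrak{z}_\ell^k)|$ can be made smaller than any $\epsilon>0$ uniformly for $\ell\geq\ell_0(\epsilon)$ by choosing $K$ sufficiently large.

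The plan is to extract this decay from uniform hyperbolic expansion of $\mathfrak{z}_\ell$ in a slit plane. By Proposition~\ref{prop:10hp,1}, with an appropriate choice of $(\theta_0,\theta_1)$ the branch $\mathfrak{z}_\ell$ extends univalently onto $V_\ell := V(\theta_0,\theta_1)$ with image $A_{\pm,\ell}$. The key geometric step is to verify that for every symbol $(\sigma,s,n,p)$ other than the post-singular $(+,\pm,2,0)$ and the parabolic $(-,\pm,1,0)$, the set $\overline{A_{\pm,\ell}\cap\Dm(\mathfrak{z}_\ell)}$ lies in a compact subset $K_0$ of $V_\ell$ uniformly in $\ell\geq\ell_0$. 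The real part of $\partial V_\ell$ contains the singular value $1$ and the interval $[0,\tau_\ell^{-1}]$ surrounding the forming parabolic fixed point near $\tau_\infty^{-1}x_{0,\infty}$, and precisely the excluded post-singular, respectively parabolic, symbols are the ones whose atomic domain clings to one of these pieces. For every other symbol the uniform containment for large $\ell$ reduces, via the Hausdorff convergence guaranteed by Proposition~\ref{prop:11jp,1} and the explicit description of $\overline{A_\ell}\cap\RR$ from Fact~\ref{fa:1hp,3}, to a finite verification at $\ell=\infty$.

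Once the uniform compact containment is in hand, Schwarz--Pick applied to $\mathfrak{z}_\ell^{-1} : V_\ell \to V_\ell$ yields a Poincar\'e contraction factor $\rho = \rho(\mathfrak{z}) < 1$ on $K_0$, independent of $\ell\geq\ell_0$. Iterating, $\mathfrak{z}_\ell^{-k}$ contracts by $\rho^k$, and via Koebe distortion applied to the univalent extension of $\mathfrak{z}_\ell^k$ we transfer this to the Euclidean bound $|\Dm(\mathfrak{z}_\ell^{k+1})| \leq C\rho^{2k}$ with $C$ uniform in $\ell\geq\ell_0$. Summing the geometric tail completes the argument.

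The principal difficulty is the compact-containment step above. It calls for a short case analysis of the remaining generic symbols, verifying in each case that the hypothesis of non-post-singularity and non-parabolicity is exactly what rules out accumulation of $\overline{A_{\pm,\ell}\cap\Dm(\mathfrak{z}_\ell)}$ on the two offending real-axis pieces of $\partial V_\ell$. Once this is done, the dynamical conclusion is elementary and parallels the hyperbolic portion of the proof of Proposition~\ref{prop:21ma,1}.
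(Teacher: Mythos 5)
Your reduction of uniform tightness to a geometric bound on $\sum_{k>K}|\Dm(\mathfrak{z}_{\ell}^k)|$, and your identification of the post-singular and parabolic symbols as exactly the ones whose domains cling to the offending boundary pieces $\{1\}$ and $[0,\tau_{\ell}^{-1}]$ of $V(\theta_0,\theta_1)$, match the paper's setup. The gap is in the contraction mechanism. You invoke Schwarz--Pick for ``$\mathfrak{z}_{\ell}^{-1}:V_{\ell}\to V_{\ell}$'', but $\mathfrak{z}_{\ell}^{-1}$ is not a self-map of $V_{\ell}=V(\theta_0,\theta_1)$: by Proposition~\ref{prop:10hp,1} the extension maps some domain $U_{\ell}$ (constrained only by the inclusions of Lemma~\ref{lem:3hp,2}) \emph{onto} $V_{\ell}$, so $\mathfrak{z}_{\ell}^{-1}(V_{\ell})=U_{\ell}$, and nothing forces $U_{\ell}\subset V_{\ell}$ (its lower-half-plane part can meet the rays from $0$ and $1$). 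Without that inclusion, Schwarz--Pick only gives the isometry identity $\|D\mathfrak{z}_{\ell}^{-1}(w)\|_{\rho_{V_{\ell}}\to\rho_{V_{\ell}}}=\rho_{V_{\ell}}(z)/\rho_{U_{\ell}}(z)$ at $z=\mathfrak{z}_{\ell}^{-1}(w)$, and there is no a priori comparison between these two densities. More importantly, even granting an inclusion, compact containment of $\Dm(\mathfrak{z}_{\ell})$ in $V_{\ell}$ buys you bounded distortion (Koebe), not a contraction factor $\rho<1$: a definite loss per iterate requires that the orbit of $\Dm(\mathfrak{z}_{\ell})$ under $\mathfrak{z}_{\ell}^{-1}$ sit within \emph{bounded hyperbolic distance of a part of the branch's range that escapes}. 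Your $K_0$ does not supply that.

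This missing ingredient is exactly what the paper's proof manufactures. It forms the ``block'' --- the union of $\Dm(\mathfrak{z}_{\ell})$ with the domain of the adjacent branch of the same side, sign and level and height greater by one --- and uses Proposition~\ref{prop:10hp,1} (with $\theta_0=\theta_1=\pm\frac{\pi}{2}$) together with Proposition~\ref{prop:11jp,1} to show that $\mathfrak{z}_{\ell}^{-1}$ has distortion on the block bounded independently of $\mathfrak{z}$ and $\ell$, because the block stays uniformly away from the slits. Since the adjacent branch's domain occupies a definite proportion of the block and its points exit in one step, a definite proportion of the non-escaped set escapes at each iterate, giving the uniform exponential decay of $|\Dm(\mathfrak{z}_{\ell}^n)|$. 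So your outline is salvageable, but you must replace the Schwarz--Pick step by (or supplement your $K_0$ with) an escaping set of definite relative measure adjacent to $\Dm(\mathfrak{z}_{\ell})$ inside the range of the branch; compact containment in the codomain of the univalent extension alone does not produce the factor $\rho<1$.
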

  \begin{proof}
  For any $\ell$ let the {\em block} mean the union of domains of
  $\mathfrak{z}_{\ell}$ and the
  adjacent branch of the same side, sign and level and height greater
  by $1$.    Use
  Proposition~\ref{prop:10hp,1} to verify that $\mathfrak{z}_{\ell}$ maps with
  distortion that is bounded independently of $\mathfrak{z}$ and
  $\ell$ onto the block. Indeed, in the Proposition choose
  $\theta_0=\theta_1=\pm\frac{\pi}{2}$ with the sign depending on the
  sign of $\mathfrak{z}$. Then by Proposition~\ref{prop:11jp,1} the
  distance from the block to the slits is uniformly bounded away from
  $0$.

  But then the measure of the set of points which do not exit by
  the $n$-th iterate of $\mathfrak{z}_{\ell}$ shrinks uniformly
  exponentially with $n$ and uniform tightness follows.
  \end{proof}

  \paragraph{Proof of the first claim.}
  One can construct $\thyp(\mathfrak{B})$ by successively inducing on
  branches one by one. That is, we set $T(0) = \tpar$ and then $T(n+1)$ is the
  first exit map from the next branch followed by $T(n)$.
  Each of those maps is uniformly tight by
  Lemma~\ref{lem:26ma,1} and Lemma~\ref{lem:4ja,1} and since the set $\mathfrak{B}$ was assumed finite, that includes $\thyp(\mathfrak{B})$.

  \paragraph{Proof of the second claim.}
  This follows immediately from Proposition~\ref{prop:10hp,1} taken
  with $\theta_0=\pi, \theta_1=0$.

  \paragraph{Proof of the third claim.}
  $\vhyp$ has a finite hyperbolic diameter in $\CC\setminus\left(
  (-\infty,\tau_{\ell}^{-1}] \cup [1,+\infty) \right)$ and therefore
  $\mathfrak{z}^{-1}(\vhyp)$ has bounded hyperbolic diameter in the
  appropriate extension domain $\hat{\Omega}_{\ell}\setminus
  (-\infty,x_{0,\ell}]$ or $\hat{\Omega}_{\ell}\setminus
  [x_{0,\ell},+\infty)$, cf. Lemma~\ref{lem:3hp,2}. Note that $x_{0,\ell}$
    is on the boundary of that domain and hence Euclidean diameters
    of $\mathfrak{z}^{-1}(\vhyp)$ tend to $0$ as a uniform function of
    the distance from $\Dm(\mathfrak{z})_{\ell}$ to $x_{0,\ell}$.

    By Proposition~\ref{prop:11jp,1} for any $r>0$
    the domains of all branches of $T_{\ell}$ except
    for finitely many are contained in $D(x_{0,\infty},r)$ for all
    $\ell\geq \ell(r)$. By what was just observed, the same holds for
    perhaps larger sets $\mathfrak{z}_{\ell}^{-1}(\vhyp)$. We choose
    $r$ so small that $\overline{D(x_{0,\infty},r)} \subset\vhyp$ and
    set     $F_{\text{hyp}} :=
    \overline{D(x_{0,\infty},r)}$.

    As $\mathfrak{B}$ we pick precisely the finite set of branches
    $\mathfrak{z}$ characterized by the condition $\exists
    \ell\geq\ell(r)\;\;\mathfrak{z}_{\ell}^{-1}(\vhyp) \not\subset F$.
    Then $\ell_0 := \max\left(\ell(\vhyp),\ell(r)\right)$.

    \section{Invariant densities}
  \paragraph{Choice of the domain.}
We fix some $\vhyp$ in Theorem~\ref{theo:23mp,1} which implies a
choice of $\mathfrak{B}$. To unclutter notation, we will write $\thyp$
for $\thyp(\mathfrak{B})$ and $\thyp(\ell)$ for the instance of
$\thyp$ for a particular $\ell$.

  \paragraph{The Perron-Frobenius operator.}
  For all  $\ell$ sufficiently large  the Perron-Frobenius operator
  can be defined
  on $L_{1}(\Dm\left(\thyp(\ell)\right),\Leb_2,\RR)$ by
  \[ (\mathfrak{P}_{\ell} g)(u) =\sum_{\mathfrak{z}\in\thyp}
  |D\mathfrak{z}^{-1}_{\ell}(u)|^2 g\left(\mathfrak{z}_{\ell}^{-1}(u)\right)  \]
  where we identified a generic induced map $\thyp(\ell)$ with the set of its
  branches. The term {\em density} will be used for a non-negative function with
  integral $1$.

  \begin{fact}\label{fa:24mp,1}
  The operator $\mathfrak{P}_{\ell}$ is {\em stochastically stable}
  meaning that there is a invariant density $g^{\infty}_{\ell}$ and for
  any other density $g\in L_{1}\bigl(\Dm(\left(\thyp(\ell)\right),\Leb_2,\RR\bigr)$,
  $\lim_{n\rightarrow\infty}\|\mathfrak{P}_{\ell}^ng -
  g_{\ell}^{\infty}\|_1 = 0$ holds. Additionally, if $\gamma\in
  L_{1}\bigl(\Dm\left(\thyp(\ell)\right),\Leb_2,\RR\bigr)$ is a fixed point of $\mathfrak{P}_{\ell}$,
  then $\gamma = c g_{\ell}^{\infty}$, $c\in\RR$.
  \end{fact}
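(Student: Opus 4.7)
The plan is to treat $\thyp(\ell)$ as a uniformly expanding countable-alphabet Markov map and to derive the stochastic stability of $\mathfrak{P}_\ell$ by a Hilbert--Birkhoff cone contraction. The ingredients supplied by Theorem~\ref{theo:23mp,1} are exactly those needed: every branch $\mathfrak{z}_\ell$ of $\thyp(\ell)$ has a univalent extension onto the fixed slit plane $\CC\setminus\bigl((-\infty,\tau_\ell^{-1}]\cup[1,+\infty)\bigr)$, which contains $\overline{\vhyp}$ as a compact subset, while $\mathfrak{z}_\ell^{-1}(\vhyp)\subset F_{\text{hyp}}$ with $\overline{F_{\text{hyp}}}$ compactly contained in $\vhyp$. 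The Koebe distortion theorem, applied to these extensions, yields a constant $Q<\infty$ independent of the branch such that
\[ \sup_{u_1,u_2\in \vhyp} \left|\log\frac{|D\mathfrak{z}_\ell^{-1}(u_1)|}{|D\mathfrak{z}_\ell^{-1}(u_2)|}\right| \leq Q \]
for every branch $\mathfrak{z}$ of $\thyp(\ell)$.

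First I would establish existence of $g_\ell^\infty$ by a cone argument. Let $\mathcal{C}$ be the convex cone of positive continuous densities $g$ on $\vhyp$ satisfying $g(u_1)/g(u_2)\leq e^{2Q}$ for all $u_1,u_2\in\vhyp$. The distortion estimate makes $\mathfrak{P}_\ell(\mathcal{C})\subset \mathcal{C}$, and the strict nesting of $F_{\text{hyp}}$ inside $\vhyp$ ensures that the image lies in a sub-cone of finite Hilbert projective diameter, so by Birkhoff's theorem $\mathfrak{P}_\ell$ contracts the Hilbert metric strictly. The unique fixed direction normalized to unit integral yields the invariant density $g_\ell^\infty$, which is positive and real-analytic on $\vhyp$, the real-analyticity being inherited from the branches $\mathfrak{z}_\ell^{-1}$. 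The extension to all of $\Dm(\thyp(\ell))$ is automatic, since the value of $g_\ell^\infty$ at any point of the full domain is obtained by a single application of $\mathfrak{P}_\ell$ to $g_\ell^\infty|_{\vhyp}$.

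Next, for uniqueness and convergence of $\mathfrak{P}_\ell^n g\to g_\ell^\infty$ from any density $g$, I would proceed in two steps. For bounded $g$, the cone contraction provides sup-norm convergence on $\vhyp$ (after one iterate the output belongs to $\mathcal{C}$ up to a bounded multiplicative factor), hence $L_1$ convergence there, and $L_1$ convergence on the full domain follows from one further application of the $L_1$-contraction $\mathfrak{P}_\ell$. For a general $g\in L_1$, approximate by $g_M:=\min(g,M)$, use that $\|\mathfrak{P}_\ell^n(g-g_M)\|_1\leq\|g-g_M\|_1\to 0$ as $M\to\infty$, and conclude. Uniqueness of the fixed ray follows in the standard way: if $\mathfrak{P}_\ell\gamma=\gamma$ with $\gamma\in L_1$, then $\mathfrak{P}_\ell|\gamma|\geq|\gamma|$ together with equality of integrals forces $\mathfrak{P}_\ell|\gamma|=|\gamma|$; the cone uniqueness gives $|\gamma|=c g_\ell^\infty$ for some $c\geq 0$, and then the positivity of $g_\ell^\infty$ combined with the ergodicity that comes from cone uniqueness shows that the positive and negative parts of $\gamma$ must have disjoint supports both supporting a multiple of $g_\ell^\infty$, so one of them vanishes and $\gamma=c g_\ell^\infty$ for some $c\in\RR$.

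The main obstacle is verifying that the cone contraction operates uniformly over the countable collection of branches of $\thyp(\ell)$. This is where the uniformity of the Koebe constant $Q$ provided by Theorem~\ref{theo:23mp,1} is decisive; the countability of branches enters only through the normalization $\int\mathfrak{P}_\ell g=\int g$, which is automatic and causes no summability issue, so every iterate of a bounded non-negative density lands in the cone $\mathcal{C}$ after a single step, and the standard Birkhoff argument applies without further modification.
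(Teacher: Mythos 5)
The paper offers no proof of this statement: it is labelled a Fact and used as an external input (the standard existence--uniqueness--stability theorem for countable full-branch expanding Markov maps with uniformly bounded distortion), and Proposition~\ref{prop:27mp,1} afterwards takes it as a hypothesis rather than deriving it. So your cone-contraction argument is supplying a proof that the paper delegates to the literature, and you have identified the right inputs: Koebe applied to the univalent extensions onto the fixed slit plane (second claim of Theorem~\ref{theo:23mp,1}) gives a distortion constant uniform over all branches; the inclusion $\mathfrak{z}_{\ell}^{-1}(\vhyp)\subset F_{\text{hyp}}$ with $F_{\text{hyp}}$ compact in $\vhyp$ (third claim) is the source of expansion; the summability $\sum_{\mathfrak{z}}|\Dm(\mathfrak{z}_{\ell})|<\infty$ makes $\mathfrak{P}_{\ell}$ well defined on bounded functions; and the truncation step together with the $L_1$-contraction property correctly reduces general $L_1$ densities to bounded ones. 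The argument via $|\gamma|$ for uniqueness of the fixed ray is also sound once the convergence statement is available.

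The step that fails as written is the cone invariance. With $\mathcal{C}=\{g>0:\ g(u_1)/g(u_2)\leq e^{2Q}\}$ one only obtains $\mathfrak{P}_{\ell}g(u_1)/\mathfrak{P}_{\ell}g(u_2)\leq e^{4Q}$, because each summand contributes $e^{2Q}$ from the Jacobian ratio and another $e^{2Q}$ from $g$ evaluated at the two distinct preimages $\mathfrak{z}^{-1}(u_1)\neq\mathfrak{z}^{-1}(u_2)$; so $\mathfrak{P}_{\ell}(\mathcal{C})\not\subset\mathcal{C}$. For the same reason a single application of $\mathfrak{P}_{\ell}$ does not place an arbitrary bounded density in $\mathcal{C}$: the ratio $g(\mathfrak{z}^{-1}(u_1))/g(\mathfrak{z}^{-1}(u_2))$ is uncontrolled for rough $g$. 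The standard repair is to let the cone record a modulus of continuity rather than a global oscillation bound, say $\mathcal{C}_L=\{g>0:\ |\log g(u_1)-\log g(u_2)|\leq L\,d(u_1,u_2)\}$ with $d$ the hyperbolic metric of $\vhyp$. Since every inverse branch maps $\vhyp$ holomorphically into the compact set $F_{\text{hyp}}$, its hyperbolic derivative is bounded by some $\theta<1$ uniformly over branches (Schwarz--Pick plus the compact nesting), while Koebe bounds the Lipschitz constant of $\log|D\mathfrak{z}_{\ell}^{-1}|^{2}$ by a uniform $Q'$; hence $\mathfrak{P}_{\ell}(\mathcal{C}_L)\subset\mathcal{C}_{2Q'+\theta L}\subset\mathcal{C}_{\theta' L}$ with $\theta'<1$ once $L$ is large, and the image has finite Hilbert diameter. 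One also needs the two-state Markov structure (each branch of $\thyp(\ell)$ is a first-return branch onto one of the half-annuli intersected with $\Dm(\thyp(\ell))$, and both states communicate) for irreducibility. Convergence then holds for densities in $\bigcup_L\mathcal{C}_L$, which is $L_1$-dense among densities, and extends to all of $L_1$ by exactly the approximation argument you give at the end. With these corrections the proof is complete; without them the key contraction step does not go through.
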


  Our goal will be to show that densities $g_{\ell}^{\infty}$ are
  real-analytic and converge analytically to $g_{\infty}^{\infty}$
  when $\ell\rightarrow\infty$.

  \subsection{The transfer operator.}
  \begin{defi}\label{defi:24mp,1}
  Let $X$ denote the space of complex-valued holomorphic functions
  of two variables defined on $\vhyp \times \vhyp$, continuous to
  the closure, and real on the diagonal:
  \[ \forall z\in\vhyp \; \forall f\in X\; f(z,\overline{z}) \in\RR .\]
  Endow $X$ with the sup-norm.
  \end{defi}

  Then $X$ is a Banach space over $\RR$.

  \begin{defi}\label{defi:24mp,2}
    The {\em transfer operator} ${\cal P}_{\ell} :\: X \rightarrow X$ is defined
    by
    \[ {\cal P}_{\ell} f(z,w) = \sum_{\mathfrak{z}\in\thyp(\ell)} D\mathfrak{z}^{-1}(z) D\mathfrak{z}^{-1} (w) f\left(\mathfrak{z}^{-1}(z),\mathfrak{z}^{-1}(w)\right) \]
where univalent extensions of branches onto $\overline{\vhyp}$ are
used, cf. Theorem~\ref{theo:23mp,1}.
  \end{defi}

  It is not immediately clear that the transfer operator is continuous
  or even well-defined. Observe that, at least formally, when
  $w=\overline{z}$, then
  \begin{multline*} {\cal P}_{\ell} f(z,\overline{z}) = \sum_{\mathfrak{z}\in\thyp(\ell)}
  D\mathfrak{z}^{-1}(z) D\mathfrak{z}^{-1} (\overline{z})
  f\left(\mathfrak{z}^{-1}(z),\mathfrak{z}^{-1}(\overline{z})\right) =\\
  \sum_{\mathfrak{z}\in\thyp(\ell)}
  D\mathfrak{z}^{-1}(z) \overline{D\mathfrak{z}^{-1}} (z)
  f\left(\mathfrak{z}^{-1}(z),\overline{\mathfrak{z}^{-1}}(z)\right) \end{multline*}
  which means that acting on the diagonal $\gamma(z) :=
  f(z,\overline{z})_{|z\in\Dm\left(\thyp(\ell)\right)}$ the transfer operator reduces to the
  Perron-Frobenius operator $\mathfrak{P}_{\ell}\gamma$.

  To establish basic properties of the transfer operator introduce
  {\em branch operators} for a generic branch $\mathfrak{z}$.
  \begin{equation}\label{equ:27mp,1}
      {\cal P}_{\mathfrak{z},\ell} f(z,w) = D\mathfrak{z}^{-1}_{\ell}(z)D\mathfrak{z}^{-1}_{\ell}(w)
  f\left(\mathfrak{z}^{-1}_{\ell}(z),\mathfrak{z}^{-1}_{\ell}(w)\right)
  .
  \end{equation}
  Because of uniformly bounded distortion,
  cf. Theorem~\ref{theo:23mp,1}, we get an estimate
  \begin{equation}\label{equ:27mp,2}
  \| {\cal P}_{\mathfrak{z},\ell} \|\leq K_{\text{norm}}
  \left|\Dm(\mathfrak{z}_{\ell})\right| \end{equation}
  for all $\ell$.

  \begin{lem}\label{lem:24mp,1}
  For every generic branch $\mathfrak{z}$ of $\thyp$ and $\ell \geq
  \ell(\mathfrak{z})$, the branch operator ${\cal P}_{\mathfrak{z},\ell}$ is
  compact.
  \end{lem}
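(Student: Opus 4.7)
The plan is to exploit the strict nesting $\mathfrak{z}_{\ell}^{-1}(\overline{\vhyp}) \subset F_{\text{hyp}} \subset \vhyp$ provided by Theorem~\ref{theo:23mp,1} and reduce compactness to Montel's theorem in two variables. By the second claim of Theorem~\ref{theo:23mp,1}, for $\ell\geq\ell(\mathfrak{z})$ sufficiently large the inverse $\mathfrak{z}_{\ell}^{-1}$ is univalent on the slit plane $\CC\setminus\bigl((-\infty,\tau_{\ell}^{-1}]\cup[1,+\infty)\bigr)$, and $\overline{\vhyp}$ is compactly contained in this slit plane. Hence $\mathfrak{z}_{\ell}^{-1}$ is holomorphic on some open neighborhood of $\overline{\vhyp}$; since $\mathfrak{z}_{\ell}^{-1}(\overline{\vhyp}) \subset F_{\text{hyp}}$ is a compact subset of the open set $\vhyp$, continuity of $\mathfrak{z}_{\ell}^{-1}$ allows us to shrink the neighborhood and obtain an open $U \supset \overline{\vhyp}$ with $\mathfrak{z}_{\ell}^{-1}(U) \subset \vhyp$.

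Next, I would take any bounded sequence $\{f_n\}\subset X$, say $\|f_n\|_{\infty}\leq M$, and consider
\[ G_n(z,w) := D\mathfrak{z}_{\ell}^{-1}(z)\, D\mathfrak{z}_{\ell}^{-1}(w)\, f_n\bigl(\mathfrak{z}_{\ell}^{-1}(z),\mathfrak{z}_{\ell}^{-1}(w)\bigr) . \]
Because $\mathfrak{z}_{\ell}^{-1}(U)\subset\vhyp$ and each $f_n$ is holomorphic on $\vhyp\times\vhyp$, each $G_n$ extends to a holomorphic function on $U\times U$, and the bound $|G_n|\leq M\cdot \sup_{U}|D\mathfrak{z}_{\ell}^{-1}|^2$ is uniform in $n$. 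By Montel's theorem in two complex variables $\{G_n\}$ is a normal family on $U\times U$, so some subsequence $G_{n_k}$ converges uniformly on compact subsets of $U\times U$; in particular the convergence is uniform on $\overline{\vhyp}\times\overline{\vhyp}$, which is exactly convergence in the norm of $X$.

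Finally I would verify that the limit $G_{\infty}$ belongs to $X$. Uniform convergence on the closure guarantees continuity there, and holomorphy on the open product follows by standard stability of holomorphy under uniform convergence. The reality on the diagonal is preserved since the symmetry of $\mathfrak{z}_{\ell}^{-1}$ with respect to $\RR$ gives $\mathfrak{z}_{\ell}^{-1}(\overline{z}) = \overline{\mathfrak{z}_{\ell}^{-1}(z)}$, whence
\[ G_{n_k}(z,\overline{z}) = \left|D\mathfrak{z}_{\ell}^{-1}(z)\right|^2 f_{n_k}\bigl(\mathfrak{z}_{\ell}^{-1}(z),\overline{\mathfrak{z}_{\ell}^{-1}(z)}\bigr) \in \RR, \]
and this passes to the uniform limit. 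Thus ${\cal P}_{\mathfrak{z},\ell}$ sends bounded sets to precompact ones, i.e.\ is compact. The only delicate point is producing the neighborhood $U$ with $\mathfrak{z}_{\ell}^{-1}(U)\subset\vhyp$ rather than merely $\subset\overline{\vhyp}$; once this is secured via compactness of $F_{\text{hyp}}$ inside $\vhyp$, everything else is a routine application of Montel and the definition of $X$.
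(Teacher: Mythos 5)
Your proof is correct and follows essentially the same route as the paper: both hinge on the compact containment $\mathfrak{z}_{\ell}^{-1}(\overline{\vhyp})\subset F_{\text{hyp}}\subset\vhyp$ from the third claim of Theorem~\ref{theo:23mp,1}, combined with a normal-families argument. The paper phrases it as compactness of the restriction operator onto $\mathfrak{z}_{\ell}^{-1}(\overline{\vhyp})$ via Cauchy estimates and Arzel\`{a}--Ascoli, while you apply Montel directly to the image sequence; these are the same mechanism.
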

  \begin{proof}
  Let $X_{\mathfrak{z},\ell}$ means the space of functions $f\in X$ restricted to
  the domain of $\mathfrak{z}_{\ell}^{-1}(\overline{\vhyp})$, still with the
  $\sup$-norm. Then, by formula~\ref{equ:27mp,1}, operator ${\cal
    P}_{\mathfrak{z},\ell}$ can be represented as the composition of a
  continuous operator on $X_{\mathfrak{z},\ell}$ and the restriction
  operator from $X$ to $X_{\mathfrak{z},\ell}$. Since
  $\mathfrak{z}_{\ell}^{-1}\left(\overline{\vhyp}\right)$ is a compact subset of
  $\vhyp$ by the last claim of Theorem~\ref{theo:23mp,1},  the restriction
  operator is compact by Cauchy's integral formula and Ascoli-Arzela's
  theorem.
  \end{proof}

  \begin{lem}\label{lem:27mp,2}
    For some $\ell_0<\infty$ and every $\ell\geq\ell_0$ the series in~\ref{defi:24mp,2}
    converges in operator norm and
    ${\cal P}_{\ell}$ is a compact operator. Furthermore,
\[ \sup \left\{ \|{\cal P}_{\ell}^n\| :\: n\geq 0,\, \ell\geq\ell_0 \right\}
< \infty .\]
  \end{lem}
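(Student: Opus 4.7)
The plan is to address the three assertions in order. For convergence in operator norm and compactness, estimate~(\ref{equ:27mp,2}) gives $\|\mathcal{P}_{\mathfrak{z},\ell}\| \leq K_{\text{norm}}|\Dm(\mathfrak{z}_\ell)|$, and since distinct branches of $\thyp$ have disjoint original domains in the fundamental annulus $A_\ell$, the total $\sum_{\mathfrak{z}}|\Dm(\mathfrak{z}_\ell)|\leq |A_\ell|$ is uniformly bounded in $\ell$ by Fact~\ref{fa:1hp,3}. Hence the defining series converges absolutely in operator norm, uniformly in $\ell$ large. Each $\mathcal{P}_{\mathfrak{z},\ell}$ is compact by Lemma~\ref{lem:24mp,1}; to obtain a single threshold $\ell_0$ applicable to all branches simultaneously, I invoke the third claim of Theorem~\ref{theo:23mp,1}, which guarantees $\mathfrak{z}_\ell^{-1}(\vhyp)\subset F_{\text{hyp}}$ compactly contained in $\vhyp$ for every branch whenever $\ell\geq\ell_0$. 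A norm-convergent sum of compact operators being compact, $\mathcal{P}_\ell$ is compact.

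For the uniform bound on $\|\mathcal{P}_\ell^n\|$, iterating the definition gives
\[ \mathcal{P}_\ell^n f(z,w) = \sum_\xi D\xi^{-1}(z)\, D\xi^{-1}(w)\, f(\xi^{-1}(z), \xi^{-1}(w)) \]
with $\xi$ ranging over $n$-fold compositions of branches of $\thyp$. By Cauchy--Schwarz,
\[ \|\mathcal{P}_\ell^n f\|_\infty \leq \|f\|_\infty \sup_{z\in\vhyp}\sum_\xi |D\xi^{-1}(z)|^2, \]
so it suffices to bound $\mathcal{S}(z):=\sum_\xi|D\xi^{-1}(z)|^2$ uniformly for $z\in\vhyp$. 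For $z$ in the natural domain $\Dm(\thyp(\ell))\subset A_\ell$, this agrees (on the diagonal) with the iterated Perron--Frobenius operator applied to $\mathbf{1}$ as noted after Definition~\ref{defi:24mp,2}, and the preimages of $z$ under $\thyp(\ell)^n$ lie in pairwise disjoint subsets of $A_\ell$. Applying Koebe's distortion theorem to the univalent extensions provided by the second claim of Theorem~\ref{theo:23mp,1} yields $|D\xi^{-1}(z)|^2 \leq K\cdot\mathrm{Area}(\xi^{-1}(B(z,r)))/(\pi r^2)$ for a fixed small $r$; summing using disjointness gives $\mathcal{S}(z)\leq K|A_\ell|/(\pi r^2)$, uniformly in $n$ and $\ell$.

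The main obstacle is extending this bound from $A_\ell$ to all of $\vhyp$, since $\vhyp$ contains points (including $x_{0,\infty}$) outside $A_\ell$, and there the extended preimages $\xi^{-1}(\vhyp)$ for different $\xi$ can overlap with positive measure, breaking the naive disjointness argument. To overcome this, I would use that by the second claim of Theorem~\ref{theo:23mp,1} every $n$-fold composition extends univalently onto the common slit plane $\CC\setminus\bigl((-\infty,\tau_\ell^{-1}]\cup[1,+\infty)\bigr)$, and that $\vhyp$ is compactly contained in this slit plane uniformly for large $\ell$, a consequence of Proposition~\ref{prop:11jp,1}. Koebe's distortion theorem applied to these extensions then gives a uniform constant $Q$ with $|D\xi^{-1}(z)|^2 \leq Q|D\xi^{-1}(z_0)|^2$ for every $z\in\vhyp$ and any fixed $z_0\in A_\ell\cap\vhyp$, transferring the uniform bound from $A_\ell$ to $\vhyp$ and completing the proof.
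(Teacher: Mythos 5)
Your proof is correct and rests on the same three ingredients the paper uses --- disjointness of branch domains in $A_{\ell}$, bounded distortion coming from the common univalent extension to the slit plane (second claim of Theorem~\ref{theo:23mp,1}), and the uniform bound on $|A_{\ell}|$ --- but your route to the uniform bound on $\|\mathcal{P}_{\ell}^n\|$ is noticeably longer than the paper's. The paper simply observes that $\mathcal{P}_{\ell}^n = \sum_{\xi\in\thyp^n}\mathcal{P}_{\xi,\ell}$ and that estimate~(\ref{equ:27mp,2}), $\|\mathcal{P}_{\xi,\ell}\| \leq K_{\text{norm}}|\Dm(\xi_{\ell})|$, extends to $n$-fold compositions $\xi$ precisely because it only uses bounded distortion; summing and using disjointness of the $\Dm(\xi_{\ell})$ gives $\|\mathcal{P}_{\ell}^n\| \leq K_{\text{norm}}|A_{\ell}|$ in one line. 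Your Cauchy--Schwarz reduction to $\mathcal{S}(z)=\sum_{\xi}|D\xi^{-1}(z)|^2$ followed by the two-step bound (disjointness on $A_{\ell}$, Koebe transfer to $\vhyp$) reaches the same conclusion but requires more care: the intermediate claim that ``the preimages of $z$ under $\thyp(\ell)^n$ lie in pairwise disjoint subsets of $A_{\ell}$'' is literally valid only for those compositions $\xi$ whose natural range contains $z$; for the remaining $\xi$ the extended $\xi^{-1}(z)$ need not land in $\Dm(\xi_{\ell})$ and disjointness of the discs $\xi^{-1}(B(z,r))$ is not automatic. Your final Koebe-transfer step does repair this (compare $|D\xi^{-1}(z)|$ with $|D\xi^{-1}(z_{\xi})|$ for a reference point $z_{\xi}$ in the natural range of $\xi$, of which there are only two choices), but it is cleaner to avoid the detour: bound $|D\xi^{-1}(z)|^2 \lesssim |\Dm(\xi_{\ell})|$ directly for all $z\in\vhyp$ as in~(\ref{equ:27mp,2}), and then sum over the genuinely disjoint domains. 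That is precisely what the paper does.
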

  \begin{proof}
  The series satisfies Cauchy's condition in operator norm by
  estimate~(\ref{equ:27mp,2}). The compactness of the limit then
  follows from Lemma~\ref{lem:24mp,1}.

  As to the additional claim, observe that for any $n>1$ operator
  ${\cal P}_{\ell}^n$ is given by a formula analogous to
  Definition~\ref{defi:24mp,2} except that the summation extends over
  $\mathfrak{z}\in\thyp^n$. Since estimate~(\ref{equ:27mp,2}) is only
  based on bounded distortion, it extends to branches of $\thyp^n$.
  \end{proof}

 \begin{lem}\label{lem:27mp,3}
    \[ \lim_{\ell\rightarrow\infty} \|{\cal P}_{\ell} - {\cal
      P}_{\infty}\| = 0 .\]
 \end{lem}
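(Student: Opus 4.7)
The approach is to split the operator $\mathcal{P}_\ell - \mathcal{P}_\infty$ into a finite ``main'' part controlled by branch-wise uniform convergence and a ``tail'' controlled uniformly in $\ell$ by tightness. Fix $\epsilon>0$. Since $\thyp$ is uniformly tight by Theorem~\ref{theo:23mp,1}, Corollary~\ref{coro:21ja,1} produces a finite set $\mathfrak{Z}$ of generic branches and $\ell_1<\infty$ with
\[ \sum_{\mathfrak{z}\notin\mathfrak{Z}} \left|\Dm(\mathfrak{z}_\ell)\right| < \epsilon \quad \text{for all } \ell\ge\ell_1. \]
The corresponding inequality at $\ell=\infty$ follows by first noting that $\sum_{\mathfrak{z}} |\Dm(\mathfrak{z}_\ell)|\le |A_\ell|$ is uniformly bounded and then invoking Lemma~\ref{lem:21ja,1} branch-by-branch to compare finite partial sums (enlarging $\mathfrak{Z}$ and $\ell_1$ if necessary). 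Combining with the uniform branch-operator bound~(\ref{equ:27mp,2}) yields
\[ \sum_{\mathfrak{z}\notin\mathfrak{Z}} \|\mathcal{P}_{\mathfrak{z},\ell}\| < K_{\text{norm}}\epsilon \quad \text{for all } \ell\ge\ell_1 \text{ or } \ell=\infty. \]

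For each fixed $\mathfrak{z}\in\mathfrak{Z}$, I will show $\|\mathcal{P}_{\mathfrak{z},\ell} - \mathcal{P}_{\mathfrak{z},\infty}\| \to 0$. A generic branch is a finite composition of $T$-branches, each of the form $\tau_\ell^n \exp(\phi_{\sigma,\ell})$; its inverse on $\overline{\vhyp}$ is expressed through $\phi_{\sigma,\ell}^{-1}\circ(\log\cdot - n\log\tau_\ell + \text{shift})$. By Proposition~\ref{prop:11jp,1}, $\phi_{\pm,\ell}^{-1} \to \phi_{\pm,\infty}^{-1}$ uniformly, and $\tau_\ell\to\tau_\infty$, so $\mathfrak{z}_\ell^{-1}\to \mathfrak{z}_\infty^{-1}$ uniformly on $\overline{\vhyp}$. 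By the second claim of Theorem~\ref{theo:23mp,1}, each $\mathfrak{z}_\ell^{-1}$ extends univalently to a slit domain strictly containing $\overline{\vhyp}$, so Cauchy's integral formula promotes the uniform convergence of $\mathfrak{z}_\ell^{-1}$ to uniform convergence of $D\mathfrak{z}_\ell^{-1}$ on $\overline{\vhyp}$. Moreover, by the third claim of Theorem~\ref{theo:23mp,1}, $\mathfrak{z}_\ell^{-1}(\overline{\vhyp}) \subset F_{\text{hyp}} \subset \vhyp$ for all $\ell$ large, so any $f\in X$ restricted to $F_{\text{hyp}}\times F_{\text{hyp}}$ has Lipschitz constant bounded by $C\|f\|$ via Cauchy's estimate on the larger domain $\vhyp\times\vhyp$.

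Combining these ingredients by telescoping
\begin{align*}
& D\mathfrak{z}^{-1}_\ell(z)\,D\mathfrak{z}^{-1}_\ell(w)\,f(\mathfrak{z}^{-1}_\ell(z),\mathfrak{z}^{-1}_\ell(w)) - D\mathfrak{z}^{-1}_\infty(z)\,D\mathfrak{z}^{-1}_\infty(w)\,f(\mathfrak{z}^{-1}_\infty(z),\mathfrak{z}^{-1}_\infty(w)) \\
&= \bigl[D\mathfrak{z}^{-1}_\ell(z) - D\mathfrak{z}^{-1}_\infty(z)\bigr] D\mathfrak{z}^{-1}_\ell(w)\,f(\mathfrak{z}^{-1}_\ell(z),\mathfrak{z}^{-1}_\ell(w)) \\
&\quad + D\mathfrak{z}^{-1}_\infty(z) \bigl[D\mathfrak{z}^{-1}_\ell(w) - D\mathfrak{z}^{-1}_\infty(w)\bigr] f(\mathfrak{z}^{-1}_\ell(z),\mathfrak{z}^{-1}_\ell(w)) \\
&\quad + D\mathfrak{z}^{-1}_\infty(z)\,D\mathfrak{z}^{-1}_\infty(w) \bigl[f(\mathfrak{z}^{-1}_\ell(z),\mathfrak{z}^{-1}_\ell(w)) - f(\mathfrak{z}^{-1}_\infty(z),\mathfrak{z}^{-1}_\infty(w))\bigr],
\end{align*}
each of the three terms has sup-norm over $\overline{\vhyp}^2$ bounded by $\|f\|$ times a quantity tending to $0$, so $\|\mathcal{P}_{\mathfrak{z},\ell} - \mathcal{P}_{\mathfrak{z},\infty}\|\to 0$. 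Summing over $\mathfrak{z}\in\mathfrak{Z}$ (finite) and adding the tail estimate, $\limsup_\ell \|\mathcal{P}_\ell - \mathcal{P}_\infty\| \le 2K_{\text{norm}}\epsilon$, and letting $\epsilon\to 0$ concludes the proof. The main obstacle is the first paragraph: upgrading the tightness statement, which is formulated for finite $\ell$, to include $\ell=\infty$; this is where Lemma~\ref{lem:21ja,1} and the a priori bound $\sum_{\mathfrak{z}}|\Dm(\mathfrak{z}_\ell)| \le |A_\ell|$ are essential.
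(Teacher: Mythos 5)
Your proof is correct and follows essentially the same structure as the paper's: branch-by-branch operator-norm convergence (via Proposition~\ref{prop:11jp,1}) for the finitely many ``large'' branches, uniform tail control from the tightness estimate and~(\ref{equ:27mp,2}), and a $3\epsilon$-type argument to combine them. The paper states this very tersely; you have filled in the telescoping and the Cauchy-estimate step for convergence of $D\mathfrak{z}^{-1}_{\ell}$, and your first-paragraph worry about $\ell=\infty$ is unnecessary since the paper's tightness and convergence statements are formulated with $\ell\le\infty$ throughout (cf.\ Definition~\ref{defi:22ja,1}).
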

 \begin{proof}
  Observe first that for every
  generic branch $\mathfrak{z}$ branch operators ${\cal
    P}_{\mathfrak{z},\ell}$ converge in operator norm to ${\cal
    P}_{\mathfrak{z},\infty}$, cf. Proposition~\ref{prop:11jp,1}. By
  the uniform tightness of $\thyp$ and estimate~(\ref{equ:27mp,2}) for
  every $\epsilon>0$ there is a finite set of branches
  $\mathfrak{B}(\epsilon)$ such that for every $\ell$ sufficiently
  large
  \[ \bigl\| \left( \sum_{\mathfrak{z}\in\mathfrak{B}} {\cal
    P}_{\mathfrak{z},\ell} \right) - {\cal P}_{\ell} \bigr\| < \epsilon .\]
  The claim of the lemma now follows by a $3\epsilon$
  argument.
  \end{proof}

  \paragraph{Fixed points of transfer operators.}
  Let us consider the $D_X$ which consists of all $f\in X$ such that
  $\gamma_f(z) := f(z,\overline{z})_{|z\in \Dm\left(\thyp(\ell)\right)}$ is a density when restricted to
  $z\in\Dm\left(\thyp(\ell)\right)$.

  \subparagraph{An identity principle for two complex variables.}
  \begin{fact}\label{fa:30zp,1}
  Suppose that $U$ is a domain in $\CC$ and $F :\:
  U\times U\rightarrow\CC$ is holomorphic. If $F(z,\overline{z})=0$ for
  all $z$ in an open subset of $U$, then $F$ vanishes identically.
  \end{fact}
  This Fact is a particular case of Theorem 7, p. 36, in~\cite{bochner}.

  \begin{prop}\label{prop:27mp,1}
  For every $\ell$ large enough ${\cal P}_{\ell}$ has a unique fixed
  point $f^{\infty}_{\ell} \in D_X$. Additionally,
  $f^{\infty}_{\ell}(z,\overline{z}) = g^{\infty}_{\ell}$,
  cf. Fact~\ref{fa:24mp,1}. Moreover,
  \[ \lim_{\ell\rightarrow\infty} \| f^{\infty}_{\ell} -
  f_{\infty}^{\infty} \|_X = 0 .\]
  \end{prop}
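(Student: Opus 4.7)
The plan is to use the spectral theory of the compact operator $\mathcal{P}_\ell$ on $X$ together with Fact~\ref{fa:24mp,1} and the two-variable identity principle (Fact~\ref{fa:30zp,1}). The diagonal restriction $\gamma_f(z) := f(z,\overline{z})$ intertwines $\mathcal{P}_\ell$ acting on $X$ with $\mathfrak{P}_\ell$ acting on its image, and this intertwining, together with Fact~\ref{fa:30zp,1}, transports the stochastic stability of $\mathfrak{P}_\ell$ to a sharp spectral statement about $\mathcal{P}_\ell$.

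First I would analyze $\ker(I - \mathcal{P}_\ell)$. If $\mathcal{P}_\ell f = f$ then $\mathfrak{P}_\ell \gamma_f = \gamma_f$, so by Fact~\ref{fa:24mp,1} one has $\gamma_f = c\, g_\ell^\infty$ for some $c \in \RR$. If $c = 0$, then $\gamma_f$ vanishes on a nonempty open subset of $\vhyp$ (nonempty by the third claim of Theorem~\ref{theo:23mp,1}, since every branch preimage of $\vhyp$ is an open subset of $F_{\text{hyp}} \subset \vhyp$), and Fact~\ref{fa:30zp,1} then forces $f \equiv 0$. Hence $\ker(I - \mathcal{P}_\ell)$ is at most one-dimensional; the same argument applied to any eigenvalue $|\lambda| = 1$ of $\mathcal{P}_\ell$ rules out $\lambda \ne 1$. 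Uniqueness in $D_X$ is then immediate, since any two elements of $D_X \cap \ker(I - \mathcal{P}_\ell)$ have the same diagonal $g_\ell^\infty$ and so coincide by the identity principle.

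For existence I apply Cesaro averages $A_N := N^{-1}\sum_{n=0}^{N-1} \mathcal{P}_\ell^n \mathbf{1}$ to the constant function. By Lemma~\ref{lem:27mp,2} this sequence is norm-bounded in $X$, while $A_N - \mathcal{P}_\ell A_N = N^{-1}(\mathbf{1} - \mathcal{P}_\ell^N \mathbf{1}) \to 0$ in norm; combined with compactness of $\mathcal{P}_\ell$, this forces every subsequential limit of $A_N$ in $X$ to lie in $\ker(I - \mathcal{P}_\ell)$. On the diagonal, Fact~\ref{fa:24mp,1} gives $\gamma_{A_N} \to |\Dm(\thyp(\ell))|\, g_\ell^\infty$ in $L_1$, while uniform convergence of a subsequence of $A_N$ in $X$ forces pointwise convergence on $\Dm(\thyp(\ell)) \cap \vhyp$; hence any subsequential limit $f^\star$ satisfies $\gamma_{f^\star} = |\Dm(\thyp(\ell))|\, g_\ell^\infty$. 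One-dimensionality of the fixed-point space then forces all subsequential limits to coincide, so $A_N \to f^\star$ in $X$. Setting $f_\ell^\infty := f^\star / |\Dm(\thyp(\ell))|$ yields $f_\ell^\infty \in D_X$ with $\gamma_{f_\ell^\infty} = g_\ell^\infty$, establishing the second assertion of the Proposition and in passing showing that $g_\ell^\infty$ is in fact real-analytic on $\Dm(\thyp(\ell)) \cap \vhyp$.

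For the convergence, the uniform work of the previous sections pays off via Lemma~\ref{lem:27mp,3}: $\mathcal{P}_\ell \to \mathcal{P}_\infty$ in operator norm. Since $1$ is isolated in the spectrum of the compact operator $\mathcal{P}_\infty$ (by the analysis just carried out at $\ell = \infty$), standard perturbation theory for compact operators produces, for $\ell$ large, a rank-one spectral projection
\[ Q_\ell := \frac{1}{2\pi i} \oint_\Gamma (z I - \mathcal{P}_\ell)^{-1}\, dz , \]
where $\Gamma$ is a small circle around $1$ disjoint from the rest of the spectrum of $\mathcal{P}_\infty$, and $Q_\ell \to Q_\infty$ in operator norm. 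From $A_N \mathbf{1} \to Q_\ell \mathbf{1}$ we have $f_\ell^\infty = Q_\ell \mathbf{1} / |\Dm(\thyp(\ell))|$, and combined with $|\Dm(\thyp(\ell))| \to |\Dm(\thyp(\infty))|$ from Lemma~\ref{lem:21ja,1} this yields $\|f_\ell^\infty - f_\infty^\infty\|_X \to 0$. The main technical point, and the part I expect to be the most delicate, is verifying that $1$ is isolated in the spectrum of $\mathcal{P}_\infty$ rather than merely an accumulation point of peripheral spectrum; this is precisely where Fact~\ref{fa:30zp,1} is indispensable, since it rules out nontrivial eigenvectors with vanishing diagonal, something that cannot be detected from the $L_1$-stability of $\mathfrak{P}_\infty$ alone.
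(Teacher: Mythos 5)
Your proposal is correct, but it reaches the conclusion by a genuinely different route than the paper. The paper's proof is purely a compactness argument: it iterates $\mathcal{P}_{\ell}^n f$ directly, notes that the orbit lies in a compact subset of $X$ by Lemma~\ref{lem:27mp,2}, and identifies all subsequential limits via their common diagonal ($L_1$-limit $g_{\ell}^{\infty}$ from Fact~\ref{fa:24mp,1}) and the identity principle Fact~\ref{fa:30zp,1}; the convergence $f_{\ell}^{\infty}\rightarrow f_{\infty}^{\infty}$ is then obtained by observing that $\{\mathcal{P}_{\infty}f_{\ell}^{\infty}\}$ is pre-compact, that $\|\mathcal{P}_{\infty}f_{\ell}^{\infty}-f_{\ell}^{\infty}\|\rightarrow 0$ by Lemma~\ref{lem:27mp,3}, and that every subsequential limit is a fixed point of $\mathcal{P}_{\infty}$, hence equals $f_{\infty}^{\infty}$ by uniqueness. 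You instead run the mean ergodic theorem (Cesaro averages of $\mathbf{1}$) for existence and Riesz spectral projections with norm-resolvent perturbation for the convergence. Both routes rest on the same four pillars (compactness, $\|\mathcal{P}_{\ell}-\mathcal{P}_{\infty}\|\rightarrow 0$, stochastic stability of $\mathfrak{P}_{\ell}$, and the two-variable identity principle), so nothing new is needed; your version is heavier machinery but buys more, namely a spectral gap for $\mathcal{P}_{\infty}$ that persists uniformly for large $\ell$, which the paper's argument never produces.

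Two small points you should make explicit. First, for $Q_{\infty}$ to be rank one you need the eigenvalue $1$ of $\mathcal{P}_{\infty}$ to be semisimple, not just geometrically simple; a Jordan block would give $\|\mathcal{P}_{\infty}^n\|\gtrsim n$, so this follows from the uniform power bound in Lemma~\ref{lem:27mp,2}, but it is not automatic from your kernel computation. (Your closing worry is misplaced: for a compact operator every nonzero spectral point is automatically isolated, so Fact~\ref{fa:30zp,1} is not what makes $1$ isolated; its role is to make the eigenspace one-dimensional and to kill eigenvectors with vanishing diagonal.) Second, $X$ is a real Banach space, so the contour integral defining $Q_{\ell}$ and the discussion of peripheral eigenvalues $\lambda\neq 1$ take place in the complexification of $X$; this is routine but should be said, since the diagonal-restriction argument for $|\lambda|=1$, $\lambda\neq 1$ then uses the complex-linear extension of Fact~\ref{fa:24mp,1} to complex-valued $L_1$.
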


  \paragraph{Proof of Proposition~\ref{prop:27mp,1}.}
  Let $f\in D_X$. By Fact~\ref{fa:24mp,1} functions ${\cal P}_{\ell}^n
  \gamma_f$ converge to $g_{\ell}^{\infty}$ in $L_1\bigl(\Dm\left(\thyp(\ell)\right)\bigr)$.
  By the compactness  and uniform bound of Lemma~\ref{lem:27mp,2},
  sequence ${\cal P}_{\ell}^n f$ is contained in a compact subset of
  $X$. Take any two convergent subsequences ${\cal P}_{\ell}^{n_p}
  f$. Since their limits are the same on the diagonal
  $(z,\overline{z}) :\: z\in\Dm\left(\thyp(\ell)\right)$  they are the same in
  $\vhyp\times\vhyp$ by Fact~\ref{fa:30zp,1}. Hence, the entire sequence ${\cal P}_{\ell}^n f$ converges to a
  fixed point $f_{\ell}^{\infty}$ of ${\cal P}_{\ell}$. Since the transfer operator
  preserves $D_X$, then $f_{\ell}^{\infty}\in D_X$. The same argument based on the
  identity principle shows that the limit is independent of $f$ and
  hence unique for each $\ell$. Since the initial $f$ can be constant
  on $\vhyp$, it also follows that the set $\{ f_{\ell}^{\infty} :\:
  \ell\geq\ell_0\}$ is bounded by some $K_1$.

  It remains to show the convergence of $f_{\ell}^{\infty}$ to
  $f_{\infty}^{\infty}$. Since ${\cal P}_{\infty}$ is compact, the set
  $\{ {\cal P}_{\infty} f_{\ell}^{\infty} :\: \ell\geq\ell_0 \}$ is
  pre-compact in $X$.
  Observe that
  \[ \lim_{\ell\rightarrow\infty} \|{\cal P}_{\infty} f_{\ell}^{\infty} - f_{\ell}^{\infty} \|_X \leq
  \lim_{\ell\rightarrow\infty} \|{\cal P}_{\infty}-{\cal P}_{\ell}\|
  K_1 = 0\]
  by Lemma~\ref{lem:27mp,3}.

  Let $\hat{f}^{\infty}$ be the limit for any convergent subsequence $\ell_p$ of
  ${\cal P}_{\infty} f_{\ell}^{\infty}$. Then, by what has just been
  observed, $\hat{f}^{\infty} = \lim_{p\rightarrow\infty}
  f_{\ell_p}^{\infty}$. Then for any $p$
  \[ {\cal P}_{\infty} \hat{f}^{\infty} - \hat{f}_{\infty} = {\cal
    P}_{\infty} (\hat{f}^{\infty} - f_{\ell_p}^{\infty}) + ({\cal
    P}_{\infty}-{\cal P}_{\ell_p}) f_{\ell_p}^{\infty} +
  (f_{\ell_p}^{\infty} - \hat{f}^{\infty}) .\]
  Since every term on the right-hand side tends to $0$ as
  $p\rightarrow\infty$, $\hat{f}^{\infty}$ is a fixed point of ${\cal
    P}_{\infty}$ and $\hat{f}^{\infty} = f_{\infty}^{\infty}$ by the
  uniqueness of the fixed point.

  Proposition~\ref{prop:27mp,1} has been proved.

  \subsection{Invariant measures for original mappings $T_{\ell}$.}
  The general construction for passing from invariant measures for induced maps
  $\thyp(\ell)$ to measures for $T_{\ell}$ is well known. Let
  $r_{\thyp}(\mathfrak{z})$ denote the return time for branch
  $\mathfrak{z}$ of $\thyp$, i.e. the number of iterates of $T$ which
  compose to $\mathfrak{z}$.

  If $\mu_{\text{hyp},\ell}$ is an invariant measure
  for $\thyp(\ell)$, which is piecewise equal to $T_{\ell}^j$, then
  \[ \mu_{\ell} := \sum_{\mathfrak{z}\in \thyp}
  \sum_{j=0}^{r_{\thyp}(\mathfrak{z})-1}
  \left( T^j_{\ell|\Dm(\mathfrak{z}_{\ell})}\right)_*
    \mu_{\text{hyp},\ell}  \]
  is immediately seen to be invariant under the push-forward by $T_{\ell}$.

    We will work in the spaces
  $L_p := L_p(\CC,\Leb_2,\RR)$.

  \begin{defi}\label{defi:28mp,1}
    Given a set $B$ of branches of $\thyp$ the {\em propagation operator}
    \[ \hat{\mathfrak{P}}_{B,\ell} :\: L_{\infty}
    \rightarrow L_1 \]
    is defined by
    \[ \hat{\mathfrak{P}}_{B,\ell} g(u) = \sum_{\mathfrak{z}\in B}
    \sum_{j=0}^{r_{\thyp}(\mathfrak{z})-1} |DT_{\ell}^{-j}(u)|^2
    (g\cdot\chi_{Dm(\mathfrak{z}_{\ell})})\left(T_{\ell}^{-j}(u)\right)
    .\]
    When $B$ is not specified, it is assumed to be the set of all
    branches of $\thyp$.
  \end{defi}

  The convergence of this sum will addressed later.

  Define {\em simple propagation operators}

  \begin{equation}\label{equ:6na,1} \hat{\mathfrak{P}}_{\mathfrak{z},j,\ell} g(u) :=
   |DT_{\ell}^{-j}(u)|^2
   (g\cdot\chi_{Dm(\mathfrak{z}_{\ell})})\left(T_{\ell}^{-j}(u)\right)
   \end{equation}
   where $\mathfrak{z}$ is any branch of $\thyp$ and
   $0\leq j<r_{\thyp}(\mathfrak{z})$.


  \begin{lem}\label{lem:4np,1}
    For any branch $\mathfrak{z}$ of $\thyp$, $0\leq
    j<r(\mathfrak{z})$ and $\ell$ sufficiently large,
    \[ \lim_{\ell\rightarrow\infty}
    \hat{\mathfrak{P}}_{\mathfrak{z},j,\ell} g_{\ell}^{\infty}  =
    \hat{\mathfrak{P}}_{\mathfrak{z},j,\ell} g_{\infty}^{\infty} \]
    in $L_1$, cf. Fact~\ref{fa:24mp,1}.
  \end{lem}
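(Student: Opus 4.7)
The plan is to reduce the $L_1$ convergence to the uniform operator bound of estimate~(\ref{equ:3np,1}) combined with the sup-norm convergence of densities supplied by Proposition~\ref{prop:27mp,1}. Linearity of the simple propagation operator gives
\[ \hat{\mathfrak{P}}_{\mathfrak{z},j,\ell}\, g_\ell^\infty - \hat{\mathfrak{P}}_{\mathfrak{z},j,\ell}\, g_\infty^\infty = \hat{\mathfrak{P}}_{\mathfrak{z},j,\ell}\bigl(g_\ell^\infty - g_\infty^\infty\bigr), \]
so it is enough to estimate the $L_1$ norm of the right-hand side in terms of $\|g_\ell^\infty - g_\infty^\infty\|_\infty$. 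Before doing so, one must verify that $g_\infty^\infty$ is legitimately defined on $\Dm(\mathfrak{z}_\ell)$: since $g_\infty^\infty(z) = f_\infty^\infty(z,\overline{z})$ and $f_\infty^\infty$ is continuous on $\overline{\vhyp}\times\overline{\vhyp}$, this diagonal restriction defines $g_\infty^\infty$ on all of $\vhyp$, while Theorem~\ref{theo:23mp,1} guarantees $\Dm(\mathfrak{z}_\ell) \subset F_{\text{hyp}} \subset \vhyp$ for every $\ell \geq \ell_0$.

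First I would verify the uniform operator bound. Each single-step operator $\hat{\mathfrak{P}}_{\mathfrak{z},j,\ell}$ defined in~(\ref{equ:6na,1}) is a non-negative summand of the full branch operator $\hat{\mathfrak{P}}_{\mathfrak{z},\ell}$, so the bound~(\ref{equ:3np,1}) applied term by term yields
\[ \bigl\|\hat{\mathfrak{P}}_{\mathfrak{z},j,\ell}\, h\bigr\|_{L_1} \leq K_{\text{prop}}\, \bigl|\Dm(\mathfrak{z}_\ell)\bigr|\, \|h\|_{L_\infty(\Dm(\mathfrak{z}_\ell))} \leq K_{\text{prop}}\,|\vhyp|\,\|h\|_{L_\infty(\vhyp)}, \]
with a constant uniform in $\ell$. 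Applying this to $h := g_\ell^\infty - g_\infty^\infty$ reduces the claim to establishing $\|g_\ell^\infty - g_\infty^\infty\|_{L_\infty(\vhyp)} \to 0$.

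That final convergence is immediate from Proposition~\ref{prop:27mp,1}: the convergence $f_\ell^\infty \to f_\infty^\infty$ in $X$ is uniform convergence on $\overline{\vhyp}\times\overline{\vhyp}$, and restricting to the diagonal $\{(z,\overline{z}) : z\in\vhyp\}$ gives exactly $\sup_{z\in\vhyp}|g_\ell^\infty(z) - g_\infty^\infty(z)| \to 0$. Combining this with the operator bound completes the argument.

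The main obstacle is essentially bookkeeping about domains: one must justify that $\Dm(\mathfrak{z}_\ell)$ always sits inside the common reference region $\vhyp$ on which the holomorphic extensions $f_\ell^\infty$ are simultaneously defined, so that $g_\ell^\infty$ and $g_\infty^\infty$ can genuinely be compared at the same points. This is precisely the content of the third claim of Theorem~\ref{theo:23mp,1} together with the fixed choice of $\vhyp$ and $\mathfrak{B}$ made at the start of this section. Beyond that no serious analytic difficulty arises: the simple propagation operator is effectively a weighted pullback through a single univalent composition of $j$ branches with bounded distortion, and no passage through a singular or parabolic region is needed since $\mathfrak{z}$ is fixed and $j < r(\mathfrak{z})$.
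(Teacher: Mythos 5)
The statement as printed contains a typo which your proof takes at face value: the right-hand side is meant to read $\hat{\mathfrak{P}}_{\mathfrak{z},j,\infty}\, g_{\infty}^{\infty}$, with the operator evaluated at $\ell=\infty$. This is clear both from the paper's own proof, which compares $|DT_{\ell}^{-j}|^2\left(\hat{g}_{\ell}\cdot\chi_{\Dm(\mathfrak{z}_{\ell})}\right)\circ T_{\ell}^{-j}$ against $|DT_{\infty}^{-j}|^2\left(\hat{g}_{\infty}\cdot\chi_{\Dm(\mathfrak{z}_{\infty})}\right)\circ T_{\infty}^{-j}$, and from the way the lemma is invoked in the proof of Theorem~\ref{theo:28mp,1}, where the quantity to be controlled is $\hat{\mathfrak{P}}_{\mathfrak{z},j,\ell}\, g_{\ell}^{\infty} - \hat{\mathfrak{P}}_{\mathfrak{z},j,\infty}\, g_{\infty}^{\infty}$. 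Your argument reduces everything to $\hat{\mathfrak{P}}_{\mathfrak{z},j,\ell}\bigl(g_{\ell}^{\infty}-g_{\infty}^{\infty}\bigr)$, i.e.\ it only controls the variation of the density while keeping the operator fixed, and therefore proves a statement too weak for the application.

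What is missing is the convergence of the operators themselves, namely the term $\bigl(\hat{\mathfrak{P}}_{\mathfrak{z},j,\ell} - \hat{\mathfrak{P}}_{\mathfrak{z},j,\infty}\bigr) g_{\infty}^{\infty}$: in formula~(\ref{equ:6na,1}) both the inverse branches $T_{\ell}^{-j}$ and the indicators $\chi_{\Dm(\mathfrak{z}_{\ell})}$ depend on $\ell$. The paper's proof splits the full difference into a term in which the Jacobians, the inverse branches and the densities vary over the fixed set $T_{\infty}^{-j}\left(\Dm(\mathfrak{z}_{\infty})\right)$ --- controlled by the uniform convergence of Proposition~\ref{prop:11jp,1} together with the sup-norm convergence of densities that you do establish --- and a second term involving $\chi_{\Dm(\mathfrak{z}_{\ell})}-\chi_{\Dm(\mathfrak{z}_{\infty})}$, which after a change of variables by $T_{\ell}^{-j}$ is handled by the $L_1$ convergence of the characteristic functions of the domains, i.e.\ Lemma~\ref{lem:21ja,1} and Fact~\ref{fa:4np,1}. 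Your linearity reduction and the uniform bound $\|\hat{\mathfrak{P}}_{\mathfrak{z},j,\ell}\, h\|_1 \leq K \|h\|_{\infty}$ are a correct and necessary ingredient, but the convergence of the domains and of the inverse branches is the substantive content of the lemma and cannot be omitted.
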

  \begin{proof}
By Proposition~\ref{prop:27mp,1} densities $g_{\ell}^{\infty}(u)$ extend
to real analytic functions
\[ \hat{g}_{\ell}(u) := f_{\ell}^{\infty}(u,\overline{u}) \] which
converge uniformly on $\vhyp$. Replacing $g_{\ell}^{\infty}$ with
$\hat{g}_{\ell}$ in the claim of the Lemma does not
change its meaning, since the characteristic functions of
$\Dm(\mathfrak{z}_{\ell})$ force the restriction to appropriate domains.
    In connection with formula~(\ref{equ:6na,1}) write
    \begin{multline*} |DT_{\ell}^{-j}|^2
      \left(\hat{g}_{\ell}\cdot\chi_{\Dm(\mathfrak{z}_{\ell})}\right)\circ T^{-j}_{\ell} -
    |DT^{-j}_{\infty}|^2\left(\hat{g}_{\infty}\cdot
    \chi_{\Dm(\mathfrak{z}_{\infty})}\right)\circ T^{-j}_{\infty} =\\ \left(
    |DT^{-j}_{\ell}|^2\hat{g}_{\ell}\circ T^{-j}_{\ell}-|DT^{-j}_{\infty}|^2
    |\hat{g}_{\infty}\circ T^{-j}_{\infty}\right)
    \chi_{T_{\infty}^{-j}\left(\Dm(\mathfrak{z}_{\infty})\right)} +\\
    \left(\chi_{\Dm(\mathfrak{z}_{\ell})}\circ
    T^{-j}_{\ell}-\chi_{\Dm(\mathfrak{z}_{\infty})}\circ T_{\ell}^{-j}\right)
    \hat{g}_{\ell}\circ T^{-j}_{\ell} |DT^{-j}_{\ell}|^2 .
    \end{multline*}

    To see that the first term goes to $0$ in $L_1$, observe
    $\hat{g}_{\ell}, T^{-j}_{\ell} \rightarrow \hat{g}_{\infty},
    T^{-j}_{\infty}$ uniformly on compact subsets of
    $\vhyp$. Next, $\mathfrak{z}_{\ell}$ maps onto
    $\bigcup_{h\notin\mathfrak{B}} \Dm(h_{\ell})$,
    cf. Theorem~\ref{theo:23mp,1}. For any fixed $h$,
     $\chi_{\Dm(h_{\ell}\circ\mathfrak{z}_{\ell})}$ converges to
    $\chi_{\Dm(h_{\infty}\circ\mathfrak{z}_{\infty})}$ in $L_1$ by
    Lemma~\ref{lem:26ma,1}. Then the sums over all $h$ also converge
    by the dominated convergence theorem.

    After changing variables by $T^{-j}_{\ell}$, we estimate the
    $L_1$-norm of the second
    term as follows:
    \begin{multline*} \left\|  \left(\chi_{\Dm(\mathfrak{z}_{\ell})}\circ
    T^{-j}_{\ell}-\chi_{\Dm(\mathfrak{z}_{\infty})}\circ T_{\ell}^{-j}\right)
    \hat{g}_{\ell}\circ T^{-j}_{\ell} |DT^{-j}_{\ell}|^2
    \right\|_1 =\\ \left\| (\chi_{\Dm(\mathfrak{z}_{\ell})} -
    \chi_{Dm(\mathfrak{z}_{\infty})}) \hat{g}_{\ell}\right\|_1 .
    \end{multline*}

    Since $\hat{g}_{\ell}$ are uniformly bounded,
    cf. Proposition~\ref{prop:27mp,1}, the second term goes to $0$ as
    $\ell\rightarrow\infty$ by Fact~\ref{fa:4np,1}.
  \end{proof}

  \begin{theo}\label{theo:28mp,1}
    Mappings $T_{\ell}$ have invariant densities
    $\gamma_{\ell} \in L_1\left(\Leb_2)\right)$, each supported on the
    corresponding $\Dm(T_{\ell})$. The convergence
    $\lim_{\ell\rightarrow\infty} \|\gamma_{\ell}-\gamma_{\infty}\|_1
    = 0$.

    Additionally, for some $R_{\text{analytic}}>0$ and all $\ell$ sufficiently large
    $\gamma_{\ell}$ extend to holomorphic
    functions of two complex variables on
    \[ D(x_{0,\infty},R_{\text{analytic}})\times
    D(x_{0,\infty},R_{\text{analytic}})\] which
    converge uniformly on this set.
  \end{theo}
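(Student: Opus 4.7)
The plan is to define $\gamma_\ell := \hat{\mathfrak{P}}_\ell g_\ell^\infty$ via the propagation operator of Definition~\ref{defi:28mp,1}, with $g_\ell^\infty$ from Fact~\ref{fa:24mp,1}, and verify the three conclusions in sequence. Convergence of the defining series in $L_1$ follows from the norm bound~(\ref{equ:3np,1}), boundedness of $g_\ell^\infty$ from Proposition~\ref{prop:27mp,1}, and uniform tightness of $\thyp$ (Theorem~\ref{theo:23mp,1}), yielding $\|\gamma_\ell\|_1 \leq K\,\sum_{\mathfrak{z}} r_\thyp(\mathfrak{z})\,|\Dm(\mathfrak{z}_\ell)|$ uniformly in $\ell$. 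Invariance under $T_\ell$ is a standard telescoping: $T_{\ell*}\hat{\mathfrak{P}}_{\mathfrak{z},j,\ell}g_\ell^\infty = \hat{\mathfrak{P}}_{\mathfrak{z},j+1,\ell}g_\ell^\infty$ for $j<r_\thyp(\mathfrak{z})-1$, while the boundary terms $j=r_\thyp(\mathfrak{z})-1$ aggregate under $T_\ell$ into $\mathfrak{P}_\ell g_\ell^\infty = g_\ell^\infty$, matching the $j=0$ contributions; support in $\Dm(T_\ell)$ follows by inspection of the formula.

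For the $L_1$ convergence, given $\epsilon>0$, uniform tightness and Corollary~\ref{coro:21ja,1} produce a finite set $\mathfrak{Z}(\epsilon)$ of branches such that the tail
\[ \Bigl\|\sum_{\mathfrak{z}\notin \mathfrak{Z}(\epsilon)}\sum_{j=0}^{r_\thyp(\mathfrak{z})-1}\hat{\mathfrak{P}}_{\mathfrak{z},j,\ell}g_\ell^\infty\Bigr\|_1 \leq K\sum_{\mathfrak{z}\notin \mathfrak{Z}(\epsilon)} r_\thyp(\mathfrak{z})\,|\Dm(\mathfrak{z}_\ell)| < \epsilon \]
holds for all $\ell$ sufficiently large, including $\ell=\infty$. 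The remaining truncation $\sum_{\mathfrak{z}\in\mathfrak{Z}(\epsilon)}\sum_j\hat{\mathfrak{P}}_{\mathfrak{z},j,\ell}g_\ell^\infty$ is a finite sum of simple propagation operators, each converging in $L_1$ by Lemma~\ref{lem:4np,1}, and a $3\epsilon$ argument delivers $\|\gamma_\ell-\gamma_\infty\|_1\to 0$.

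For the analytic extension on $D(x_{0,\infty},R_{\text{analytic}})\times D(x_{0,\infty},R_{\text{analytic}})$, I would assemble the two-variable propagation
\[ F_{\mathfrak{z},j,\ell}(z,w) := DT_\ell^{-j}(z)\,DT_\ell^{-j}(w)\,f_\ell^\infty\bigl(T_\ell^{-j}(z),T_\ell^{-j}(w)\bigr), \]
where $T_\ell^{-j}$ is the specific inverse branch leading from the bidisk into $\Dm(\mathfrak{z}_\ell)$ via the univalent extensions of Lemma~\ref{lem:3hp,2} and Proposition~\ref{prop:10hp,1}, and summation is restricted to pairs $(\mathfrak{z},j)$ with $D(x_{0,\infty},R_{\text{analytic}}) \subset T_\ell^j(\Dm(\mathfrak{z}_\ell))$. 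Each $F_{\mathfrak{z},j,\ell}$ is then holomorphic on the bidisk. A Koebe distortion estimate on the univalent extension domains bounds $\sup_{D(x_{0,\infty},R_{\text{analytic}})}|DT_\ell^{-j}|^2$ by a constant multiple of $|T_\ell^{-j}(D(x_{0,\infty},R_{\text{analytic}}))|/R_{\text{analytic}}^2$, and interchanging summation with integration yields
\[ \sum_{\mathfrak{z},j}|T_\ell^{-j}(D(x_{0,\infty},R_{\text{analytic}}))| \leq \int_{A_\ell} r_\thyp\,d\Leb_2, \]
uniformly bounded by tightness. Hence $F_\ell := \sum F_{\mathfrak{z},j,\ell}$ converges uniformly on the bidisk to a holomorphic function. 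Since $f_\ell^\infty(z,\overline{z})=g_\ell^\infty(z)$, one checks that $F_\ell(z,\overline{z})=\gamma_\ell(z)$ on the diagonal, so $F_\ell$ is the sought extension. Uniform convergence $F_\ell\to F_\infty$ then comes from the same truncation-plus-term-by-term strategy, now with sup-norm convergence of each $F_{\mathfrak{z},j,\ell}$ supplied by Proposition~\ref{prop:11jp,1} and Lemma~\ref{lem:27mp,3}.

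The hard part will be the third step: choosing $R_{\text{analytic}}$ small enough that the Koebe distortion estimates apply uniformly in $\ell$ and across all contributing pairs $(\mathfrak{z},j)$, and verifying that $D(x_{0,\infty},R_{\text{analytic}}) \subset T_\ell^j(\Dm(\mathfrak{z}_\ell))$ is either fully satisfied or fully violated for the whole bidisk so that the characteristic functions in the propagation operator degenerate to locally constant factors and holomorphy is preserved. The nesting supplied by the univalent extensions of Proposition~\ref{prop:10hp,1}, combined with the uniform convergence in Proposition~\ref{prop:11jp,1}, should provide the necessary room, but the bookkeeping for the finitely many branches whose image boundary passes through the bidisk requires care to ensure that those exceptional branches do not spoil the holomorphy or the uniform estimate.
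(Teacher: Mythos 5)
Your treatment of the invariance of $\gamma_\ell$ and of the $L_1$-convergence matches the paper's argument: the norm bound~(\ref{equ:3np,1}) plus uniform tightness control the tail of the propagation series, and Lemma~\ref{lem:4np,1} plus a $3\epsilon$ argument finish; the explicit telescoping check of invariance is extra but correct.

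The analytic-extension step, however, has a genuine gap, at exactly the place you flag as the hard part. You try to continue each propagation term to a two-variable holomorphic $F_{\mathfrak{z},j,\ell}$ on the bidisk and sum. But the restriction you impose — $D(x_{0,\infty},R_{\text{analytic}})\subset T_\ell^j(\Dm(\mathfrak{z}_\ell))$ — is never met, since $T_\ell^j(\Dm(\mathfrak{z}_\ell))$ lies in $A_\ell$ and contains no disk centred at $x_{0,\infty}$; and under any weaker restriction the characteristic functions $\chi_{\Dm(\mathfrak{z}_\ell)}\circ T_\ell^{-j}$ obstruct holomorphy, which you acknowledge but do not resolve. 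The Koebe-plus-tightness bound as written also does not close, because it compares preimages of $D$ rather than of $D\cap T_\ell^j(\Dm(\mathfrak{z}_\ell))$, which requires another distortion step you do not supply.

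The observation you are missing is that $\thyp(\mathfrak{B})$ is the \emph{first return map} to $\bigcup_{\mathfrak{x}\notin\mathfrak{B}}\Dm(\mathfrak{x}_\ell)$. In the formula of Definition~\ref{defi:28mp,1}, for $u\in\Dm(\mathfrak{x}_\ell)$ with $\mathfrak{x}\notin\mathfrak{B}$ the only contributing pair is $(\mathfrak{x},0)$, because for $1\leq j<r_\thyp(\mathfrak{z})$ the set $T^j_\ell(\Dm(\mathfrak{z}_\ell))$ lies in a $\mathfrak{B}$-branch domain. Hence $\gamma_\ell=g^\infty_\ell$ on $\Dm(\thyp(\ell))$. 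Since $\mathfrak{B}$ is finite and its branch domains are bounded away from $x_{0,\infty}$ for all $\ell$ large enough, one may choose $R_{\text{analytic}}>0$ so that $D(x_{0,\infty},R_{\text{analytic}})$ meets no $\mathfrak{B}$-domain; on that disk $\gamma_\ell(u)=g^\infty_\ell(u)=f^\infty_\ell(u,\overline{u})$, which already extends holomorphically to $\vhyp\times\vhyp$ and converges uniformly by Proposition~\ref{prop:27mp,1}. No summation of two-variable continuations is required.
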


  \paragraph{Proof of $L_1$ convergence.}
  Densities $\gamma_{\ell}$ are given by $\gamma_{\ell} :=
  \hat{\mathfrak{P}}_{\ell} g_{\ell}^{\infty}$. We will now address
  the convergence of the propagation operator.

  All inverse branches in formula~(\ref{equ:6na,1}) have uniformly bounded
  distortion since they map into the domains of branches of
  $\thyp(\ell)$ which are all contained in $F_{\text{hyp}}$. So, for
  some $K_{\text{propag}}$ independent of $\mathfrak{z},\ell,j$
  \[ \| \hat{\mathfrak{P}}_{\mathfrak{z},j,\ell}  \|_1 \leq
  K_{\text{propag}} \|\chi_{\Dm(\mathfrak{z}_{\ell})}\|_1 .\]
  If we sum this up over all $j :\: 0\leq j<r_{\thyp}(\mathfrak{z})$
  we get a factor $r_{\thyp}(\mathfrak{z})$ and if we further sum up
  over all branches $\mathfrak{z}$ in some set $B$, then since the domains are
  disjoint, we get
  \[ \|\hat{\mathfrak{P}}_{B,\ell}\|_1 \leq K_{\text{propag}}
  \int_{\bigcup_{\mathfrak{z}\in B} \Dm(\mathfrak{z}_{\ell})}
  r_{\thyp,\ell}(u)\, d\Leb_2(u) .\]

  Let $\epsilon>0$ be arbitrary.
  By uniform tightness in Theorem~\ref{theo:23mp,1},
  cf. Definition~\ref{defi:3jp,1}, there is a set of branches
  $B(\epsilon)$ including all but finitely many branches of $\thyp$
  such that
  \begin{equation}\label{equ:6np,1}
    \|\hat{\mathfrak{P}}_{B(\epsilon),\ell}\|_1 \leq
    K_{\text{propag}} \epsilon
    \end{equation} uniformly for all
  $\ell\geq\ell(\epsilon)$.  Then we estimate
  \begin{multline*}
    \limsup_{\ell\rightarrow\infty} \left\| \hat{\mathfrak{P}}_{\ell} g_{\ell}^{\infty} -
  \hat{\mathfrak{P}}_{\infty} g_{\infty}^{\infty}\right\|_1 \leq
  \limsup_{\ell\rightarrow\infty} \bigl\|\sum_{\mathfrak{z}\notin
    B(\epsilon)}\sum_{j=0}^{r(\mathfrak{z})-1}\left(
    \hat{\mathfrak{P}}_{\mathfrak{z},j,\ell} g_{\ell}^{\infty} -
    \hat{\mathfrak{P}}_{\mathfrak{z},j,\infty}
    g_{\infty}^{\infty}\right)\bigr\|_1 +\\
    \limsup_{\ell\rightarrow\infty} \left\|
    \hat{\mathfrak{P}}_{B(\epsilon),\ell} g_{\ell}^{\infty} -
    \hat{\mathfrak{P}}_{\infty} g_{\infty}^{\infty}\right\|_1 \leq 0 +
    K_{\text{propag}}\epsilon\left(\|g_{\ell}^{\infty}\|_{\infty} +
    \|g_{\infty}^{\infty}\|_{\infty}\right)
    \end{multline*}
    where in the final estimate we used Lemma~\ref{lem:4np,1} and
    inequality~(\ref{equ:6np,1}). Since $\epsilon$ was arbitrary and
    $\|g_{\ell}^{\infty}\|_{\infty}$ are uniformly bounded,
    cf. Proposition~\ref{prop:27mp,1}, it follows that
    \[ \lim_{\ell\rightarrow\infty} \left\| \hat{\mathfrak{P}}_{\ell} g_{\ell}^{\infty} -
    \hat{\mathfrak{P}}_{\infty} g_{\infty}^{\infty}\right\|_1 = 0 \]
    which is the first claim of Theorem~\ref{theo:28mp,1}.

  To prove that claim about $R_{\text{analytic}}$ start with the
  observation that $\thyp(\mathfrak{B})$ is the first return map to
  union of the domains of branches not in $\mathfrak{B}$. Then the
  formula of Definition~\ref{defi:28mp,1} implies that
  $\gamma_{\ell}=g_{\ell}^{\infty}$ on the domain of such branches,
  since the  only possibility to get something other than $0$ for
  $u\in \Dm\bigl(\mathfrak{x}_{\ell} :\: \mathfrak{x}\notin\mathfrak{B}\bigr)$ is
  when $\mathfrak{z}=\mathfrak{x}$ and $j=0$. On the other hand, the
  set $\mathfrak{B}$ was finite and disjoint from
  $D(x_{0,\ell},R_{\text{analytic}})$ for some
  $R_{\text{analytic}}>0$. Then the $\gamma_{\ell} =
  g_{\ell}^{\infty}$ continue analytically to $f_{\ell}^{\infty}$ and
  the second claim of Theorem~\ref{theo:28mp,1} follows from
  Proposition~\ref{prop:27mp,1}.

 \subsection{Geometric properties of the boundary of $\Omega_{\ell}$.}
 Recall the arc $\mathfrak{w}_{\ell}$ which joins $x_{\pm,\ell}$ to
 $x_{0,\ell}$ and is invariant under $G_{\ell}$. Define
 $\hat{\HH}_{\pm,\ell} := \HH_{\pm}\setminus
 \mathfrak{w}_{\ell}$. We can also take $\hat{\HH}_{\pm,\infty} =
 \hat{\HH}_{\pm}$.
 Then $\hat{\HH}_{\pm,\ell}$ are swapped by the
 action of the principal inverse branch $\mathbf{G}_{\ell}^{-1}$:
 \[ \mathbf{G}_{\ell}^{-1}(\hat{\HH}_{\sigma,\ell}) \subset
 \hat{\HH}_{-\sigma,\ell} ,\, \sigma=\pm .\]

 \paragraph{Fundamental segments in the border of $\Omega_{\ell}$.}
Recall the point $y_{\ell} := \mathbf{G}_{\ell}^{-1}(\tau_{\ell}
x_{0,\ell})$. The first segment of the boundary of $\Omega_{+,\ell}$
is composed of
two arcs in the form $G_{\pm,\ell}^{-1}[y_{\ell},0)$ where
  $G_{\pm,\ell}^{-1}$ denotes the inverse branch which maps
  $\CC\setminus [0,+\infty)$ into $\HH_{\pm}$ while sending
    $(y_{\ell},0)$ into the border of $\Omega_{+,\ell}$.
Then the rest of the boundary consists images of these two arcs by
$\mathbf{G}_{\ell}^{-n}$ for $n>0$.
\begin{defi}\label{defi:24xp,1}
We will call all arcs in the form
$\mathbf{G}_{\ell}^{-n}\left(G_{\pm,\ell}^{-1}[y_{\ell},0)\right)$, $n=0,1,\cdots$
  {\em fundamental segments} of $\partial\Omega_{\ell}$ of {\em order}
  $n$.
\end{defi}

\begin{lem}\label{lem:31na,1}
There exists finite constants $K_{\text{arc}},\ell_0$ and for every $n\geq
0$ and $\ell_0\leq\ell\leq\infty$ if
 $\mathfrak{u}$ is a fundamental segment
of the boundary of $\Omega_{\ell}$ with endpoints
$u_1$ and $u_2$, then
\[ \diam(\mathfrak{u})   \leq K_{\text{arc}} |u_1 - u_2| \; .\]
\end{lem}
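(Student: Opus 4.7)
The plan is to prove the estimate by induction on the order $n$ of the fundamental segment, using the Koebe distortion theorem applied to the univalent extensions of $\mathbf{G}_\ell^{-1}$ provided by Proposition~\ref{prop:2hp,1} and Fact~\ref{fa:1hp,4}. For the base case $n=0$, the arc $\mathfrak{u}_0 = G_{\pm,\ell}^{-1}[y_\ell,0)$ is the image of the real interval $[y_\ell,0]$ under a univalent branch which extends to a definite neighborhood avoiding the other singular values; Koebe then gives a ratio uniform in $\ell$, since Proposition~\ref{prop:11jp,1} furnishes uniform convergence $G_\ell \to G_\infty$.

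For the inductive step, I would write $\mathfrak{u}_n = \mathbf{G}_\ell^{-1}(\mathfrak{u}_{n-1})$ and note that $\mathbf{G}_\ell^{-1}$ is univalent on the whole half-plane containing $\mathfrak{u}_{n-1}$. Provided a disk of radius comparable to $3\,\diam(\mathfrak{u}_{n-1})$ around a point of $\mathfrak{u}_{n-1}$ fits, up to a fixed ratio, inside the domain of univalency and stays uniformly away from the slits, Koebe's theorem yields bounded distortion of $\mathbf{G}_\ell^{-1}$ on $\mathfrak{u}_{n-1}$, so the ratio $\diam(\mathfrak{u}_{n})/|u_1-u_2|$ is propagated with a fixed multiplicative constant. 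Iterating and comparing to the base case gives the bound with a constant independent of $n$.

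The hard part will be verifying the Koebe condition uniformly in $n$ and $\ell$ when $\mathfrak{u}_{n-1}$ approaches the real axis, which happens near $x_{0,\ell}$ and, for finite $\ell$, near the companion points $x_{\pm,\ell}$ and $\tau_\ell x_{0,\ell}$. I would split into two regimes. In the generic regime $\dist(\mathfrak{u}_{n-1},\RR) \gtrsim \diam(\mathfrak{u}_{n-1})$, direct Koebe succeeds with constants uniform by Proposition~\ref{prop:11jp,1}. In the almost-parabolic regime near $x_{0,\ell}$, where by Lemma~\ref{lem:17ha,3} the arc is tangent to $x_{0,\ell}+\iota\RR$, I would use the wedge lemma (Lemma~\ref{lem:15hp,1}) together with the generalized Fatou coordinate of Definition~\ref{defi:30hp,1} to confine the fundamental segments of sufficiently high order to a wedge of opening strictly less than $\pi$ about the tangent direction, restoring the comparison $\dist(\mathfrak{u}_{n-1},\RR) \gtrsim \diam(\mathfrak{u}_{n-1})$; here the natural dynamics is $\mathbf{G}_\ell^{-2}$ (which preserves each half-plane), so one inducts on consecutive pairs of segments.

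Uniformity in $\ell$ is automatic because all ingredients — Koebe constants, wedge openings, and Fatou coordinate estimates — have already been established uniformly for $\ell\geq\ell_0$ in Sections~2.3 and~3. The transition between the generic and almost-parabolic regimes is the main technical juncture, but it is exactly the pattern used in the proof of Lemma~\ref{lem:17ha,2}, and the same strategy adapts here with the additional Koebe input.
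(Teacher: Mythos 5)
The overall idea---control $\mathfrak{u}_n=\mathbf{G}_{\ell}^{-n}(\mathfrak{u}_0)$ via Koebe distortion applied to the inverse branches---is the right one, but your inductive step does not close. You propagate the ratio $\diam(\mathfrak{u})/|u_1-u_2|$ from order $n-1$ to order $n$ ``with a fixed multiplicative constant'' and then claim that iterating gives a bound independent of $n$; it does not, since the product of per-step constants is $C^n$. Nothing in the proposal prevents this accumulation, and the regime analysis near $\RR$ (generic versus almost-parabolic) is aimed at a different difficulty: even if the single-step Koebe condition held perfectly at every order, the composed estimate would still blow up with $n$. This is the genuine gap.

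The paper's proof applies distortion control \emph{once, to the whole composition}, rather than step by step. The fundamental segments of orders $2$ and $3$ are compactly contained in the slit half-plane $\hat{\HH}_{\pm,\infty}$, and this persists for all large $\ell$ by Proposition~\ref{prop:11jp,1} and Lemma~\ref{lem:17ha,3}; in particular they have uniformly bounded hyperbolic diameter there, and the estimate of the Lemma holds for them directly. Since $\mathbf{G}_{\ell}^{-1}$ maps $\hat{\HH}_{\sigma,\ell}$ into $\hat{\HH}_{-\sigma,\ell}$, the Schwarz lemma shows that the hyperbolic diameter of every higher-order segment is no larger than that of the base segment, and a univalent map of a hyperbolic domain restricted to a set of hyperbolic diameter at most $R$ has distortion bounded in terms of $R$ alone. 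Hence $\mathbf{G}_{\ell}^{-n}$ distorts the base arc by an amount bounded uniformly in $n$ and $\ell$, which is exactly what the Lemma asserts. In this formulation your worry about intermediate arcs approaching $\RR$ near $x_{0,\ell}$ or $x_{\pm,\ell}$ disappears: one never needs a Euclidean disk around $\mathfrak{u}_{n-1}$ to fit inside the domain of univalence, only that the initial arc sits compactly in the slit half-plane. To salvage your induction you would have to replace the fixed per-step constant by a telescoping or summable distortion estimate, which amounts to reconstructing the hyperbolic-diameter argument anyway.
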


\begin{proof}
  For any $\ell=\infty$ arc of orders $2$ and $3$ are compactly contained in
  $\hat{\HH}_{\infty}$. This persists for large $\ell$ by Proposition~\ref{prop:11jp,1} and Lemma~\ref{lem:17ha,3}. For those arcs the estimate holds.
  Arcs of larger orders are obtained
  by taking inverse branches $\mathbf{G}_{\ell}^{-1}$ with uniformly
  bounded distortion.
\end{proof}

\begin{lem}\label{lem:27va,1}
There are a constant $K_{\ref{lem:27va,1}}$ and an integer
$\ell_{\ref{lem:27va,1}}$ with the following property. Let
$\ell\geq\ell_{\ref{lem:27va,1}}$ and $v$ belong
to a fundamental segment of order $n$ with endpoints $u_1,u_2$
in the boundary of
$\Omega_{\sigma_1,\ell} \cap \HH_{\sigma_2}$, where
$\sigma_1,\sigma_2=\pm$. Then there is $\hat{v} \in
\partial\Omega_{-\sigma_1,\ell}\cap\HH_{\sigma_2}$ which belongs to a
fundamental segment of order $n+1$ and
\[ |\hat{v}-v| \leq K_{\ref{lem:27va,1}} |u_1-u_2| .\]
\end{lem}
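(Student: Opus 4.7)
}
I will take $\hat{v} := \mathbf{G}_\ell^{-1}(\bar{v})$. To verify set-membership, note that real symmetry of $\Omega_\ell$ gives $\bar{v} \in \partial\Omega_{\sigma_1,\ell} \cap \HH_{-\sigma_2}$, and that $\mathbf{G}_\ell^{-1}$ simultaneously swaps the two sides $\Omega_{\pm,\ell}$ (Fact~\ref{fa:1hp,4}) and the half-planes $\hat{\HH}_{\pm,\ell}$ (as noted just before Definition~\ref{defi:24xp,1}); hence $\hat{v} \in \partial\Omega_{-\sigma_1,\ell} \cap \HH_{\sigma_2}$, as required. To confirm the order of $\hat{v}$, write $v = \mathbf{G}_\ell^{-n}(w)$ with $w$ on the appropriate order-$0$ arc $G_{\pm,\ell}^{-1}[y_\ell,0)$; real symmetry of $G_\ell$ yields $\overline{\mathbf{G}_\ell^{-n}(z)} = \mathbf{G}_\ell^{-n}(\bar z)$, so $\bar{v} = \mathbf{G}_\ell^{-n}(\bar{w})$ and $\hat{v} = \mathbf{G}_\ell^{-(n+1)}(\bar{w})$; since $\bar{w}$ lies on the conjugate order-$0$ arc, $\hat{v}$ is a point on a fundamental segment of order $n+1$.

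For the size estimate, set $\tilde{w} := \mathbf{G}_\ell^{-1}(\bar{w})$ so that $\hat{v} = \mathbf{G}_\ell^{-n}(\tilde{w})$, and let $w_1, w_2$ denote the endpoints of the order-$0$ arc containing $w$, so that $u_i = \mathbf{G}_\ell^{-n}(w_i)$. The plan is to apply Koebe's distortion theorem to the univalent composite $\mathbf{G}_\ell^{-n}$ on a domain $U$, independent of $n$ and large enough to provide a definite Koebe modulus around the four base points $w, \tilde{w}, w_1, w_2$. Granted this, the theorem gives
\[ |\hat{v} - v| = |\mathbf{G}_\ell^{-n}(\tilde{w}) - \mathbf{G}_\ell^{-n}(w)| \leq K_0 \,|u_1 - u_2| \,\frac{|\tilde{w} - w|}{|w_1 - w_2|} \leq K_{\ref{lem:27va,1}} \,|u_1 - u_2|, \]
since Proposition~\ref{prop:11jp,1} ensures that $|\tilde{w} - w|$ is uniformly bounded above and $|w_1 - w_2|$ uniformly bounded below for $\ell \geq \ell_{\ref{lem:27va,1}}$ large.

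The main obstacle will be arranging the uniform Koebe modulus for the extension of $\mathbf{G}_\ell^{-n}$, particularly because the limit endpoint $x_{0,\ell}$ of the order-$0$ arc is a fixed point of $\mathbf{G}_\ell^{-1}$ whose multiplier tends to $1$ as $\ell \to \infty$, so the linearization there degenerates. My intended remedy is to take $U$ inside the extension domain $\hat{\Omega}_{\pm,\ell}$ provided by Theorem~\ref{theo:3hp,1}, on which iterates of the principal inverse branch continue univalently, and to use Proposition~\ref{prop:11jp,1} for uniform control of the relevant inclusions. When $w$ lies very close to $x_{0,\ell}$, both $v$ and $\hat v$ are automatically close to $u_2 = x_{0,\ell}$ and the inequality is trivial; for $w$ bounded away from $x_{0,\ell}$ the four base points stay in a uniformly compactly-contained subset of $U$, so Koebe applies with a constant independent of $\ell$ large, closing the estimate.
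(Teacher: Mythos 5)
Your construction $\hat{v} := \mathbf{G}_{\ell}^{-1}(\overline{v})$ is sound and in fact more explicit than the paper's (which pushes $v$ forward to the order-$2$ segment, finds a nearby point on the order-$3$ segment there, and pulls both back); your verifications of side, half-plane and order are correct. The gap is in the distortion estimate, where you anchor Koebe at the order-$0$ arc. That arc $G_{\pm,\ell}^{-1}[y_{\ell},0)$ does not limit on $x_{0,\ell}$: its limit endpoint is $\tau_{\ell}x_{0,\ell}\in\RR$, the unique $G_{\ell}$-preimage of $0$, and the order-$1$ arc correspondingly terminates at $y_{\ell}=\mathbf{G}_{\ell}^{-1}(\tau_{\ell}x_{0,\ell})<0$. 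Since $y_{\ell}$ lies on the slit $(-\infty,0]$ excluded from the domain of the principal inverse branch, $\mathbf{G}_{\ell}^{-2}$ admits no single-valued analytic continuation to any neighborhood of $\tau_{\ell}x_{0,\ell}$. Hence there is no open set $U$, independent of $n$, on which all iterates $\mathbf{G}_{\ell}^{-n}$ are univalent and which compactly contains your four base points: the only domains invariant under all iterates are $\hat{\HH}_{\pm,\ell}$, and the base point $w_1$ or $w_2$ equal to $\tau_{\ell}x_{0,\ell}$ sits on their boundary. Your closing dichotomy (``$w$ close to $x_{0,\ell}$'' versus ``bounded away'') addresses a difficulty that does not arise --- the order-$0$ arc stays a definite distance from $x_{0,\ell}$, and $u_2$ is never equal to $x_{0,\ell}$ --- while leaving the real one untouched; without it the Koebe constant cannot be taken uniform in $n$.

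The repair is to move the anchor up two orders, which is exactly the paper's route. For $n\geq 2$ write $v=\mathbf{G}_{\ell}^{-(n-2)}(v_0)$ with $v_0$ on an order-$2$ segment; since $\mathbf{G}_{\ell}^{-1}$ commutes with complex conjugation, $\hat{v}=\mathbf{G}_{\ell}^{-(n-2)}\bigl(\mathbf{G}_{\ell}^{-1}(\overline{v_0})\bigr)$ with $\mathbf{G}_{\ell}^{-1}(\overline{v_0})$ on an order-$3$ segment. The order-$2$ and order-$3$ segments are compactly contained in $\hat{\HH}_{\pm,\ell}$ uniformly for $\ell$ large (cf.\ the proof of Lemma~\ref{lem:31na,1}, Proposition~\ref{prop:11jp,1} and Lemma~\ref{lem:17ha,3}), so $v_0$, $\mathbf{G}_{\ell}^{-1}(\overline{v_0})$ and the endpoints of the order-$2$ segment lie in a disk of uniformly bounded hyperbolic diameter in the appropriate $\hat{\HH}_{\sigma,\ell}$. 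As $\mathbf{G}_{\ell}^{-2}$ maps $\hat{\HH}_{\sigma,\ell}$ into itself without expanding its hyperbolic metric, $\mathbf{G}_{\ell}^{-(n-2)}$ maps that disk with distortion bounded independently of $n$ and $\ell$, and your ratio inequality then goes through with the order-$2$ endpoints in place of $w_1,w_2$; the finitely many cases $n\leq 1$ are handled directly.
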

\begin{proof}
Without loss of generality $n\geq 2$ and let $v_0 =
G_{\ell}^{n-2}(v)$. By Proposition~\ref{prop:11jp,1} for $\ell$
sufficiently large the fundamental segment of order $2$ which contains
$v_0$ and a point $\hat{v}_0$ which belongs to the
fundamental segment of order $3$ can be enclosed in a disk of
fixed hyperbolic diameter in $\hat{\HH}_{(-1)^n\sigma_2}$. This
configuration is then mapped by $\mathbf{G}_{\ell}^{-n+2}$ with
bounded distortion, which yields the claim of the Lemma.
\end{proof}

\paragraph{Action of $\mathbf{G}_{\ell}^{-2}$ near fixed points.}
\begin{lem}\label{lem:28na,2}
  Choose $\ell$ and consider point $u+x_{0,\ell}$ in the boundary
  of $\Omega_{\ell}$. Then, for every $\epsilon>0$ and
  integer $k$, possibly negative,  there is
  $r(\epsilon,k)>0$ and for every
  $r: 0<r\leq r(\epsilon,k)$ there is
  $\ell(r,\epsilon,k)<\infty$ so that for every $\ell(r,\epsilon,k)\leq\ell\leq\infty$
  the estimate
  \[
  \left| \mathbf{G}_{\ell}^{-2k}(u+x_{0,\ell}) - x_{0,\ell} -
  u(1+ka|u|^2) \right| < \epsilon |u|^3 \]
  holds wherever $r \leq |u| \leq r(\epsilon,k)$,
  where
  \[ a := -\frac{1}{3}SG_{\infty}(x_{0,\infty}) = \frac{1}{6}D^3\mathbf{G}^{-2}_{\infty}(x_{0,\infty})
  > 0 .\]
\end{lem}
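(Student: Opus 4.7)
}
The plan is to iterate the local Taylor expansion of $\mathbf{G}_\ell^{-2}$ at its fixed point $x_{0,\ell}$ through $k$ steps, read off the cubic term, and then arrange for the errors introduced by $a_{1,\ell}\ne 1$ and by the higher Taylor terms to be negligible at order $|u|^3$ by taking $\ell$ large enough in terms of $r$, $\epsilon$, and $k$. Recalling the setup of Section~3.3, after the normalizing coordinate change which kills the quadratic coefficient we have
\[
\mathbf{G}_\ell^{-2}(x_{0,\ell}+z) - x_{0,\ell} = a_{1,\ell}\, z + a_{3,\ell}\, z^3 + h_\ell(z)\, z^4,
\]
with $a_{1,\ell} = 1 + \rho_\ell/4 \to 1$, $a_{3,\ell}\to a$ as $\ell\to\infty$, and the holomorphic remainder $h_\ell$ uniformly bounded on some fixed disk $D(0,R_0)$ for all $\ell$ large; the uniform bound follows from Cauchy's estimates applied to the uniform convergence of Proposition~\ref{prop:11jp,1}.

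A direct induction on $k\ge 0$ then gives
\[
\mathbf{G}_\ell^{-2k}(x_{0,\ell}+u) - x_{0,\ell} = a_{1,\ell}^{k}\, u + A_{k,\ell}\, u^3 + O(u^4),
\]
where $A_{k,\ell}$ obeys the recursion $A_{k+1,\ell} = a_{1,\ell} A_{k,\ell} + a_{1,\ell}^{3k} a_{3,\ell}$ and therefore admits the closed form $A_{k,\ell} = a_{3,\ell}\, a_{1,\ell}^{k-1}\sum_{j=0}^{k-1} a_{1,\ell}^{2j}$, which tends to $k a$ as $\ell\to\infty$. For $|u|\le r(\epsilon,k)$ small enough every intermediate iterate stays within $D(0,R_0)$, so the constant in $O(u^4)$ is uniformly controlled in $\ell$. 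The case $k<0$ is treated identically using the expansion of $G_\ell^2$ in place of $\mathbf{G}_\ell^{-2}$, with the sign of the cubic coefficient flipping; $k=0$ is trivial.

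Comparison with the target expression of the lemma splits the error into three pieces: a linear correction $(a_{1,\ell}^k - 1)u$, of size $O(k\rho_\ell |u|)$, which is below $\epsilon|u|^3/3$ as soon as $\rho_\ell \le \epsilon r^2/(3k)$, thereby determining $\ell(r,\epsilon,k)$; a cubic discrepancy $(A_{k,\ell} - ka)u^3$, absorbed as $\epsilon|u|^3/3$ for $\ell$ sufficiently large by the convergence $A_{k,\ell}\to ka$; and the quartic remainder $O(u^4)$, absorbed by taking $r(\epsilon,k)$ small enough in terms of $\sup_\ell \|h_\ell\|_{D(0,R_0)}$. The main technical point is merely to verify that the closed form above does yield $A_{k,\ell}\to ka$ with errors proportional to $\rho_\ell$, so that for fixed $k$ and $|u|\ge r$ the convergence $\rho_\ell\to 0$ eventually dominates the iteration error uniformly; no substantive obstacle arises beyond this bookkeeping, since everything reduces to the parabolic normal form already established in Section~3.3.
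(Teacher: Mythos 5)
Your proposal reproduces the paper's skeleton (Taylor expansion of $\mathbf{G}_{\ell}^{-2}$ at $x_{0,\ell}$ with the quadratic term normalized away, induction on $k$, and absorption of the linear discrepancy $(a_{1,\ell}^{k}-1)u$ by taking $\ell$ large once $|u|\geq r$ is fixed), and the recursion $A_{k+1,\ell}=a_{1,\ell}A_{k,\ell}+a_{3,\ell}a_{1,\ell}^{3k}$ with $A_{k,\ell}\rightarrow ka$ is correct bookkeeping. But there is a genuine gap: the iteration of the Taylor series only yields an approximation of the form $u\bigl(1+kau^{2}\bigr)$, with the \emph{complex} square $u^{2}$, whereas the Lemma asserts $u\bigl(1+ka|u|^{2}\bigr)$ with the \emph{modulus} squared. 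The difference $ka\,u\,(u^{2}\mp|u|^{2})$ is of full order $|u|^{3}$ with a constant comparable to $ka$ for a generic direction of $u$ (take $u=\iota s$, for instance), so it cannot be absorbed into $\epsilon|u|^{3}$ by any choice of $r$ or $\ell$. Your error decomposition compares $A_{k,\ell}u^{3}$ with $kau^{3}$ and thereby silently replaces the Lemma's conclusion with a different statement.

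The missing ingredient is precisely the hypothesis you never use: that $u+x_{0,\ell}$ lies on $\partial\Omega_{\ell}$. By the double-wedge geometry (Fact~\ref{fa:18ha,1} and Lemma~\ref{lem:18ha,1}), for $r\leq|u|\leq r(\eta)$ and $\ell$ large the boundary is confined to $|\arg u^{2}-\pi|<\eta$, whence $\bigl|u^{2}+|u|^{2}\bigr|<2\sin\frac{\eta}{2}\,|u|^{2}$. This is what converts the cubic Taylor term $ka\,u\cdot u^{2}$ into $\mp ka\,u\,|u|^{2}$ up to an error $\eta\,|u|^{3}$, i.e.\ into a \emph{real} multiple of $u$, which is the whole point of the Lemma (it is used downstream to show that boundary points move essentially radially, cf.\ Lemmas~\ref{lem:2xp,1} and~\ref{lem:31np,2}). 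Note also that this conversion produces a definite sign coming from $\arg u^{2}\approx\pi$; you should track it explicitly when you match your $A_{k,\ell}u^{3}$ against the target $ka\,u\,|u|^{2}$, since the two differ by more than a convergent coefficient. Without invoking the boundary hypothesis and this wedge estimate, the proof does not establish the stated inequality.
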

\begin{proof}
We will first consider the case of $k=\pm 1$.
We have the expansion
\[ \mathbf{G}_{\ell}^{-2k}(x_{0,\ell}+u) - x_{0,\ell} = u +
ka u^3 + \psi_{\ell,k}(u) + u^4 O_{\ell}(1) \]
where $\left| O_{\ell}(1) \right|$ is bounded for all $0<|u|<r_0$
and $\ell\geq\ell_0$ while $\lim_{\ell\rightarrow\infty}
\psi_{\ell,k}(u) = 0$ for all $0<|u|<r_0$.

Since $u+x_{0,,\infty}$ is in the boundary of $\Omega_{\ell}$, by
Lemma~\ref{lem:18ha,1} for any $\eta>0$ there is $r(\eta)>0$ and if
$r<|u|<r(\delta)$ as well as $\ell\geq\hat{\ell}(\eta,r)$, then
$|\arg u^2-\pi|<\eta$. Consequently,
\begin{equation}\label{equ:30vp,1}
  \left| u^2 + |u|^2\right| < 2\sin\frac{\eta}{2}|u|^2 < \eta .
\end{equation}
when $\eta$ is small enough.

Thus,
\[ \left| \mathbf{G}_{\ell}^{\mp 2}(u+x_{0,\ell}) - x_{0,\ell} -
  u(1 \pm a|u|^2) \right| \leq \eta |u|^3 + |\psi_{\ell,k}(u)| +
  |u|^4 |O_{\ell}(1)| .\]

This leads to fixing $r(\epsilon,k)$ so that for $|u|\leq r(\epsilon,k)$
the last term is less than
$\frac{\epsilon}{4}|u|^3$ and addition to $r(\epsilon,k) \leq r(
\epsilon/4)$ in the previous estimate. Then,
\[ \left| \mathbf{G}_{\ell}^{\mp 2}(u+x_{0,\ell}) - x_{0,\ell} -
  u(1 \pm a|u|^2) \right| \leq \frac{\epsilon}{2} |u|^3 + |\psi_{\ell,k}(u)| .\]

Now as soon as $r$ has been specified in the claim of the Lemma and $|u|>r$,
$|\psi_{\ell,k}(u)|$ can be made less than $\frac{\epsilon}{4}
|u|^3$ by choosing $\ell(r,\epsilon,k)$ suitably large. Additionally, we need
$\ell(r,\epsilon,k) \geq \hat{\ell}(\epsilon/4,r)$ which was needed to
secure estimate~(\ref{equ:30vp,1}).

The general case follows by induction with respect to $k$. To fix attention,
we will describe the inductive step for $k>0$. Let $u_k =
\mathbf{G}_{\ell}^{-2k}(u+x_{0,\infty}) - x_{0,\infty}$. Then
\begin{equation}\label{equ:31vp,1}
  \left| \mathbf{G}_{\ell}^{-2}(u_k-x_{0,\ell}) - x_{0,\ell} -
  u_k(1+a|u_k|^2) \right| < \epsilon_1 |u_k|^3  .
  \end{equation}
  By the inductive step
  \[ \left| u_k - u(1+ka|u|^2) \right| < \epsilon_2 |u|^3 . \]
  In particular, $u_k$ and $u$ differ only by $O(|u|^3)$ and
  $u_k|u_k|^2 = u|u|^2 + O(|u|^5)$.
  Furthermore,
  \begin{multline*}
    \left| u_k(1+a|u_k|^2) - u(1+(k+1)a|u|^2) \right| = \\
    \left| u_k(1+a|u_k|^2) - a u_k |u_k|^2 - ku(1+ka|u|^2) + O(|u|^5) \right|
    \leq \epsilon_2 |u|^3 .
    \end{multline*}
  From formula~(\ref{equ:31vp,1}), we get
  \[  \left| \mathbf{G}_{\ell}^{-2}(u_k+x_{0,\ell}) - x_{0,\ell} - u(1+(k+1)a|u|^2) \right| \leq \epsilon_1 |u_k|^3 + \epsilon_2|u|^2 +O(|u|^5) .\]
  Given $\epsilon>0$, if we take $\epsilon_1=\epsilon_2=\epsilon/3$ and $u$ small enough, we get the claim.
\end{proof}

\begin{lem}\label{lem:2xp,1}
Consider a fundamental segment $\mathfrak{u}$ in the boundary of
$\Omega_{\ell}$ with endpoints $v$ and
$\mathbf{G}^{-2}_{\ell}(v)$. Let $r>0$. There are constants $K_{\ref{lem:2xp,1}}$ and $\ell(r)$ such
that if $\ell\geq\ell(r)$ and $\mathfrak{u}$ intersects
$C(x_{0,\ell},r)$, then $|v-\mathbf{G}_{\ell}^{-2}(v)| < K_{\ref{lem:2xp,1}}r^3$.
\end{lem}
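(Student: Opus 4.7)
The strategy is to combine two previously established tools: the cubic expansion of $\mathbf{G}^{-2}_\ell$ along the boundary $\partial\Omega_\ell$ near $x_{0,\ell}$ given by Lemma~\ref{lem:28na,2} with $k=1$, and the aspect-ratio bound of Lemma~\ref{lem:31na,1} for fundamental segments. Lemma~\ref{lem:28na,2} directly delivers $|v-\mathbf{G}^{-2}_\ell(v)| \le (a+\epsilon)|v-x_{0,\ell}|^3$ whenever $v\in\partial\Omega_\ell$ is close to $x_{0,\ell}$; the only remaining issue is to show that $|v-x_{0,\ell}|$ is of the same order as $r$.

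Concretely, fix a small $\epsilon>0$ and apply Lemma~\ref{lem:28na,2} with $k=1$, choosing its internal lower bound to be $r/100$, so that for $\ell\ge\ell(r/100,\epsilon,1)$ and any $v\in\partial\Omega_\ell$ satisfying $r/100\le|v-x_{0,\ell}|\le r(\epsilon,1)$ we get
\[ \bigl|\mathbf{G}^{-2}_\ell(v)-v\bigr| \le (a+\epsilon)\,|v-x_{0,\ell}|^3. \]
Pick any $w\in\mathfrak{u}\cap C(x_{0,\ell},r)$. Since $v$ and $w$ both lie on $\mathfrak{u}$, Lemma~\ref{lem:31na,1} gives $|v-w|\le \diam(\mathfrak{u})\le K_{\mathrm{arc}}\bigl|v-\mathbf{G}^{-2}_\ell(v)\bigr|$, and combining with the previous display yields the cubic inequality
\[ |v-x_{0,\ell}|\;\le\; r+|v-w| \;\le\; r + K_{\mathrm{arc}}(a+\epsilon)\,|v-x_{0,\ell}|^3. \]

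Writing $s:=|v-x_{0,\ell}|$ and $Q:=K_{\mathrm{arc}}(a+\epsilon)$, this becomes $s(1-Qs^2)\le r$, and for $r$ small the only relevant solution is $s\le 2r$; substituting back produces $|v-\mathbf{G}^{-2}_\ell(v)|\le 8(a+\epsilon)r^3$, so $K_{\ref{lem:2xp,1}}:=8(a+\epsilon)$. The main obstacle is the bootstrap that keeps $s$ on the small-$s$ branch of the cubic rather than on the spurious root $s\asymp Q^{-1/2}$. I would handle this by showing a priori that fundamental segments of sufficiently large order shrink toward $x_{0,\ell}$ uniformly in $\ell$, adapting the proof of Lemma~\ref{lem:17ha,2} (hyperbolic contraction of $\mathbf{G}^{-1}_\ell$ together with an analogue of Lemma~\ref{lem:17ha,3}) from $\mathfrak{w}_\ell$ to $\partial\Omega_\ell$. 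This guarantees that for $r$ sufficiently small, every $\mathfrak{u}$ meeting $C(x_{0,\ell},r)$ has $s\le Q^{-1/2}/2$. The auxiliary case $s<r/100$, outside the range of Lemma~\ref{lem:28na,2} as stated above, does not occur either: it would force $\diam(\mathfrak{u})\ge r/2$, while the cubic bound $\diam(\mathfrak{u})\le K_{\mathrm{arc}}(a+\epsilon)(r/100)^3$ is much smaller than $r/2$ for small $r$.
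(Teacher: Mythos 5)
Your proposal follows the paper's argument essentially verbatim: Lemma~\ref{lem:28na,2} with $k=1$ gives the cubic displacement bound at $v$, Lemma~\ref{lem:31na,1} converts it into the two-sided comparison between $\rho=|v-x_{0,\ell}|$ and $r$, and solving the resulting cubic inequality yields $\frac{r}{2}\leq\rho\leq 2r$ and the same final constant $8ar^3$. The bootstrap you flag (keeping $\rho$ on the small branch of the cubic and excluding $\rho<r/100$) is resolved in the paper by exactly the device you propose, namely that fundamental segments of large order shrink to $x_{0,\infty}$ uniformly in $\ell$ by Proposition~\ref{prop:11jp,1}, with the finitely many low orders treated directly, so nothing essential is missing.
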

\begin{proof}
  In Lemma~\ref{lem:28na,2} set $\epsilon=a$ and $k=1$. Let
  $\rho:=|v-x_{0,\ell}|$. To use Lemma~\ref{lem:28na,2} we need to
  have $\rho$ suitably small. This is true provided that the order of
  $\mathfrak{u}$ is sufficiently large and $\ell$ is large as well, by
  Proposition~\ref{prop:11jp,1}. The estimate of the Lemma can be
  easily met for finitely many orders based on the same Proposition.

  Then for $\ell\geq\ell(\rho)$ Lemma~\ref{lem:28na,2}  will implies
  that
  \begin{equation}\label{equ:2xp,1}
    |v-\mathbf{G}^{-2}_{\ell}(v)| \leq a\rho^3 .
\end{equation}
    By
  Lemma~\ref{lem:31na,1}
  \[ \rho(1-aK_{\text{arc}}\rho^2) \leq r \leq
  \rho(1+aK_{\text{arc}}\rho^2) .\] Again, for $\rho <
  \sqrt{\frac{1}{2aK_{\text{arc}}}}$, this reduces to
    $\frac{r}{2}\leq\rho\leq 2r$. Hence, we can replace the condition
    $\ell\geq\ell(\rho)$ with $\ell\geq\ell(\frac{r}{2})$ and rewrite
    estimate~(\ref{equ:2xp,1}) as
    \[  |v-\mathbf{G}^{-2}_{\ell}(v)| \leq 8 a r^3 .\]
    \end{proof}

\paragraph{Sections of $\partial\Omega_{\ell}$ by circles.}
Fix $\sigma_1,\sigma_2=\pm$ and $r>0$. Then we will write
\[ X_{\sigma_1,\sigma_2,\ell}(r) := \left\{
v\in\partial\Omega_{\sigma_1,\ell}\cap\HH_{\sigma_2} :\:
|v-x_{0,\ell}|=r \right\} .\]

\begin{lem}\label{lem:31np,2}
There exist $r_{\ref{lem:31np,2}}>0$ and an integer constant
$N_{\ref{lem:31np,2}}<\infty$ with the following property. For every $r :\:
0<r<r_{\ref{lem:31np,2}}$ there is $\ell(r)<\infty$ and if
$\ell(r)\leq\ell\leq\infty$, $\sigma_1,\sigma_2=\pm$, then
$X_{\sigma_1,\sigma_2,\ell}(r)$ is contained in some
$N_{\ref{lem:31np,2}}$ consecutive fundamental segments in the boundary of
  $\Omega_{\sigma_1,\ell}\cap\HH_{\sigma_2}$.
\end{lem}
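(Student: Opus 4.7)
My plan is to parametrize the vertices between consecutive fundamental segments on $\partial\Omega_{\sigma_1,\ell}\cap\HH_{\sigma_2}$ as a sequence $(v_k)$ with $v_{k+1}=\mathbf{G}_\ell^{-2}(v_k)$, which is possible since consecutive segments on a single $(\sigma_1,\sigma_2)$-piece are separated by two applications of $\mathbf{G}_\ell^{-1}$ and each fundamental segment has endpoints $v,\mathbf{G}_\ell^{-2}(v)$ by Lemma~\ref{lem:2xp,1}. The proof combines an asymptotic expansion of $\mathbf{G}_\ell^{-2}$ near $x_{0,\ell}$ (Lemma~\ref{lem:28na,2}) with the diameter controls of Lemmas~\ref{lem:31na,1} and~\ref{lem:2xp,1}. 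Fix $\epsilon:=a/3$ and choose $r_{\ref{lem:31np,2}}$ small enough, then $\ell_{\ref{lem:31np,2}}(r)$ large enough, that the hypotheses of these three lemmas apply uniformly on the annulus singled out below.

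The first step is to establish strict monotonicity of $k\mapsto |v_k-x_{0,\ell}|$ in the regime $|v_k-x_{0,\ell}|\le r(\epsilon,1)$. Writing $u:=v_k-x_{0,\ell}$, Lemma~\ref{lem:28na,2} with $k=1$ gives $\mathbf{G}_\ell^{-2}(x_{0,\ell}+u)-x_{0,\ell}=u(1+a|u|^2)+\mathrm{err}$ with $|\mathrm{err}|<\epsilon|u|^3$. Since $1+a|u|^2$ is a positive real,
\[
  |v_{k+1}-x_{0,\ell}|-|v_k-x_{0,\ell}|\in\bigl[\tfrac{2a}{3}|u|^3,\,\tfrac{4a}{3}|u|^3\bigr],
\]
so the distances to $x_{0,\ell}$ are strictly increasing with consecutive gap at least $\tfrac{2a}{3}|v_k-x_{0,\ell}|^3$.

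The second step localizes segments that meet the circle. If $\mathfrak{u}_k$, the fundamental segment with endpoints $v_k,v_{k+1}$, intersects $C(x_{0,\ell},r)$, Lemma~\ref{lem:2xp,1} yields $|v_k-v_{k+1}|<K_{\ref{lem:2xp,1}}r^3$, and Lemma~\ref{lem:31na,1} then bounds $\diam(\mathfrak{u}_k)\le K r^3$ with $K:=K_{\text{arc}}K_{\ref{lem:2xp,1}}$. Because some point of $\mathfrak{u}_k$ lies at distance exactly $r$ from $x_{0,\ell}$, both endpoints must satisfy $|v_k-x_{0,\ell}|,\,|v_{k+1}-x_{0,\ell}|\in[r-Kr^3,\,r+Kr^3]$.

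Putting the two steps together, if $r_{\ref{lem:31np,2}}$ is so small that $r-Kr^3\ge r/2$, the minimum gap from step one is at least $\tfrac{2a}{3}(r/2)^3=ar^3/12$, while the window of admissible endpoint radii has length $2Kr^3$; hence at most $\lfloor 24K/a\rfloor+1$ consecutive indices $k$ can contribute, yielding the constant $N_{\ref{lem:31np,2}}$. Monotonicity guarantees that these indices form a single consecutive block, and therefore so do the fundamental segments they bound. The main obstacle I anticipate is verifying that monotonicity persists throughout the relevant annulus for finite $\ell$: for this, $\ell_{\ref{lem:31np,2}}(r)$ must be chosen so that the attracting period-two point $x_{+,\ell}$ of $\mathbf{G}_\ell^{-2}$ sits well inside $D(x_{0,\ell},r/2)$, ensuring that the annulus $[r-Kr^3,r+Kr^3]$ stays in the pure repelling regime where Lemma~\ref{lem:28na,2} applies with uniform-in-$\ell$ constants.
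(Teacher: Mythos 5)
Your argument is correct and follows essentially the same route as the paper's: both rest on comparing the per-segment radial displacement of order $ar^3$ supplied by Lemma~\ref{lem:28na,2} against the $O(r^3)$ bound on the diameter of any circle-meeting fundamental segment obtained from Lemmas~\ref{lem:2xp,1} and~\ref{lem:31na,1}. The only difference is organizational: the paper applies the $k$-step expansion of Lemma~\ref{lem:28na,2} once, at the point of $X_{\sigma_1,\sigma_2,\ell}(r)$ extremal in the ordering of the arc, which bounds the number of intervening segments directly, whereas you iterate the one-step expansion and deduce consecutiveness from monotonicity of the endpoint radii --- a step whose persistence outside the controlled annulus you correctly identify as the remaining point to secure.
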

\begin{proof}
  Let $v_0$ be the point of $X_{\sigma_1,\sigma_2,\ell}(r)$ which
  is furthest from $x_{\sigma_2,\ell}$ in the ordering of the
  arc. Suppose that $k_0$ is an integer chosen so that the fundamental
  segment which contains $\mathbf{G}_{\ell}^{-2k_0}(v_0)$ also intersects
  $X_{\sigma_1,\sigma_2,\ell}(r)$ and contains its point $v_1$ which
  is closest to $x_{\sigma_2,\ell}$ in the ordering of the arc.

  Use Lemma~\ref{lem:28na,2} with
  $u=v+0-x_{0,\infty}$ and $\epsilon:=a$.
  For $r$ sufficiently small and
  $\ell$ large depending on $r,k_0$, we obtain
  \[ |\mathbf{G}_{\ell}^{-2k_0}(v_0)-x_{0,\ell}| \leq
  r\left( 1-(k_0-2)ar^2 \right) . \]

  Then the diameter of the fundamental segment which
  contains $\mathbf{G}_{\ell}^{-2k_0}(v_0)$ is at least
  $(k_0-2)ar^3$. On other hand, the diameter of any
  fundamental segment which intersects $C(x_{0,\ell},r)$ for
  $\ell\geq\ell(r)$ is bounded by $K_{~\ref{lem:2xp,1}} K_{\text{arc}}
  r^3$ in view of
  Lemmas~\ref{lem:2xp,1} and~\ref{lem:31na,1}. Then $k_0 \leq
  a^{-1}K_{~\ref{lem:2xp,1}} K_{\text{arc}}+2$
  and  $N_{\ref{lem:31np,2}}$ is that bound increased by
  $1$.
\end{proof}

Results on sections are summarized by the following Proposition.
\begin{prop}\label{prop:3xp,1}
  There exist $R_{\ref{prop:3xp,1}}>0$, $Q_{\ref{prop:3xp,1}}<\infty$
  and an integer constant
$M_{\ref{prop:3xp,1}}$ with the following property. For every $r :\:
0<r<R_{\ref{prop:3xp,1}}$ there is $\ell(r)<\infty$ and if
$\ell\geq\ell(r)$, then the set $X_{\sigma,\ell}(r) =
X_{+,\sigma,\ell}(r)\cup X_{-,\sigma,\ell}(r)$ is
\begin{itemize}
  \item
    covered by fundamental segments in
    $\partial\Omega_{\ell}\cap\HH_{\sigma}$ with orders that vary by
    no more than $M_{\ref{prop:3xp,1}}$.
\item
    contained in a Euclidean disk of radius $Q_{\ref{prop:3xp,1}} r^3$,
\end{itemize}
\end{prop}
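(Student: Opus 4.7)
}
The plan is to combine Lemma~\ref{lem:31np,2} (applied separately on each side $\sigma_1=\pm$) with Lemma~\ref{lem:27va,1} (to cross-reference orders between $\partial\Omega_{+,\ell}$ and $\partial\Omega_{-,\ell}$) to establish the first claim, and then to pass to the Euclidean diameter bound via Lemmas~\ref{lem:2xp,1}, \ref{lem:31na,1}, and the local normal form of Lemma~\ref{lem:28na,2}.

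First, I would fix $\sigma=\pm$ and apply Lemma~\ref{lem:31np,2} separately to $X_{+,\sigma,\ell}(r)$ and $X_{-,\sigma,\ell}(r)$, obtaining, for each $\sigma_1=\pm$, a set of at most $N_{\ref{lem:31np,2}}$ consecutive fundamental segments of $\partial\Omega_{\sigma_1,\ell}\cap\HH_\sigma$ covering the corresponding piece. Let $n_{\sigma_1}$ denote the smallest order among those segments. The content of claim~(i) reduces to showing $|n_+ - n_-|\le C$ uniformly in $r,\ell$. To see this, pick any $v\in X_{+,\sigma,\ell}(r)$ on the segment realizing $n_+$ and invoke Lemma~\ref{lem:27va,1} to produce $\hat v\in\partial\Omega_{-,\ell}\cap\HH_\sigma$ on a segment of order $n_++1$ with $|v-\hat v|\le K_{\ref{lem:27va,1}}|u_1-u_2|$. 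By Lemma~\ref{lem:2xp,1} combined with Lemma~\ref{lem:31na,1}, $|u_1-u_2|\le K\,r^3$, so $\hat v$ lies within $O(r^3)$ of $C(x_{0,\ell},r)$. Since the iteration $\mathbf{G}_\ell^{-2}$ displaces points of modulus $r$ radially by $\approx a r^3$ (Lemma~\ref{lem:28na,2}), the segments of $\partial\Omega_{-,\ell}\cap\HH_\sigma$ meeting an $O(r^3)$-neighborhood of $C(x_{0,\ell},r)$ form only a bounded range of orders, and $n_-$ must lie in that range. Thus $|n_+-n_-|$ is bounded and we may take $M_{\ref{prop:3xp,1}}:=2N_{\ref{lem:31np,2}}+C$.

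For claim~(ii), by the argument above all fundamental segments covering $X_{\sigma,\ell}(r)$ have orders in an interval of length at most $M_{\ref{prop:3xp,1}}$. Each such segment has Euclidean diameter $\le K_{\ref{lem:2xp,1}}K_{\text{arc}}\,r^3$ by Lemmas~\ref{lem:2xp,1} and \ref{lem:31na,1}. To control the Euclidean gap between consecutive segments on the same side, I would use Lemma~\ref{lem:28na,2} with $k=1$ and $\epsilon=a$: it shows that applying $\mathbf{G}_\ell^{-2}$ to an endpoint $u+x_{0,\ell}$ with $|u|\approx r$ produces a point whose displacement is bounded by $2a\,r^3$. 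Summing these displacements over the bounded number of orders and adding Lemma~\ref{lem:27va,1} to bridge across sides yields $\operatorname{diam}(X_{\sigma,\ell}(r))\le Q_{\ref{prop:3xp,1}}\,r^3$.

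The main obstacle is the cross-side comparison in step~1, since the fundamental segments of $\partial\Omega_{+,\ell}$ and $\partial\Omega_{-,\ell}$ are defined by distinct initial arcs and one cannot invoke Lemma~\ref{lem:31np,2} jointly for both. The key is that Lemma~\ref{lem:27va,1} produces a companion point $\hat v$ at distance comparable to the length of a single fundamental segment, which is itself $O(r^3)$, so $\hat v$ falls well inside the narrow annulus of radii $r\pm O(r^3)$ where the segments meeting the circle live; the proof then reduces to verifying that Lemma~\ref{lem:28na,2} forces the orders in that annulus to form a bounded range, and this in turn is where the cubic spacing $\approx a|u|^3$ between consecutive iterates plays an essential role.
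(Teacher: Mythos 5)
Your proposal follows essentially the same route as the paper: anchor at a point on one side, use Lemma~\ref{lem:27va,1} together with Lemma~\ref{lem:2xp,1} to produce a companion point $\hat v$ on the other side at distance $O(r^3)$ from the circle, then use the normal form of Lemma~\ref{lem:28na,2} (with $\epsilon=a$) to conclude that the orders of fundamental segments meeting an $O(r^3)$-neighborhood of $C(x_{0,\ell},r)$ lie in a range of bounded length, and finally close with Lemma~\ref{lem:31np,2}; the diameter bound (ii) is likewise derived as in the paper from Lemma~\ref{lem:2xp,1} by summing over the bounded number of segments plus the $v$-to-$\hat v$ bridge. The only difference is organizational (you apply Lemma~\ref{lem:31np,2} to each side first and then compare $n_\pm$, whereas the paper starts from one point and crosses over), which does not change the substance.
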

\subparagraph{Proof of Proposition~\ref{prop:3xp,1}.}
Let $v \in X_{-,\sigma,\ell}(r)$ for definiteness belong to a
fundamental segment of order $n$. By Lemma~\ref{lem:27va,1} we find
$\hat{v}$ in a fundamental segment of order $n+1$ in
$\partial\Omega_{+,\ell}$ with
\[ |v-\hat{v}| \leq K_{\ref{lem:2xp,1}} K_{\ref{lem:27va,1}} r^3 \]
  provided that $\ell\geq\ell(r)$ by Lemma~\ref{lem:2xp,1}.

Again, by making $r$ small we can ensure that
$|\hat{v}-x_{0,\ell}|>\frac{r}{2}$. Then we use
Lemma~\ref{lem:28na,1} with $\epsilon=a$. What we get is that when $k>
\frac{K_{\ref{lem:2xp,1}} K_{\ref{lem:27va,1}}}{2a}+1$, then
  $|\mathbf{G}_{\ell}^{k}(\hat{v})-x_{0,\ell}| > r$ while
  $|\mathbf{G}_{\ell}^{-k}(\hat{v})-x_{0,\ell}| < r$ provided $r$ is
  small enough depending on $k$ and $\ell$
  is large enough depending on $k,r$. In any case, a fundamental
  segment in $\partial \Omega_{+,\ell}$ with order between $n-k$ and
  $n+k+2$ intersects $C(x_{0,\ell},r)$. In other words,
  $X_{+,\sigma,\ell}(r)$ intersects a fundamental segment whose order
  differs from $n$ by no more than $k+2$.

  Now the first claim of Proposition~\ref{prop:3xp,1} follows from
  Lemma~\ref{lem:31np,2}.

  The second claim is derived by Lemma~\ref{lem:2xp,1}, since
  $X_{\sigma,\ell}(r)$ can be connected by a bounded number of
  fundamental segments and the interval from $v$ to $\hat{v}$.

  \subsection{K\"{o}nig's coordinate.}
For $\ell<\infty$ point $x_{+,\ell}$ is an attracting point for
$\mathbf{G}_{\ell}^{-2}$ and the basin of attraction contains the
entire $\HH_+$.

\begin{defi}\label{defi:19np,1}
The {\em K\"{o}nig coordinate} $\mathfrak{k}_{\ell,\pm}$ is a univalent
map from $H_{\pm}$ into $\CC$ given by
\[ \mathfrak{k}_{\pm,\ell}(u) = \lim_{n\rightarrow\infty}\bigl[
  \left(DG^2_{\ell}(x_{\pm,\ell})\right)^{n}
  \left(\mathbf{G}_{\ell}^{-2n}(u) - x_{\pm,\ell}\right) \bigr] .\]
\end{defi}

It is worth noting that since $x_{\pm,\ell}$ form a cycle under
$G_{\ell}$, we get
\[ DG^2_{\ell}(x_{\pm,\ell}) = DG_{\ell}(x_{+,\ell})
DG_{\ell}(x_{-,\ell}) = \left| DG_{\ell}(x_{\pm,\ell})\right |^2 \]
since $DG_{\ell}(x_{+,\ell}) = \overline{DG_{\ell}(x_{-,\ell})}$.

We get the functional equation for $u \in \HH_{\pm}$
\[ \mathfrak{k}_{\mp,\ell} \circ \mathbf{G}^{-1}_{\ell}(u) =
\left(DG_{\ell}(x_{\pm,\ell})\right)^{-1}
\mathfrak{k}_{\pm,\ell}(u)\; . \]

Our interest is in
the behavior of K\"{o}nig's coordinate in
$D(x_{0,\infty},R_{\text{analytic}})\setminus\Omega_{\ell}$. Recall
the arc $\mathfrak{w}_{\ell}$ which joins $x_{\pm,\ell}$ to
$x_{0,\ell}$ and is the common boundary component of
$\Omega_{\pm,\ell}$ and invariant under $G_{\ell}$. It is convenient
to restrict the domain of $\mathfrak{k}_{\pm,\ell}$ to
$\HH_{\pm}\setminus\mathfrak{w}_{\ell}$ and then we take the logarithm
$\log \mathfrak{k}_{\pm,\ell}$ which will map into some horizontal
strip of with $2\pi$.

Set
\begin{equation}\label{equ:2qp,1}
  t_{\ell} := \frac{\log
    \left|DG_{\ell}(x_{\pm,\ell})\right|^2}{\log\tau_{\ell}^2} .
\end{equation}
Then functions $\psi_{\ell} = \log H_{\ell},\, t_{\ell}^{-1}\log \mathfrak{k}_{\pm,\ell}$
satisfy the same functional equation
\begin{equation}\label{equ:4qa,1}
  \psi_{\ell} \circ \mathbf{G}_{\ell}^{-2}(u) = \psi_{\ell}(u) -
  \log\tau^2_{\ell} .
\end{equation}

\paragraph{Repelling Fatou coordinate.}
When $\ell=\infty$ the K\"{o}nig coordinate is replaced
with the exponential of the repelling Fatou coordinate denoted by
$\mathfrak{k}_{\pm,\infty}$.  The functional
equation is $\log\mathfrak{k}_{\pm,\infty}\mathbf{G}^{-2}_{\infty} = 1 +
\log \mathfrak{k}_{\pm,\infty}$. In that case we put
\[ \psi_{\infty}:=-\log\tau_{\infty}^2\log\mathfrak{k}_{\pm,\infty} \]
and the equation~(\ref{equ:4qa,1})
will be satisfied. We will speak a {\em generalized} K\"{o}nig
coordinate to include this case.

\paragraph{Estimates of the variation of the generalized K\"{o}nig coordinate.}
We will write $d_{\pm,\ell}$ for the hyperbolic metric of
$\hat{\HH}_{\pm,\ell}$.
\begin{lem}\label{lem:28na,1}

  For every $R>0$ there are $\ell_{\ref{lem:28na,1}}<\infty$ and $K_{\ref{lem:28na,1}}(R)<\infty$ for which the
  following statement holds true for every $\ell\geq\ell_{\ref{lem:28na,1}}$.

  Fix a fundamental segment
  in the boundary
  of $\Omega_{\ell}\cap\HH_{\pm}$ and denote its
  endpoints by $u_1,u_2$. Let $\Delta$ be a disk which contains that
  fundamental segment and whose hyperbolic diameter is less than
  $R\cdot d_{\pm,\ell}(u_1,u_2)$,

  Then, whenever $z_1,z_2\in\Delta$,
   \[ t_{\ell}^{-1}\bigl|
\log\mathfrak{k}_{\pm,\ell}(z_1)-\log\mathfrak{k}_{\pm,\ell}(z_2)\bigr|
< K_{\ref{lem:28na,1}}(R) .\]
\end{lem}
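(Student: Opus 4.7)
The plan is to pass to the (generalized) König coordinate, in which the functional equation becomes a translation, and thereby reduce the estimate to an Euclidean-hyperbolic comparison at a base fundamental segment of fixed small order where everything is uniformly compact.

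Set $\chi_\ell:=\log\mathfrak{k}_{\pm,\ell}$, a univalent conformal map from $\hat{\HH}_{\pm,\ell}$ onto a simply-connected domain $\Sigma_\ell$, with $\Sigma_\ell\subset\{|\Im\zeta|<\pi\}$ for $\ell$ finite and with $\Sigma_\infty$ a half-plane (via the repelling Fatou coordinate). By construction $\chi_\ell\circ\mathbf{G}_\ell^{-2}=\chi_\ell-\alpha_\ell$, where $\alpha_\ell=t_\ell\log\tau_\ell^2$ for finite $\ell$ and $\alpha_\infty=1$. Every fundamental segment of order $n$ is the image of a base segment $\mathfrak{u}_0$ (of matching parity and fixed small order) under iterated translation by $-\alpha_\ell$, so $\chi_\ell(u_2)-\chi_\ell(u_1)=\pm\alpha_\ell$ for consecutive endpoints, and $d_{\pm,\ell}(u_1,u_2)=d_{\Sigma_\ell}(\chi_\ell(u_1),\chi_\ell(u_2))$ is independent of $n$ since translation along the real direction preserves the $\Sigma_\ell$-metric restricted to the translation-invariant tail of $\Sigma_\ell$.

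Because $\chi_\ell$ is a hyperbolic isometry, $\chi_\ell(\Delta)$ has hyperbolic $\Sigma_\ell$-diameter at most $R\cdot d_{\Sigma_\ell}(\chi_\ell(u_1),\chi_\ell(u_2))$, itself a uniformly bounded multiple of $|\alpha_\ell|$. The main technical step, and the main obstacle, is to establish the following uniform comparability: for all $\ell$ sufficiently large the image $\chi_\ell(\mathfrak{u}_0)$, together with a hyperbolic $\Sigma_\ell$-disk of fixed radius around it, lies in a common compact region $W$ on which Euclidean and hyperbolic $\Sigma_\ell$-metrics are equivalent with constants independent of $\ell$. This uniform comparability follows from Proposition~\ref{prop:11jp,1} (uniform convergence of $\phi_{\pm,\ell}^{-1}$), Lemma~\ref{lem:17ha,3} (shrinking of $\mathfrak{w}_\ell$ in the hyperbolic metric of $\HH_\pm$), and the almost-parabolic analysis of Section~3, which together guarantee that $\Sigma_\ell$ converges in the Carath\'eodory sense, once suitably rescaled by $\alpha_\ell$, so that the hyperbolic metrics on $W$ converge uniformly. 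Translation invariance extends this comparability from $W$ to every translate of $W$ by a multiple of $\alpha_\ell$, which together cover all fundamental segments.

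Assembling the above, the Euclidean diameter of $\chi_\ell(\Delta)$ is at most $CR|\alpha_\ell|$, so that
\[
t_\ell^{-1}\bigl|\chi_\ell(z_1)-\chi_\ell(z_2)\bigr| \le CR\log\tau_\ell^2
\]
for all finite $\ell\ge\ell_{\ref{lem:28na,1}}$ and $z_1,z_2\in\Delta$. Since $\tau_\ell\to\tau_\infty<\infty$ this bound is uniform in $\ell$, and the identical reasoning at $\ell=\infty$ reads $\log\tau_\infty^2|\chi_\infty(z_1)-\chi_\infty(z_2)|\le CR\log\tau_\infty^2$, which is the correct form of the conclusion given the definition $\psi_\infty=-\log\tau_\infty^2\log\mathfrak{k}_{\pm,\infty}$. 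This yields $K_{\ref{lem:28na,1}}(R)$ of the form $CR\log\tau_\infty^2+o(1)$ as claimed.
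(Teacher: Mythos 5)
Your proposal correctly identifies the key mechanism: the functional equation $\chi_\ell\circ\mathbf{G}_\ell^{-2}=\chi_\ell-\alpha_\ell$ forces $|\chi_\ell(u_1)-\chi_\ell(u_2)|=\alpha_\ell=t_\ell\log\tau_\ell^2$ at the endpoints, and one then needs to propagate this to the whole disk $\Delta$. However, your route for that propagation step contains two false claims and a third that is asserted but neither proven nor needed.

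First, $d_{\pm,\ell}(u_1,u_2)$ is \emph{not} independent of the order $n$. Since $\mathbf{G}_\ell^{-2}$ is a strict hyperbolic contraction of $\hat{\HH}_{\pm,\ell}$, this distance strictly decreases with $n$; equivalently $\Sigma_\ell-\alpha_\ell\subsetneq\Sigma_\ell$, so $\Sigma_\ell$ has no translation-invariant tail. What is true, and what the paper uses, is that the distance is \emph{bounded above} for all $n$ and all large $\ell$: bound it on the base segments of order $2,3$ (compactly contained in $\hat{\HH}_{\pm,\ell}$ for large $\ell$ by Proposition~\ref{prop:11jp,1} and Lemma~\ref{lem:17ha,3}), then Schwarz--Pick gives the same bound for all higher orders. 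Second, the purported translation-invariance used to spread your comparison region $W$ over all fundamental segments fails for the same reason. Third, the ``main technical step'' you propose --- uniform Euclidean-to-hyperbolic comparability on a fixed compact $W$ via Carath\'eodory convergence of rescaled $\Sigma_\ell$ --- is not established and is unnecessary. Once the $\hat{\HH}_{\pm,\ell}$-hyperbolic diameter of $\Delta$ is bounded by $RK$, Koebe distortion for the univalent map $\log\mathfrak{k}_{\pm,\ell}$ on a hyperbolic ball of radius $RK$ immediately gives
\[
\bigl|\log\mathfrak{k}_{\pm,\ell}(z_1)-\log\mathfrak{k}_{\pm,\ell}(z_2)\bigr|
\le K_1(R)\,\bigl|\log\mathfrak{k}_{\pm,\ell}(u_1)-\log\mathfrak{k}_{\pm,\ell}(u_2)\bigr|
=K_1(R)\,t_\ell\log\tau_\ell^2
\]
with $K_1(R)$ depending only on the hyperbolic radius and hence uniform in $\ell$; no convergence of the image domains is needed. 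This is exactly how the paper concludes. So while your strategy is aimed in the right direction, the justification of the disk-propagation step is the content of the lemma, and the argument you give for it does not hold up.
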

\begin{proof}
  $\ell_{\ref{lem:28na,1}}$ should be chosen so that for every
  $\ell\geq\ell_{\ref{lem:28na,1}}$ the hyperbolic diameter of the
  fundamental arcs of order $2,3$ in the boundary of $\Omega_{\ell}$
  is bounded by some $K$.  Then the same bound holds for all orders,
  since $\mathbf{G}_{\ell}^{-2}$ is a hyperbolic contraction.

  Thus, the hyperbolic diameter of $\Delta$ is bounded by $KR$
  and so the distortion of
  $\log \mathfrak{k}_{\pm,\ell}$ is bounded on $\Delta$ in terms of $R$.
  Let $u_1$ and $u_2$ be the endpoints of the fundamental segment.
  Then,
  \[ \bigl|\log\mathfrak{k}_{\pm,\ell}\left(z_1\right) -
  \log\mathfrak{k}_{\pm,\ell}\left(z_2\right) \bigr| \leq K_1(R)
  \left|\log\mathfrak{k}_{\pm,\ell}(u_1)-\log\mathfrak{k}_{\pm,\ell}(u_2)\right|
  = K_1(R)t_{\ell}\log\tau_{\ell}^2 \]
where the last equality follows from formula~(\ref{equ:2qp,1}).
\end{proof}

\paragraph{The filler map.}
$\mathfrak{k}_{\pm,\ell}$ is defined up to a multiplicative
constant. Let us choose it so that
$t_{\ell}^{-1}\log\mathfrak{k}_{\pm,\ell}$ and $\log H_{\ell}$ are
equal at an endpoint of the arc of order $2$ in the boundary of
$\Omega_{\ell}$. Then we get:

\begin{coro}\label{coro:8xp,1}
  There exists $K_{\ref{coro:8xp,1}}$ such that every
    $\ell\geq\ell_{\ref{lem:28na,1}}$
      at every point $u$ in the boundary of $\Omega_{\ell}$,
      \[ \left| t_{\ell}^{-1}\log\mathfrak{k}_{\pm,\ell}(u) - \log
      H_{\ell}(u)\right| \leq K_{\ref{coro:8xp,1}} .\]
\end{coro}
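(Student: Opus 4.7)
The proof proposal is the following.

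Set $\psi_{1,\ell}:=t_\ell^{-1}\log\mathfrak{k}_{\pm,\ell}$ and $\psi_{2,\ell}:=\log H_\ell$, with branches of $\log$ chosen compatibly so that both satisfy the functional equation~(\ref{equ:4qa,1}). Their difference $F_\ell(u):=\psi_{1,\ell}(u)-\psi_{2,\ell}(u)$ is then invariant under the inverse branch of $G^2_\ell$ that preserves $\hat{\HH}_{\pm,\ell}$, and by the normalization preceding the statement $F_\ell$ vanishes at the designated endpoint $u_0$ of the order-$2$ fundamental segment of $\partial\Omega_\ell\cap\HH_+$. This invariance will be the engine of the argument.

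By Definition~\ref{defi:24xp,1} every point of $\partial\Omega_\ell\cap\HH_\pm$ lies on a fundamental segment, and every such segment of order $n\geq 4$ is the image under some $\mathbf{G}^{-2k}_{\pm,\ell}$ of a segment of order $n-2k\in\{2,3\}$. Together with the $\mathbf{G}^{-2}_{\pm,\ell}$-invariance of $F_\ell$, this reduces the problem to bounding $|F_\ell|$ on the (finitely many) fundamental segments of orders $0,1,2,3$ in both half-planes, plus their common endpoints. On each such initial segment I would control the oscillation of $\psi_{1,\ell}$ by Lemma~\ref{lem:28na,1}: by Proposition~\ref{prop:11jp,1} and Lemma~\ref{lem:17ha,3}, for $\ell$ large these arcs lie in enclosing disks of uniformly bounded hyperbolic diameter in $\hat{\HH}_{\pm,\ell}$, so Lemma~\ref{lem:28na,1} gives a uniform bound $K_{\ref{lem:28na,1}}(R)$. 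For $\psi_{2,\ell}=\log H_\ell$, note that by Fact~\ref{fa:1hp,1} the image $H_\ell(\partial\Omega_\ell)$ lies on the boundary of a rescaling of $\varPi_\ell$; the portion corresponding to the initial arcs is bounded and converges to the analogous set for $H_\infty$ by Proposition~\ref{prop:11jp,1}, so $\log H_\ell$ has uniformly bounded oscillation there. Starting from $F_\ell(u_0)=0$ and walking through the finitely many adjacent initial segments, these two oscillation estimates propagate to give $|F_\ell|\leq K$ on all of them.

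Invariance under $\mathbf{G}^{-2}_{\pm,\ell}$ then propagates this bound to every fundamental segment, hence to all of $\partial\Omega_\ell$ in a neighborhood of $x_{0,\ell}$. The complementary portion of $\partial\Omega_\ell$ is bounded away from the accumulation point $x_{0,\ell}$, and on it the bound follows by direct compactness from the continuous convergence $\Omega_\ell\to\Omega_\infty$ supplied by Proposition~\ref{prop:11jp,1}.

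The main technical obstacle is the choice of branches of the two logarithms consistently across $\partial\Omega_\ell$ so that the functional equations really have matching signs and $F_\ell$ is genuinely $\mathbf{G}^{-2}_{\pm,\ell}$-invariant; one must also handle the endpoint of the order-$0$ arc, where $H_\ell$ takes the value $0$ and $\log H_\ell$ diverges. This will be dealt with by excising a small neighborhood of that endpoint, bounding $F_\ell$ on the truncated initial segments, and then re-entering the excised neighborhood along preimages by $\mathbf{G}^{-2}_{\pm,\ell}$ whose images cluster at $x_{0,\ell}$ rather than at the singularity of $\log H_\ell$.
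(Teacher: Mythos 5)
Your proposal matches the paper's intended argument: the difference of $t_\ell^{-1}\log\mathfrak{k}_{\pm,\ell}$ and $\log H_\ell$ is constant along orbits of $\mathbf{G}_\ell^{-2}$ by the shared functional equation~(\ref{equ:4qa,1}), it vanishes at one endpoint of the order-$2$ segment by the preceding normalization, and its size on the initial segments is controlled by the oscillation bounds of Lemma~\ref{lem:28na,1} together with the convergence of Proposition~\ref{prop:11jp,1}, after which invariance spreads the bound to every fundamental segment. Two small corrections that only simplify your write-up: the worry about $\log H_\ell$ diverging at the tip of the order-$0$ arc is unfounded — $H_\ell$ vanishes only at the interior critical point $x_{0,\ell}$ and, by the Feigenbaum equation, approaches the finite value $\tau_\ell$ at the boundary cusp $\tau_\ell x_{0,\ell}$ — and since $\mathbf{G}_\ell^{-2}$ carries the order-$n$ segment exactly onto the order-$(n+2)$ segment, invariance transfers the bound on orders $2,3$ backward to orders $0,1$ automatically, so (as the paper states) orders $2$ and $3$ suffice; moreover, the fundamental segments accumulate at $x_{\pm,\ell}$ (not $x_{0,\ell}$) and already exhaust $\partial\Omega_\ell\cap\HH_\pm$, so no residual compactness argument is needed.
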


This follows from Lemma~\ref{lem:28na,1} since its enough to establish
the estimate $u$ in arcs of order $2$ and $3$.

\begin{lem}\label{lem:31np,3}
  There are $R_{\ref{lem:31np,3}}>0$ and $K_{\ref{lem:31np,3}}$ and for every
  $r :\: 0<r\leq R_{\ref{lem:31np,3}}$
one can choose $\ell_{\ref{lem:31np,3}}(r)<\infty$ with the following property.

For every $\ell :\: \ell_{\ref{lem:31np,3}}(r) \leq\ell\leq\infty$ there is an arc
which contains the set $X_{\sigma,\ell}(r)$, $\sigma=\pm$,
cf. Proposition~\ref{prop:3xp,1}, so that for every two points on this
arc $t_{\ell}^{-1}\log\mathfrak{k}_{\sigma,\ell}$
differ by no more than $K_{\ref{lem:31np,3}}$.
\end{lem}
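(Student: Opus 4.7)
The plan is to take as the arc a short subarc of $\partial\Omega_\ell\cap\HH_\sigma$ that is assembled from finitely many fundamental segments supplied by Proposition~\ref{prop:3xp,1}, and then bound the oscillation of $\psi_{\sigma,\ell}:=t_\ell^{-1}\log\mathfrak{k}_{\sigma,\ell}$ along it by a telescoping estimate that combines the functional equation~(\ref{equ:4qa,1}) with the local distortion bound of Lemma~\ref{lem:28na,1}. First I set $R_{\ref{lem:31np,3}}:=R_{\ref{prop:3xp,1}}$ and choose $\ell_{\ref{lem:31np,3}}(r)$ at least as large as required by Proposition~\ref{prop:3xp,1}. That proposition produces a collection $\mathcal{F}$ of fundamental segments of $\partial\Omega_\ell\cap\HH_\sigma$ whose union covers $X_{\sigma,\ell}(r)$, whose orders differ by at most $M_{\ref{prop:3xp,1}}$, and whose union lies in a Euclidean disk of radius $Q_{\ref{prop:3xp,1}}\,r^3$. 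Because each fundamental segment meeting $C(x_{0,\ell},r)$ has diameter comparable to $r^3$ by Lemmas~\ref{lem:31na,1} and \ref{lem:2xp,1}, the number $|\mathcal{F}|$ is bounded by a universal constant $N$. I then define the arc to be the smallest connected subarc of $\partial\Omega_\ell\cap\HH_\sigma$ containing $\bigcup\mathcal{F}$; it likewise consists of at most $N'$ fundamental segments.

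Next I apply Lemma~\ref{lem:28na,1} with a fixed constant $R$ (say $R=1$) to each fundamental segment $\mathfrak{u}$ of the arc, using as $\Delta$ a disk of hyperbolic diameter comparable to that of $\mathfrak{u}$ itself; this gives that the total oscillation of $\psi_{\sigma,\ell}$ on $\mathfrak{u}$ is at most $K_{\ref{lem:28na,1}}(1)$. It remains to compare the values of $\psi_{\sigma,\ell}$ at the shared endpoints of consecutive segments of the arc. For two consecutive segments in the \emph{same} side of $\partial\Omega_\ell$ and of the same half-plane $\HH_\sigma$, their orders differ by $2$ and they are the image one of the other under $\mathbf{G}_\ell^{-2}$; by the functional equation~(\ref{equ:4qa,1}) the corresponding endpoint values of $\psi_{\sigma,\ell}$ then differ by exactly $\log\tau_\ell^2$, which is at most $\log\tau_\infty^2+1$ for $\ell$ large. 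The number of such junctions along the arc is bounded by $M_{\ref{prop:3xp,1}}$, so these jumps contribute at most $M_{\ref{prop:3xp,1}}(\log\tau_\infty^2+1)$ in total.

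The delicate step, and the one I expect to be the main obstacle, is the junction where the arc passes from $\partial\Omega_{+,\ell}\cap\HH_\sigma$ to $\partial\Omega_{-,\ell}\cap\HH_\sigma$. Both pieces lie in the simply connected domain $\HH_\sigma\setminus\mathfrak{w}_\ell$ on which $\mathfrak{k}_{\sigma,\ell}$ is defined, so there is no topological obstacle; however the two sides are not directly matched by $\mathbf{G}_\ell^{-2}$. Here I use Lemma~\ref{lem:27va,1}, which produces, for any point of one side on the arc, a nearby point on the other side belonging to a fundamental segment whose order is shifted by only $1$, at Euclidean distance $O(r^3)$; combining this with Lemma~\ref{lem:28na,1} applied to a disk of uniformly bounded hyperbolic diameter containing both neighboring segments gives a uniform bound on the change in $\psi_{\sigma,\ell}$ across the junction. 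Telescoping over the at most $N'$ segments of the arc therefore yields a uniform bound
\[
K_{\ref{lem:31np,3}}:=N'\bigl(K_{\ref{lem:28na,1}}(1)+\log\tau_\infty^2+1\bigr)+K_{\ref{lem:27va,1}}',
\]
independent of $r$ and of $\ell\ge\ell_{\ref{lem:31np,3}}(r)$, and the case $\ell=\infty$ is handled identically since $\psi_\infty$ satisfies the same functional equation~(\ref{equ:4qa,1}) with $\log\tau_\infty^2$ in place of $\log\tau_\ell^2$.
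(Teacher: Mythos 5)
The proof has a genuine gap in the construction of the arc. You define the arc as ``the smallest connected subarc of $\partial\Omega_\ell\cap\HH_\sigma$ containing $\bigcup\mathcal{F}$'' and assert that ``it likewise consists of at most $N'$ fundamental segments.'' This is not true when, as in the proof of Proposition~\ref{prop:3xp,1}, $\mathcal{F}$ contains fundamental segments on \emph{both} $\partial\Omega_{+,\ell}$ and $\partial\Omega_{-,\ell}$. Inside the Jordan arc $\partial\Omega_\ell\cap\HH_\sigma$, the subarcs $\partial\Omega_{+,\ell}\cap\HH_\sigma$ and $\partial\Omega_{-,\ell}\cap\HH_\sigma$ are glued at $x_{\sigma,\ell}$, the fixed point of $\mathbf{G}_\ell^{-2}$ at which the fundamental segments accumulate. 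Since $|x_{\sigma,\ell}-x_{0,\ell}|\to 0$ as $\ell\to\infty$, $x_{\sigma,\ell}$ lies far inside the circle $C(x_{0,\ell},r)$, so the connected subarc of $\partial\Omega_\ell\cap\HH_\sigma$ joining a segment of $\mathcal F$ on one side to one on the other must pass through the entire spiral near $x_{\sigma,\ell}$, traversing \emph{infinitely many} fundamental segments. By the functional equation~(\ref{equ:4qa,1}), $\Re\bigl(t_\ell^{-1}\log\mathfrak{k}_{\sigma,\ell}\bigr)$ decreases by $\log\tau_\ell^2$ across each of them, hence tends to $-\infty$ along that subarc, and the claimed oscillation bound fails. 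Your ``junction estimate'' via Lemmas~\ref{lem:27va,1} and~\ref{lem:28na,1} is the right mechanism, but it must \emph{replace} the detour through $x_{\sigma,\ell}$, not supplement it: the arc has to cut directly across the gap near $C(x_{0,\ell},r)$, and it therefore cannot be a subarc of $\partial\Omega_\ell$ at all.

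The paper avoids this entirely and is considerably simpler. It takes as arc the short subarc of the circle $C(x_{0,\ell},r)$ bounded by the extreme points of $X_{\sigma,\ell}(r)$; by Proposition~\ref{prop:3xp,1} this arc sits in a disk $\hat\Delta$ of Euclidean radius $2Q_{\ref{prop:3xp,1}}r^3$, while by Lemma~\ref{lem:28na,2} (with $\epsilon=a/2$) each $u\in X_{\sigma,\ell}(r)$ is moved by $\mathbf{G}_\ell^{-2}$ a Euclidean distance $\geq\frac{a}{2}r^3$, and $\hat\Delta$ stays at distance $\gtrsim r/2$ from $\RR$. Thus $\hat\Delta$ has bounded hyperbolic diameter compared with the span of a fundamental segment, a \emph{single} application of the distortion argument of Lemma~\ref{lem:28na,1} controls $t_\ell^{-1}\log\mathfrak{k}_{\sigma,\ell}$ on all of $\hat\Delta$, and no telescoping or side-to-side matching is needed. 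If you want to salvage your telescoping approach, you must explicitly build the arc as: finitely many fundamental segments on $\partial\Omega_{+,\ell}\cap\HH_\sigma$, a short Euclidean cross-over supplied by Lemma~\ref{lem:27va,1}, and finitely many on $\partial\Omega_{-,\ell}\cap\HH_\sigma$ --- but by the time you control the oscillation on the cross-over you are essentially repeating the paper's single-disk distortion estimate.
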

\begin{proof}
By Proposition~\ref{prop:3xp,1} the convex hull of
$X_{\sigma,\ell}(r)$ on the circle $C(x_{0,\ell},r)$ is contained in a
Euclidean disk $\hat{\Delta}$  of radius $2Q_{\ref{prop:3xp,1}} r^3$. On the other
hand, if $u\in X_{\sigma,\ell}(r)$ then
$|u-\mathbf{G}_{\ell}^{-2}(u)|\geq \frac{a}{2} r^3$ provided that $r$ is
sufficiently small and $\ell$ large enough depending on $r$ - just
refer to Lemma~\ref{lem:28na,2} with $\epsilon=\frac{a}{2}$. Additionally, the
distance from $u$ to $\RR$ is bigger than $r/2$ under the same
conditions on $r,\ell$.

Hence, $t_{\ell}^{-1}\log\mathfrak{k}_{\sigma,\ell}$ maps $\hat{\Delta}$
with uniformly bounded distortion and the claim follows as in
Lemma~\ref{lem:28na,1}.

\end{proof}

  \subsection{The drift integral.}
  Let us recall the fundamental annulus $A_{\ell}$, cf. Definition~\ref{defi:3hp,2}.
  The drift integral is
  \[ \vartheta(\ell) = -\frac{1}{\log\tau_{\ell}} \Re \int_{A_{\ell}}
  \log\frac{H_{\ell}(u)}{u} \gamma_{\ell}(u)\, d\Leb_2(u) ,\]
  cf. in~\cite{leswi:limit} Lemma 3.2, Definition~\ref{defi:3hp,2}.

  The function $\log\frac{H_{\ell}(u)}{u}$ is bounded in $A_{\ell}$
  except in neighborhoods of $x_{0,\ell}$. Its growth there can be
  controlled by the functional equation $H_{\ell}\circ G_{\ell} =
  \tau^{-2}_{\ell} H_{\ell}$. This shows that for $\ell$ finite the
  magnitude of that function exceeds $M$ on sets which are
  exponentially small in $M$ and hence the drift integral is
  well-defined. For $\ell=\infty$ this argument breaks down and the
  drift integral has undefined value, see in~\cite{leswi:measure},
  Proposition 3.5. Our goal is to prove the following result.

  \begin{theo}\label{theo:7np,1}
  There exists a finite limit
\[ \lim_{\ell\rightarrow\infty} \vartheta(\ell) = -\frac{1}{\log\tau_{\infty}}\lim_{r\rightarrow
  0^+} \Re \int_{A_{\infty}\setminus D(x_{0,\infty},r)}
\log\frac{H_{\infty}(u)}{u}\gamma_{\infty}(u)\, d\Leb_2(u) .\]
  \end{theo}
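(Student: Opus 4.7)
The plan is to split the drift integral defining $\vartheta(\ell)$ into an \emph{outer} piece over $A_\ell\setminus D(x_{0,\infty},r)$ and an \emph{inner} piece over $A_\ell\cap D(x_{0,\infty},r)$, depending on a small parameter $r>0$. I will first pass to $\ell\to\infty$ for each fixed $r$ and only afterwards let $r\to 0^+$; the iterated limit will reproduce the Cauchy-principal-value expression on the right-hand side of the theorem, after dividing by $\log\tau_\ell\to\log\tau_\infty$.

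On the outer piece the integrand $\log(H_\ell(u)/u)$ is uniformly bounded in $\ell$ for $\ell$ large depending on $r$. The plan there is to combine the $L_1$ convergence $\gamma_\ell\to\gamma_\infty$ from Theorem~\ref{theo:28mp,1} with the $L_1$ convergence $\chi_{A_\ell}\to\chi_{A_\infty}$ derived from Lemma~\ref{lem:21ja,1} and the uniform convergence $H_\ell\to H_\infty$ on compact subsets of $\Omega_\infty\setminus\{x_{0,\infty}\}$ obtained from Proposition~\ref{prop:11jp,1}. Together these give, for every fixed $r>0$,
\[ \lim_{\ell\to\infty}\Re\int_{A_\ell\setminus D(x_{0,\infty},r)}\log\frac{H_\ell(u)}{u}\gamma_\ell(u)\,d\Leb_2(u) = \Re\int_{A_\infty\setminus D(x_{0,\infty},r)}\log\frac{H_\infty(u)}{u}\gamma_\infty(u)\,d\Leb_2(u). \]

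The inner piece carries the whole difficulty, because $\log|H_\ell|$ has a singularity of strength $\ell$ at $x_{0,\ell}$ that becomes genuinely flat as $\ell\to\infty$. I will handle it by substituting $t_\ell^{-1}\log\mathfrak{k}_{\pm,\ell}$ for $\log H_\ell$, where $\mathfrak{k}_{\pm,\ell}$ is the generalized K\"onig coordinate of Definition~\ref{defi:19np,1}. By Corollary~\ref{coro:8xp,1} the two functions differ by at most $K_{\ref{coro:8xp,1}}$ on $\partial\Omega_\ell$, and since both satisfy the same functional equation~(\ref{equ:4qa,1}) under $\mathbf{G}_\ell^{-2}$ this gap propagates uniformly throughout $A_\ell\cap D(x_{0,\infty},r)$. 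Combined with the uniform analytic bound on $\gamma_\ell$ across $D(x_{0,\infty},R_{\text{analytic}})$ from Theorem~\ref{theo:28mp,1}, the substitution introduces only an $O(r^2)$ error uniform in $\ell$. The substituted integral itself I will show tends to $0$ as $r\to 0$ uniformly in $\ell$ by a dyadic-shell decomposition of $A_\ell\cap D(x_{0,\infty},r)$: on each shell Proposition~\ref{prop:3xp,1} bounds the number of relevant fundamental segments, Lemma~\ref{lem:2xp,1} controls their diameter cubically in the radius, and Lemma~\ref{lem:31np,3} gives a uniform oscillation bound for $t_\ell^{-1}\log\mathfrak{k}_{\pm,\ell}$ on arcs containing a section; summing the shells produces a bound of the form $C\,r^2(1+|\log r|)$ uniformly in $\ell$ large. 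The contribution of $\log u$ is bounded and handled directly.

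\textbf{Main obstacle.} The hardest step will be making the K\"onig substitution genuinely uniform in $\ell$. One must track that every constant appearing in the geometric apparatus of Section~\ref{sec:4qa,1} and the K\"onig-coordinate material (section counts, fundamental-segment diameters, oscillation bounds, the multiplier $DG_\ell(x_{\pm,\ell})$ remaining controlled relative to $\log\tau_\ell^2$) survives $\ell\to\infty$, and that the pointwise $O(1)$ gap between $\log H_\ell$ and $t_\ell^{-1}\log\mathfrak{k}_{\pm,\ell}$ converts into an area-integral bound rather than a mere pointwise one. Once this is in place, fixing $r$ small, letting $\ell\to\infty$ first and then $r\to 0^+$ delivers the claimed limit.
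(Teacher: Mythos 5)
Your treatment of the outer piece is the same as the paper's (dominated convergence plus Theorem~\ref{theo:28mp,1}), but the inner piece contains a genuine gap that I do not think can be repaired along the lines you describe. You propose to bound the inner integral in \emph{absolute value}, ending with an estimate of the form $Cr^2(1+|\log r|)$ uniform in $\ell$. No such bound can hold: by Lemma~\ref{lem:11na,1} one has $\log H_{\ell}(u)=-C_{\text{Fatou}}(u-x_{0,\infty})^{-2}+o(r^{-2})$ on most of each circle $|u-x_{0,\infty}|=s$, the density $\gamma_{\ell}$ is bounded away from $0$ near $x_{0,\infty}$, and $A_{\infty}\cap D(x_{0,\infty},s)$ has area comparable to $s^{2}$, so each dyadic shell contributes an amount bounded \emph{below} by a constant and the absolute integral over $A_{\infty}\cap D(x_{0,\infty},r)$ diverges. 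This is exactly why the paper states (citing \cite{leswi:measure}) that the drift integral is undefined for $\ell=\infty$ and that the right-hand side of the theorem exists only as a Cauchy principal value; the same divergence shows up for finite $\ell$ as an unbounded (in $\ell$) contribution for fixed $r$. The lemmas you invoke (Proposition~\ref{prop:3xp,1}, Lemma~\ref{lem:2xp,1}, Lemma~\ref{lem:31np,3}) control the \emph{oscillation} of $t_{\ell}^{-1}\log\mathfrak{k}_{\pm,\ell}$ on sections of $\partial\Omega_{\ell}$, not its magnitude, which still grows like $s^{-2}$; they cannot convert a divergent absolute integral into a convergent one.

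The missing idea is a \emph{cancellation} in the real part of the $(u-x_{0,\infty})^{-2}$ singularity, and this is what the paper's argument is built to capture. One introduces $\psi_{\ell}$ with $\partial_{\overline{u}}\psi_{\ell}=\gamma_{\ell}$, normalized so that its linear part is $\gamma_{\ell}(x_{0,\infty})\overline{u-x_{0,\infty}}$, and applies Stokes' formula to convert the area integral over $D(x_{0,\infty},r)$ into boundary integrals (formula~(\ref{equ:10xp,1})). The dangerous part reduces to $\Re\bigl[\iota\int_{C_{\pm}}\overline{z}z^{-2}\,dz\bigr]$, which is computed explicitly in~(\ref{equ:15np,2}) and is essentially zero because the angular average of $e^{-2\iota\theta}$ over the half-circles vanishes; only then do the König coordinate (via Corollary~\ref{coro:8xp,1} and the short length of $\partial\Omega_{\ell}\cap D(x_{0,\infty},r)$ from Lemma~\ref{lem:3qp,1}) and the exit-time estimates of Proposition~\ref{prop:11xp,1} enter, to control the mismatch terms. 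Without some version of this sign cancellation — which your proposal never invokes — the inner estimate, and hence the existence of the limit, cannot be obtained.
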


  For all $\ell\geq \ell(r)$ the complement of $D(x_{0,\infty},r)$
  meets the domains of only finitely many branches of $T_{\ell}$. For
  that reason $\log\frac{H_{\ell}(u)}{u} \cdot \chi_{A_{\ell}\setminus
    D(x_{0,\infty},r)}$ are    bounded    uniformly
    with respect to $\ell$ and converge to
    $\log\frac{H_{\infty}(u)}{u}$ pointwise. By the Lebesgue
    dominated convergence theorem and Theorem~\ref{theo:28mp,1}
\begin{multline}\label{equ:17np,1} \lim_{\ell\rightarrow\infty} -\frac{1}{\log\tau_{\ell}} \Re
\int_{A_{\ell}\setminus D(x_{0,\infty},r)}
\log\frac{H_{\ell}(u)}{u}\gamma_{\ell}(u)\, d\Leb_2(u) =\\
-\frac{1}{\log\tau_{\infty}} \int_{A_{\infty}\setminus D(x_{0,\infty},r)}
\log\frac{H_{\infty}(u)}{u}\gamma_{\infty}(u)\, d\Leb_2(u) .\end{multline}

\paragraph{Stokes' formula.}
We can assume
$r<R_{\text{analytic}}$, cf. Theorem~\ref{theo:28mp,1}, and hence all
$\gamma_{\ell}$ are analytic.
\begin{defi}\label{defi:10np,1}
Let $\psi_{\ell}(u)$ satisfy
\begin{equation}\label{equ:7np,2}
  \partial_{\overline{u}} \psi_{\ell}(u) = \gamma_{\ell}(u),
\end{equation}
normalized so that the linear part at $x_{0,\infty}$ is
$\gamma_{\ell}(x_{0,\infty})\overline{u-x_{0,\infty}}$.
\end{defi}
  Stokes' formula is $\int_D F(u)\gamma_{\ell}(u)\, d\Leb_2(u)
= \frac{1}{2i} \int_{\partial D} F(u)\psi_{\ell}(u)\, du$ for $F$ holomorphic
in $D$ and continuous to the closure.

\begin{defi}\label{defi:10xp,1}
  For every $\ell$
  including $\infty$ define $\Phi_{\ell}$ on some fixed
  neighborhood of $x_{0,\infty}$ by
  \[ \Phi_{\ell}(z) := \left\{ \begin{array}{ccc} \log \frac{H_{\ell}(z)}{z}
    &\text{if}& z\in\Omega_{\ell}\\ t_{\ell}^{-1}
    \log\mathfrak{k}_{\pm,\ell}(z) - \log z& \text{if} & z\notin
    \Omega_{\ell} \end{array}\right. \]
\end{defi}

Then, let us also define

\begin{multline}\label{equ:10xp,1}
\Theta_{\ell}(r) := \Re \left[ \frac{\iota}{2} \left(
  \int_{C(x_{0,\infty},r)} \Phi_{\ell}(u)\psi_{\ell}(u)\, du \right.\right.+\\
  \left.\left.\int_{\partial\Omega_{\ell}\cap D(x_{0,\ell},r)} \left(\log\frac{H_{\ell}(u)}{u} -
  \Phi_{\ell}(u)\right)\psi_{\ell}(u)\, du \right) \right] .
\end{multline}

For $\ell<\infty$ we claim that
\begin{equation}\label{equ:11xp,1}
  \Theta_{\ell}(r) = -\int_{D(x_{0,\infty},r)} \Re\Phi_{\ell}(u)\gamma_{\ell}(u)\,d\Leb_2(u) .
\end{equation}
Observe first that the singularities of $\Phi_{\ell}$ and
$\log H_{\ell}$ at $x_{\sigma,\ell}, \sigma=+,-,0$ are
logarithmic and therefore integrable. Then take
into account that $\Phi_{\ell}$ is discontinuous and hence the Stokes'
formula has to
be used separately on $\Omega_{\ell}\cap D(x_{0,\infty},r)$ and
$D(x_{0,\infty},r)\setminus\Omega_{\ell}$. The boundaries of those
sets can be complicated, but add up to $C(x_{0,\infty},r)$ and
  subtract along $\partial\Omega_{\ell} \cap D(x_{0,\infty},r)$ which
  corresponds to the second term in formula~(\ref{equ:10xp,1}).

For $\ell=\infty$ the convergence of $\Theta_{\infty}$ is not clear
and  will be shown later. Assuming it holds, in all cases including
$\ell=\infty$, we get for $0<\rho<r$
\begin{equation}\label{equ:11xp,2}
  \Theta_{\ell}(r)-\Theta_{\ell}(\rho) = \int_{\{ u :\: \rho<|u-x_{0,\infty}|<r\}}\Re\Phi_{\ell}(u)\gamma_{\ell}(u)\,d\Leb_2(u) .
\end{equation}

\begin{prop}\label{prop:10xp,1}
For every $\epsilon>0$ there is $r(\epsilon)>0$
and for every $0<r\leq r(\epsilon)$ there is $\ell_{\ref{prop:10xp,1}}(r)<\infty$ such that
\[ \forall \ell\;\;
\ell_{\ref{prop:10xp,1}}(r)\leq\ell\leq\infty\implies
\left| \Theta_{\ell}\left(r\right)\right| < \epsilon .\]
This includes the claim that $\Theta_{\infty}(r)$ is convergent.
\end{prop}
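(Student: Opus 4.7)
The plan is to control each of the two boundary integrals in~\eqref{equ:10xp,1} separately and to handle the existence claim for $\ell=\infty$ by the same estimates in the limit. A preliminary observation is that the density $\gamma_\ell$, extended analytically by Theorem~\ref{theo:28mp,1}, has a $\bar{\partial}$-primitive $\psi_\ell$ with the normalization of Definition~\ref{defi:10np,1} which satisfies $|\psi_\ell(u)|\leq C_1|u-x_{0,\infty}|$ uniformly in $\ell$ large enough, and converges uniformly to $\psi_\infty$ on compact subsets of $D(x_{0,\infty},R_{\text{analytic}})$; this follows by Cauchy estimates from the analyticity and uniform convergence of the $f_\ell^\infty$ produced in Proposition~\ref{prop:27mp,1}.

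For the boundary integral over $\partial\Omega_\ell\cap D(x_{0,\ell},r)$ I would invoke Corollary~\ref{coro:8xp,1} for the uniform pointwise bound $K_{\ref{coro:8xp,1}}$ on the first factor $\log H_\ell-t_\ell^{-1}\log\mathfrak{k}_{\pm,\ell}$. The remaining arc-length weighted by $|\psi_\ell|$ is estimated by summing over fundamental segments: each segment at distance $\rho$ from $x_{0,\ell}$ has diameter $O(\rho^3)$ by Lemmas~\ref{lem:31na,1} and~\ref{lem:2xp,1}, and carries $|\psi_\ell|=O(\rho)$, so per-segment contribution is $O(\rho^4)$. Using the near-parabolic contraction $\rho_n\sim 1/\sqrt{n}$ implied by Lemma~\ref{lem:28na,2} and summing over all segments in $D(x_{0,\ell},r)$ yields $\sum_{\rho_n\leq r} O(\rho_n^4)=O(r^2)$, uniformly in $\ell$.

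The circular integral over $C(x_{0,\infty},r)$ is the main difficulty, because the naive pointwise bound $|\Phi_\ell|\sim \ell|\log r|$ is much too crude. The essential device is the following. By Proposition~\ref{prop:3xp,1} the circle meets $\partial\Omega_\ell$ in arcs of total Euclidean length $O(r^3)$, and Corollary~\ref{coro:8xp,1} bounds the jump of $\Phi_\ell$ across those crossings by $K_{\ref{coro:8xp,1}}$, so up to an error $O(r^4)$ we may replace $\Phi_\ell$ on the whole circle by the single multivalued expression $\log H_\ell-\log u$. Stokes' theorem applied on $D(x_{0,\infty},r)\setminus \overline{D(x_{0,\ell},\epsilon)}$ and letting $\epsilon\to 0$ then converts the circular integral into $-2i\int_{D(x_{0,\infty},r)\setminus\Omega_\ell}\Phi_\ell\,\gamma_\ell\,d\Leb_2$ plus the boundary term already handled; the small contour around $x_{0,\ell}$ contributes $O(\ell\,\epsilon^2|\log\epsilon|)\to 0$ by the local expansion $\log H_\ell\sim\ell\log(u-x_{0,\ell})$ paired against $|\psi_\ell|=O(\epsilon)$. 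The remaining area integral is over the narrow cuspidal complement of $\Omega_\ell$ (controlled by Lemma~\ref{lem:18ha,1}), where the generalized K\"onig coordinate bounds $|\log\mathfrak{k}_{\pm,\ell}|$ and the cusp area scales as $O(r^{2})$ times a vanishing angular factor, giving the desired $O(r^2|\log r|)$ bound uniformly in $\ell$.

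For $\ell=\infty$, existence of $\Theta_\infty(r)$ reduces to the convergence of each ingredient of~\eqref{equ:10xp,1}: the boundary integral by the arc-length sum of paragraph two extended to the limit, and the circular integral because $\Phi_\infty$ is continuous on $C(x_{0,\infty},r)$ except on a finite subset. Convergence $\Theta_\ell(r)\to\Theta_\infty(r)$ for fixed $r$ uses $H_\ell\to H_\infty$ and $\mathfrak{k}_{\pm,\ell}\to\mathfrak{k}_{\pm,\infty}$ uniformly on compact subsets of the circle and the boundary (Proposition~\ref{prop:11jp,1} and Lemma~\ref{lem:28na,1}), together with $\psi_\ell\to\psi_\infty$ from paragraph one. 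The principal obstacle is the circular integral: the naive pointwise bound is $O(\ell\,r^2|\log r|)$, which grows with $\ell$, so the proof must exploit the holomorphy of $\log H_\ell-\log u$ in conjunction with the shared functional equation~\eqref{equ:4qa,1} on both sides of $\partial\Omega_\ell$, converting the bulk of the circular integration into an area integral over the cuspidal complement where the $\ell$-dependence of $\Phi_\ell$ is absorbed by the small measure of that region.
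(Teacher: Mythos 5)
Your treatment of the second term of (\ref{equ:10xp,1}) is essentially the paper's: the integrand is uniformly bounded by Corollary~\ref{coro:8xp,1} and the length of $\partial\Omega_{\ell}\cap D(x_{0,\infty},r)$ is made small uniformly in $\ell$ (the paper's Lemma~\ref{lem:3qp,1}; note that your asymptotic $\rho_n\sim n^{-1/2}$ is only the mid-range behaviour and the uniformity in $\ell$ in the far range needs the joint estimates of Lemma~\ref{lem:30hp,2}, but the conclusion is the one the paper proves). The circular integral, however, is where your argument has a genuine gap. On the bulk of $C(x_{0,\infty},r)$ one has $|\Phi_{\ell}(u)|\asymp r^{-2}$ (Lemma~\ref{lem:11na,1}) and $|\psi_{\ell}(u)|\asymp r$ (Lemma~\ref{lem:10na,1}), so the integrand has size $r^{-1}$ over an arc of length $\asymp r$: every size estimate, however carefully organized, yields only $O(1)$, not $o(1)$ as $r\to 0^+$. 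The proposition is true only because of a cancellation in the leading term: writing $\log H_{\ell}(u)\approx -C_{\text{Fatou}}(u-x_{0,\infty})^{-2}$ and $\psi_{\ell}(u)\approx \gamma_{\ell}(x_{0,\infty})\overline{z}$, the main contribution is $\Re\bigl[\iota\int_{C_{\pm}}\overline{z}z^{-2}\,dz\bigr]$, which is computed explicitly in (\ref{equ:15np,2}) and is nearly purely real in the bracket, hence nearly zero. Your proposal never performs this computation, and your Stokes conversion does not circumvent it: the area integral produced by Stokes over $D(x_{0,\infty},r)$ includes the part \emph{inside} $\Omega_{\ell}$, where $\Re\Phi_{\ell}\,\gamma_{\ell}$ is of size $r^{-2}$ and its integral is small only by the same angular cancellation; it is not supported on the "narrow cuspidal complement" as you assert.

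A second, related problem is your claim that $\Phi_{\ell}$ may be replaced on the whole circle by $\log H_{\ell}-\log u$ up to an error $O(r^4)$. Proposition~\ref{prop:3xp,1} controls the set $C(x_{0,\infty},r)\cap\partial\Omega_{\ell}$ (it lies in a disk of radius $O(r^3)$), not the length of $C(x_{0,\infty},r)\setminus\Omega_{\ell}$, which consists of two arcs near the imaginary directions of small but unquantified angular measure $\eta$. On those arcs $H_{\ell}$ is not defined, and the comparison of the K\"onig coordinate with $\log H_{\ell}$ holds only on $\partial\Omega_{\ell}$ (Corollary~\ref{coro:8xp,1}); propagating it to the circle (Lemma~\ref{lem:31np,3} together with Lemma~\ref{lem:10xp,1}) still gives only a bound of order $r^{-2}$ there, not boundedness of a "jump". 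The paper then disposes of these arcs by choosing their angular measure $\eta=\eta(\epsilon)$ small, producing a contribution $K_1\eta<\epsilon/6$ as in (\ref{equ:15np,3}) --- an $O(\eta)$ estimate, not a power of $r$. Without the explicit cancellation on $C_{\pm}$ and the correct handling of the arcs $c_{\pm}$, the proof does not close.
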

The proof of this Proposition will require some preparatory estimates.

\paragraph{Estimates on circles.}
\begin{lem}\label{lem:10na,1}
For every $r>0$ there is $\ell(r)<\infty$ so that for all
$\ell(r)\leq\ell\leq\infty$ and $u :\: |u-x_{0,\infty}|=r$
\[ \left|\psi_{\ell}(u) - \gamma_{\ell}(x_{0,\infty}) \overline{u-x_{0,\infty}}\right| \leq O_{\psi}(r^2) \]
where $O_{\psi}(r)$ is independent of $\ell$ and $\limsup_{r\rightarrow 0^+} r^{-2} O_{\psi}(r^2) < \infty$.
\end{lem}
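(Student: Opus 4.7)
The plan is to exploit the joint real-analyticity of $\gamma_\ell$ near $x_{0,\infty}$ provided by Theorem~\ref{theo:28mp,1} and solve the $\overline\partial$-equation in Definition~\ref{defi:10np,1} by term-by-term antidifferentiation in $\overline u$, then read off the uniform bound from uniform Cauchy estimates on Taylor coefficients.

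First I would invoke Theorem~\ref{theo:28mp,1}: for all $\ell\geq\ell_0$, $\gamma_\ell$ admits a joint-holomorphic extension $f_\ell^\infty(z,w)$ on the bidisk $D(x_{0,\infty},R_{\text{analytic}})\times D(x_{0,\infty},R_{\text{analytic}})$ with $f_\ell^\infty(u,\overline{u})=\gamma_\ell(u)$, and $f_\ell^\infty\to f_\infty^\infty$ uniformly there. Expanding around $(x_{0,\infty},x_{0,\infty})$ gives
\[ f_\ell^\infty(z,w)=\sum_{j,k\geq 0} c_{j,k,\ell}(z-x_{0,\infty})^j(w-x_{0,\infty})^k, \]
and the uniform $L^\infty$ bound on the bidisk combined with Cauchy's integral formula on a slightly smaller polydisk of radius $R_0<R_{\text{analytic}}$ yields $|c_{j,k,\ell}|\leq MR_0^{-(j+k)}$ with $M$ independent of $\ell$.

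Next I would exhibit $\psi_\ell$ in a neighborhood of $x_{0,\infty}$ by integrating the series in $\overline u$:
\[ \psi_\ell(u)=\sum_{j,k\geq 0}\frac{c_{j,k,\ell}}{k+1}\,(u-x_{0,\infty})^j\,\overline{(u-x_{0,\infty})}^{\,k+1}. \]
One checks directly that $\partial_{\overline u}\psi_\ell=\gamma_\ell$ and that the linear part at $x_{0,\infty}$ equals $c_{0,0,\ell}\,\overline{(u-x_{0,\infty})}=\gamma_\ell(x_{0,\infty})\overline{(u-x_{0,\infty})}$, matching the normalization of Definition~\ref{defi:10np,1}; any other admissible $\psi_\ell$ differs from this one by a holomorphic function with vanishing value and vanishing $\partial$-derivative at $x_{0,\infty}$, so all that matters for the estimate is this canonical piece.

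Finally, writing $v=u-x_{0,\infty}$ with $|v|=r$, I would subtract the linear term and estimate term by term:
\[ \bigl|\psi_\ell(u)-\gamma_\ell(x_{0,\infty})\overline v\bigr|\leq \sum_{j+k\geq 1}\frac{M}{R_0^{j+k}(k+1)}\,r^{j+k+1}\leq M r\sum_{m\geq 1}(m+1)\Bigl(\frac{r}{R_0}\Bigr)^{m}, \]
which for $r\leq R_0/2$ is bounded by a constant multiple of $r^2$ with constant independent of $\ell\geq\ell_0$. Choosing $O_\psi(s):=Cs$ with this constant $C$ and $\ell(r):=\ell_0$ whenever $r\leq R_0/2$ completes the lemma. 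The main obstacle — which is what forces the invocation of the full second part of Theorem~\ref{theo:28mp,1} rather than just $L^1$-convergence of $\gamma_\ell$ — is obtaining coefficient bounds $|c_{j,k,\ell}|\leq MR_0^{-(j+k)}$ that are \emph{uniform in $\ell$}; this is precisely what the joint-analytic uniform convergence on a fixed bidisk supplies, via Cauchy's formula.
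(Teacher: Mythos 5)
Your proof is correct and follows essentially the same route as the paper's: both rest on the normalization in Definition~\ref{defi:10np,1} together with the uniform joint-analytic convergence of $f_\ell^\infty$ from Theorem~\ref{theo:28mp,1} to conclude that the remainder $\psi_\ell(u)-\gamma_\ell(x_{0,\infty})\overline{u-x_{0,\infty}}$ is $O(|u-x_{0,\infty}|^2)$ with a constant independent of $\ell$. You merely make explicit (via term-by-term $\overline\partial$-antidifferentiation of the double power series and uniform Cauchy coefficient bounds) what the paper asserts in one line.
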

\begin{proof}
  Change variables to $z := u-x_{0,\infty}$.
  By Definition~\ref{defi:10np,1}
  $\psi_{\ell}(z) = \gamma_{\ell}(x_{0,\infty})\overline{z} +
  \psi_{1,\ell}(z)$
  where the linear part of $\psi_{1,\ell}$ vanishes at $z=0$.
  By the analytic convergence claim of Theorem~\ref{theo:28mp,1}, it
  means that $|\psi_{1,\ell}(z)| \leq K_1 |z|^2$ for all $\ell$
  sufficiently large and $z$ in a fixed neighborhood of $0$.
  Hence,
  $|\psi_{\ell}(z) - \gamma_{\ell}(x_{0,\infty}) \overline{z}| \leq K_1 |z|^2$.
\end{proof}

\begin{lem}\label{lem:11na,1}
  There exist a function $o_{\text{Fatou}}(r^{-2}) :\: \lim_{r\rightarrow 0^+} r^2 o_{\text{Fatou}}(r) = 0$ and a positive constant $C_{\text{Fatou}}$ such that
  \begin{multline*} \forall\epsilon>0\; \exists r(\epsilon)>0\; \forall 0<r<r(\epsilon) \exists \ell(r) < \infty\; \forall \ell(r)\leq\ell\leq\infty\\ \log H_{\ell}(u) = -\frac{C_{\text{Fatou}}}{(u-x_{0,\infty})^2} + o_{\text{Fatou}}(r^{-2})
  \end{multline*}
  for $u :\: |u-x_{0,\infty}|=r,\, -\pi +\epsilon<\arg
  (u-x_{0,\infty})^2<\pi-\epsilon$. In particular, it holds for  $u$ in $\Omega_{\ell}$.
\end{lem}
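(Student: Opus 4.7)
The strategy is to realize $\log H_{\ell}$ as (a scalar multiple of) a Fatou coordinate at the almost-parabolic fixed point $x_{0,\ell}$ of $G_{\ell}^{2}$, and to read off its leading $r^{-2}$ singularity. The functional equation $\tau_{\ell}^{-1}H_{\ell}=H_{\ell}G_{\ell}$ from Fact~\ref{fa:1hp,4} yields
\[
\log H_{\ell}(G_{\ell}^{2}u)=\log H_{\ell}(u)-\log\tau_{\ell}^{2},
\]
which identifies $\log H_{\ell}$ with $-\log\tau_{\ell}^{2}$ times a solution of $\Psi\circ G_{\ell}^{2}=\Psi+1$. Given the expansion $G_{\ell}^{2}(u)-u=a_{3,\ell}(u-x_{0,\ell})^{3}+\cdots$ with $a_{3,\ell}\to a_{3,\infty}<0$ from Lemma~\ref{lem:17hp,1}, the natural candidate is $C_{\mathrm{Fatou}}:=-\log\tau_{\infty}^{2}/(2a_{3,\infty})>0$. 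The set $\Omega_{\infty}$ near $x_{0,\infty}$, characterized by $|\arg(u-x_{0,\infty})^{2}|<\pi-\delta$ via Fact~\ref{fa:18ha,1}, coincides with the attracting petal of $G_{\infty}^{2}$; this is exactly the range of directions appearing in the hypothesis.

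For $\ell=\infty$ I would invoke the classical Leau--Fatou theory for the degenerate parabolic point $f_{\infty}:=G_{\infty}^{2}$, using the normalization with vanishing quadratic term already adopted in Section~4 preceding Definition~\ref{defi:30hp,1}. This yields a Fatou coordinate $\Psi_{\infty}$ on the attracting petal with $\Psi_{\infty}\circ f_{\infty}=\Psi_{\infty}+1$ and the standard expansion $\Psi_{\infty}(u)=-1/(2a_{3,\infty}(u-x_{0,\infty})^{2})+c\log(u-x_{0,\infty})+O(1)$. Since $\log H_{\infty}$ and $-\log\tau_{\infty}^{2}\Psi_{\infty}$ satisfy the same functional equation on the simply connected petal, their difference pushes forward via $\Psi_{\infty}$ to a $1$-periodic holomorphic function on a right half-plane; it is bounded because the leading $r^{-2}$ divergences match, and hence constant by Liouville. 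This yields the claim for $\ell=\infty$ with remainder $O(\log r)=o_{\mathrm{Fatou}}(r^{-2})$.

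For finite $\ell$ I would transfer the asymptotic via Proposition~\ref{prop:11jp,1}, which gives uniform convergence $\phi_{\pm,\ell}^{-1}\to\phi_{\pm,\infty}^{-1}$ on $\CC\setminus[0,+\infty)$ with some rate $\varepsilon_{\ell}\to 0$. Since $\log H_{\ell}=\phi_{-,\ell}+\log\tau_{\ell}^{2}$ on $\Omega_{-,\ell}$ and symmetrically on $\Omega_{+,\ell}$, an inverse-function-theorem estimate at $v:=\phi_{-,\infty}(u)$ gives
\[
|\log H_{\ell}(u)-\log H_{\infty}(u)|\le K\,\varepsilon_{\ell}\,\bigl|(\phi_{-,\infty}^{-1})'(v)\bigr|^{-1}.
\]
The Fatou expansion above forces $|(\phi_{-,\infty}^{-1})'(v)|$ to be bounded below by a positive constant times $r^{3}$ when $|u-x_{0,\infty}|=r$, so choosing $\ell(r)$ large enough that $\varepsilon_{\ell(r)}\le r^{2}$ (possible because $\varepsilon_{\ell}\to 0$ with no rate needed) renders the right-hand side $o(r^{-2})$. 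Combined with the $\ell=\infty$ expansion, this yields the uniform claim.

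The principal obstacle is this quantitative transfer: Proposition~\ref{prop:11jp,1} provides no explicit rate for $\varepsilon_{\ell}$, so one can only choose $\ell(r)\to\infty$ as $r\to 0$, which matches the quantifier order ``$r$ first, then $\ell(r)$'' in the statement but precludes a more symmetric formulation. A secondary technical point is the careful derivation of the $\Psi_{\infty}$-expansion: the $c\log(u-x_{0,\infty})$ term comes from the ``r\'esidu it\'eratif'' of $G_{\infty}^{2}$ at $x_{0,\infty}$, but because it is $O(\log r)=o(r^{-2})$ it does not affect the leading constant $C_{\mathrm{Fatou}}$. An alternative route, avoiding classical theory entirely, would use the generalized Fatou coordinate $\zeta_{\ell}$ of Definition~\ref{defi:30hp,1} directly and count iterates of $\mathbf{G}_{\ell}^{-2}$ from $u$ to a reference region through Lemmas~\ref{lem:27hp,1}--\ref{lem:30hp,1}; the iterate count is $\sim 1/(2|a_{3,\infty}|r^{2})$ and each iterate contributes $-\log\tau_{\ell}^{2}$ to $\log H_{\ell}$, but bounding the accumulated per-step errors uniformly in $\ell$ by $o(r^{-2})$ is the same obstruction in a different guise.
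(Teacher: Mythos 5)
Your argument follows the same route as the paper's: for $\ell=\infty$ the expansion is read off from the asymptotics of the Fatou coordinate of $G_{\infty}^{2}$ (of which $\log H_{\infty}$ is a scalar multiple up to an additive constant, via the functional equation $\log H_{\infty}\circ G_{\infty}^{2}=\log H_{\infty}-\log\tau_{\infty}^{2}$), and for finite $\ell$ one fixes $r$ first and invokes the uniform convergence of Proposition~\ref{prop:11jp,1} to make the discrepancy between $\log H_{\ell}$ and $\log H_{\infty}$ on the arc smaller than any prescribed $o(r^{-2})$, no rate being required because of the quantifier order. Your write-up simply makes explicit (the periodicity/Liouville step, the lower bound $|(\phi_{-,\infty}^{-1})'|\gtrsim r^{3}$) what the paper compresses into ``the form of the Fatou coordinate'' and ``the composition goes to the identity uniformly.''
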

\begin{proof}
  For $\ell=\infty$ recall Fact~\ref{fa:18ha,1} by which the arc of values of $u$ in the claim of the Lemma is a compact subset of $\Omega_{\infty}$. Then the claim of the Lemma
  follows from the form of the Fatou coordinate, with $C_{\text{Fatou}} = -\frac{3\log \tau_\infty^2}{D^3 G_{\infty}(x_{0,\ell})}$.  For $\ell$ finite, write
  \[ \log H_{\ell}(u) = \log H_{\infty} \left(\phi_{\pm,\infty}^{-1}\circ\phi_{\pm,\ell}(u)\right) .\]
  When $r$ has been fixed, the composition in parentheses goes to the identity uniformly on a
  neighborhood of the arc  $u :\: |u-x_{0,\infty}|=r,\, -\pi +\epsilon<\arg (u-x_{0,\infty})^2<\pi-\epsilon$ by Proposition~\ref{prop:11jp,1}.
  Hence, by choosing $\ell(r)$ large enough we can make $|H_{\infty}(u)-H_{\ell}(u)|$ smaller than some $o(r^{-2})$.
\end{proof}

\begin{lem}\label{lem:11na,2}
For some $K_{\text{arc}}<\infty$ and every $r>0$ there is
$\ell(r)<\infty$ such that for all $\ell :\:
\ell(r)\leq\ell\leq\infty$ and $u\in \Omega_{\ell} \cap
C(x_{0,\infty},r)$ the estimate $|\log H_{\ell}(u)| \leq
K_{\text{arc}} r^{-2}$ holds.
\end{lem}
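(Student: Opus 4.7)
The strategy is to reduce $|\log H_\ell(u)|$ to a bound on the Fatou coordinate $|\phi_{\pm,\ell}(u)|$ and then argue by contradiction using Proposition \ref{prop:11jp,1}. From Fact \ref{fa:1hp,1}, on $\Omega_{\pm,\ell}$ one has $\log H_\ell = \phi_{\pm,\ell} + c_{\pm,\ell}$ with $c_{\pm,\ell}\in\{\log\tau_\ell,2\log\tau_\ell\}$ uniformly bounded in $\ell$, so the task becomes to prove $|\phi_{\pm,\ell}(u)|\leq K_0 r^{-2}$ for a universal constant $K_0$ and for all $\ell\geq\ell(r)$ (including $\ell=\infty$).

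The first substantive step is the quantitative decay
\[
\bigl|\phi_{\pm,\infty}^{-1}(\zeta) - x_{0,\infty}\bigr| \leq c_0|\zeta|^{-1/2}, \qquad \zeta\in\Pi_\infty,\ |\zeta|\geq R_0,
\]
with constants $c_0, R_0$ independent of the direction of $\zeta$. This is the standard quantitative asymptotic of the parabolic Fatou coordinate at the cubic fixed point $x_{0,\infty}$ of $G_\infty^2$: within the wedge $|\arg(u-x_{0,\infty})^2|<\pi-\delta$ it is the inversion of the expansion $\phi_\infty(u) = -C_F(u-x_{0,\infty})^{-2}(1+o(1))$ that already underlies Lemma \ref{lem:11na,1}, and it propagates to all remaining directions in $\Pi_\infty$ via the univalence of $\phi_\infty^{-1}$ on the simply-connected slit plane together with the functional equation $\phi_\infty\circ\mathbf{G}_\infty^{-2}=\phi_\infty+\log\tau_\infty^2$.

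Granting this, set $K_0:=16c_0^2$ and, for each $r>0$ so small that $K_0 r^{-2}>R_0$, use Proposition \ref{prop:11jp,1} to pick $\ell(r)$ with $\sup_{\zeta\in\Pi_\ell}|\phi_{\pm,\ell}^{-1}(\zeta)-\phi_{\pm,\infty}^{-1}(\zeta)|<r/2$ whenever $\ell\geq\ell(r)$. For $u\in\Omega_\ell\cap C(x_{0,\infty},r)$ lying in $\Omega_{\sigma,\ell}$, $\sigma\in\{+,-\}$, put $\zeta:=\phi_{\sigma,\ell}(u)$. If $|\zeta|>K_0 r^{-2}$, the displayed estimate yields $|\phi_{\sigma,\infty}^{-1}(\zeta)-x_{0,\infty}|\leq c_0 r/\sqrt{K_0}=r/4$, so
\[
|u - x_{0,\infty}| \leq \bigl|\phi_{\sigma,\infty}^{-1}(\zeta)-x_{0,\infty}\bigr| + \bigl|\phi_{\sigma,\ell}^{-1}(\zeta)-\phi_{\sigma,\infty}^{-1}(\zeta)\bigr| < r/4 + r/2 < r,
\]
contradicting $|u-x_{0,\infty}|=r$. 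Therefore $|\phi_{\sigma,\ell}(u)|\leq K_0 r^{-2}$, and the target estimate $|\log H_\ell(u)|\leq K_{\text{arc}} r^{-2}$ follows upon absorbing the bounded constant $c_{\pm,\ell}$ into a slightly enlarged $K_{\text{arc}}$. The case $\ell=\infty$ is subsumed in the same argument with zero convergence error.

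The main obstacle is the uniform-in-direction decay of $\phi_{\pm,\infty}^{-1}$ asserted in the first display. Lemma \ref{lem:11na,1} secures it only inside the wedge, and one must argue separately for $\zeta$ approaching the slit $[0,+\infty)$ of $\Pi_\infty$, which corresponds in the $u$-variable to approaching $\partial\Omega_\infty$; a Koebe-type distortion argument applied to the univalent map $\phi_\infty^{-1}:\Pi_\infty\to\Omega_{\pm,\infty}$ delivers the uniform constant $c_0$ there.
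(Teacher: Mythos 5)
Your strategy is sound and is essentially the paper's: everything reduces to the decay $\bigl|\phi^{-1}_{\pm,\infty}(\zeta)-x_{0,\infty}\bigr|=O\bigl(|\zeta|^{-1/2}\bigr)$ (equivalently, $|\phi_{\pm,\infty}(u)|=O\bigl(|u-x_{0,\infty}|^{-2}\bigr)$), split between the wedge regime and the regime where $\zeta$ approaches the slit, followed by the uniform convergence of Proposition~\ref{prop:11jp,1}. The paper organizes this through level sets of $\Re\phi_{\pm,\ell}$ rather than through your contrapositive with the inverse map, but the substance is the same.

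The one point where your argument is not yet a proof is exactly the point you flag as the main obstacle, and your proposed tool there is the wrong one. A Koebe-type distortion argument for the univalent map $\phi^{-1}_{\pm,\infty}:\varPi_{\infty}\rightarrow\Omega_{\pm,\infty}$ cannot by itself transport the wedge estimate to points $\zeta$ close to the slit: the hyperbolic distance in $\varPi_{\infty}$ from such a $\zeta$ to the sector where Lemma~\ref{lem:11na,1} applies grows like $\log\bigl(|\zeta|/\dist(\zeta,[0,+\infty))\bigr)$, so no uniform distortion constant is available. What closes this case is the dynamics: write $\zeta=\zeta_0+n\log\tau_{\infty}^2$ with $\zeta_0$ in a fixed fundamental strip, so that the functional equation gives $\phi^{-1}_{\pm,\infty}(\zeta)=\mathbf{G}^{-2n}_{\infty}\bigl(\phi^{-1}_{\pm,\infty}(\zeta_0)\bigr)$, and then use the asymptotics of the parabolic germ $\mathbf{G}^{-2}_{\infty}(x_{0,\infty}+z)=x_{0,\infty}+z+az^3+\cdots$ in its attracting petals (equivalently, of the repelling Fatou coordinate of $G_{\infty}$, which is what the paper invokes) to get $\bigl|\mathbf{G}^{-2n}_{\infty}(w)-x_{0,\infty}\bigr|=O\bigl(n^{-1/2}\bigr)$ uniformly over $w$ in the compact fundamental crescent. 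You do name the functional equation in passing; it, and not Koebe, is the essential ingredient. With that substitution the argument is complete and coincides with the paper's.
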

\begin{proof}
Under $\phi^{-1}_{\pm,\infty}$ vertical lines are mapped to arcs which
tend to $x_{0,\infty}$ with tangents at angle $\pi/4$ with the real
line. Hence, for $L_1$ large enough and positive, the preimage of
$L_1+\iota\RR$ is in the domain of the repelling Fatou coordinate of
$G_{\infty}$. Its image by the repelling Fatou coordinate is contained
in some right half-plane $\Re z > L_2, L_2>0$. For $n>0$ the image of
$L_1+n\log\tau_{\infty}^2$ is contained in $\Re z > L_2 + n$. From the
asymptotics of the repelling Fatou coordinate, the preimage of
$L_1+n\log\tau_{\infty}$ by $H_{\infty}$ is contained in $D(x_{0,\infty},Kn^{-2})$.
Hence the desired estimate for $\ell=\infty$ follows on the set
$\phi_{\pm,\infty}^{-1}\left(\{ u :\: \Re u>
L_1+\log\tau_{\infty}^2\}\right)$. For $\ell$ large enough it is then
derived from Proposition~\ref{prop:11jp,1}. On the other hand,
when $\Re u$ is bounded, for $\ell$ sufficiently large the preimage
by $\phi_{\pm,\ell}$ is contained in the  wedge $|\arg
(u-x_{0,\infty})^2| < \frac{3}{4}\pi$ and Lemma~\ref{lem:11na,1}
applies with a stronger claim.
\end{proof}

\begin{lem}\label{lem:10xp,1}
For some $K_{\ref{lem:10xp,1}}<\infty$ and every $r>0$ there is
$\ell_{\ref{lem:10xp,1}}(r)<\infty$ such that for all $\ell :\:
\ell_{\ref{lem:10xp,1}}(r)\leq\ell\leq\infty$ and
$u\in C(x_{0,\infty},r)$ the estimate $|\log \Phi_{\ell}(u)| \leq
K_{\ref{lem:10xp,1}} r^{-2}$ holds.
\end{lem}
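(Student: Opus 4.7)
The plan is to split into the two cases of Definition~\ref{defi:10xp,1}. For $u\in \Omega_{\ell}\cap C(x_{0,\infty},r)$, one has $\Phi_{\ell}(u)=\log H_{\ell}(u)-\log u$; Lemma~\ref{lem:11na,2} yields $|\log H_{\ell}(u)|\leq K_{\text{arc}} r^{-2}$, while $|\log u|$ is bounded by a constant once $r$ is small (since $x_{0,\infty}>0$), so this case is immediate.

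The substantive case is $u\in C(x_{0,\infty},r)\setminus\Omega_{\ell}$, where $\Phi_{\ell}(u)=t_{\ell}^{-1}\log\mathfrak{k}_{\pm,\ell}(u)-\log u$. Such $u$ lies in some half-plane $\HH_{\sigma}$ and sits on a sub-arc of $C(x_{0,\infty},r)$ outside $\Omega_{\ell}$ whose endpoints are on $\partial\Omega_{\ell}$, hence (for $\ell$ large so that $|x_{0,\ell}-x_{0,\infty}|\ll r$) very near points of $X_{\sigma,\ell}(r)$. By Proposition~\ref{prop:3xp,1} the whole set $X_{\sigma,\ell}(r)$ is contained in a Euclidean disk of radius $Q_{\ref{prop:3xp,1}}r^{3}$, so $u$ itself lies in a $O(r^{3})$-neighborhood of any chosen $u_{0}\in\partial\Omega_{\ell}$ that we pick near the same boundary arc.

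I then chain three estimates. First, Lemma~\ref{lem:31np,3} bounds $|t_{\ell}^{-1}\log\mathfrak{k}_{\sigma,\ell}(u)-t_{\ell}^{-1}\log\mathfrak{k}_{\sigma,\ell}(u_{0})|$ by the absolute constant $K_{\ref{lem:31np,3}}$. Second, Corollary~\ref{coro:8xp,1} at the boundary point $u_{0}$ gives $|t_{\ell}^{-1}\log\mathfrak{k}_{\sigma,\ell}(u_{0})-\log H_{\ell}(u_{0})|\leq K_{\ref{coro:8xp,1}}$. Third, Lemma~\ref{lem:11na,2} applied at $u_{0}$, which lies on $C(x_{0,\ell},r')$ for some $r'$ within a multiplicative constant of $r$, gives $|\log H_{\ell}(u_{0})|\leq K_{\text{arc}}(r')^{-2}\leq K' r^{-2}$. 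Adding the three bounds and subtracting $\log u$ (which is $O(1)$) yields $|\Phi_{\ell}(u)|\leq K_{\ref{lem:10xp,1}} r^{-2}$.

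The only real obstacle is the cosmetic mismatch between $C(x_{0,\infty},r)$ appearing in the statement and the circles $C(x_{0,\ell},r)$ used in Proposition~\ref{prop:3xp,1} and Lemma~\ref{lem:31np,3}. This is absorbed by requiring $\ell\geq\ell_{\ref{lem:10xp,1}}(r)$ large enough that $|x_{0,\ell}-x_{0,\infty}|$ is small compared to $r$, so that $C(x_{0,\infty},r)$ lies in a thin annulus around $C(x_{0,\ell},r)$ and all the geometric ingredients (containment in the convex hull arc, diameter bounds, bounded distortion of $\log\mathfrak{k}_{\pm,\ell}$) transfer with at most a multiplicative loss absorbed into $K_{\ref{lem:10xp,1}}$.
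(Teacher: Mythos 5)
Your proposal is correct and follows essentially the same route as the paper: split according to Definition~\ref{defi:10xp,1}, use Lemma~\ref{lem:11na,2} for $u\in\Omega_{\ell}$, and for $u\notin\Omega_{\ell}$ chain Lemma~\ref{lem:31np,3}, Corollary~\ref{coro:8xp,1} and Lemma~\ref{lem:11na,2} through a point of $X_{\pm,\ell}(r)$. You merely spell out the details (the choice of $u_0$ and the $x_{0,\ell}$ versus $x_{0,\infty}$ adjustment) that the paper's terse argument leaves implicit.
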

\begin{proof}
The term $\log u$ from Definition~\ref{lem:10xp,1} is bounded and can
be ignored. Now in view of Lemma~\ref{lem:11na,2} the
estimate needs to be established for $u$ outside of
$\Omega_{\ell}$. But then the difference between
$t_{\ell}^{-1}\mathfrak{k}_{\pm,\ell}(u)$ and $\log H_{\ell}(u)$ at a
point of $X_{\pm,\ell}(r)$, cf. Proposition~\ref{prop:3xp,1}, is
uniformly bounded by Lemma~\ref{lem:31np,3} and
Corollary~\ref{coro:8xp,1}.
\end{proof}

\begin{lem}\label{lem:11np,1}
In the setting of Proposition~\ref{prop:10xp,1}, for every
$\epsilon>0$, $0<r\leq r(\epsilon)$ and $\ell\geq\ell(r)$

\[ \left| \Re \left[ \frac{\iota}{2}\int_{C(x_{0,\infty},r)}
  \Phi_{\ell}(u)\psi_{\ell}(u)\,du \right] \right| <
  \frac{\epsilon}{2} .\]
\end{lem}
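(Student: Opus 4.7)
The plan is to parametrize $C(x_{0,\infty},r)$ by $u=x_{0,\infty}+z$ with $z=re^{\iota\theta}$, replace each factor by its leading asymptotics, and exploit the exact cancellation $\oint_{|z|=r}\bar z z^{-2}\,dz=0$ over a full revolution. The first step is to apply Lemma~\ref{lem:10na,1} to write $\psi_\ell(u) = \gamma_\ell(x_{0,\infty})\,\bar z + O(r^2)$ uniformly on the circle for all $\ell\geq\ell(r)$. The second step is to analyze $\Phi_\ell$: inside $\Omega_\ell$ it equals $\log H_\ell(u)-\log u$, which Lemma~\ref{lem:11na,1} expands as $-C_{\mathrm{Fatou}}z^{-2} - \log u + o(r^{-2})$, whereas outside $\Omega_\ell$ only the crude bound $|\Phi_\ell|\leq K r^{-2}$ from Lemma~\ref{lem:10xp,1} is available.

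For a small angle $\delta>0$ (to be fixed at the end) I would split the circle into the wedge portion $A_\delta := \{u : |\arg z^2|\leq\pi-\delta\}$ and the residual portion $B_\delta$ of angular measure $2\delta$. By Lemma~\ref{lem:18ha,1} (applied with $s=0$), once $\ell$ is large enough depending on $\delta$ and $r$, the whole arc $A_\delta$ lies inside $\Omega_\ell$, so the expansion of Lemma~\ref{lem:11na,1} is valid there. On $B_\delta$, the integrand $\Phi_\ell\psi_\ell\,du$ is bounded in absolute value by $K r^{-2}\cdot O(r)\cdot r\,d\theta = O(1)\,d\theta$, so its contribution is bounded by a constant times $\delta$.

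On $A_\delta$ the dominant piece of $\Phi_\ell\psi_\ell\,du$ is $-C_{\mathrm{Fatou}}\gamma_\ell(x_{0,\infty})\,\bar z z^{-2}\,dz$; since $\oint_{|z|=r} \bar z z^{-2}\,dz = \iota\int_0^{2\pi}e^{-2\iota\theta}\,d\theta = 0$, its integral over $A_\delta$ equals minus its integral over $B_\delta$, whose absolute value is again $O(\delta)$. The remaining cross terms are routine: the bounded factor $-\log u$ paired with $\psi_\ell=O(r)$ yields $O(r^2)$; the $o(r^{-2})$ correction paired with $\psi_\ell$ yields $o(1)$; and the $-C_{\mathrm{Fatou}} z^{-2}$ main term paired with the $O(r^2)$ remainder of $\psi_\ell$ yields $O(r)$. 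Summing all contributions and taking the real part (which does not affect absolute bounds) gives a total bound of $K\delta + o(1) + O(r)$, uniform in $\ell\geq\ell(r)$. One first chooses $\delta=\delta(\epsilon)$ so that $K\delta<\epsilon/4$, then shrinks $r(\epsilon)$, and finally enlarges $\ell_{\ref{prop:10xp,1}}(r)$ so that both remaining error terms fall below $\epsilon/8$. The convergence of $\Theta_\infty(r)$ asserted alongside the estimate follows from the same calculation, since Lemmas~\ref{lem:10na,1}, \ref{lem:11na,1} and~\ref{lem:10xp,1} are formulated uniformly for $\ell\leq\infty$.

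The main obstacle I foresee is honoring the quantifier order $\forall\epsilon\,\exists r(\epsilon)\,\forall r\leq r(\epsilon)\,\exists \ell(r)$ when combining all three expansions: $\delta$ must be chosen first, then $r$ small enough that the expansions of $\psi_\ell$ and $\log H_\ell$ become accurate and so that $A_\delta\subset\Omega_\ell$ (via Lemma~\ref{lem:18ha,1}), and finally $\ell$ large enough that the $o(r^{-2})$ error in Lemma~\ref{lem:11na,1} is smaller than a prescribed multiple of $r^{-2}$ at the fixed scale $r$. A secondary subtlety is that no expansion of $\Phi_\ell$ is available across $\partial\Omega_\ell$ inside $B_\delta$---only the crude bound of Lemma~\ref{lem:10xp,1}---and the argument succeeds only because the angular measure of $B_\delta$ is small and the crude bound has precisely the right order $r^{-2}$ to be absorbed against the factor $|\psi_\ell|\,|du| = O(r)\cdot r\,d\theta$.
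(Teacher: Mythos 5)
Your proposal is correct and follows essentially the same route as the paper's proof: the same wedge/residual splitting of the circle, the same three lemmas (\ref{lem:10na,1}, \ref{lem:11na,1}, \ref{lem:10xp,1}) in the same roles, and the same cancellation of the main term $\bar z z^{-2}\,dz$ (you obtain it from full-circle cancellation minus the small arcs, the paper by evaluating the antiderivative $\tfrac{1}{2\iota}e^{-2\iota\theta}$ at the endpoints; these are equivalent). Your explicit appeal to Lemma~\ref{lem:18ha,1} to place the wedge arcs inside $\Omega_{\ell}$, and your explicit handling of the bounded $-\log u$ term, only make precise what the paper leaves implicit.
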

\begin{proof}
We pick an $\eta>0$ having in mind the statement
of Lemma~\ref{lem:11na,1} and then split the arc
$C(x_{0,\infty},r)$ into the sum of arcs $C_{\pm}$ which
are contained in the sector $u :\: |u-x_{0,\infty}|=r,\, -\pi
+\epsilon<\arg (u-x_{0,\infty})^2<\pi-\epsilon$ and in $\Omega_{\ell}$
and
$c_{\pm}$ which the rest.
The angular measure
of $c_{\pm}$ does not exceed
$\eta$. If we take into account that
$|\psi_{\ell}(u)|\leq K_1|u-x_{0,\infty}|$ by
Lemma~\ref{lem:10na,1} and combine with the estimate of
Lemma~\ref{lem:10xp,1},  both holding when $\ell\geq\ell(r)$, then for such $\ell$
\begin{equation}\label{equ:15np,3} \bigl|
  \int_{c_{\pm}} \Phi_{\ell}(u) \psi_{\ell}(u)\, du \bigr| \leq K_1 \eta .
\end{equation}
We want to have
$K_1 \eta(\epsilon) = \frac{\epsilon}{6}$
which sets a value $\eta(\epsilon)$.

Now we pass to estimating the integral along
$C_{\pm}$. We will rely on Lemma~\ref{lem:11na,1}
which requires $r < r(\eta(\epsilon)) := r(\epsilon)$.

Then, by Lemma~\ref{lem:11na,1},
\begin{multline}\label{equ:15np,1}
  \bigl| \Re \left[ \iota\int_{C_{\pm}} \log H_{\ell}(u) \psi_{\ell}(u)\, du \right] \bigr| = \\\bigl| \Re \left[ \iota\int_{C_{\pm}}
  \gamma_{\ell}(x_{0,\infty})C_{\text{Fatou}} \frac{\overline{z}}{z^2} \, dz \right] \bigr| + C_{\text{Fatou}} r^{-1} O_{\psi}(r^2) + K_2 r^2 o_{\text{Fatou}}(r^{-2}) .\end{multline}
The residual terms in estimate~(\ref{equ:15np,1}) tend to $0$ as $r\rightarrow 0^+$ and by $r(\epsilon)$ sufficiently small, we can ensure that they add up to less than $\frac{\epsilon}{3}$. The main term is evaluated directly
\begin{equation}\label{equ:15np,2}
  \Re \left[ \iota \int_{C_{\pm}} \frac{\overline{z}}{z^2} \, dz \right]
  = 2\Re \left[ \frac{1}{2\iota} \exp(-2\iota\theta) |_{\theta_1}^{\theta_2} \right]
\end{equation}
where $z=r\exp(\iota\theta)$. $\theta_1$ and $\theta_2$
are in the form
$\pm\left(\frac{\pi}{2} - \frac{\eta}{2}\right)$.  Inserting $\pm\frac{\pi}{2}$ for
$\theta_1,\theta_2$ results in a purely real difference and hence zero contribution
to the real part of the main integral. What remains has absolute value
bounded by $\eta$. So, this time by possibly decreasing
$\eta$ we get less than $\frac{\epsilon}{6}$. This, together
with estimates~(\ref{equ:15np,3},\ref{equ:15np,1}), yields the claim of
the Lemma.
\end{proof}

\paragraph{Length of the boundary arcs.}
Let us write $w(\sigma,s,\ell)$ for
$\partial{\Omega}_{\sigma,\ell}\cap \HH_{s}$ where
$\sigma,s$ can be any combination of $+,-$. For $r>|x_{+,\ell}-x_{0,\ell}|$ we will write
$w_r(\sigma,s,\ell)$ for the smallest connected subarc of
$w(\sigma,s,\ell)$ which touches $x_{s,\ell}$ and contains
$w(\sigma,s,\ell)\cap D(x_{0,\ell},r)$.

\begin{lem}\label{lem:3qp,1}
For every $\varepsilon>0$ there exist $\ell(\varepsilon)<\infty$ and $r(\varepsilon)>0$
so that for every $\ell\geq\ell(\varepsilon)$ and $\sigma,s=\pm$,
we get $r(\varepsilon)>|x_{\pm,\ell}-x_{0,\ell}|$ and
the Euclidean length $|w_{r(\varepsilon)}(\sigma,s,\ell)|<\varepsilon$.
\end{lem}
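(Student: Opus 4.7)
The strategy is to identify $w_{r}(\sigma,s,\ell)$ as the concatenation of two natural pieces, $\mathfrak{w}_{\ell}\cap\HH_s$ and a short tail of fundamental segments of $\partial\Omega_{\ell}$, and to bound the Euclidean length of each. I begin by recalling that $\partial\Omega_{\sigma,\ell}$ is the union of the appropriate half of $\partial\Omega_{\ell}$ with the separating arc $\mathfrak{w}_{\ell}$, so that $w(\sigma,s,\ell)$ is the concatenation of $\mathfrak{w}_{\ell}\cap\HH_s$ (running from the approach to $x_{0,\ell}$ through $x_{s,\ell}$ to the contact point $w^{s}:=\mathfrak{w}_{\ell}\cap\partial\Omega_{\ell}\cap\HH_s$) with an arc of $\partial\Omega_{\ell}\cap\HH_s$ issuing from $w^s$. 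By Lemma~\ref{lem:17ha,2}, $\mathrm{diam}(\mathfrak{w}_{\ell})\to 0$, so once $\ell$ is large the whole of $\mathfrak{w}_{\ell}$ lies inside $D(x_{0,\ell},r)$ and automatically $r>|x_{\pm,\ell}-x_{0,\ell}|$; combined with Proposition~\ref{prop:11jp,1}, which keeps the remaining part of $\partial\Omega_{\ell}\cap\HH_s$ uniformly far from $x_{0,\infty}$, it follows that $w_r(\sigma,s,\ell)$ equals $(\mathfrak{w}_{\ell}\cap\HH_s)\cup E_{r,\ell}$, where $E_{r,\ell}$ is the initial chain of fundamental segments of $\partial\Omega_{\ell}$ beginning at $w^s$ and contained in $D(x_{0,\ell},r)$.

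For the $\mathfrak{w}_{\ell}$ piece, the Euclidean length is bounded using the almost-parabolic dynamics of $G_{\ell}^{2}$. Since $\mathfrak{w}_{\ell}$ is a preimage of $\RR$ by the critical map $H_{\ell}$, it is tangent to the vertical through $x_{0,\ell}$; writing the generalized Fatou coordinate $(z-x_{0,\ell})^{-2}$, the iterates $G_{\ell}^{2k}(x_{s,\ell})$ march monotonically along $\mathfrak{w}_{\ell}$ from $x_{s,\ell}$ toward $x_{0,\ell}$, and the successive Euclidean displacements telescope to a bound of order $|x_{s,\ell}-x_{0,\ell}|\sim\sqrt{\rho_{\ell}}$. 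The complementary subarc of $\mathfrak{w}_{\ell}$ from $x_{s,\ell}$ outward to $w^s$ is similarly controlled using backward iterates together with Lemma~\ref{lem:17ha,3}, and its Euclidean length again vanishes with $\ell$. Hence the total length of $\mathfrak{w}_{\ell}\cap\HH_s$ tends to $0$ as $\ell\to\infty$.

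For $E_{r,\ell}$, decompose into consecutive fundamental segments, cf. Definition~\ref{defi:24xp,1}. By Lemmas~\ref{lem:31na,1} and~\ref{lem:2xp,1}, each fundamental segment meeting $C(x_{0,\ell},\rho)$ has Euclidean diameter at most a universal constant times $\rho^{3}$. By Lemma~\ref{lem:28na,2} with $k=1$, the distances from $x_{0,\ell}$ of consecutive fundamental segments differ by $\Theta(\rho^{3})$, so the number of segments meeting a dyadic annulus $\{z:\rho/2\leq|z-x_{0,\ell}|\leq\rho\}$ is $O(\rho^{-2})$ and the sum of their diameters is $O(\rho)$. Summing over $\rho=2^{-k}r$, $k\geq 0$, gives $\mathrm{length}(E_{r,\ell})\leq K_{\mathrm{final}}\,r$, uniformly in $\ell$ sufficiently large.

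Given $\varepsilon>0$, first fix $r(\varepsilon):=\varepsilon/(2K_{\mathrm{final}})$; then choose $\ell(\varepsilon)$ so large that $r(\varepsilon)>|x_{\pm,\ell}-x_{0,\ell}|$, the $\mathfrak{w}_{\ell}$ contribution is below $\varepsilon/2$, and every cited Lemma applies for $\ell\geq\ell(\varepsilon)$. The main technical obstacle lies in the length estimate for $\mathfrak{w}_{\ell}\cap\HH_s$: Lemma~\ref{lem:17ha,2} delivers only the Euclidean diameter, and upgrading this to a length bound requires the almost-parabolic analysis above to rule out a wildly oscillating curve. Once this is in hand, the dyadic summation for $E_{r,\ell}$ is routine given the cubic shrinking of fundamental segments near the almost-parabolic fixed point.
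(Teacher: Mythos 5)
Your plan correctly targets the chain of fundamental segments of $\partial\Omega_\ell$ and the $\rho^3$ scale, but it replaces the paper's engine --- Lemma~\ref{lem:30hp,2}, the two--regime derivative estimate obtained via the generalized Fatou coordinate --- by a dyadic count built directly on Lemma~\ref{lem:28na,2}, and this substitution leaves a genuine gap. The fundamental segments in $\HH_s$ accumulate geometrically at $x_{s,\ell}$, which lies at distance $\sim\sqrt{\rho_\ell}$ from $x_{0,\ell}$; the dyadic annulus containing $x_{s,\ell}$ therefore meets \emph{infinitely many} segments, so the assertion that each annulus meets only $O(\rho^{-2})$ of them fails there. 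Moreover Lemma~\ref{lem:28na,2} cannot even be invoked at that scale at fixed $\ell$: it holds for $|u|\geq r$ only when $\ell\geq\ell(r,\epsilon,k)$, i.e.\ only above the parabolic/geometric crossover. The total contribution of that innermost tail is indeed $O(\sqrt{\rho_\ell})\lesssim r$, but this follows from the far-range geometric decay in $(1+\rho_\ell)$ which Lemma~\ref{lem:30hp,2} supplies and your proposal omits. Your closing claim that the dyadic sum is ``routine given the cubic shrinking of fundamental segments near the almost-parabolic fixed point'' has it backwards: near $x_{s,\ell}$, for finite $\ell$, the shrinking is geometric, not cubic; the cubic rate is the mid-range phenomenon.

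Secondly, the $\mathfrak{w}_\ell$ piece should not appear at all: the paper's proof identifies $w(\sigma,s,\ell)$ with the union of the $\mathbf{G}_\ell^{-n}$-images of the basic arc $G_{\pm,\ell}^{-1}[y_\ell,0)$, and the lemma is applied in Proposition~\ref{prop:10xp,1} to $\partial\Omega_\ell\cap D(x_{0,\infty},r)$, which excludes $\mathfrak{w}_\ell$. Even setting that aside, the argument you sketch for it is not sound: $x_{s,\ell}$ is fixed by $G_\ell^2$, so the iterates $G_\ell^{2k}(x_{s,\ell})$ do not ``march along $\mathfrak{w}_\ell$''; you are right that Lemma~\ref{lem:17ha,2} gives only a diameter bound, but the repair would again require the same two-regime derivative sum applied to the fundamental arcs $\mathfrak{w}(z)$ of Lemma~\ref{lem:17ha,3}, not the vague telescoping you describe.
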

\begin{proof}
We start by observing that the length of the basic arc
$G_{\pm,\ell}^{-1}[y_{\ell},0)$ is uniformly bounded for all $\ell$
  sufficiently large. That arc is the preimage under
  $\phi_{\sigma,\ell}$ of the horizontal ray $x+\iota\pi :\:
  -\infty<x<\log |y_{\ell}|$. For $\ell=\infty$ it is an analytic arc
  of finite length. As $\ell\rightarrow\infty$
  $\phi^{-1}_{\sigma,\ell}$ converge uniformly to
  $\phi^{-1}_{\sigma,\infty}$ together with the derivatives, by Cauchy
  estimates.

  Then $w(\sigma,s,\ell)$ is formed by taking images under the inverse
  map $\mathbf{G}_{\ell}^{-1}$. These mappings all have uniformly
  bounded distortion for $\ell$ large enough and hence we can estimate
  the length by taking the sum of absolute values of the derivatives
  $D_z\mathbf{G}_{\ell}^{-n}\left(G_{\pm,\ell}^{-1}(y_{\ell})\right)$.

  The requisite estimates are provided by Lemma~\ref{lem:30hp,2}.
  Point $z$ in the Lemma will be chosen as
  \begin{equation}\label{equ:2fa,1}
    z(\sigma,s,\ell):=\mathbf{G}_{\ell}^{-n(\sigma,s)}\left(G_{s',\ell}^{-1}(y_{\ell})\right)
  \end{equation}
  where $s'=\pm$ and is equal to $s$ if and only if $n(\sigma,s)$ is
  even. This will do for $n(\sigma,s)$ and $\ell$ large enough, since $z$ needs
  to be close enough to $x_{s,\ell}$ and then the condition on
  the argument of $z-x_{s,\ell}$ is also satisfied by
  Lemma~\ref{lem:18ha,1}.

  Then Lemma~\ref{lem:30hp,2} specifies $k(z(\sigma,s,\ell),\ell)$
  which will be written as $k(\sigma,s,\ell)$.
  First look at the estimate for
  $\bigl|D_z\mathbf{G}_{\ell}^{-1}\left(z(\sigma,s,\ell)\right)\bigr|$
  for $k\geq k(\sigma,s,\ell)$. Recall that
  $\lim_{\ell\rightarrow\infty} \rho_{\ell} = 0$. Take
  $\eta=\frac{1}{4}$ while $r$ in Lemma~\ref{lem:30hp,2} can be fixed
  since $z(\sigma,s,\ell)$ is given by formula~(\ref{equ:2fa,1}).
  For $\rho_{\ell}$ small enough, $(1+\rho_{\ell})^{-\frac{1}{8}} <
  1-\frac{\rho_{\ell}}{7}$             and hence the sum of those derivatives is
  bounded by $9L\sqrt{\rho_{\ell}}$ which can be made
  arbitrarily small by taking $\ell$ large enough.

  For $\hat{k}<k(\sigma,s,\ell)$ the sum of the derivatives of iterates between $\hat{k}$ and
  $k(\sigma,s,\ell)$ is bounded by $\frac{L'}{\sqrt{\hat{k}}}$. It
  remains to show that as $r\rightarrow 0$ in the statement of the
  present Lemma,  $\mathbf{G}_{\ell}^{-2k}z(\sigma,s,\ell) \subset
  D(x_{0,\ell},r)$ implies $k\geq \hat{k}(r)$ and $\hat{k}(r)$ can be
  made as large as needed by making $r$ small. This is indeed so,
  since the hyperbolic distance between $z(\sigma,s,\ell)$ and
  $\mathbf{G}_{\ell}^{-2}\left(z(\sigma,s,\ell)\right)$ is fixed and
  then shrunk by iterates. Finally, the condition
  $r(\varepsilon)>|x_{0,\infty}-x_{\pm,\ell}|$ can be satisfied by
  again specifying $\ell(\varepsilon)$ sufficiently large.
 \end{proof}

\paragraph{Proof of Proposition~\ref{prop:10xp,1}.}
In view of Lemma~\ref{lem:11np,1} it remains to estimate
\[ \int_{\partial\Omega_{\ell}\cap D(x_{0,\infty},r)} \left( \log\frac{H_{\ell}(u)}{u} -
\Phi_{\ell}(u)\right)\, du .\]
The integrand is uniformly bounded for
all $\ell$ sufficiently large by Corollary~\ref{coro:8xp,1}.
The length of $\partial\Omega_{\ell}\cap D(x_{0,\infty},r)$ can be
made arbitrarily small for all $\ell$ large enough including $\infty$
by Lemma~\ref{lem:3qp,1} by making $r$ small.

Proposition~\ref{prop:10xp,1} has been established.

\paragraph{Integral of $\Phi_{\ell}$ outside of $A_{\ell}$.}
\begin{prop}\label{prop:11xp,1}
For every $\epsilon>0$ there is $r(\epsilon)>0$
and for every $0<r\leq r(\epsilon)$ there is $\ell_{\ref{prop:11xp,1}}(r)<\infty$ such that
\[ \forall \ell\;\;
\ell_{\ref{prop:10xp,1}}(r)\leq\ell\leq\infty\implies
\left| \int_{D(x_{0,\infty},r)\setminus A_{\ell}}
\left|\Re\Phi_{\ell}(u)\right|\gamma_{\ell}(u)\,d\Leb_2(u)\right| < \epsilon .\]
\end{prop}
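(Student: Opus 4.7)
The plan is to decompose $D(x_{0,\infty},r)\setminus A_\ell = E_1 \cup E_2$, where $E_1 := D(x_{0,\infty},r)\setminus\Omega_\ell$ is the \emph{outer} piece and $E_2 := D(x_{0,\infty},r)\cap\tau_\ell^{-1}\overline{\Omega}_\ell$ is the \emph{inner} piece (these are disjoint because $\tau_\ell^{-1}\overline{\Omega}_\ell\subset\Omega_\ell$). I will bound the two contributions separately. Throughout I rely on the uniform bound $\|\gamma_\ell\|_\infty < \infty$ on $D(x_{0,\infty},R_{\mathrm{analytic}})$ coming from Proposition~\ref{prop:27mp,1} together with Theorem~\ref{theo:28mp,1}.

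For the outer piece, Lemma~\ref{lem:10xp,1} yields the pointwise bound $|\Re\Phi_\ell(u)|\leq K\rho^{-2}$ on $C(x_{0,\infty},\rho)$, while Proposition~\ref{prop:3xp,1} shows that $X_\ell(\rho)$ is contained in a Euclidean disk of radius $Q\rho^3$, so the angular measure of $E_1\cap C(x_{0,\infty},\rho)$ is of order $\rho^2$. Integrating radially,
\[
\int_{E_1} |\Re\Phi_\ell|\,\gamma_\ell\,d\Leb_2
\;\leq\; K\|\gamma_\ell\|_\infty \int_0^r \rho^{-2}\cdot\rho\cdot O(\rho^2)\,d\rho
\;=\; O(r^2),
\]
which can be made smaller than $\epsilon/2$ by choosing $r(\epsilon)$ small. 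This step is routine.

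For the inner piece, write $\Re\Phi_\ell(u) = \log|H_\ell(u)| - \log|u|$ with $|\log|u||=O(1)$ on $D$. Since for $u\in E_2$ and $\ell$ large one has $|u-x_{0,\ell}|<2r$, the local expansion $H_\ell(u)=E_\ell(u)^\ell$ at its simple zero $x_{0,\ell}$ gives $\log|H_\ell(u)|\leq \ell\log(2r)+O(1)$, so $\Re\phi_{\pm,\ell}(u)=\log|H_\ell(u)|-2\log\tau_\ell\leq -\lambda_0$ for $\ell\geq\ell_0(\lambda_0,r)$ provided $2r$ is bounded strictly away from $1$. Hence $E_2\cap\Omega_{\pm,\ell}$ falls inside the set $\{z\in\Omega_{\pm,\ell}:\Re\phi_{\pm,\ell}(z)<-\lambda_0\}$. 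The key is then an $\ell$-uniform bound
\[
\int_{\{\Re\phi_{\pm,\ell}<-\lambda_0\}\cap\Omega_{\pm,\ell}} |\Re\phi_{\pm,\ell}|\,d\Leb_2 \;\leq\; \delta(\lambda_0),
\qquad \delta(\lambda_0)\to 0 \text{ as } \lambda_0\to\infty,
\]
derived directly from Fact~\ref{fa:12mp,1} by repeating the area estimate of Lemma~\ref{lem:12ma,1} with the half-plane $\{\Re u<-\lambda_0\}$ in place of the slab $\{\Re u<-\lambda_0,|\Im u|<\pi\}$, then passing through the $K$-quasiconformal $\Upsilon_\ell$ and through $z\mapsto 2C_\ell z^2$ (the symmetric statement for $\Omega_{+,\ell}$ follows from the analogous dominant-convergence asymptotic for $\phi_{+,\ell}$). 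Multiplying by $\|\gamma_\ell\|_\infty$, together with the auxiliary estimate $|E_2|\cdot (2\log\tau_\ell + O(1)) = O(r^2)$ absorbing the constant offsets $-2\log\tau_\ell$ and $-\log|u|$, yields
\[
\int_{E_2} |\Re\Phi_\ell|\,\gamma_\ell\,d\Leb_2 \;=\; O(\delta(\lambda_0)) + O(r^2),
\]
made smaller than $\epsilon/2$ by choosing $\lambda_0$ large first and then $r$ small.

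The main obstacle is the inner piece: the singularity $\log|H_\ell|\sim\ell\log|u-x_{0,\ell}|^{-1}$ near $x_{0,\ell}$ grows with $\ell$, so a naive area bound is hopeless and one cannot treat $\log|H_\ell|$ as a bounded integrand. The crucial input is the dominant-convergence representation of Fact~\ref{fa:12mp,1}, which absorbs the $\ell$-dependence through the uniformly quasi-conformal change of variables. The minor technical point is that Lemma~\ref{lem:12ma,1} is stated on the principal $|\Im\phi_{-,\ell}|<\pi$ slab only, whereas the preimage of the $\ell$-power singularity wraps around many sheets; verifying that Fact~\ref{fa:12mp,1} furnishes the integrability bound on the full half-plane $\{\Re\phi_{\pm,\ell}<-\lambda_0\}$, uniformly in $\ell$, is the heart of the argument and requires tracking the QC change-of-area constant to confirm the exponent $-2/K$ stays strictly below $-1$ for $K$ close to $1$.
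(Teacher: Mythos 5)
Your decomposition $D(x_{0,\infty},r)\setminus A_\ell = E_1\cup E_2$ with $E_1=D(x_{0,\infty},r)\setminus\Omega_\ell$ and $E_2=D(x_{0,\infty},r)\cap\tau_\ell^{-1}\overline{\Omega}_\ell$ is exactly the one the paper uses (its $W_{\pm,\ell}(r)$ and $w_\ell(r)$), but both halves of your argument have genuine gaps.

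For $E_1$ you replace the paper's route through the generalized K\"{o}nig coordinate and the uniform tightness of $\tpar$ (Proposition~\ref{prop:21ma,1}) by a radial integration using the pointwise bound of Lemma~\ref{lem:10xp,1} and the $O(\rho^3)$ channel width of Proposition~\ref{prop:3xp,1}. Both of these statements are only proved for $\ell\geq\ell(\rho)$, whereas you invoke them simultaneously for all $\rho\in(0,r)$ with a single $\ell\geq\ell(r)$. This is not a cosmetic issue: when $\rho$ is comparable to $|x_{0,\ell}-x_{\pm,\ell}|\approx\sqrt{\rho_\ell}$, the circle $C(x_{0,\infty},\rho)$ enters the hyperbolic neighborhood of the repelling two-cycle, where $\Phi_\ell\approx t_\ell^{-1}\log\mathfrak{k}_{\pm,\ell}$ has a logarithmic singularity at $x_{\pm,\ell}$, so $|\Re\Phi_\ell(u)|\approx\rho_\ell^{-1}\log\bigl(\mathrm{const}/|u-x_{\pm,\ell}|\bigr)$, which exceeds $K\rho^{-2}$ by an unbounded logarithmic factor. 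Your bound $|\Re\Phi_\ell|\leq K\rho^{-2}$ simply fails there, so the claimed $O(r^2)$ estimate does not follow from the ingredients you cite. (The integral is in fact controllable, but only because the angular measure degenerates like $d\cdot t_\ell$ near $x_{\pm,\ell}$, which precisely cancels the logarithm; this cancellation is what the paper's K\"{o}nig-coordinate argument is built to exploit, and your argument does not see it.)

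For $E_2$ you correctly identify Lemma~\ref{lem:12ma,1} as the key input, but the extension you propose --- from the slab $\{|\Im\phi|<\pi\}$ to the full half-plane $\{\Re\phi<-\lambda_0\}$ --- does not work and is in any case unnecessary. Dropping the $|\Im|<\pi$ constraint changes the inversion image from a $\lambda^{-3}$-measure region located at distance $\gtrsim\lambda^{-1}$ from $0$ to a disk of measure $\lambda^{-2}$ tangent to $0$. Since the $z\mapsto 2C_\ell z^2$ pullback carries a Jacobian comparable to $|w|^{-1}$, the lower bound on $|w|$ is what lets the paper's argument close; without it the chain yields $q_\ell(\lambda)\lesssim\lambda^{-1/K}$ (not $\lambda^{-2/K}$ as you write, because you must still pay the $|w|^{-1}$ Jacobian after the quasiconformal step), and $-1/K>-1$ for $K>1$, so $\int_{\lambda_0}^{\infty}q_\ell(\lambda)\,d\lambda$ diverges. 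The point you are missing is that the extension is not needed: $E_2\subset\tau_\ell^{-1}\Omega_\ell$, and by Fact~\ref{fa:1hp,4} the map $G_\ell=H_\ell\tau_\ell^{-1}$ sends $\Omega_\ell$ univalently onto $\Omega_\ell\setminus(-\infty,0]$, so $H_\ell(E_2)$ avoids $(-\infty,0]$ and $|\Im\log H_\ell(u)|<\pi$ holds automatically on $E_2$. Thus $E_2\subset\mathfrak{Q}(\lambda,\ell)$ for $\ell$ large, and Lemma~\ref{lem:12ma,1} applies exactly as stated; this is what the paper's proof does.
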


\paragraph{Proposition~\ref{prop:11xp,1} for the complement of
  $\Omega_{\ell}$.}
Let $W_{\pm,\ell} : = \HH_{\pm} \setminus \overline{\Omega}_{\ell}$.
For $r$ small and $\ell$ large given $r$, $D(x_{0,\infty},r)\setminus A_{\ell}$
contains two sets
\[ W_{\pm,\ell}(r) := D(x_{\infty,0},r) \cap W_{\pm,\ell} .\]
  We
  will prove the estimate of Proposition~\ref{prop:11xp,1} first for
  these sets.

Let us consider the case of $\ell=\infty$ first. Under the repelling
Fatou coordinate $W_{\pm,\infty}$ is a strip of bounded horizontal width.
Thus the measure of $W_k = \left\{ u\in W_{\pm,\infty}(r) :\: k-1 \leq
|t_{\infty}^{-1}\mathfrak{k}_{\pm,\infty}(u)| \leq k\right\}$ is $O(k^{-3})$. Thus,
\[ \left|\int_{W_{\pm,\infty}(r)} \Phi_{\infty}(u)\gamma_{\infty}(u)\,d\Leb_2(u)\right| \leq K \sum_{k\geq k(r)} k^{-2} \]
where $k(r)$ is the smallest $k$ for which $W_k$ is non-empty. Since
$\lim_{r\rightarrow 0^+} k(r) = \infty$ this can be made less than $\epsilon/2$ by
taking $r(\epsilon)$ small.

For $\ell$ finite we observe first that since $\log H_{\ell}$ is real
on $\partial{\Omega}_{\ell}$, then by Corollary~\ref{coro:8xp,1}, the
imaginary part of $t_{\ell}^{-1} \log\mathfrak{k}_{\pm,\ell}$ is
bounded on $\partial\Omega_{\ell}$, as well as on the arc of
$C(x_{0,\infty},r)$ which joins the components of
$\partial\Omega_{\ell}$ by Lemma~\ref{lem:31np,3}. By the maximum
principle for harmonic functions the imaginary part is thus bounded on
$W_{\pm,\ell}(r)$ for $\ell$ large enough depending on $r$.

The real part, on the other hand, by the functional equation is just
the exit time from $W_{\pm,\ell}(r)$ under $G_{\ell}$ up to
constants.
In the notations of
section~\ref{sec:4qa,1},
\[ \left| \Re t_{\ell}^{-1}\log\mathfrak{k}_{\pm,\ell}(u) \right| \leq K_2
\mathfrak{E}_{\text{par},\ell}(\tau_{\ell}^{-1}u) + K_3 .\]
The integral of this over $u \in W_{\pm,\ell}(r) :\:
|t_{\ell}^{-1}\Re\mathfrak{k}_{\pm,\ell}(u)| \geq k(r)$ tends to $0$
with $k(r)\rightarrow\infty$ by
Proposition~\ref{prop:21ma,1} and since $\lim_{r\rightarrow 0^+} k(r)
= \infty$ as in the case of $\ell=\infty$, the proof is finished.

\paragraph{Proposition~\ref{prop:11xp,1} for $\tau_{\ell}^{-1}\Omega_{\ell}$.}
The rest of $D(x_{0,\infty},r)\setminus A_{\ell}$ is the set
$\tau_{\ell}^{-1}\Omega_{+,\ell} \cap D(x_{0,\infty},r)$.
$\log H_{\ell}$ is defined in this sector. This time we will write
$w_{\ell}(r) := \tau^{-1}_{\ell} \Omega_{+,\ell} \cap D(x_{0,\infty},r)$.

The imaginary part of $\log H_{\ell}$ is bounded by $\pi$ on
$w_{\ell}(r)$ and the integral of the real part of
$\log H_{\ell} = \log\tau_{\ell}^2 +
\phi_{-,\ell}$ over the set
\[ \mathfrak{Q}(\lambda,\ell) := \left\{u\in\Omega_{-,\ell} :\: \Re H_{\ell}(u) <
\lambda, |\Im H_{\ell}(u)|<\pi\right\} \]
tend to $0$ uniformly with respect to
$\ell$, cf. Lemma~\ref{lem:12ma,1}. Since for every $\lambda$ there
is $r(\lambda)>0$ such that for all $\ell$
$w_{\ell}\left(r(\lambda)\right) \subset \mathfrak{Q}(\lambda,\ell)$,
the integral can be made arbitrarily small for all $\ell$ by making $r$ small
enough.

Proposition~\ref{prop:11xp,1} has been proved.

\paragraph{Proof of Theorem~\ref{theo:7np,1}.}
\subparagraph{Convergence of the right-hand side.}
From formula~(\ref{equ:11xp,2}) and
Propositions~\ref{prop:10xp,1},~\ref{prop:11xp,1} in the case of
$\ell=\infty$ we conclude that for every $\epsilon>0$ there is
$r_{\infty}(\epsilon)>0$ such that whenever
$0<\rho<r<r_{\infty}(\epsilon)$,  then

\[ \left| \int_{\{ u:\: \rho<|u-x_{0,\infty}|<r\}\cap A_{\infty}}
\Re\frac{H_{\infty}(u)}{u}\gamma_{\infty}(u)\,d\Leb_2(u) \right| < \epsilon .\]

Hence, the limit of right-hand side of the formula in Theorem~\ref{theo:7np,1}
exists and will be denoted with $\vartheta_{\infty}$.

\subparagraph{Convergence for finite $\ell$.}
For finite $\ell$, the same Propositions and
formula~(\ref{equ:11xp,1}) it follows that for every $\epsilon>0$
there are $r(\epsilon)>0$, then for every $0<r\leq r(\epsilon)$ there is
$\ell(r)$ such that for all
$\ell\geq\ell(r)$

\begin{equation}\label{equ:14xp,1}
  \left|\vartheta_{\ell} + \frac{1}{\log\tau_{\ell}} \int_{A_{\ell}\setminus
  D(x_{0,\infty},r)} \Re
\frac{H_{\ell}(u)}{u}\gamma_{\ell}(u)\,d\Leb_2(u)\right| <\epsilon .
\end{equation}

\subparagraph{The link between finite and infinite $\ell$.}
Now take $\rho :\: r(\epsilon)\geq\rho>0$ such that
\begin{equation}\label{equ:14xp,2}
  \left| \vartheta_{\infty} + \frac{1}{\log\tau_{\infty}} \int_{A_{\infty}\setminus D(x_{0,\infty},\rho)}
\Re\frac{H_{\infty}(u)}{u}\gamma_{\infty}(u)\,d\Leb_2(u) \right| <
\epsilon .
\end{equation}

By estimate~(\ref{equ:17np,1}) there is $\hat{\ell}(\rho)<\infty$
such that if $\ell\geq\hat{\ell}(\rho)$, then
\begin{multline*}
 \left| - \frac{1}{\log\tau_{\ell}} \int_{ A_{\ell}\setminus D(x_{0,\infty},\rho)}
 \Re\frac{H_{\ell}(u)}{u}\gamma_{\ell}(u)\,d\Leb_2(u) +\right. \\
 \left.\frac{1}{\log\tau_{\infty}} \int_{ A_{\infty}\setminus D(x_{0,\infty},\rho)}
\Re\frac{H_{\infty}(u)}{u}\gamma_{\infty}(u)\,d\Leb_2(u) \right| < \epsilon .
\end{multline*}

When $\ell\geq\max\left(\ell(\rho),\hat{\ell}(\rho)\right)$, from the
estimate above and~(\ref{equ:14xp,1}) we conclude that
\[ \left |\vartheta_{\ell}  +  \frac{1}{\log\tau_{\infty}} \int_{ A_{\infty}\setminus D(x_{0,\infty},\rho)}
\Re\frac{H_{\infty}(u)}{u}\gamma_{\infty}(u)\,d\Leb_2(u) \right| <
2\epsilon . \]
When~(\ref{equ:14xp,2}) is taken into account, we get
$|\vartheta_{\ell}-\vartheta_{\infty}| <3\epsilon$ which ends the
proof.

\subsection{Main Theorems.}
Theorem~\ref{m1} follows from the first claim of
Theorem~\ref{theo:28mp,1}. The convergence claim in Theorem~\ref{m2}
follows from the convergence in Theorem~\ref{theo:28mp,1} and the
drift formula from Theorem~\ref{theo:7np,1}.

\end{document}